\documentclass[a4paper]{amsart}

\RequirePackage{amsmath} 
\RequirePackage{amssymb}
\usepackage{amscd,latexsym,amsthm,amsfonts,amssymb,amsmath,amsxtra}
\usepackage[colorlinks=true,urlcolor=blue,citecolor=blue]{hyperref}
\usepackage{color}
\usepackage[all]{xy}
\usepackage{bm}
\usepackage{mathtools}
\usepackage{ mathrsfs }
\usepackage{xcolor}
\usepackage{comment}
\usepackage{marginnote}
\usepackage{enumitem}
\usepackage{thmtools, thm-restate}

\let\Re\undefined
\let\Im\undefined

\DeclareMathOperator{\Re}{Re}
\DeclareMathOperator{\Im}{Im}

\DeclareMathOperator{\Tr}{Tr}

\DeclareMathOperator{\vol}{vol}

	\newcommand{\Res}{\operatorname{Res}}
	
	\newcommand{\Eis}{\operatorname{Eis}}

	\newcommand{\Ad}{\operatorname{Ad}}

	\newcommand{\fin}{\operatorname{fin}}

	\newcommand{\diag}{\operatorname{diag}}
	\newcommand{\cusp}{\operatorname{cusp}}

	\newcommand{\Vol}{\operatorname{Vol}}

	\newcommand{\RNum}[1]{\uppercase\expandafter{\romannumeral #1\relax}}

\begin{document}
\theoremstyle{plain}
\newtheorem{thm}{Theorem}[section]
	
\newtheorem{cor}[thm]{Corollary}

\newtheorem{thmx}{Theorem}
\renewcommand{\thethmx}{\Alph{thmx}} 

\newtheorem{hy}[thm]{Hypothesis}
\newtheorem*{thma}{Theorem A}
\newtheorem*{corb}{Corollary B}
\newtheorem*{thmc}{Theorem C}
\newtheorem{lemma}[thm]{Lemma}  
\newtheorem{prop}[thm]{Proposition}
\newtheorem{conj}[thm]{Conjecture}
\newtheorem{fact}[thm]{Fact}
\newtheorem{claim}[thm]{Claim}
	
\theoremstyle{definition}
\newtheorem{defn}[thm]{Definition}
\newtheorem{example}[thm]{Example}
\theoremstyle{remark}
	
\newtheorem{remark}[thm]{Remark}	
\numberwithin{equation}{section}
	
\title[]{A Non-Split Motohashi Formula for Toric Periods over Number Fields}%
\author{Liyang Yang} 
\hypersetup{pdftitle={A Non-Split Motohashi Formula for Toric Periods over Number Fields},pdfauthor={Liyang Yang},pdfsubject={Automorphic forms and L-functions},pdfkeywords={Motohashi formula, toric periods, spectral reciprocity, class group L-functions, nonvanishing, subconvexity}}

\address{Department of Mathematics, Texas A\&M University, College Station,  TX 77843, USA} 
\email{liyangy@tamu.edu} 

\begin{abstract}
We establish a non-split Motohashi formula for toric Eisenstein periods over an
arbitrary number field $F$. For a quadratic extension $E/F$, we compare toric
Fourier inversion for a product of two Eisenstein series with the
$\mathrm{GL}_2/F$-spectral expansion of its Rankin--Selberg kernel. The kernel
regularizes the product in a cone of absolute convergence. Eisenstein
corrections and contour residues give the geometric and residual terms in
the meromorphically continued identity.
Suitable test data give twisted first- and second-moment formulas for the
completed $L$-functions of everywhere unramified toric characters.  Over
general $F$ these provide an adelic counterpart of the moment formulas of
Duke--Friedlander--Iwaniec that does not use the geometry of Heegner points.  In the CM case, we prove the absolute nonvanishing proportion
$4.9\times10^{-4}$ for every totally real base field and all sufficiently
large CM discriminants, with the threshold depending on the base field
and no restriction on the number of ramified primes.
We incorporate inverse Euler factors at small ramified primes into the
mollifier. Local unit-group tests control the resulting moments and remove
the ramified Euler-factor loss.
\end{abstract}
	
\date{}
\maketitle
\tableofcontents

\section{Introduction}
Moments of $L$-functions play a central role in analytic number theory and its arithmetic applications. A landmark example is Motohashi's formula \cite[Theorem 4.2]{Mot97}, which connects the fourth moment of the Riemann zeta function with the cubic moment of $\mathrm{PGL}_2$ $L$-functions:
\begin{equation}\label{1.1}
\int_{\mathbb{R}}|\zeta(1/2+it)|^4h(t)dt\rightsquigarrow \sum_{\text{$\phi$: $\mathrm{PGL}_2$-form of level $1$}}L(1/2,\phi)^3\check{h}(t_{\phi})+\text{Eisenstein},
\end{equation}
an identity whose use in both directions has led to many important developments. Analogues for Dirichlet $L$-functions were later established in \cite{BHKM20, Kan22}.

The formula \eqref{1.1} may be viewed as a split phenomenon, since $\zeta(s)^2$ is the Dedekind zeta function of the split quadratic algebra. In this paper, we develop a non-split analogue for a general quadratic extension $E/F$ of number fields. More precisely, we prove a non-split Motohashi formula for toric Eisenstein periods over $\mathcal{C}_{E/F}:=E^\times\mathbb A_F^\times\backslash\mathbb A_E^\times$, realized as a spectral reciprocity identity relating moments over characters of $\mathcal{C}_{E/F}$ to dual $\mathrm{GL}_2/F$-spectral moments, with explicit geometric and residual terms.

With suitable test data, the identity specializes to twisted first- and
second-moment formulas for
class group $L$-functions.  In the imaginary quadratic case these recover the
moment structure of Duke--Friedlander--Iwaniec \cite{DFI95}; over a general base
field they arise from an adelic spectral decomposition. The new feature is
the compatibility of the Rankin--Selberg kernel
regularization of \cite{Yan26a} with non-split toric Fourier analysis.
Fourier inversion identifies the toric-character moment, while the
$\mathrm{GL}_2/F$-spectral expansion expresses it as the dual automorphic moment.
Eisenstein corrections and contour residues account for the main terms;
see \textsection\ref{sec2.kernel} for the former.

Blomer--Brumley--Radziwi{\l}{\l}
\cite[\textsection 2.4, Remark~3.6, and \textsection 4]{BBR26}
obtain related spectral identities over $\mathbb Q$ for imaginary quadratic
fields, treating cuspidal forms and incomplete Eisenstein series, and
identify their non-split Motohashi structure. Here the Rankin--Selberg
kernel regularization allows us to work with genuine Eisenstein series at
fixed spectral parameters over a general number field, retaining the
explicit degenerate and residue terms needed for central-value moment
formulas.

The Fourier spectral reciprocity of \cite{Yan26c} compares nonsplit toric
periods of a single Eisenstein series with split Hecke periods and yields
first-moment identities. Here the regularized $\mathrm{GL}_2$-spectral
decomposition of an Eisenstein product yields the second-moment architecture
of a Motohashi formula.

For the arithmetic applications we distinguish two levels of generality.  The
reciprocity and shifted moment identities hold for arbitrary quadratic $E/F$.
For CM extensions of a fixed totally real field, the unramified toric family is
finite and the moment formulas yield a uniform positive proportion.
We modify Blomer's Selberg mollifier \cite{Blo04} to remove its ramified
Euler-factor loss; see \textsection\ref{sec9}. Earlier work over CM fields gave quantitative
nonvanishing by equidistribution or Eisenstein-series methods
\cite{Mas10,Yan20}.

For $F=\mathbb Q$, the Katz--Sarnak philosophy predicts symplectic symmetry
and hence $100$ percent central nonvanishing in the limit \cite{KS99}.
Fouvry and Iwaniec established the corresponding symplectic density of
low-lying zeros under the generalized Riemann hypothesis after averaging over
the discriminant \cite{FI03}.  From this perspective, uniformity in the
discriminant is part of the expected phenomenon, rather than merely a local
refinement.  Theorem~\ref{thmC} gives an unconditional fixed positive
proportion for each sufficiently large CM relative class group family,
including extensions with arbitrarily many ramified primes.  Khayutin
\cite{Kha20} proves quantitative nonvanishing for class group $L$-functions
over general number fields under a small-regulator hypothesis; his lower
bound does not give a fixed positive proportion.

\subsection{The Non-Split Motohashi Formula}
Let $E/F$ be a quadratic extension of number fields, and let $\eta_{E/F}$ be the associated quadratic character. Fix an embedding $\iota: E^{\times}\hookrightarrow  \mathrm{GL}_2(F)$, regarding $E^{\times}$ as a nonsplit torus. For a Bruhat--Schwartz function $\Phi$ on $\mathbb{A}_F^{\oplus 2}$, $\Re(s)\gg 1$, and a character $\xi$ of $E^{\times}\backslash\mathbb{A}_E^{\times}$, we define
\begin{align*}
Z(s,\Phi,\xi)=\int_{\mathbb{A}_E^{\times}}
\Phi((0,1)\iota(x))\xi(x)|x|_{\mathbb{A}_E}^{1/2+s}d^{\times}x.
\end{align*}

Then $Z(s,\Phi,\xi)$ is a Tate integral representing the completed Hecke $L$-function $\Lambda(1/2+s,\xi)$. 

For $1\leq i\leq 2$, let $\xi_i$ be a unitary character of $E^{\times}\backslash\mathbb{A}_E^{\times}$, put $\omega_i=(\xi_i|_{\mathbb{A}_F^{\times}})^{-1}$, and let $\Phi_i$ be a Bruhat--Schwartz function on $\mathbb{A}_F^{\oplus 2}$. We take $\chi_i=\mathbf 1$ in the Eisenstein data associated with $(\Phi_i,\omega_i)$. We obtain the following non-split Motohashi formula for toric periods. 
\begin{thmx}\label{thmA}
Let $\boldsymbol{\Phi}:=(\Phi_1, \Phi_2)$ and let $\mathbf{s}=(s_1,s_2)\in \mathbb{C}^2$ with $|\Re(s_1)|+|\Re(s_2)|<1/2$. Then 
\begin{multline}\label{e1.2}
\sum_{\chi\in \widehat{\mathcal{C}}_{E/F}}\frac{Z(s_1,\Phi_1,\xi_1\chi)Z(s_2,\Phi_2,\xi_2\overline{\chi})}{2L(1,\eta_{E/F})}=Z(\mathbf{s},\boldsymbol{\Phi},\xi_1\xi_2)+ Z^{\Diamond}(\mathbf{s},\boldsymbol{\Phi},\xi_1\xi_2)\\
+\mathcal{P}_{\cusp}(\mathbf{s},\boldsymbol{\Phi},\xi_1\xi_2)
+\mathcal{P}_{\Eis}(\mathbf{s},\boldsymbol{\Phi},\xi_1\xi_2)
-
\sum_{i=1}^{4}\widetilde{\Psi}_{\Res}^{(i)}(\mathbf{s},\boldsymbol{\Phi},\xi_1\xi_2),
\end{multline}	
where $\widehat{\mathcal C}_{E/F}$ is the Pontryagin dual of
$\mathcal C_{E/F}$. Here $Z$ and $Z^\Diamond$ are the geometric terms,
related by intertwining; $\mathcal P_{\cusp}$ and $\mathcal P_{\Eis}$ are
the cuspidal and continuous spectral moments, with Rankin--Selberg
coefficients paired with toric periods; and
$\widetilde\Psi_{\Res}^{(i)}$ are the four contour residues.
\end{thmx}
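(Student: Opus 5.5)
We compute one auxiliary period in two ways. Let $E(g,\Phi_i,\omega_i,s_i)$ be the $\mathrm{GL}_2/F$ Eisenstein series attached to $(\Phi_i,\omega_i)$ with $\chi_i=\mathbf 1$, built so that its toric period against a character $\xi$ of $\mathcal C_{E/F}$ (twisted by central character) unfolds, via the embedding $\iota$, to the Tate integral $Z(s_i,\Phi_i,\xi)$. This is the Hecke/Waldspurger unfolding for Eisenstein series: the sum over $P(F)\backslash \mathrm{GL}_2(F)$ restricted to the nonsplit torus becomes a sum over $F^\times\backslash E^\times$, because $E^\times$ acts simply transitively on $\mathbb P^1(F)$. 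The object of study is
\[
I(\mathbf s)=\int_{E^\times\mathbb A_F^\times\backslash\mathbb A_E^\times} E(\iota(x),\Phi_1,\omega_1,s_1)\,E(\iota(x),\Phi_2,\omega_2,s_2)\,\xi_1\xi_2(x)\,d^\times x .
\]

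\emph{Toric Fourier expansion.} The group $\mathcal C_{E/F}$ is compact. Fourier inversion on it, applied to $x\mapsto E(\iota(x),\Phi_1,\dots)\xi_1(x)$ and paired with the second factor, expresses $I(\mathbf s)$ as
\[
\sum_\chi \mathcal P_\chi(E_1)\,\mathcal P_{\bar\chi}(E_2).
\]
Each toric period is $Z(s_1,\Phi_1,\xi_1\chi)$, respectively $Z(s_2,\Phi_2,\xi_2\bar\chi)$. The factor $2L(1,\eta_{E/F})$ is $\mathrm{vol}(\mathcal C_{E/F})$ under the Tamagawa normalization of $d^\times x$, and it enters through Plancherel. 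Absolute convergence of the $\chi$-sum for $|\Re s_1|+|\Re s_2|<1/2$ follows from convexity bounds for $\Lambda(1/2+s,\xi\chi)$ combined with the Schwartz decay of the archimedean Tate integrals in the infinity type of $\chi$.

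\emph{Spectral side.} We do not expand the product $E_1E_2$ directly, since it is not square integrable. Instead, as in \cite{Yan26a}, we pass to the Rankin--Selberg kernel regularization. In a cone of absolute convergence $\Re s_i\gg1$, we subtract from $E_1E_2$ the Eisenstein corrections of \textsection\ref{sec2.kernel}, which are Eisenstein series built from the constant terms of the product. What remains is an $L^2$ function, and we expand it spectrally in $L^2(\mathrm{GL}_2(F)\backslash\mathrm{GL}_2(\mathbb A_F),\omega_1\omega_2)$. Its inner product with a cusp form $\phi$ unfolds to a Rankin--Selberg integral. Integrating the spectral expansion over the torus against $\xi_1\xi_2$ then gives $\mathcal P_{\cusp}+\mathcal P_{\Eis}$: Rankin--Selberg coefficients times the toric periods of $\phi$, respectively of unitary Eisenstein series.

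The correction terms have explicitly computable toric periods, again by the unfolding above. Their two constant-term pieces, related by the intertwining operator $M(s)$, yield the geometric terms $Z(\mathbf s,\boldsymbol\Phi,\xi_1\xi_2)$ and $Z^\Diamond(\mathbf s,\boldsymbol\Phi,\xi_1\xi_2)$. Interchanging the toric integral with the spectral sum and integral is justified by rapid decay of the regularized kernel together with the compactness of the torus.

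\emph{Continuation.} Both sides are meromorphic in $\mathbf s$. To go from the cone of absolute convergence to $|\Re s_1|+|\Re s_2|<1/2$, we shift the contour of the continuous spectrum, whose coefficients involve products of four Hecke $L$-functions of $\omega$-twisted characters. Each pole crossed, coming from $\zeta$ or Hecke $L(1+\cdot)$ factors at $s$-dependent locations, contributes one residue. There are four families, giving $\widetilde\Psi^{(i)}_{\Res}$ with the sign from the orientation of the shift.

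The main obstacle I expect is this continuation step. It requires checking three things:
\begin{itemize}
\item no additional poles arise from the correction terms;
\item the cuspidal sum converges uniformly in the strip, using Rankin--Selberg bounds and polynomial bounds for toric periods (via Waldspurger) against decay in the spectral parameter coming from smoothing $\Phi_i$ at infinity;
\item the exact range $|\Re s_1|+|\Re s_2|<1/2$ is precisely where both sides converge absolutely and no further residues appear.
\end{itemize}
The identity then follows by uniqueness of meromorphic continuation.
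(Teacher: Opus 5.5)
Your proposal is correct and follows the paper's route. The left side comes from Plancherel on the compact group $\mathcal C_{E/F}$, whose volume is $2L(1,\eta_{E/F})$. The right side comes from the Rankin--Selberg kernel $\mathcal K_{12}=E_1E_2-\Eis((h_1+h_1^\Diamond)h_2)$ of \cite{Yan26a}, its genuine spectral expansion, and a continuation that crosses the four polar divisors of Definition~\ref{defn2.1}. The paper continues the pointwise identity (Proposition~\ref{prop2.2}) before integrating over the torus, whereas you integrate first; this difference is immaterial.

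One justification should be corrected. Restricting the spectral expansion to the torus, which has measure zero in $[G]$, requires locally uniform convergence of that expansion, not just decay of the kernel. The paper obtains this from $D\mathcal K_{12}\in L^1\cap L^2$ for every Archimedean differential operator $D$, together with Sobolev embedding.
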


A precise form of Theorem \ref{thmA} is given in Theorem \ref{thm2.3} in \textsection\ref{sec2.6}. 

\subsection{Twisted Moments of Class Group \texorpdfstring{$L$}{L}-functions}
By specifying the automorphic data $\xi_i$ and $\Phi_i$, $1\leq i\leq 2$, in \eqref{e1.2}, we derive the following twisted moment formulas for class group $L$-functions. 

\subsubsection{The Twisted Second Moment}
Let $\widehat{\mathcal{C}}_{E/F}^{\text{un}}$ be the subgroup of $\widehat{\mathcal{C}}_{E/F}$ consisting of everywhere unramified characters. Let $\mathfrak{n}$ be an integral ideal whose prime divisors split in $E$ and which is coprime to $2\mathfrak{D}_{E/F}\mathfrak{D}_F$. For $\chi\in \widehat{\mathcal{C}}_{E/F}^{\text{un}}$, let $\lambda_{\chi}^*(\mathfrak{n})$ be the renormalized Hecke weight defined precisely in \eqref{f3.4}. 
\begin{thmx}\label{thmB}
Let $|\Re(s_1)|+|\Re(s_2)|<1/2$. Then
\begin{equation}\label{e1.3}
\sum_{\chi\in\widehat{\mathcal C}_{E/F}^{\mathrm{un}}}\frac{\lambda_\chi^*(\mathfrak n)\Lambda(1/2+s_1,\chi)\Lambda(1/2+s_2,\overline\chi)}{L(1,\eta_{E/F})}=\mathscr M(\mathbf s;\mathfrak n)+\mathcal E_{\mathrm{Spec}}(\mathbf s;\mathfrak n),
\end{equation}
as an identity of meromorphic functions. Here $\mathscr M$ is the explicit
main term and $\mathcal E_{\mathrm{Spec}}$ is the cuspidal and continuous
spectral contribution; see \textsection\ref{sec7}. At $s_1=s_2=0$,
\begin{equation}\label{e1.3-central}
\sum_{\chi\in\widehat{\mathcal C}_{E/F}^{\mathrm{un}}}\frac{\lambda_\chi^*(\mathfrak n)|\Lambda(1/2,\chi)|^2}{L(1,\eta_{E/F})}=\mathscr M^\heartsuit(\mathfrak n)+O_{F,\varepsilon}(D_E^{5/12+\varepsilon}N_F(\mathfrak n)^{2/3+\varepsilon}).
\end{equation}
where $\mathscr M^\heartsuit(\mathfrak n)$ denotes the central value of
$\mathscr M$.
\end{thmx}

\begin{remark}
When $F=\mathbb{Q}$ and $E$ is imaginary quadratic, the central formula
\eqref{e1.3-central} specializes to the setting of
Duke--Friedlander--Iwaniec \cite{DFI95}.  In the normalization used here, its
error is $D_E^{1/2-1/12+\varepsilon}N_F(\mathfrak n)^{2/3+\varepsilon}$,
whereas the error underlying \cite[Theorem~4]{DFI95} is
$D_E^{1/2-1/28+\varepsilon}N_F(\mathfrak n)^{6+\varepsilon}$.
\end{remark}

\begin{remark}
Formula \eqref{e1.3} explains the structure of the main terms in the DFI moment formula. Our proof is fundamentally different from that of \cite{DFI95}: the two Eisenstein corrections and the two paired residue contributions in \eqref{eq6.res12}--\eqref{eq6.res34} combine into the symmetric four-term expression \eqref{eq7.main}. The remainder consists entirely of the cuspidal and continuous spectral contributions. This organization explains the ``lucky fact'' observed by Blomer \cite[\textsection 3]{Blo04}, namely the cancellation between certain diagonal and off-diagonal contributions in the second moment.
\end{remark}

\begin{remark}
Under a hypothesis on the supply of small split primes, a DFI-type
amplifier applied to Theorem~\ref{thmB} gives subconvexity over quadratic
extensions of arbitrary number fields; see Theorem~\ref{thm10.subconvexity}
in \textsection\ref{sec10}. This extends the second-moment approach of
\cite[Theorem~6]{DFI95} beyond imaginary quadratic fields, with a conditional
saving approaching $1/88$, compared with $1/1156$ there.
\end{remark}

\subsubsection{The Twisted First Moment}
Our construction of the test functions $\Phi_1$ and $\Phi_2$ also yields the following twisted first moment formula for class group $L$-functions. 
\begin{thm}\label{thmFirstMomentIntro}
Let $m\geq 10$ and $|\Re(s_2)|\leq 1$. We have
\begin{equation}\label{e1.4}
\sum_{\chi\in \widehat{\mathcal{C}}_{E/F}^{\text{un}}}\frac{\lambda_{\chi}^*(\mathfrak{n})\Lambda(1/2+s_2,\chi)}{L(1,\eta_{E/F})}=\mathcal{M}(s_2;\mathfrak{n})+\mathcal{M}^{\Diamond}(s_2;\mathfrak{n})+O(D_E^{-m/2}N_F(\mathfrak{n})^{m+5}),
\end{equation}
as an identity of meromorphic functions in $s_2$. The implied constant depends only on $F$ and $m$. The main terms $\mathcal{M}(s_2;\mathfrak{n})$ and $\mathcal{M}^{\Diamond}(s_2;\mathfrak{n})$ are explicit multiples of $D_E^{1/2}N_F(\mathfrak{n})^{-1/2-s_2}\Lambda_F(1+2s_2)$ and $D_E^{1/2-s_2}N_F(\mathfrak{n})^{-1/2+s_2}\Lambda_F(1-2s_2)$, respectively; see Theorem \ref{thm8.1}.
\end{thm}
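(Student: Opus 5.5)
We would derive the twisted first moment \eqref{e1.4} from the same toric Fourier-inversion mechanism underlying Theorem~\ref{thmA}, but in a degenerate configuration where only one Tate integral carries an $L$-function. Concretely, we take $\xi_1=\xi_2=\mathbf 1$. We choose $\Phi_2$ as the standard everywhere unramified test function, modified at the primes dividing $\mathfrak n$ so that its toric Fourier coefficient against $\chi\in\widehat{\mathcal C}_{E/F}^{\mathrm{un}}$ produces $\lambda_\chi^*(\mathfrak n)\Lambda(1/2+s_2,\chi)$; this is the same local construction used for \eqref{e1.3}. For the other factor, we choose $\Phi_1$ so that $Z(s_1,\Phi_1,\chi)$ is independent of $\chi$ on the unramified family and vanishes on ramified $\chi$. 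To achieve this, we take $\Phi_1$ supported near $(0,1)\iota(\widehat{\mathcal O}_E^\times E_\infty^{1})$, with smooth archimedean data, so that the $\chi$-sum collapses to evaluating an Eisenstein-type toric period at the identity. The left side of \eqref{e1.4} is then a toric period of a single Eisenstein series $E(g,\Phi_2,s_2)$ integrated against an indicator of the compact subgroup. Unfolding this period gives the sum over $\gamma\in E^\times\backslash\mathrm{GL}_2(F)/B(F)$, equivalently over $F$-rational points in the toric orbit of $(0,1)$.

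The main terms arise from the constant-term analysis. The constant term of $E(g,\Phi_2,s_2)$ has two pieces, $f_{s_2}(g)$ and $M(s_2)f_{s_2}(g)$. Integrated over the compact toric set, these pieces give $\mathcal M(s_2;\mathfrak n)$ and $\mathcal M^\Diamond(s_2;\mathfrak n)$, which are proportional to $\widehat{\Phi}_2(0)$-type values. This explains the factors $\Lambda_F(1\pm 2s_2)$ and the powers $D_E^{1/2}$ and $D_E^{1/2-s_2}$ from the volume of $\mathcal C_{E/F}$ and the conductor of the toric embedding. These are exactly the specializations of $Z$ and $Z^\Diamond$ in Theorem~\ref{thmA}, and the $\mathfrak n$-dependence $N_F(\mathfrak n)^{-1/2\mp s_2}$ comes from the Hecke-modified local factors at split primes.

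The remainder is the nonconstant Fourier expansion evaluated along the torus, a sum over $\alpha\in F^\times$ of Whittaker functions $W(\mathrm{diag}(\alpha y,1)\iota(x))$. The key point is that the toric embedding forces the imaginary parts of $y$ to be of size $\gg D_E^{1/2}N_F(\mathfrak n)^{-1}$ at the archimedean places, up to the Hecke shift. The Whittaker functions then decay faster than any power, and truncating at order $m$ yields $O(D_E^{-m/2}N_F(\mathfrak n)^{m+5})$. Uniformity in $|\Re(s_2)|\le1$ follows from standard Bessel bounds, with the threshold $m\ge10$ absorbing the sum over units and $\alpha$ via the unit-lattice counting in $F$.

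We expect the main obstacle to be this height/decay step uniformly over a general $F$ with units. Concretely, one must show that every point of the toric orbit lies high in the cusp simultaneously at all archimedean places, modulo $\mathcal O_F^\times$. We would first treat it by reducing $x$ modulo the compact part of $\mathcal C_{E/F}$ and choosing a unit-fundamental domain. We would then bound the resulting lattice sum by comparison with a Dedekind zeta integral, which is where the extra $N_F(\mathfrak n)^{5}$ loss would enter.
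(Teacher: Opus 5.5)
Your argument is correct in outline and follows the paper's decomposition. The one genuine difference is how you bound the nonconstant term.

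\textbf{The reduction.} By Proposition~\ref{lem3.1}, $Z(s_2,\Phi_2,\chi)$ already vanishes off $\widehat{\mathcal C}_{E/F}^{\mathrm{un}}$. Hence $x\mapsto E_2(\iota(x),s_2)$ is invariant under the compact subgroup $U$ you describe, and your period against $\mathbf 1_U$ is just $\mathrm{vol}(U)E_2(I_2,s_2)$, which is \eqref{8.2}. Neither $\Phi_1$ nor Theorem~\ref{thmA} is needed. For non-CM $E/F$ the unramified family has varying archimedean parameters, so no Schwartz $\Phi_1$ makes $Z(s_1,\Phi_1,\chi)$ constant on it; the measure on $U$ still works.

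Since $G(F)=B(F)\iota(E^\times)$, ``unfolding over $E^\times\backslash G(F)/B(F)$'' produces a single term and only returns the Tate integral. The main terms come, as you then say, from the constant term of $E_2$, i.e.\ $h_2(I_2,s_2)$ and $h_2^{\Diamond}(I_2,s_2)$ (Lemmas~\ref{lem8.2}--\ref{lem8.3}). They are not specializations of $Z,Z^\Diamond$ in Theorem~\ref{thmA}, which arise from the constant term of $E_1$.

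\textbf{The error term.} You bound $\mathcal G_2(I_2,s_2)=\sum_{\alpha\in F^\times}W_2(\mathrm{diag}(\alpha,1),s_2)$ pointwise by Bessel decay at the CM point. Note that $D_E^{1/2}N_F(\mathfrak n)^{-1}$ is only the adelic (product) height, and the individual $|\tau_v|$ can be very unbalanced. After rebalancing by units, each archimedean coordinate has size only about $(D_E^{1/2}/N_F(\mathfrak n))^{1/d}$. This still gives decay faster than any power, since $\sum_v|\alpha_vy_v|\geq d\,(N(\alpha)\prod_v y_v)^{1/d}$. You would then need the lattice count you flag, and you must track the $N_F(\mathfrak n)$ prefactors to reach the stated exponent $m+5$. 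These come from the $q_v^{r_v/2}$ normalizations, the enlargement of the $\alpha$-lattice by $\mathfrak n^{-1}$, and the $N_F(\mathfrak n)^{s_2}$ in the moment normalization.

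The paper avoids all of this. It Mellin-expands $y\mapsto |y|^r\sum_\alpha W_2(\mathrm{diag}(\alpha y,1),s_2)$ on $F^\times\backslash\mathbb A_F^\times$. This writes $\mathcal G_2(I_2,s_2)$ as \eqref{8.37}: a sum over Hecke characters $\xi$ of conductor essentially dividing $\mathfrak n$ of contour integrals of $\Lambda(1/2+s+s_2,\xi)\Lambda(1/2+s-s_2,\xi)\,2^{ds}D_{E/F}^{-s/2}N_F(\mathfrak n)^{s}\,\mathcal T_{\mathfrak n}$. Shifting to $\Re s=m+2$ and bounding trivially, using $\mathcal T_{\mathfrak n}\ll N_F(\mathfrak n)^{\varepsilon}$ and Stirling, gives Lemma~\ref{lem8.4}. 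There the individual $|\tau_v|$ enter only through the unimodular phases $\xi_\infty(\theta_\infty)$, so units and archimedean imbalance never appear, and the $N_F(\mathfrak n)$-exponent comes out directly.

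Your route gives stronger (stretched-exponential) decay but requires the geometry-of-numbers step. The Mellin route trades that for a clean polynomial bound. Finally, $m\geq10$ plays no role in absorbing unit sums; it is just a normalization.
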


\begin{remark}
When $F=\mathbb{Q}$, the formula \eqref{e1.4} is classical: it follows from the
Fourier expansion of Eisenstein series, Hecke's formula, and ideal class
decompositions. Over a general number field, this approach becomes considerably
more delicate. Our adelic proof combines toric Fourier inversion with
the $\mathrm{GL}_1$-spectral expansion of the nonconstant term. This
first-moment argument may be viewed as a simplified specialization of the
Fourier spectral reciprocity formula in \cite{Yan26c}.
\end{remark}

\subsection{Nonvanishing of Class Group \texorpdfstring{$L$}{L}-functions}
Let $F$ be totally real and let $E/F$ be CM. Put
$\mathrm{Cl}_{E/F}:=\mathrm{Cl}_E/\operatorname{Im}(\mathrm{Cl}_F)$. Then
$\widehat{\mathcal C}_{E/F}^{\mathrm{un}}\simeq
\widehat{\mathrm{Cl}}_{E/F}$ is a finite family.

\begin{thmx}\label{thmC}
For every totally real number field $F$, there is a threshold $D_0(F)$
such that every CM quadratic extension $E/F$ with $D_E\geq D_0(F)$ satisfies
\begin{equation}\label{e1.5}
\frac{\#\big\{\chi\in \widehat{\mathrm{Cl}}_{E/F}:
L(1/2,\chi)\neq 0\big\}}{|\mathrm{Cl}_{E/F}|}
\geq 4.9\times10^{-4}.
\end{equation}
\end{thmx}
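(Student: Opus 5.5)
The plan is the mollified moment method with Cauchy--Schwarz. Let $h=|\mathrm{Cl}_{E/F}|$. Choose a mollifier
\[
M(\chi)=\sum_{\mathfrak a}x_{\mathfrak a}\,\lambda_\chi^*(\mathfrak a),
\]
supported on integral ideals $\mathfrak a$ with $N_F(\mathfrak a)\le D_E^{\theta}$. The $\mathfrak a$ are built from split primes coprime to $2\mathfrak D_{E/F}\mathfrak D_F$, so Theorems~\ref{thmB} and \ref{thmFirstMomentIntro} apply to each $\lambda^*_\chi(\mathfrak n)$ with $\mathfrak n\mid\mathfrak a\mathfrak b$. Then
\[
\#\{\chi:L(1/2,\chi)\ne0\}\ \ge\ \frac{\bigl|\sum_\chi M(\chi)\Lambda(1/2,\chi)\bigr|^2}{\sum_\chi |M(\chi)|^2|\Lambda(1/2,\chi)|^2}.
\]
Since $\Lambda$ and $L$ differ by a nonzero archimedean factor that is uniform in the family, vanishing of $\Lambda(1/2,\chi)$ is equivalent to vanishing of $L(1/2,\chi)$.

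\textbf{First moment.} Expand $M$ and use the Hecke relations for $\lambda^*_\chi$ to write the numerator as a linear form in the $x_{\mathfrak a}$. Apply Theorem~\ref{thmFirstMomentIntro} at $s_2=0$, taking the limit of the $\mathcal M+\mathcal M^\Diamond$ terms (their poles cancel). Each term is then $L(1,\eta_{E/F})D_E^{1/2}N(\mathfrak a)^{-1/2}\times$ (a $\log D_E$-linear expression). The error $O(D_E^{-m/2}N(\mathfrak a)^{m+5})$ is negligible provided $\theta<1/2$ roughly; taking $m$ large makes the admissible range approach $N(\mathfrak a)\le D_E^{1/2-\delta}$ after summing.

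\textbf{Second moment.} Expand $|M|^2$, linearize $\lambda^*_\chi(\mathfrak a)\overline{\lambda^*_\chi(\mathfrak b)}$ into sums of $\lambda^*_\chi(\mathfrak n)$, and apply \eqref{e1.3-central}. The main term $\mathscr M^\heartsuit(\mathfrak n)$ gives a quadratic form in $x$ with logarithmic (DFI-type) kernel. The error, summed over $\mathfrak n$ with $N(\mathfrak n)\le D_E^{2\theta}$, is
\[
\ll D_E^{5/12+\varepsilon+2\theta(2/3)}\cdot(\text{number of terms}),
\]
which must be $o(hD_E^{1/2}\cdot L(1,\eta))$. Using $h\asymp D_E^{1/2\pm\varepsilon}$ by Brauer--Siegel, this forces a small admissible $\theta$, roughly $\theta\le 1/16$ after bookkeeping. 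I would fix $\theta$ at the largest value the error terms permit.

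\textbf{Optimization.} Take Selberg/Blomer-type weights
\[
x_{\mathfrak a}=\mu(\mathfrak a)\,N(\mathfrak a)^{1/2}\,P\!\left(\frac{\log(D_E^\theta/N\mathfrak a)}{\log D_E^\theta}\right),
\]
modified as in the abstract. The modification inserts the inverse Euler factors at small ramified primes: for primes $\mathfrak p\mid\mathfrak D_{E/F}$ of small norm, $\lambda_\chi$ is multiplied by the appropriate unit-group test. This ensures the local factors at ramified places appearing in $\mathscr M^\heartsuit$ cancel between numerator and denominator uniformly, instead of producing a loss $\prod_{\mathfrak p\mid \mathfrak D}(1-\cdots)$. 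That loss is unbounded when there are many ramified primes, and removing it is what allows arbitrarily many ramified primes in Theorem~\ref{thmC}. The ratio then reduces to a standard variational problem in $P$. This yields an explicit constant $c(\theta)$, which with the admissible $\theta$ evaluates to at least $4.9\times10^{-4}$. The threshold $D_0(F)$ absorbs the $F$-dependent implied constants and the $\varepsilon$-losses.

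\textbf{Main obstacle.} I expect the hardest step to be the uniform control of the ramified local factors. The main terms of both moments involve Euler factors at the primes dividing $\mathfrak D_{E/F}$; the bounds must not depend on $\omega(\mathfrak D_{E/F})$, and the inverse-Euler-factor mollifier must be shown to be compatible with the moment formulas, which are stated only for $\mathfrak n$ coprime to $\mathfrak D_{E/F}$. I would handle this by computing local unit-group integrals at each ramified place via the local data $\Phi_{i,v}$ in Theorem~\ref{thmA}. Each small ramified prime would then contribute an exactly computable factor, and these factors cancel in the Cauchy--Schwarz ratio. The numerical evaluation of the optimal $P$ is routine by comparison.
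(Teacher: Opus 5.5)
The gap is the step where you say the ratio ``reduces to a standard variational problem in $P$'' whose value at the admissible $\theta$ is at least $4.9\times10^{-4}$. That step is where the whole theorem lives, and as you set it up it does not go through.

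\textbf{The optimization cannot be carried out uniformly in $E$.} With smoothed weights $\mu(\mathfrak a)P(\cdot)$ on split primes, evaluating the mollified main terms asymptotically requires the distribution of split primes of norm up to $D_E^{\theta}$. You would need sums such as $\sum_{N_F(\mathfrak a)\le y}b(\mathfrak a)/N_F(\mathfrak a)$ and their M\"obius-weighted analogues. These are not known uniformly in $E$; this is the small-split-prime/Siegel-zero obstruction of \S\ref{sec10}. In addition, the finite part of $\mathscr M$ contains $L^{(j)}(1,\eta_{E/F})$ for $j\le3$. The ratios $L^{(j)}(1,\eta_{E/F})/(\ell^jL(1,\eta_{E/F}))$, with $\ell=\log D_E$, are not bounded by explicit constants; $u=L'(1,\eta_{E/F})/(\ell L(1,\eta_{E/F}))$ may be very large. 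So no explicit $c(\theta)$ comes out, and the numerical value is only asserted.

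\textbf{How the paper avoids this.}
\begin{itemize}
\item It uses Blomer's Selberg diagonalization, so both mollified moments are explicit multiples of $S(Y)=\sum_{N_F(\mathfrak a)\le Y}b(\mathfrak a)/N_F(\mathfrak a)$. The shifted-mollifier derivatives are bounded combinatorially in Lemma~\ref{lem9.mollifier.derivatives}.
\item Positivity of the Dirichlet coefficients of $\zeta_E=\zeta_FL(\cdot,\eta_{E/F})$ gives \eqref{eq9.nv.derivatives}. Hence $\mathscr L_2\lesssim P(\beta)(\sigma+u)L(1,\eta_{E/F})S(Y)/\ell$.
\item Mellin inversion at $x=D_E^\sigma$ with $\sigma>1/2$ gives $S(D_E^\sigma)\gtrsim\rho_Fc_{E/F}\mathcal H_{E/F}(1)L(1,\eta_{E/F})\ell(\sigma+u)$. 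The uncontrolled factor $\sigma+u$ therefore cancels.
\item The passage from $D_E^\sigma$ down to $D_E^\beta$ uses the Dickman comparison of Lemma~\ref{lem9.shortlong}. It is uniform in the split-prime set and costs $e^{-2\gamma}\beta^2/(2\sigma^2)$.
\end{itemize}
The constant is $3e^{-2\gamma}\beta^2/(8\sigma(\beta)^2P(\beta))$, evaluated at $\beta$ just below $1/28$.

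\textbf{The mollifier length is wrong.} Your $x_{\mathfrak a}\propto N(\mathfrak a)^{1/2}$ has the wrong sign; the coefficients should be $\ll N_F(\mathfrak a)^{-1/2+\varepsilon}$. With that size, the second-moment error summed over pairs $(\mathfrak q_1,\mathfrak q_2)$ is $D_E^{5/12+\varepsilon}Y^{7/3}$. It must be $o(D_E^{1/2-\varepsilon})$, which forces $\theta<1/28$. Your $\theta\le1/16$ comes from $5/12+\tfrac43\theta<\tfrac12$, which drops the sum over pairs. Since the constant improves with $\theta$, this matters.

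\textbf{The ramified treatment needs more than you describe.} The unit-group idea is right, but:
\begin{itemize}
\item The first-moment error acquires the factor $R_S^{m+3/2}$. So inverse Euler factors can only be inserted at ramified primes of norm at most $\ell/20$, and the remaining ramified primes must be shown harmless (Lemma~\ref{lem9.unit.cutoff}).
\item The ramified loss does not sit in $\mathscr M^\heartsuit$. It is the ramified Euler factor of $\sum_{\mathfrak a}b(\mathfrak a)N_F(\mathfrak a)^{-s}$ and appears only in the lower bound for $S$. The modified moments contribute $c_S$ and $c_S^2/g_S$. The leftover $g_S$ offsets the loss through $c_{E/F}g_S\mathcal H_{E/F}(1)=(1+o(1))/\zeta_F(2)$.
\item Controlling the second-moment error with unit-group tests requires admitting Steinberg components at $v\in S$ and showing their local weight is $O(q_v^{-1})$.
\end{itemize}

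\textbf{The constant would depend on $F$.} Restricting the mollifier to primes coprime to $2\mathfrak D_F$, as Theorems~\ref{thmB} and \ref{thmFirstMomentIntro} require, loses a factor depending on the primes above $2\mathfrak D_F$. For $F=\mathbb Q$ with $2$ split in $E$ the loss is about $1/3$. An $F$-independent constant needs the extension in Lemma~\ref{lem9.fixed.primes}.
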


This lower bound is independent of $F$; see
Proposition~\ref{prop.explicit.basefield} in \textsection\ref{sec9}.
The threshold $D_0(F)$ is ineffective because our proof uses Siegel's
lower bound for $L(1,\eta_{E/F})$. The bound is
uniform in the relative discriminant, with no restriction on the number of
prime ideals dividing $\mathfrak D_{E/F}$. To our knowledge, such an
unconditional uniform positive-proportion bound was not previously known
for these families, even for $F=\mathbb Q$, where Blomer's lower bound
contains a discriminant-dependent product over the ramified primes.

We modify Blomer's split-prime mollifier by incorporating inverse Euler
factors at small ramified primes. Local unit-group tests realize these
factors in both moment formulas, allowing their main terms and spectral
errors to be controlled as the set of primes grows. Together with a bound
for the contribution of the remaining ramified primes, this removes the
ramified-prime product from the nonvanishing bound; see
Lemma~\ref{lem9.unit.cutoff} and \eqref{eq9.explicit.local}.

\begin{remark}
For a non-CM quadratic extension, the moment formulas naturally sum an infinite
Archimedean spectrum with smooth Gamma weights. A positive-proportion result
for characters ordered by Archimedean conductor would require an additional
localization argument, which is not developed here.
\end{remark}

\subsection{Mechanism of the Motohashi Formula}
For simplicity, we describe the mechanism when $\xi_1=\xi_2=\mathbf{1}$; the general twists are treated in the same way. For $1\leq i\leq 2$, let $E_i(\cdot,s_i)$ be the Eisenstein series attached to the induced representation $|\cdot|^{s_i}\boxplus |\cdot|^{-s_i}$ and constructed from a Bruhat--Schwartz function $\Phi_i$ on $\mathbb{A}_F^{\oplus 2}$. The starting point is the toric period
\begin{equation}\label{1.8}
J(\mathbf{s},\boldsymbol{\Phi}):=\int_{\mathcal{C}_{E/F}}E_1(\iota(x),s_1)E_2(\iota(x),s_2)d^{\times}x. 
\end{equation}

The Motohashi formula arises by evaluating \eqref{1.8} in two different ways. On the one hand, one may expand the function $x\mapsto E_1(\iota(x),s_1)$ spectrally on the compact abelian group $\mathcal{C}_{E/F}$. By Fourier inversion and Plancherel on $\mathcal{C}_{E/F}$, this gives the left-hand side of \eqref{e1.2}, namely a moment over the characters $\chi\in \widehat{\mathcal{C}}_{E/F}$.

On the other hand, one first writes the Eisenstein product, in a cone of
absolute convergence, as the sum of the Rankin--Selberg kernel and explicit
Eisenstein corrections, and then expands the kernel along the genuine
$\mathrm{GL}_2$-spectrum. Unfolding the spectral coefficients and applying
Waldspurger's formula to the toric periods give the dual $L$-function moment.
With the CM class group test data, the central reciprocity and its local
spectral weight estimates have the schematic form
\begin{multline*}
\sum_{\chi\in\widehat{\mathrm{Cl}}_{E/F}}
\lambda_\chi^*(\mathfrak n)|L(\tfrac12,\chi)|^2
\rightsquigarrow \text{main term}
+L(1,\eta)D_E^{1/4}
\sum_{\mathfrak m\mid\mathfrak n}N_F(\mathfrak m)^{-1/2}\\
\times\sum_{\pi\in\mathcal F(\mathfrak m,\mathbf1)}
L(\tfrac12,\pi_E)^{1/2}|L_F(\tfrac12,\pi)|^2
+\text{Eisenstein contribution}.
\end{multline*}
Here $\mathcal F(\mathfrak m,\mathbf1)$ is the cuspidal family of conductor
$\mathfrak m$ and trivial central character; Archimedean spectral weights
and $\varepsilon$-losses are suppressed. Whereas the first-moment reciprocity
of \cite{Yan26c} has a split Hecke-character dual family, the present
second-moment formula produces a genuine $\mathrm{GL}_2/F$-spectral moment
carrying a central base-change factor. The factor $D_E^{1/4}$,
arising from the local spectral weights, contrasts with the $D_E^{1/2}$
discriminant scale of the main term. Combining this structure with the
spectral estimates gives the power-saving error in Theorem~\ref{thmB};
see \textsection\ref{sec5}.

Theorem \ref{thmA} compares these two expansions of the same toric period. It is a non-split analogue of the strategy of Michel--Venkatesh \cite{MV10} for Motohashi's formula, realized by Nelson \cite{Nel19}. The main analytic tasks are regularization and the construction of test functions suitable for arithmetic applications.

\subsubsection{Analytic Regularization}
The function $g\mapsto E_1(g,s_1)E_2(g,s_2)$ is not square-integrable, and hence its spectral expansion is not available without regularization. This is one of the basic analytic difficulties behind Motohashi-type formulas. In the setting of Duke--Friedlander--Iwaniec \cite{DFI95}, it is also one of the barriers to a direct application of Duke's equidistribution theorem. This difficulty was later addressed by Templier \cite{Tem11} through a careful regularization, different from Zagier's method \cite{Zag82}. However, such equidistribution methods are not well suited to the twisted sums appearing in \cite[Theorem 4]{DFI95}, which are needed for the amplified second moment and the subconvexity application. Moreover, they are specific to CM points and do not directly extend to arbitrary quadratic extensions $E/F$.

We use the Rankin--Selberg kernel regularization and continuation of
\cite[\textsection 2.2, \textsection 3.2, and \textsection 3.4]{Yan26a}. Its $L^1\cap L^2$ bounds
permit spectral inversion, while meromorphic continuation produces the
four contour residues. \textsection\ref{sec2} establishes compatibility with
the non-split toric period and proves Theorem~\ref{thmA}.

\subsubsection{Construction of Test Functions}
After regularization, Theorem \ref{thmA} is an abstract reciprocity formula. To obtain the arithmetic applications, one must choose the automorphic data $\Phi_1$ and $\Phi_2$ so that the toric periods become the desired class group $L$-functions and Hecke twists. This is the second main technical point.

We construct test functions selecting everywhere unramified toric characters. The construction is adelic and local. At the Archimedean places, a translate of a spherical Schwartz function is determined by the choice of $\tau$ with $E=F(\sqrt{\tau})$. At the finite places, the test functions encode the Hecke weights in the first and second moments.

\subsection{Further Applications}
Theorem~\ref{thmA} allows nontrivial twists $\xi_1,\xi_2$, giving mixed
moments of twisted class group $L$-functions. Suitable finite and
Archimedean test vectors should also accommodate algebraic anticyclotomic
characters, as in \cite{Vat02,Cor02}, and families obtained by fixing $E/F$
and varying the conductor. We leave these applications to subsequent work.

\subsection{Outline of the Paper}
\begin{itemize}

\item In \textsection\ref{sec2} we recall the Rankin--Selberg kernel decomposition and its geometric and spectral expansions established in \cite{Yan26a}, and prove their compatibility with the non-split toric period. This gives the non-split Motohashi formula in Theorem \ref{thm2.3}; see \textsection\ref{sec2.6}. 

\item In \textsection\ref{sect3} we construct the test functions and compute the relevant local period integrals on the spectral side. 

\item \textsection\ref{sec4} evaluates the geometric terms $Z(\mathbf{s},\boldsymbol{\Phi})$ and $Z^{\Diamond}(\mathbf{s},\boldsymbol{\Phi})$. \textsection\ref{sec5} estimates the spectral terms $\mathcal{P}_{\cusp}(\mathbf{s},\boldsymbol{\Phi})$ and $\mathcal{P}_{\Eis}(\mathbf{s},\boldsymbol{\Phi})$, while \textsection\ref{sec6} computes the residual terms $\widetilde{\Psi}_{\Res}^{(i)}(\mathbf{s},\boldsymbol{\Phi})$. \textsection\ref{sec7} assembles these calculations to prove the twisted second moment formula in Theorem \ref{thmB}. 

\item In \textsection\ref{sec8} we derive the twisted first moment formula, proving Theorem \ref{thmFirstMomentIntro}. 

\item In \textsection\ref{sec9} we combine the moment formulas from \textsection\ref{sec7} and \textsection\ref{sec8} to prove Theorem~\ref{thmC}. The conditional subconvexity application is treated in \textsection\ref{sec10}.
\end{itemize}

\subsection{Notation}\label{notation}
\subsubsection{Number Fields and Measures}\label{1.1.1}

Let $F$ be a number field with ring of integers $\mathcal{O}_F$, degree $d=[F:\mathbb{Q}]$, absolute norm $N_F$, and different $\mathfrak{D}_F$. Let $r_1$ and $r_2$ denote the numbers of real and complex places of $F$, respectively.

Let $\mathbb{A}_F$ be the adele ring of $F$. Let $\Sigma_F$ be the set of places of $F$, and let $\Sigma_{F,\fin}$ and $\Sigma_{F,\infty}$ denote the sets of non-Archimedean and Archimedean places, respectively. For $v\in \Sigma_F$, let $F_v$ be the corresponding local field and $|\cdot|_v$ the normalized absolute value. If $v$ is non-Archimedean, let $\mathcal{O}_v$ be the ring of integers of $F_v$ and $\mathfrak{p}_v$ its maximal ideal. Given an integral ideal $\mathfrak{n}$, we write $v\mid \mathfrak{n}$ if $\mathfrak{n}\subseteq \mathfrak{p}_v$. Fix a uniformizer $\varpi_v\in\mathfrak{p}_v$, and let $e_v(\cdot)$ denote the valuation normalized by $e_v(\varpi_v)=1$. Let $d_{F_v}:=e_v(\mathfrak{D}_F)$, and let $q_v$ be the cardinality of $\mathcal{O}_v/\mathfrak{p}_v$. In calculations involving $\mathbb{A}_F^{\times}$ or $F^{\times}\backslash \mathbb{A}_F^{\times}$, we simply write $|\cdot|$ for $|\cdot|_{\mathbb{A}_F}:=\prod_{v\in \Sigma_F}|\cdot|_v$. For an integral ideal $\mathfrak{n}$, let $N_F(\mathfrak{n})=N_{F/\mathbb Q}(\mathfrak{n})=\#(\mathcal O_F/\mathfrak{n})$ denote its absolute norm.

For $\Re(s)>1$, let $\zeta_F(s)=\prod_{v\in \Sigma_{F,\fin}}\zeta_{F_v}(s)$
be the finite Dedekind zeta function, where $\zeta_{F_v}(s)$ denotes the local Dedekind zeta factor. Denote by $\Lambda_F(s)=\prod_{v\in \Sigma_F}\zeta_{F_v}(s)$
the completed Dedekind zeta function. We identify both $\zeta_F(s)$ and $\Lambda_F(s)$ with their meromorphic continuations.

Let $\psi=\prod_{v}\psi_v$ be the additive character $\psi=\psi_{\mathbb{Q}}\circ \Tr_F$ of $F\backslash\mathbb{A}_F$, where $\Tr_F$ is the trace map and $\psi_{\mathbb{Q}}$ is the additive character of $\mathbb{Q}\backslash\mathbb{A}_{\mathbb{Q}}$ given by $x\mapsto e^{2\pi i x}$ on $\mathbb{R}$. For $v\in \Sigma_F$, let $dt_v$ be the additive Haar measure on $F_v$ which is self-dual with respect to $\psi_v$. Then $dt=\prod_{v}dt_v$ is the standard Tamagawa measure on $\mathbb{A}_F$. Let
\begin{align*}
&d^{\times}t_v=|t_v|_v^{-1}dt_v, \ \ \ v\in \Sigma_{F,\infty},\\
&d^{\times}t_v=\zeta_{F_v}(1)|t_v|_v^{-1}dt_v, \ \ \ v\in \Sigma_{F,\fin}.
\end{align*}
In particular, $\Vol(\mathcal{O}_v^{\times},d^{\times}t_v)=\Vol(\mathcal{O}_v,dt_v)=q_v^{-d_{F_v}/2}$ 
for all finite places $v$. Moreover,
\begin{align*}
\Vol(F\backslash\mathbb{A}_F; dt)=1,\quad
\Vol(F^{\times}\backslash\mathbb{A}_F^{(1)},d^{\times}t)=\underset{s=1}{\Res}\ \zeta_F(s),
\end{align*}
where $\mathbb{A}_F^{(1)}$ is the subgroup of ideles in $\mathbb{A}_F^{\times}$ of norm $1$. Write $C_F:=F^{\times}\backslash\mathbb{A}_F^{(1)}$, denote its Pontryagin dual by $\widehat C_F$, and write $\rho_F=\underset{s=1}{\Res}\,\zeta_F(s)$. We extend characters in $\widehat C_F$ trivially across the norm factor when viewing them as Hecke characters.

Let $E/F$ be a quadratic extension and let $\eta=\eta_{E/F}$ be the associated quadratic character. Write $\mathfrak D_{E/F}\subseteq\mathcal O_F$ for the relative discriminant ideal, $D_F=|\operatorname{disc}(F)|$ and $D_E=|\operatorname{disc}(E)|$, and write $D_{E/F}=N_F(\mathfrak D_{E/F})$, so that $D_E=D_F^2D_{E/F}$. We use the analogous conventions for measures over $E$. For $w\in \Sigma_{E,\fin}$, let $\mathcal{O}_{E_w}$ denote the ring of integers of $E_w$, and let $d_{E_w}:=e_w(\mathfrak{D}_E)$ be the valuation of the different ideal of $E$.
	
For a finite place $v$ of $F$, write $E_v=\prod_{w\mid v}E_w$ and set
$d_{E_v}:=\sum_{w\mid v}f(w/v)d_{E_w}$, where $q_w=q_v^{f(w/v)}$.
Thus $q_v^{d_{E_v}}=\prod_{w\mid v}q_w^{d_{E_w}}$ and
$\vol(\mathcal O_{E_v}^{\times})=q_v^{-d_{E_v}/2}$ for the product measure.

Partition the Archimedean places as follows:
\begin{equation}\label{1.2}
\begin{cases}
\Sigma_{F,\infty}^{\mathrm{r,ns}}:=\big\{v\in \Sigma_{F,\infty}:\ F_v\simeq \mathbb{R},\  E\otimes_FF_v\simeq \mathbb{C}\big\},\\
\Sigma_{F,\infty}^{\mathrm{r,sp}}:=\big\{v\in \Sigma_{F,\infty}:\ F_v\simeq \mathbb{R},\  E\otimes_FF_v\simeq \mathbb{R}\times \mathbb{R}\big\},\\
\Sigma_{F,\infty}^{\mathrm{c}}:=\big\{v\in \Sigma_{F,\infty}:\ F_v\simeq \mathbb{C},\  E\otimes_FF_v\simeq \mathbb{C}\times \mathbb{C}\big\}.
\end{cases} 
\end{equation}

Write
\begin{align*}
r_{\mathrm{ns}}:=\#\Sigma_{F,\infty}^{\mathrm{r,ns}}.
\end{align*}
Here $\mathrm r$ and $\mathrm c$ stand for real and complex, and $\mathrm{ns}$ and $\mathrm{sp}$ for nonsplit and split, respectively.
	
\subsubsection{The Haar Measure on $G(\mathbb{A}_F)$}

Throughout this paper, let $G=\mathrm{GL}(2)$ and $A=\diag(\mathrm{GL}(1),1)$. Let $B$ be the subgroup of upper triangular matrices in $G$. Let $\overline{G}=Z\backslash G$, where $Z$ is the center of $G$, and let $N$ be the unipotent radical of $B$. Let $K=\bigotimes_{v}K_v$ be a maximal compact subgroup of $G(\mathbb{A}_F)$, where $K_v=\mathrm{U}_2(\mathbb{C})$ if $v$ is complex, $K_v=\mathrm{O}_2(\mathbb{R})$ if $v$ is real, and $K_v=G(\mathcal{O}_v)$ if $v<\infty$.
    
For $v \in \Sigma_F$, the Iwasawa decomposition
\begin{align*}
G(F_v)=Z(F_v)N(F_v)A(F_v)K_v
\end{align*}
gives coordinates
\begin{align*}
g_v=z_v 
\begin{pmatrix}
1 & b_v \\
& 1
\end{pmatrix}
\begin{pmatrix}
a_v \\
& 1
\end{pmatrix}
k_v,
\end{align*}
where $z_v, a_v \in F_v^{\times}$, $b_v \in F_v$, and $k_v \in K_v$. We define a Haar measure on $G(F_v)$ by
\begin{align*}
dg_v = d^{\times}z_v \, db_v \, |a_v|_v^{-1}d^{\times}a_v \, dk_v,
\end{align*}
where $dk_v$ is the probability Haar measure on $K_v$.

\subsubsection{Automorphic Data}
Let $\omega\in\widehat C_F$. For a subgroup $H$ of $G$, we write
\begin{align*}
[H] := H(F) \backslash H(\mathbb{A}_F)
\end{align*}
for the corresponding automorphic quotient. Denote by $\mathcal{A}_0\left([G], \omega \right)$ the set of cuspidal representations of $G(\mathbb{A}_F)$ with central character $\omega$.

Given a suitable representation $\pi=\otimes_{v\in \Sigma_F}'\pi_v$, let $L(s,\pi)$ denote its finite $L$-function, and let $L_{\infty}(s,\pi)$ denote its Archimedean partial $L$-function. Define the complete $L$-function by
\begin{align*}
\Lambda(s,\pi)=L_{\infty}(s,\pi)L(s,\pi).
\end{align*}
In particular, when $\pi=\chi$ is a character, $\Lambda(s,\pi)=\Lambda(s,\chi)$ is the complete Hecke $L$-function. We add a field subscript when needed to specify the base field.

For $\eta_1,\eta_2\in\widehat C_F$, write $\eta_1\boxplus\eta_2$ for
the normalized parabolic induction from $B(\mathbb A_F)$ to
$G(\mathbb A_F)$ associated with $\eta_1\otimes\eta_2$.

We identify $N$ with the additive group via $\left(\begin{smallmatrix}1&b\\&1\end{smallmatrix}\right)\mapsto b$ and use $\psi$ and $\psi_v$ also for the corresponding characters of $[N]$ and $N(F_v)$.

Define the local Gamma functions: 
\begin{equation}\label{eq1.1}
\Gamma_{\mathbb{R}}(s):=\pi^{-s/2}\Gamma(s/2),\quad  \Gamma_{\mathbb{C}}(s):=2(2\pi)^{-s}\Gamma(s).
\end{equation}

Throughout, we follow the $\varepsilon$-convention: $\varepsilon$ always denotes a positive number which may be taken arbitrarily small, and may vary from one occurrence to another.

\subsection*{Acknowledgements}
I am grateful to Valentin Blomer for his interest in this work and for his
helpful comments.

\section{The Non-Split Motohashi Formula}\label{sec2}

\subsection{Eisenstein Series}

For $j=1,2$, let $\chi_j$ and $\omega_j$ be characters of
$F^{\times}\backslash\mathbb{A}_F^{\times}$, let $s_j\in\mathbb{C}$, and let
$\Phi_j$ be a Schwartz--Bruhat function on $\mathbb{A}_F^2$. For
$g\in G(\mathbb{A}_F)$, define
\begin{equation}\label{2.1}
\Eis(h_j(\cdot,\Phi_j,\chi_j,\omega_j,s_j))(g)
:=
\sum_{\delta\in B(F)\backslash G(F)}
h_j(\delta g,\Phi_j,\chi_j,\omega_j,s_j),
\end{equation}
where $h_j(g,\Phi_j,\chi_j,\omega_j,s_j)$ is defined by 
\begin{equation}\label{e2.2}
\chi_j(\det g)|\det g|^{\frac{1}{2}+s_j}
\int_{\mathbb{A}_F^{\times}}
\Phi_j((0,t)g)\chi_j^2\omega_j^{-1}(t)|t|^{1+2s_j}d^{\times}t. 
\end{equation}
Then $\Eis(h_j(\cdot,\Phi_j,\chi_j,\omega_j,s_j))(g)$ converges absolutely for
$\Re(s_j)\gg 1$ and admits meromorphic continuation to $s_j\in\mathbb{C}$.

For brevity, we write
\begin{align*}
h_j(g,s_j):=h_j(g,\Phi_j,\chi_j,\omega_j,s_j),
\end{align*}
and
\begin{align*}
E_j(g,s_j)
:=
\Eis(h_j(\cdot,s_j))(g)
=
\Eis(h_j(\cdot,\Phi_j,\chi_j,\omega_j,s_j))(g).
\end{align*}
Henceforth $E_j(g,s_j)$ is identified with its meromorphic continuation.

\subsection{Fourier Expansion}
Let
\begin{align*}
W_j(g,s_j):=\int_{[N]}E_j(ug,s_j)\overline{\psi}(u)du
\end{align*}
be the Whittaker function attached to $E_j$. Explicitly, it is given by
\begin{equation}\label{2.2}
\chi_j(\det g)|\det g|^{1/2+s_j}
\int_{\mathbb{A}_F^{\times}}\int_{\mathbb{A}_F}
\Phi_j((t,ct)g)\overline{\psi}(c)dc
\chi_j^{2}\omega_j^{-1}(t)|t|^{1+2s_j}d^{\times}t.
\end{equation}

The Fourier expansion of $E_j$ takes the form
\begin{equation}\label{eq2.6}
E_j(g,s_j)=h_j(g,s_j)+h_j^{\Diamond}(g,s_j)+\mathcal{G}_j(g,s_j),
\end{equation}
where
\begin{align*}
h_j^{\Diamond}(g,s_j)
:=
\int_{N(\mathbb{A}_F)}h_j(wug,s_j)du,\quad 
\mathcal{G}_j(g,s_j)
:=
\sum_{\alpha\in F^{\times}}
W_j\left(\begin{pmatrix}
\alpha\\
& 1
\end{pmatrix}g,s_j\right).
\end{align*}

\subsection{Period Integrals}
\subsubsection{Hecke Integrals}\label{sec2.3.1}
For $\Re(s)\gg 1$ and $\xi\in \widehat{C}_F$, define 
\begin{align*}
\Psi(W_2(\cdot,s_2),\xi|\cdot|^s):=\int_{\mathbb{A}_F^{\times}}
W_2\left(\begin{pmatrix}
a\\
& 1
\end{pmatrix},s_2\right)\xi(a)|a|^{s}d^{\times}a.
\end{align*}
This integral converges absolutely when $\Re(s)-|\Re(s_2)|$ is sufficiently large, and is a holomorphic multiple of
\begin{align*}
\Lambda(1/2+s+s_2,\chi_2\xi)\Lambda(1/2+s-s_2,\chi_2^{-1}\omega_2\xi).
\end{align*} 
It admits a meromorphic continuation $\widetilde{\Psi}(W_2(\cdot,s_2),\xi|\cdot|^s)$ to $(s,s_2)\in \mathbb{C}^2$. 

\subsubsection{Rankin--Selberg Convolutions}
Let $\pi$ be a generic automorphic representation of $[G]$ with central
character $\omega^{-1}=\omega_1^{-1}\omega_2^{-1}$, and let $W$ belong to
its Whittaker model for $\overline{\psi}$. For $\Re(s_2)$ sufficiently large, define
\begin{align*}
\Psi(W,W_1(\cdot,s_1),h_2(\cdot,s_2))
:=
\int_{N(\mathbb{A}_F)\backslash\overline{G}(\mathbb{A}_F)}
W(x)W_1(x,s_1)h_2(x,s_2)dx.
\end{align*}
By Rankin--Selberg theory, in the region where
$\Re(s_2)-|\Re(s_1)|$ is sufficiently large, this period is a holomorphic
multiple of
\begin{align*}
\Lambda(1/2+s_2+s_1,\pi\times\chi_1\chi_2)
\Lambda(1/2+s_2-s_1,\pi\times\chi_1^{-1}\omega_1\chi_2).
\end{align*}
It therefore admits a meromorphic continuation to $(s_1,s_2)\in\mathbb{C}^2$,
which we denote by $\widetilde{\Psi}(W,W_1(\cdot,s_1),h_2(\cdot,s_2))$.

\subsection{The Rankin--Selberg Kernel and Its Spectral Decomposition}

\subsubsection{The Rankin--Selberg Kernel}\label{sec2.kernel}
With $\mathbf{s}=(s_1,s_2)$ and $\omega=\omega_1\omega_2$, in the region
\begin{align*}
\Re(s_1)\geq 10,\qquad \Re(s_2)-|\Re(s_1)|\geq 10,
\end{align*}
define the Rankin--Selberg kernel by
\begin{align*}
\mathcal K_{12}(g;\mathbf{s},\boldsymbol{\Phi}):=\Eis\bigl(\mathcal G_1(\cdot,s_1)h_2(\cdot,s_2)\bigr)(g)=\sum_{\delta\in B(F)\backslash G(F)}\mathcal G_1(\delta g,s_1)h_2(\delta g,s_2).
\end{align*}

By \cite[Lemmas~2.1--2.2]{Yan26a}, the defining series converges absolutely and locally uniformly in this region and defines a smooth automorphic function with central character $\omega$ in $L^1\cap L^2$ modulo the center. The same Siegel-set estimates, applied after termwise differentiation and using the Schwartz decay of the differentiated Whittaker functions, give $D\mathcal K_{12}\in L^1\cap L^2$ for every fixed Archimedean right-invariant differential operator $D$.

Since $E_1(g,s_1)E_2(g,s_2)=\Eis(E_1(\cdot,s_1)h_2(\cdot,s_2))(g)$, the Fourier expansion \eqref{eq2.6} takes the form
\begin{align*}
\mathcal K_{12}(g;\mathbf{s},\boldsymbol{\Phi})=E_1(g,s_1)E_2(g,s_2)-\Eis\bigl((h_1(\cdot,s_1)+h_1^{\Diamond}(\cdot,s_1))h_2(\cdot,s_2)\bigr)(g).
\end{align*}
We call the two subtracted series, induced from $h_1h_2$ and
$h_1^{\Diamond}h_2$, the \emph{Eisenstein corrections}.
For every generic automorphic form $\phi$ of central character $\omega$,
its spectral coefficient unfolds to
\begin{align*}
\langle \mathcal K_{12}(\cdot;\mathbf{s},\boldsymbol{\Phi}),\phi\rangle=\Psi\bigl(\overline{W_{\phi}},W_1(\cdot,s_1),h_2(\cdot,s_2)\bigr),
\end{align*}
and $\mathcal K_{12}$ is orthogonal to the residual spectrum, which is nongeneric.

\subsubsection{The \texorpdfstring{$\mathrm{GL}_2$}{GL2}-spectrum}

Let $\mu_1,\mu_2\in \widehat{F^{\times}\backslash \mathbb{A}_F^{(1)}}$.
Let $H(\mu_1,\mu_2)$ be the space of functions $h\in L^2(K)$ satisfying
\begin{align*}
h\left(\begin{pmatrix}
a& b\\
&d
\end{pmatrix}k\right)
=
\mu_1(a)\mu_2(d)h(k),
\quad
k\in K,\quad
\begin{pmatrix}
a& b\\
&d
\end{pmatrix}\in B(\mathbb{A}_F)\cap K .
\end{align*}
For $h\in H(\mu_1,\mu_2)$ and $\lambda\in\mathbb{C}$, define
\begin{align*}
S(h)\left(\begin{pmatrix}
a& b\\
&d
\end{pmatrix}k,\lambda\right)
:=
\mu_1(a)\mu_2(d)\Big|\frac{a}{d}\Big|^{\lambda+\frac{1}{2}}h(k),
\end{align*}
where $a,d\in\mathbb{A}_F^{\times}$, $b\in\mathbb{A}_F$, and $k\in K$.
The associated Eisenstein series is
\begin{equation}\label{e2.5}
E(g,h,\lambda)
:=
\sum_{\gamma\in B(F)\backslash G(F)}
S(h)(\gamma g,\lambda),
\quad
\Re(\lambda)>1/2.
\end{equation}
It converges absolutely for $\Re(\lambda)>1/2$ and admits meromorphic
continuation to $\lambda\in\mathbb{C}$; we henceforth regard it as a
meromorphic function of $\lambda$. If $h$ is right-$K$-finite, then
$E(g,h,\lambda)$ is a finite linear combination of Eisenstein series of the
form \eqref{2.1}, by \cite[page 6]{JZ87}.

We fix once and for all an orthonormal basis
$\mathfrak{B}(\mu_1,\mu_2)$ of $K$-finite functions in $H(\mu_1,\mu_2)$.
For $\pi\in\mathcal{A}_0([G],\omega)$, let $\mathfrak{B}(\pi)$ be an
orthonormal basis of smooth $K$-finite vectors, chosen compatibly with the Archimedean Casimir operators.

\subsubsection{The Spectral Expansion of the Kernel}\label{sec2.4.2}

For $s_1,s_2,\lambda\in\mathbb{C}$ and $h\in H(\mu,\overline{\mu}\omega)$, put
\begin{align*}
\widetilde{\Psi}(\lambda,h;\mathbf{s}):=\widetilde{\Psi}\bigl(\overline{W_{E(\cdot,h,-\overline{\lambda})}},W_1(\cdot,s_1),h_2(\cdot,s_2)\bigr).
\end{align*}
By the kernel decomposition above and smooth spectral inversion, its genuine spectral expansion in the initial region is
\begin{align*}
\mathcal K_{12}(g;\mathbf{s},\boldsymbol{\Phi})=\mathcal E_{\cusp}(g;\mathbf{s},\boldsymbol{\Phi})+\mathcal E_{\Eis}(g;\mathbf{s},\boldsymbol{\Phi}),
\end{align*}
where
\begin{align*}
&\mathcal{E}_{\cusp}(g;\mathbf{s},\boldsymbol{\Phi}):=
\sum_{\pi\in \mathcal{A}_0([G],\omega)}
\sum_{\phi\in\mathfrak{B}(\pi)}
\widetilde{\Psi}\bigl(\overline{W_{\phi}},W_1(\cdot,s_1),h_2(\cdot,s_2)\bigr)\phi(g),\\
&\mathcal{E}_{\Eis}(g;\mathbf{s},\boldsymbol{\Phi}):=
\sum_{\mu\in \widehat{C}_F}\frac{1}{4\pi i\rho_F}\int_{i\mathbb{R}}
\sum_{h\in \mathfrak{B}(\mu,\overline{\mu}\omega)}
\widetilde{\Psi}(\lambda,h;\mathbf{s})E(g,h,\lambda)d\lambda.
\end{align*}
By \cite[Lemma~3.1]{Yan26a}, the displayed identity holds in $L^2$. The derivative estimates above imply convergence in every automorphic Sobolev norm, hence locally uniformly with derivatives by Sobolev embedding. Since the image of $\mathcal C_{E/F}$ is compact, the expansion may be restricted to it and integrated termwise.

\begin{defn}\label{defn2.1}
Write $\mu_1=\chi_1\chi_2$, $\mu_2=\omega\overline{\chi_1\chi_2}$, $\mu_3=\overline{\chi_1}\omega_1\chi_2$, and $\mu_4=\omega\chi_1\overline{\omega_1\chi_2}$. Since the continuous-spectrum density is $d\lambda/(4\pi i\rho_F)$, a contour crossing contributes $2\pi i/(4\pi i\rho_F)=1/(2\rho_F)$ times the signed residue. We define the residual terms
\begin{align*}
\widetilde{\Psi}_{\Res}^{(1)}(g;\mathbf{s},\boldsymbol{\Phi})
&:=(2\rho_F)^{-1}
\sum_{h\in \mathfrak{B}(\mu_1,\overline{\mu_1}\omega)}
\underset{\lambda=s_2+s_1-\frac{1}{2}}{\Res}\,
\widetilde{\Psi}(\lambda,h;\mathbf{s})
E(g,h,\lambda),\\
\widetilde{\Psi}_{\Res}^{(2)}(g;\mathbf{s},\boldsymbol{\Phi})
&:=-(2\rho_F)^{-1}
\sum_{h\in \mathfrak{B}(\mu_2,\overline{\mu_2}\omega)}
\underset{\lambda=\frac{1}{2}-s_2-s_1}{\Res}\,
\widetilde{\Psi}(\lambda,h;\mathbf{s})
E(g,h,\lambda),\\
\widetilde{\Psi}_{\Res}^{(3)}(g;\mathbf{s},\boldsymbol{\Phi})
&:=(2\rho_F)^{-1}
\sum_{h\in \mathfrak{B}(\mu_3,\overline{\mu_3}\omega)}
\underset{\lambda=s_2-s_1-\frac{1}{2}}{\Res}\,
\widetilde{\Psi}(\lambda,h;\mathbf{s})
E(g,h,\lambda),\\
\widetilde{\Psi}_{\Res}^{(4)}(g;\mathbf{s},\boldsymbol{\Phi})
&:=-(2\rho_F)^{-1}
\sum_{h\in \mathfrak{B}(\mu_4,\overline{\mu_4}\omega)}
\underset{\lambda=\frac{1}{2}+s_1-s_2}{\Res}\,
\widetilde{\Psi}(\lambda,h;\mathbf{s})
E(g,h,\lambda).
\end{align*}
\end{defn}

\begin{prop}\label{prop2.2}
Let $|\Re(s_1)|+|\Re(s_2)|<1/2$ and $\omega=\omega_1\omega_2$. Then, for
$g\in G(\mathbb{A}_F)$, one has the following identity of meromorphic functions of $\mathbf{s}$:
\begin{multline}\label{2.3}
E_1(g,s_1)E_2(g,s_2)
=
\Eis(h_1(\cdot,s_1)h_2(\cdot,s_2))(g)
+
\Eis(h_1^{\Diamond}(\cdot,s_1)h_2(\cdot,s_2))(g)\\
+\mathcal{E}_{\cusp}(g;\mathbf{s},\boldsymbol{\Phi})
+\mathcal{E}_{\Eis}(g;\mathbf{s},\boldsymbol{\Phi})
-
\sum_{i=1}^{4}\widetilde{\Psi}_{\Res}^{(i)}(g;\mathbf{s},\boldsymbol{\Phi}), 
\end{multline}
\end{prop}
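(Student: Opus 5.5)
The plan is to establish \eqref{2.3} first in the initial cone $\Re(s_1)\geq 10$, $\Re(s_2)-|\Re(s_1)|\geq 10$, and then to continue both sides meromorphically to the strip $|\Re(s_1)|+|\Re(s_2)|<1/2$.

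\textbf{Initial cone.} Here \S\ref{sec2.kernel} gives
\[
E_1E_2=\Eis(h_1h_2)+\Eis(h_1^{\Diamond}h_2)+\mathcal K_{12}.
\]
This comes from inserting the Fourier expansion \eqref{eq2.6} of $E_1$ into $\Eis(E_1h_2)$. Both correction series converge absolutely there, since $h_1h_2$ and $h_1^{\Diamond}h_2$ are sections of induced representations with exponents governed by $s_2\pm s_1$, which have large real part. By \S\ref{sec2.4.2} we also have $\mathcal K_{12}=\mathcal E_{\cusp}+\mathcal E_{\Eis}$, with convergence locally uniform in $g$. So in the cone, \eqref{2.3} holds with no residual terms.

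\textbf{Continuation of the terms other than $\mathcal E_{\Eis}$.}
\begin{itemize}
\item The left side $E_1E_2$ is meromorphic in $\mathbf s$.
\item Each correction $\Eis(h_1h_2)$ and $\Eis(h_1^{\Diamond}h_2)$ is a degenerate Eisenstein series. Its $\mathbf s$-dependence enters through the parameter $s_1+s_2$ or $s_2-s_1$ and through the meromorphic Tate integrals defining $h_1,h_1^{\Diamond},h_2$. Hence it continues meromorphically.
\item For the cuspidal term, each coefficient $\widetilde\Psi(\overline{W_\phi},W_1,h_2)$ is entire in $\mathbf s$, being a holomorphic multiple of Rankin--Selberg $L$-functions. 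The sum over $\pi,\phi$ converges locally uniformly, using rapid decay of cusp forms against polynomial growth of the coefficients in the Archimedean parameters, as in \cite[\S3.2]{Yan26a}.
\end{itemize}

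\textbf{Continuation of $\mathcal E_{\Eis}$ (main obstacle).} The coefficient $\widetilde\Psi(\lambda,h;\mathbf s)$ unfolds to a holomorphic multiple of
\[
\Lambda(\tfrac12+s_2+s_1,\mu\cdots)\,\Lambda(\tfrac12+s_2-s_1,\mu\cdots)
\]
evaluated at $\pm\lambda$, that is, a product of four Hecke $L$-functions. Its poles lie at $\lambda=\pm(s_2+s_1-\tfrac12)$ and $\lambda=\pm(s_2-s_1-\tfrac12)$, and they occur only when $\mu$ is one of $\mu_1,\dots,\mu_4$ of Definition~\ref{defn2.1}. In the cone these four points lie off $i\mathbb R$, on prescribed sides.

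As $\mathbf s$ moves toward the strip, each pole crosses the line $\Re\lambda=0$ exactly once. The strip $|\Re(s_1)|+|\Re(s_2)|<1/2$ is precisely the region where all four poles have crossed. To continue, I would deform the contour in the standard way: for $\mathbf s$ near the crossing locus, shift $i\mathbb R$ slightly to avoid the pole. Once $\mathbf s$ is past the crossing, move the contour back to $i\mathbb R$ and pick up $2\pi i$ times the residue, multiplied by the density $(4\pi i\rho_F)^{-1}$. The sign of each residue is determined by the direction of crossing, which yields the signs in Definition~\ref{defn2.1}. The result is
\[
\mathcal E_{\Eis}^{\text{cone}}=\mathcal E_{\Eis}-\sum_{i=1}^4\widetilde\Psi^{(i)}_{\Res}
\]
on the strip.

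The analytic inputs are uniform bounds in $(\mu,\lambda)$ for $\widetilde\Psi(\lambda,h;\mathbf s)E(g,h,\lambda)$ near the contour. These are needed so that the sum over $\mu$ and $h$ and the $\lambda$-integral converge locally uniformly after the shift. They follow from Stirling, convexity bounds for Hecke $L$-functions in vertical strips, and polynomial bounds for $E(g,h,\lambda)$, as in \cite[\S3.4]{Yan26a}. Only finitely many $\mu$ can carry a pole, because the residue condition forces $\mu$ to equal one of $\mu_1,\dots,\mu_4$ up to the $K$-type data. I expect this step to be the hardest: one must track the sign and direction of each crossing, and handle possible coincidences of poles (for example $s_1=0$) by meromorphy.

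\textbf{Conclusion.} The identity holds on the cone and every term continues meromorphically to a connected domain containing the strip. By uniqueness of meromorphic continuation, \eqref{2.3} then holds on $|\Re(s_1)|+|\Re(s_2)|<1/2$.
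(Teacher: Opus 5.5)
Your strategy is the paper's: establish \eqref{2.3} in the cone from the kernel identity and its spectral expansion, then continue meromorphically. The only new terms are residues at poles of $\widetilde\Psi(\lambda,h;\mathbf s)$ that cross $i\mathbb R$, and your crossing directions and signs agree with Definition~\ref{defn2.1}. Two supporting steps are wrong as stated, and the first is exactly the step that explains the hypothesis on $\mathbf s$.

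First, your pole list is incomplete. Each of the four completed Hecke $L$-functions $\Lambda(\tfrac12\pm\lambda+s_2\pm s_1,\cdot)$ has a pole where its argument is $1$ and another where it is $0$. Besides the four points of Definition~\ref{defn2.1} (argument $1$), there are four more, at $\lambda=\pm(s_2+s_1+\tfrac12)$ and $\lambda=\pm(s_2-s_1+\tfrac12)$, occurring for the same $\mu\in\{\mu_1,\dots,\mu_4\}$. Your four poles have all crossed once $\Re(s_2)+|\Re(s_1)|<\tfrac12$, a region strictly larger than the strip. The second family stays on its original side exactly when $\Re(s_2)-|\Re(s_1)|>-\tfrac12$, and it is this condition that cuts the region down to $|\Re(s_1)|+|\Re(s_2)|<\tfrac12$. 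So the strip is not ``precisely the region where all four poles have crossed''. With your list, the same reasoning would miss the further residues that enter when $\Re(s_2)-|\Re(s_1)|<-\tfrac12$. To repair this, add the second family and move $\mathbf s$ from the cone along a path with $\Re(s_2\pm s_1)>-\tfrac12$ throughout. Then exactly the four crossings of Definition~\ref{defn2.1} occur, which is the content of the paper's statement that the contour crosses precisely those four polar divisors.

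Second, your convergence argument for the cuspidal sum has the roles reversed. Rapid decay of cusp forms is decay toward the cusp, not in the spectral parameter. On a compact set, $L^2$-normalized $\phi$ and their derivatives grow polynomially in the spectral parameter and $K$-type, so polynomially growing coefficients would not make the sum converge. What is needed is rapid decay of $\widetilde\Psi(\overline{W_\phi},W_1(\cdot,s_1),h_2(\cdot,s_2))$, uniformly for $\mathbf s$ in compacta (including outside the region of absolute convergence), combined with the Weyl law. The paper obtains this decay from repeated integration by parts in the local Rankin--Selberg integrals, which also gives decay in the $K$-type for general, non-$K$-finite Bruhat--Schwartz data. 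This is the same mechanism you correctly invoke for the continuous spectrum.
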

\begin{proof}
Combining the geometric and spectral expansions of the Rankin--Selberg kernel above proves \eqref{2.3} in the initial region, without residual terms. We then use the one-kernel specialization of the continuation procedure in \cite[\textsection 3.4]{Yan26a}. On every compact subset in $g$, derivatives of $E(g,h,\lambda)$ have at most polynomial growth in the spectral parameter and the $K$-type, whereas repeated integration by parts in the Rankin--Selberg local integrals gives arbitrary decay of $\widetilde{\Psi}$. The Weyl law and the continuous Plancherel formula therefore give locally normal $C^r$-convergence on the locally deformed contours, for every fixed $r$. Thus the cuspidal sum and the deformed continuous-spectrum integral continue meromorphically as $C^\infty_{\mathrm{loc}}$-valued functions. As $\mathbf{s}$ leaves the region of absolute convergence, the $\lambda$-contour crosses precisely the four polar divisors displayed in Definition \ref{defn2.1}; with the conventions there, the total contour correction is $-\sum_{i=1}^{4}\widetilde{\Psi}_{\Res}^{(i)}$. This gives \eqref{2.3} in $|\Re(s_1)|+|\Re(s_2)|<1/2$ as an identity of meromorphic functions, and hence pointwise at every regular value of $\mathbf{s}$.
\end{proof}

\subsection{Hecke's Formula via Eisenstein Periods}\label{sec2.5}
Let $E/F$ be a quadratic extension of number fields. Write $E=F(\sqrt{\tau})$,
where $\tau$ is a fixed nonsquare in $F^{\times}$. Fix $\theta\in E^{\times}$
such that $\theta^2=\tau$, and consider the embedding
\begin{align*}
\mathbb{A}_E^{\times}\hookrightarrow M_{2\times 2}(\mathbb{A}_F),\quad  x=a+b\theta\mapsto \iota(x):=\begin{pmatrix}
a & b\tau \\
b & a
\end{pmatrix},\ \ a, b\in \mathbb{A}_F.
\end{align*}

Let
$\mathcal{C}_{E/F}=E^{\times}\mathbb{A}_F^{\times}\backslash\mathbb{A}_E^{\times}$,
which is compact, and let $\widehat{\mathcal{C}}_{E/F}$ denote its
Pontryagin dual. Since 
\begin{align*}
\mathcal{C}_{E/F}\simeq (E^{\times}\backslash\mathbb{A}_E^{(1)})/(F^{\times}\backslash\mathbb{A}_F^{(1)}).
\end{align*}
we have
\begin{equation}\label{e2.8}
\Vol(\mathcal{C}_{E/F})=2\frac{\Vol(E^{\times}\backslash\mathbb{A}_E^{(1)})}{\Vol(F^{\times}\backslash\mathbb{A}_F^{(1)})}=2\frac{\underset{s=1}{\Res}\ \zeta_E(s)}{\underset{s=1}{\Res}\ \zeta_F(s)}=2L(1,\eta_{E/F}),
\end{equation}
where $\eta_{E/F}$ is the quadratic character associated with $E/F$. The factor $2$ is the radial Jacobian: for $a\in\mathbb A_F^\times$ one has $|a|_E=|a|_F^2$.

Let $\chi$ be a unitary Hecke character of
$E^{\times}\backslash\mathbb{A}_E^{\times}$ such that
$\omega_j^{-1}=\chi|_{\mathbb{A}_F^{\times}}$. Define
\begin{equation}\label{2.8}
Z(s_j,\Phi_j,\chi):=\int_{\mathcal{C}_{E/F}}E_j(\iota(x),s_j)
\chi(x)d^{\times}x.
\end{equation}
Since $\mathcal{C}_{E/F}$ is compact, $Z(s_j,\Phi_j,\chi)$ is meromorphic,
with its analytic behavior inherited from that of $E_j(\iota(x),s_j)$.

Suppose $\Re(s_j)\gg 1$. Since $G(F)=B(F)\iota(E^{\times})$ and
$B(F)\cap \iota(E^{\times})\simeq F^{\times}$, it follows from \eqref{2.1} that
\begin{multline}\label{eq2.4}
Z(s_j,\Phi_j,\chi)=\int_{E^{\times}\mathbb{A}_F^{\times}\backslash\mathbb{A}_E^{\times}}\sum_{\delta\in F^{\times}\backslash E^{\times}}\chi_j(\det \iota(x))|\det \iota(x)|_{\mathbb{A}_F}^{\frac{1}{2}+s_j}\\
\int_{\mathbb{A}_F^{\times}}
\Phi_j((0,t)\iota(\delta x))\chi_j^2\omega_j^{-1}(t)|t|_{\mathbb{A}_F}^{1+2s_j}d^{\times}t\chi(x)d^{\times}x.
\end{multline}

By a change of variables, and using the assumption
$\omega_j^{-1}=\chi|_{\mathbb{A}_F^{\times}}$, we obtain from \eqref{eq2.4} that
\begin{align*}
Z(s_j,\Phi_j,\chi)=\int_{\mathbb{A}_E^{\times}}
\Phi_j((0,1)\iota(x))\chi_j(\det \iota(x))\chi(x)|\det \iota(x)|_{\mathbb{A}_F}^{\frac{1}{2}+s_j}d^{\times}x.	
\end{align*}
Together with the identity $N_{E/F}(x)=\det \iota(x)$, this gives
\begin{align*}
Z(s_j,\Phi_j,\chi)=\int_{\mathbb{A}_E^{\times}}
\Phi_j((0,1)\iota(x))\widetilde{\chi}_j\chi(x)|x|_{\mathbb{A}_E}^{\frac{1}{2}+s_j}d^{\times}x,	
\end{align*}
where $\widetilde{\chi}_j:=\chi_j\circ N_{E/F}$. Consequently, $Z(s_j,\Phi_j,\chi)$ is a holomorphic multiple of
\begin{align*}
\Lambda(1/2+s_j,\widetilde{\chi}_j\chi).
\end{align*}

For any unitary Hecke character $\xi$ of
$E^{\times}\backslash\mathbb{A}_E^{\times}$ satisfying
$\xi|_{\mathbb{A}_F^{\times}}=(\omega_1\omega_2)^{-1}$, define
\begin{align*}
&Z(\mathbf{s},\boldsymbol{\Phi},\xi):=\int_{\mathcal{C}_{E/F}}\Eis(h_1(\cdot,s_1)h_2(\cdot,s_2))(\iota(x))\xi(x)d^{\times}x,\\
&Z^{\Diamond}(\mathbf{s},\boldsymbol{\Phi},\xi):=\int_{\mathcal{C}_{E/F}}\Eis(h_1^{\Diamond}(\cdot,s_1)h_2(\cdot,s_2))(\iota(x))\xi(x)d^{\times}x.
\end{align*}
For $1\leq i\leq 4$, we define 
\begin{align*}
\widetilde{\Psi}_{\Res}^{(i)}(\mathbf{s},\boldsymbol{\Phi},\xi):=\int_{\mathcal{C}_{E/F}}\widetilde{\Psi}_{\Res}^{(i)}(\iota(x);\mathbf{s},\boldsymbol{\Phi})\xi(x)d^{\times}x.
\end{align*}

\subsection{The Motohashi Identity}\label{sec2.6}
We deduce from Proposition \ref{prop2.2} the following non-split Motohashi formula for toric periods.
\begin{thm}\label{thm2.3}
Let $\xi_1$ and $\xi_2$ be unitary Hecke characters of $E^{\times}\backslash\mathbb{A}_E^{\times}$, put $\omega_i=(\xi_i|_{\mathbb{A}_F^{\times}})^{-1}$ for $1\leq i\leq2$, and take $\chi_1=\chi_2=\mathbf{1}$ in the Eisenstein data above. Let $\boldsymbol{\Phi}:=(\Phi_1, \Phi_2)$ be a pair of Bruhat--Schwartz functions on $\mathbb{A}_F^{\oplus 2}$, and let $\mathbf{s}=(s_1,s_2)\in \mathbb{C}^2$ with $|\Re(s_1)|+|\Re(s_2)|<1/2$. Then 
\begin{multline}\label{2.4}
\sum_{\chi\in \widehat{\mathcal{C}}_{E/F}}\frac{Z(s_1,\Phi_1,\xi_1\chi)Z(s_2,\Phi_2,\xi_2\overline{\chi})}{2L(1,\eta_{E/F})}=Z(\mathbf{s},\boldsymbol{\Phi},\xi_1\xi_2)+ Z^{\Diamond}(\mathbf{s},\boldsymbol{\Phi},\xi_1\xi_2)\\
+\mathcal{P}_{\cusp}(\mathbf{s},\boldsymbol{\Phi},\xi_1\xi_2)
+\mathcal{P}_{\Eis}(\mathbf{s},\boldsymbol{\Phi},\xi_1\xi_2)
-
\sum_{i=1}^{4}\widetilde{\Psi}_{\Res}^{(i)}(\mathbf{s},\boldsymbol{\Phi},\xi_1\xi_2),
\end{multline}	
where, for any unitary Hecke character $\xi$ of $E^{\times}\backslash\mathbb{A}_E^{\times}$ satisfying $\xi|_{\mathbb{A}_F^{\times}}=(\omega_1\omega_2)^{-1}$, 
\begin{align*}
&\mathcal{P}_{\cusp}(\mathbf{s},\boldsymbol{\Phi},\xi)
:=
\sum_{\pi\in \mathcal{A}_0([G],\omega_1\omega_2)}
\sum_{\phi\in\mathfrak{B}(\pi)}
\widetilde{\Psi}
\bigl(\overline{W_{\phi}},W_1(\cdot,s_1),h_2(\cdot,s_2)\bigr)\int_{\mathcal{C}_{E/F}}\phi(\iota(x))\xi(x)d^{\times}x,\\
&\mathcal{P}_{\Eis}(\mathbf{s},\boldsymbol{\Phi},\xi)
:=
\sum_{\mu\in \widehat{C}_F}
\frac{1}{4\pi i\rho_F}
\int_{i\mathbb{R}}
\sum_{h\in \mathfrak{B}(\mu,\overline{\mu}\omega_1\omega_2)}
\widetilde{\Psi}(\lambda,h;\mathbf{s})\int_{\mathcal{C}_{E/F}}E(\iota(x),h,\lambda)\xi(x)d^{\times}x\,d\lambda.
\end{align*}
\end{thm}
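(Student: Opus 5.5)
The plan is to integrate the pointwise identity \eqref{2.3} of Proposition~\ref{prop2.2} against $\xi_1\xi_2(x)\,d^\times x$ over $\mathcal C_{E/F}$, with $g=\iota(x)$. Then we identify each side. Throughout we take $\chi_1=\chi_2=\mathbf 1$ and $\omega_i=(\xi_i|_{\mathbb A_F^\times})^{-1}$, so that $\xi_1\xi_2|_{\mathbb A_F^\times}=(\omega_1\omega_2)^{-1}$. This condition is exactly what makes $x\mapsto E_1(\iota(x),s_1)E_2(\iota(x),s_2)\xi_1\xi_2(x)$ invariant under $E^\times\mathbb A_F^\times$: the central character of $E_1E_2$ is $\omega_1\omega_2$, and $\iota(E^\times)\subset G(F)$. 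The same invariance holds for every term on the right of \eqref{2.3}.

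On the right-hand side, integration of the two Eisenstein corrections gives $Z(\mathbf s,\boldsymbol\Phi,\xi_1\xi_2)$ and $Z^\Diamond(\mathbf s,\boldsymbol\Phi,\xi_1\xi_2)$ by definition. The same holds for the four residual terms, which integrate to $\widetilde\Psi^{(i)}_{\Res}(\mathbf s,\boldsymbol\Phi,\xi_1\xi_2)$. For $\mathcal E_{\cusp}$ and $\mathcal E_{\Eis}$ I will interchange the toric integral with the spectral sum and the $\lambda$-integral, which yields $\mathcal P_{\cusp}$ and $\mathcal P_{\Eis}$. The justification is the one recorded after the spectral expansion in \S\ref{sec2.4.2} and in the proof of Proposition~\ref{prop2.2}: the expansions converge locally normally in $C^r$ on compacta, and the image of $\mathcal C_{E/F}$ in $[\overline G]$ is compact. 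Hence termwise integration is legitimate, first in the initial region and then, on the deformed contours, after meromorphic continuation. The identity holds as meromorphic functions because both sides are meromorphic in $\mathbf s$, compactness preserving meromorphy.

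On the left-hand side, I will apply Fourier inversion and Plancherel on the compact abelian group $\mathcal C_{E/F}$. Put $f_1(x)=E_1(\iota(x),s_1)\xi_1(x)$ and $f_2(x)=E_2(\iota(x),s_2)\xi_2(x)$. Each is a function on $\mathcal C_{E/F}$, since $\xi_i|_{\mathbb A_F^\times}=\omega_i^{-1}$ cancels the central character of $E_i$. Both are smooth at regular $\mathbf s$. Their Fourier coefficients are
\begin{align*}
\int_{\mathcal C_{E/F}} f_1(x)\chi(x)\,d^\times x=Z(s_1,\Phi_1,\xi_1\chi),\qquad \int_{\mathcal C_{E/F}} f_2(x)\overline{\chi}(x)\,d^\times x=Z(s_2,\Phi_2,\xi_2\overline\chi),
\end{align*}
by \eqref{2.8}. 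Here $\xi_1\chi$ and $\xi_2\overline\chi$ still restrict correctly to $\mathbb A_F^\times$, because $\chi$ is trivial there. Plancherel with total volume $2L(1,\eta_{E/F})$ from \eqref{e2.8} then gives
\begin{align*}
\int_{\mathcal C_{E/F}} f_1f_2\,d^\times x=\frac{1}{\Vol(\mathcal C_{E/F})}\sum_{\chi}\widehat{f_1}(\chi)\widehat{f_2}(\overline\chi),
\end{align*}
which is the left side of \eqref{2.4}. Absolute convergence of the $\chi$-sum follows from $L^2$-ness of smooth functions on a compact group, via Cauchy--Schwarz.

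The main obstacle is the meromorphic-continuation bookkeeping. We need the toric integral of the continued identity to agree with the continuation of the toric integrals, and we need the cuspidal and continuous spectral sums to remain absolutely convergent after integration. Both points reduce to the uniform $C^0$ bounds on the compact set $\iota(\mathcal C_{E/F})$ supplied in the proof of Proposition~\ref{prop2.2}. I would check these at regular $\mathbf s$ and extend by meromorphy. A minor point is the poles of $E_i$ in $s_i$: at such points both sides are read as meromorphic functions.
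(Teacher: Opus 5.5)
Your proposal is correct and follows the paper's proof. You integrate the identity of Proposition~\ref{prop2.2} at $g=\iota(x)$ against $\xi_1\xi_2$ over $\mathcal C_{E/F}$, identify the right-hand terms by definition and by termwise integration of the spectral expansions, and evaluate the left side by toric Fourier inversion with $\Vol(\mathcal C_{E/F})=2L(1,\eta_{E/F})$. The $E^\times\mathbb A_F^\times$-invariance check and the justification of termwise integration are exactly the steps the paper leaves implicit (cf.\ \S\ref{sec2.4.2}).
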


\begin{proof}
By the toric Fourier expansion and \eqref{2.8}, for $x\in \mathbb{A}_E^{\times}$, we have
\begin{align*}
\xi_1(x)E_1(\iota(x),s_1)=\frac{1}{\Vol(\mathcal{C}_{E/F})}\sum_{\chi\in \widehat{\mathcal{C}}_{E/F}}Z(s_1,\Phi_1,\xi_1\chi)\overline{\chi}(x).
\end{align*}

It follows that
\begin{equation}\label{2.12}
\int_{\mathcal{C}_{E/F}}E_1(\iota(x),s_1)
E_2(\iota(x),s_2)\xi_1\xi_2(x)d^{\times}x=\sum_{\chi\in \widehat{\mathcal{C}}_{E/F}}\frac{Z(s_1,\Phi_1,\xi_1\chi)Z(s_2,\Phi_2,\xi_2\overline{\chi})}{\Vol(\mathcal{C}_{E/F})}.
\end{equation}

Therefore, \eqref{2.4} follows from Proposition \ref{prop2.2}, \eqref{e2.8}, and \eqref{2.12}.
\end{proof}

When $\xi_1=\xi_2=1$, we omit the final character argument in
$Z$, $Z^\Diamond$, $\mathcal P_{\cusp}$, $\mathcal P_{\Eis}$, and
$\widetilde\Psi_{\Res}^{(i)}$.
We refer to the left-hand side of \eqref{2.4} as the \textit{spectral moment}, to $Z(\mathbf{s},\boldsymbol{\Phi},\xi_1\xi_2)+ Z^{\Diamond}(\mathbf{s},\boldsymbol{\Phi},\xi_1\xi_2)$ as the \textit{geometric terms}, to $\mathcal{P}_{\cusp}(\mathbf{s},\boldsymbol{\Phi},\xi_1\xi_2)
+\mathcal{P}_{\Eis}(\mathbf{s},\boldsymbol{\Phi},\xi_1\xi_2)$ as the \textit{dual moment}, and to $\sum_{i=1}^{4}\widetilde{\Psi}_{\Res}^{(i)}(\mathbf{s},\boldsymbol{\Phi},\xi_1\xi_2)$ as the \textit{residual terms}.   

\section{Explicit Automorphic Data and Period Formula}\label{sect3}
Let $E/F$ be a quadratic extension of number fields, and let
$\mathrm{St}(E/F)$ denote the Steinitz class of $\mathcal{O}_E$ over
$\mathcal{O}_F$. Let $\mathfrak{n}\subseteq \mathcal{O}_F$ be an ideal all of
whose prime divisors split in $E$ and are coprime to $2\mathfrak{D}_{E/F}\mathfrak{D}_F$.

Choose an integral ideal
$\mathfrak{a}\in \mathrm{St}(E/F)^{-1}$ satisfying
\begin{align*}
\mathfrak{a}+2\mathfrak{n}\mathfrak{D}_{E/F}\mathfrak{D}_F=\mathcal{O}_F,
\end{align*}
and choose $\tau\in \mathcal{O}_F$ such that
\begin{align*}
E=F(\sqrt{\tau}),\quad
(\tau)=\mathfrak{D}_{E/F}\mathfrak{a}^2.
\end{align*}
We fix such choices of $\mathfrak{a}$ and $\tau$ throughout the paper.

\subsection{Construction of Automorphic Data}\label{sect3.1}
\subsubsection{Archimedean Components}\label{sec3.1}
We decompose the set of Archimedean places as
\begin{align*}
\Sigma_{F,\infty}=\Sigma_{F,\infty}^{\mathrm{r,sp}}\bigsqcup \Sigma_{F,\infty}^{\mathrm{r,ns}}\bigsqcup \Sigma_{F,\infty}^{\mathrm{c}},
\end{align*}
where $\Sigma_{F,\infty}^{\mathrm{r,sp}}$, $\Sigma_{F,\infty}^{\mathrm{r,ns}}$ and $\Sigma_{F,\infty}^{\mathrm{c}}$ are defined as in \eqref{1.2}. 

\begin{itemize}
\item Let $v\in \Sigma_{F,\infty}^{\mathrm{r,ns}}$. Under $F_v\simeq \mathbb{R}$ we have $\tau_v<0$. Define 
\begin{align*}
\Phi_v(t_1,t_2):=\Phi_v^{\circ}((t_1,t_2)g_v^{\dag}),\ \ t_1, t_2\in F_v,
\end{align*}
where 
\begin{align*}
\Phi_v^{\circ}(t_1,t_2):=\pi^{-1}e^{-2\pi(t_1^2+t_2^2)},\quad \ \ g_v^{\dag}:=\begin{pmatrix}
|\tau_v|_v^{1/2}\\
& 1
\end{pmatrix}.
\end{align*}

\item Let $v\in \Sigma_{F,\infty}^{\mathrm{r,sp}}$. Under $F_v\simeq \mathbb{R}$ we have $\tau_v>0$. Define 
\begin{align*}
\Phi_v(t_1,t_2):=\Phi_v^{\circ}((t_1,t_2)g_v^{\dag}), \ \ t_1, t_2\in F_v,
\end{align*}
where 
\begin{align*}
\Phi_v^{\circ}(t_1,t_2):=e^{-\pi(t_1^2+t_2^2)},\quad g_v^{\dag}:=\begin{pmatrix}
|\tau_v|_v^{1/2}\\
& 1
\end{pmatrix}\mathbf{B},\quad \mathbf{B}:=\begin{pmatrix}
1 & -1\\
1 & 1
\end{pmatrix}.
\end{align*}

\item Let $v\in \Sigma_{F,\infty}^{\mathrm{c}}$. Choose $\theta_v\in F_v$ with $\theta_v^2=\tau_v$ and define
\begin{align*}
\Phi_v((z_1, z_2)):=\Phi_v^{\circ}((z_1, z_2)g_v^{\dag}),\ \ z_1, z_2\in F_v,
\end{align*}
where 
\begin{align*}
\Phi_v^{\circ}((z_1, z_2)):=\pi^{-2}e^{-2\pi(z_1\overline{z_1}+z_2\overline{z_2})},\quad \ \ g_v^{\dag}:=\begin{pmatrix}
\theta_v\\
& 1
\end{pmatrix}\mathbf{B}.
\end{align*}
\end{itemize}

\subsubsection{Non-Archimedean Components}\label{sec3.2}
For each $v\in \Sigma_{F,\fin}$, choose $u_{0,v}\in F_v$ and $u_v\in F_v^\times$ such that $1,u_{0,v}+u_v\theta$ is an $\mathcal O_v$-basis of $\mathcal O_{E_v}$. For $t_1,t_2\in F_v$, set
\begin{equation}\label{eq3.1}
\Phi_v^{\circ}(t_1,t_2)=\mathbf{1}_{\mathcal{O}_v}(t_1)\mathbf{1}_{\mathcal{O}_v}(t_2),\qquad 
\Phi_v^*(t_1,t_2)=\mathbf{1}_{\mathcal O_{E_v}}(t_2+t_1\theta).
\end{equation}
Then 
\begin{align*}
\Phi_v^*(t_1,t_2)=\Phi_v^\circ\bigl((t_1,t_2)g_{0,v}\bigr),\quad g_{0,v}=\begin{pmatrix}u_v^{-1}&-u_{0,v}/u_v\\0&1\end{pmatrix}.
\end{align*}
This definition applies also above $2$; it is the actual integer-ring lattice rather than a rectangular coordinate lattice.
We now define $\Phi_v$ as follows.

\begin{itemize}
\item Let $v\mid\mathfrak n$. Let  $e_v(\mathfrak{n})=r_v>0$. Define 
\begin{equation}\label{3.2}
\Phi_v(t_1,t_2)=\sum_{j=1}^{2}\Phi_v^{(j)}(t_1,t_2),\quad t_1,t_2\in F_v,
\end{equation}
where 
\begin{align*}
\Phi_v^{(j)}(t_1,t_2):=
\begin{cases}
q_v^{r_v/2}\Phi_v^{\circ}\left((t_1,t_2)g_v^{\dag}\begin{pmatrix}
\varpi_v^{-r_v}\\
& 1
\end{pmatrix}\right), &\quad \text{$j=1$,}\\
q_v^{r_v/2}\Phi_v^{\circ}\left((t_1,t_2)g_v^{\dag}\begin{pmatrix}
1\\
& \varpi_v^{-r_v}
\end{pmatrix}\right), &\quad \text{$j=2$}.
\end{cases}
\end{align*}
Here the matrix $g_v^{\dag}$ is defined by 
\begin{equation}\label{eq3.3}
g_v^{\dag}:=\begin{pmatrix}
\theta_v\\
& 1
\end{pmatrix}\mathbf{B}.
\end{equation}

\item Let $v<\infty$ with $v\nmid\mathfrak n$. Define
\begin{align*}
\Phi_v(t_1,t_2)=q_v^{d_{E_v}/2}\Phi_v^*(t_1,t_2),\quad t_1,t_2\in F_v.
\end{align*}
\end{itemize}

\subsubsection{Construction of Automorphic Data}\label{sec3.1.3}
Let $\Phi=\otimes_{v\in \Sigma_{F}}\Phi_v$, where $\Phi_v$ is defined as in \textsection\ref{sec3.1}--\textsection\ref{sec3.2}. We set
\begin{align*}
&\Phi_1:=\bigotimes_{v\in \Sigma_{F,\infty}}\Phi_v\bigotimes_{v\in \Sigma_{F,\fin}}q_v^{d_{E_v}/2}\Phi_v^*,\\
&\Phi_2=\Phi,\ \ \ \chi_1=\chi_2=\omega_1=\omega_2=\xi_1=\xi_2=\mathbf{1}.
\end{align*}

For an integral ideal $\mathfrak{m}\subset\mathcal{O}_F$, let $\mathcal{F}(\mathfrak{m},\mathbf{1})$ denote the set of unitary cuspidal automorphic representations of $\mathrm{PGL}_2/F$ that are spherical at all Archimedean places and have arithmetic conductor $\mathfrak{m}$.

Let $\widehat{C}_F(\mathfrak{n})\subset \widehat{C}_F$ denote the subset of characters $\mu=\otimes_{v\in \Sigma_F}'\mu_v$ such that $\mu_v$ is unramified at all $v\nmid\mathfrak n$, and such that the exponent of its arithmetic conductor is $\leq e_v(\mathfrak{n})/2$ at all $v\mid\mathfrak n$. 

Let $\widehat{\mathcal{C}}_{E/F}^{\text{un}}$ denote the subgroup of everywhere unramified characters in $\widehat{\mathcal{C}}_{E/F}$. For $\chi\in \widehat{\mathcal{C}}_{E/F}$, define 
\begin{equation}\label{f3.4}
\lambda_{\chi}^*(\mathfrak{n}):=2^{-\omega_F(\mathfrak{n})}\lambda_{\chi}(\mathfrak{n}),\quad 
\lambda_{\chi}(\mathfrak{n}):=\prod_{\mathfrak{p}\mid\mathfrak{n}}\big(\chi(\mathfrak{p})^{e_{\mathfrak{p}}(\mathfrak{n})}+\overline{\chi}(\mathfrak{p})^{e_{\mathfrak{p}}(\mathfrak{n})}\big),
\end{equation}
where $\omega_F(\mathfrak{n})$ is the number of distinct prime ideal divisors of $\mathfrak{n}$, $e_{\mathfrak{p}}(\mathfrak{n})=e_{v}(\mathfrak{n})$, and $v$ corresponds to $\mathfrak{p}$. At each split prime, $\chi(\mathfrak p)$ denotes the value at either chosen prime of $E$ above $\mathfrak p$; the symmetric expression is independent of this choice. Here $\chi$ is unramified at the primes dividing $\mathfrak n$.

In particular, when $\mathfrak{n}$ is squarefree, $\lambda_{\chi}(\mathfrak{n})$ is precisely the normalized $\mathfrak{n}$-th Hecke eigenvalue of the automorphic induction associated with $\chi$. 

\begin{remark}
We restrict the twisting ideal to split primes because inert primes give no character-dependent Hecke weights. If $\mathfrak p$ is inert, then $\mathfrak P=\mathfrak p\mathcal O_E$ and $\chi(\mathfrak P)=1$ for $\chi\in\widehat{\mathcal C}_{E/F}^{\mathrm{un}}$, so the local Euler factor $(1-N_F(\mathfrak p)^{-2s})^{-1}$ is independent of $\chi$. Inert places are nevertheless included in all global formulas.
\end{remark}

For $v\in \Sigma_{F,\fin}$ and $l\in \mathbb{Z}_{\geq 0}$, we define 
\begin{equation}\label{3.5}
K_v[l]:=\bigg\{\begin{pmatrix}
a & b\\
c & d
\end{pmatrix}\in K_v:\ c\in \mathfrak{p}_v^{l}\bigg\}.
\end{equation}

\subsection{Arithmetic Weights}\label{sec3.6}
Let $(s_1,s_2)\in \mathbb{C}^2$ and  $v\mid\mathfrak n$. Write $e_v(\mathfrak{n})=r_v$. 
\subsubsection{Weights in Geometric Terms}\label{3.2.1}
We define
\begin{multline*}
H_v(s_1,s_2)
:=\frac{
1}{
1-q_v^{-2-2s_1-2s_2}
}
\bigg[
1-q_v^{-1-2s_2}+
q_v^{r_v(s_1-s_2)}q_v^{-1-2s_2}
(1-q_v^{-1-2s_1})\\
+
(1-q_v^{-1-2s_1})(1-q_v^{-1-2s_2})
\frac{q_v^{s_1-s_2}(1-q_v^{r_v(s_1-s_2)})}
{1-q_v^{s_1-s_2}}
\bigg].
\end{multline*}

We then define the holomorphic factor
\begin{equation}\label{e3.6}
H_{\mathfrak{n}}(s_1,s_2):=\prod_{v\mid\mathfrak n} H_v(s_1,s_2). 
\end{equation}

\subsubsection{Weights in Residual Terms}\label{sec3.2.2}
For $v\mid\mathfrak n$, write $q=q_v$, $r=e_v(\mathfrak n)$,
$x=q^{-1/2+\lambda}$, $y=q^{-1/2-\lambda}$, and
$S_s(j)=\sum_{k=0}^jq^{(j-2k)s}$. Define
\begin{equation}\label{eq3.Tl}
T_l=q^{l-r/2}\bigg(\sum_{j\geq l}x^jS_{s_1}(j)S_{s_2}(j+r-2l)-\mathbf1_{l\geq1}\frac{x^{l-1}S_{s_1}(l-1)S_{s_2}(r-l-1)}{q-1}\bigg)
\end{equation}
for $0\leq l<r$, and 
\begin{equation}\label{eq3.Tr}
T_r=q^{r/2}\sum_{j\geq r}x^jS_{s_1}(j)S_{s_2}(j-r).
\end{equation}
The series defining $T_l$, $0\leq l\leq r$, are initially evaluated in their common region of absolute convergence and then continued meromorphically using their rational expressions in $q^\lambda,q^{s_1},q^{s_2}$.
Define
\begin{equation}\label{equ3.8}
\mathbf L_v^-(\mathbf s,\lambda)=\prod_{\epsilon,\delta=\pm1}\zeta_{F_v}(1/2-\lambda+\epsilon s_1+\delta s_2)
\end{equation}
and
\begin{equation}\label{eq3.Rr}
R_{\mathfrak n}(s_1,s_2;\lambda)
=\prod_{v\mid\mathfrak n}\mathscr R_{v,r_v}(s_1,s_2;\lambda),
\end{equation}
where 
\begin{align*}
\mathscr R_{v,r}(s_1,s_2;\lambda)
=\frac{q^{r/2}\zeta_{F_v}(1-2\lambda)}{(1+y)\mathbf L_v^-(\mathbf s,\lambda)}
\bigg(T_0+y^rT_r+(1-y)\sum_{l=1}^{r-1}y^lT_l\bigg).
\end{align*}
The subtraction in \eqref{eq3.Tl} is the mean of the
additive character on the outer valuation layer. The value of the Gaussian
section at the identity contributes $\zeta_{F_v}(1+2\lambda)$; this factor
cancels in \eqref{eq3.Rr}. Consequently
\begin{equation}\label{eq3.Ridentities}
\begin{aligned}
\mathscr R_{v,r}(s_1,s_2;\lambda)
&=\mathscr R_{v,r}(s_1,s_2;-\lambda),\\
\mathscr R_{v,r}(s_1,s_2;1/2-s_1-s_2)
&=q^{rs_1}H_v(-s_1,-s_2),\\
\mathscr R_{v,r}(s_1,s_2;1/2+s_1-s_2)
&=q^{-rs_1}H_v(s_1,-s_2).
\end{aligned}
\end{equation}
These identities follow by summing the four geometric series in
\eqref{eq3.Tl}--\eqref{eq3.Tr}; both sides satisfy the same fourth-order
recurrence in $r$, and direct substitution for $1\leq r\leq4$ initializes
the recurrence. In particular
\[
\mathscr R_{v,r}(0,0;\pm1/2)=H_v(0,0)=1+r\frac{q-1}{q+1}.
\]
For squarefree $\mathfrak n$ this becomes
\begin{equation}\label{eq3.9}
R_{\mathfrak n}(s_1,s_2;\lambda)=\prod_{v\mid\mathfrak n}\frac{q_v^{s_1}+q_v^{-s_1}+q_v^{s_2}+q_v^{-s_2}}{(1+q_v^{\lambda-1/2})(1+q_v^{-\lambda-1/2})}.
\end{equation}

\subsection{Explicit Formula for \texorpdfstring{$Z(s,\Phi_j,\chi)$}{Z(s,Phi,chi)}}
Let $\chi\in \widehat{\mathcal{C}}_{E/F}$ and let $\Phi_j$, $j=1,2$, be as above. For $\Re(s)\gg 1$, recall that 
\begin{align*}
Z(s,\Phi_j,\chi)=\prod_{v\in \Sigma_F}Z_v(s,\Phi_{j,v},\chi_v),
\end{align*}
where 
\begin{equation}\label{3.1}
Z_v(s,\Phi_{j,v},\chi_v):=\int_{E_v^{\times}}
\Phi_{j,v}((0,1)\iota(x_v))\chi_v(x_v)|x_v|_{E_v}^{1/2+s}d^{\times}x_v.
\end{equation}
Here $\Phi_{j,v}$ is the $v$-th component of $\Phi_j$. 

\begin{prop}\label{lem3.1}
Let $\chi\in \widehat{\mathcal{C}}_{E/F}$. Then, for $s\in \mathbb{C}$, we have an identity of meromorphic functions:
\begin{align*}
&Z(s,\Phi_1,\chi)=\mathbf{1}_{\chi\in \widehat{\mathcal{C}}_{E/F}^{\text{un}}}\Lambda(1/2+s,\chi),\\
&Z(s,\Phi_2,\chi)=N_F(\mathfrak{n})^{-s}\lambda_{\chi}(\mathfrak{n})\mathbf{1}_{\chi\in \widehat{\mathcal{C}}_{E/F}^{\text{un}}}\Lambda(1/2+s,\chi).
\end{align*}
\end{prop}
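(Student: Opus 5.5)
The plan is to factor $Z(s,\Phi_j,\chi)=\prod_v Z_v(s,\Phi_{j,v},\chi_v)$ as in \eqref{3.1}, valid for $\Re(s)\gg1$, and compute each local factor. The global identity then follows by meromorphic continuation. The key observation is that $(0,1)\iota(x)=(b,a)$ for $x=a+b\theta$. Hence $\Phi_v^*((0,1)\iota(x))=\mathbf 1_{\mathcal O_{E_v}}(a+b\theta)=\mathbf 1_{\mathcal O_{E_v}}(x)$. So at finite $v\nmid\mathfrak n$ the local integral is
\[
q_v^{d_{E_v}/2}\int_{\mathcal O_{E_v}\setminus\{0\}}\chi_v(x)|x|_{E_v}^{1/2+s}d^\times x .
\]
This vanishes unless $\chi_v$ is trivial on $\mathcal O_{E_v}^\times$. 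In the unramified case it equals $L_v(1/2+s,\chi_v)$, since $\vol(\mathcal O_{E_v}^\times)=q_v^{-d_{E_v}/2}$ cancels the normalizing factor. This gives the unramified indicator at every finite place for $\Phi_1$, and at $v\nmid\mathfrak n$ for $\Phi_2$.

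At Archimedean places I would compute directly. For $v\in\Sigma^{\mathrm{r,ns}}_{F,\infty}$, $E_v=\mathbb C$ with $\theta=i|\tau_v|^{1/2}$. Then $(b,a)g_v^\dagger=(|\tau_v|^{1/2}b,a)$, so $\Phi_v((0,1)\iota(x))=\pi^{-1}e^{-2\pi|x|^2}$. The integral against $\chi_v$ is nonzero only for $\chi_v$ trivial on $\mathbb C^{(1)}$, where it gives $\Gamma_{\mathbb C}(1/2+s)$ with the paper's normalization $\Gamma_{\mathbb C}(s)=2(2\pi)^{-s}\Gamma(s)$. For the split real and complex places, $E_v=F_v\times F_v$ via $x\mapsto(a+b\theta_v,a-b\theta_v)$. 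The matrix $\mathbf B$ is designed so that $(b,a)\,\mathrm{diag}(\theta_v,1)\mathbf B=(b\theta_v+a,\,a-b\theta_v)$, giving a product of Gaussians in the two coordinates. The integral then splits into two $\mathrm{GL}_1$ Tate integrals, each equal to $\Gamma_{F_v}(1/2+s)$, or vanishing when the sign or angular character is nontrivial. Since $\chi\in\widehat{\mathcal C}_{E/F}$ is trivial on $F_v^\times$, triviality at one component forces triviality at the other, which matches the definition of unramified at infinity.

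At $v\mid\mathfrak n$, which is split and unramified, the same computation applies to $\Phi_v^{(j)}$. The factor $\mathrm{diag}(\varpi_v^{-r},1)$ rescales the first coordinate $a+b\theta$, and $\mathrm{diag}(1,\varpi_v^{-r})$ rescales the second. Thus $\Phi_v^{(1)}$ gives $q_v^{r/2}\mathbf 1_{\varpi^{r}\mathcal O}(x_1)\mathbf 1_{\mathcal O}(x_2)$ on $E_v=F_v^2$. Here $\theta_v\in\mathcal O_v^\times$ because $v\nmid2\tau$, and $\mathbf B$ has unit determinant away from $2$, so it preserves lattices. Substituting $x_1\mapsto\varpi^r x_1$ produces $q_v^{r/2}\chi_{w_1}(\varpi)^r q_v^{-r(1/2+s)}L_v(1/2+s,\chi_v)$, which simplifies to $q_v^{-rs}\chi(\mathfrak P_1)^rL_v$. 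Together with the $j=2$ term, which gives $\chi(\mathfrak P_2)^r=\overline\chi(\mathfrak P_1)^r$, the sum is $q_v^{-rs}\lambda_{\chi,v}L_v$ in the unramified case. In the ramified case both terms vanish by the same unit-group orthogonality.

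Multiplying the local factors yields both formulas. The main obstacle is the bookkeeping at ramified places and places above $2$, where one must check that $\mathcal O_{E_v}=\mathcal O_v+\mathcal O_v(u_0+u\theta)$ is exactly what $\Phi_v^*$ encodes. This is built into the definition, so no coordinate choice is needed. A further check is that the Archimedean normalizing constants $\pi^{-1},\pi^{-2}$ and the Haar measure on $E_v^\times$ match $\Gamma_{\mathbb R},\Gamma_{\mathbb C}$ exactly. I would verify this by computing $\int_{\mathbb C^\times}\pi^{-1}e^{-2\pi|z|^2}|z|_{\mathbb C}^{u}d^\times z$ with $|z|_{\mathbb C}=|z|^2$, and analogously for the product of two real Gaussians $e^{-\pi t^2}$ giving $\Gamma_{\mathbb R}(u)^2$.
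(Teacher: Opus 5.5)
Your proposal is correct and follows essentially the same route as the paper: you factor the Tate integral over places and use $(0,1)\iota(x)=(b,a)$, so that $\Phi_v^*$ becomes $\mathbf 1_{\mathcal O_{E_v}}(x)$; you pass to the coordinates $a\pm\theta_v b$ at split places, rescaling one coordinate by $\varpi_v^{r_v}$ at $v\mid\mathfrak n$ to produce $\chi(\mathfrak P)^{r_v}+\overline\chi(\mathfrak P)^{r_v}$; and you conclude by meromorphic continuation. One small correction: at split Archimedean places one may have $\chi_v'=|\cdot|_v^{it_v}$ with $t_v\neq 0$, so each Tate integral gives the shifted factor $L_w(1/2+s,\chi_w)=\Gamma_{F_v}(1/2+s\pm it_v)$ rather than $\Gamma_{F_v}(1/2+s)$, which is what the paper records as $\prod_{w\mid v}L_w(s+1/2,\chi_w)$ (at real non-split places triviality on $\mathbb R^\times$ forces $t_v=0$, so your $\Gamma_{\mathbb C}(1/2+s)$ is right there).
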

\begin{proof}
We compute the local factors according to the construction of $\Phi_v$ in \textsection\ref{sec3.1}--\textsection\ref{sec3.2}.
\begin{itemize}
\item Suppose $v\in \Sigma_{F,\infty}^{\mathrm{r,sp}}$. Write $x_v=a_v+\theta_vb_v$, where $\theta_v=\sqrt{\tau_v}$. Then 
\begin{align*}
\Phi_v((0,1)\iota(x_v))=\Phi_v^{\circ}((\theta_vb_v,a_v)\mathbf{B})=e^{-\pi(a_v+\theta_vb_v)^2}e^{-\pi(a_v-\theta_vb_v)^2}.
\end{align*}

In this case, $\chi_v(x_v)=\chi_v'(a_v+\theta_vb_v)\chi_v'^{-1}(a_v-\theta_vb_v)$ for some $\chi_v'$. With this substitution, \eqref{3.1} becomes
\begin{multline}\label{3.3}
Z_v(s,\Phi_v,\chi_v)=\int_{\mathbb{R}^{\times}}\int_{\mathbb{R}^{\times}}
e^{-\pi(a_v+\theta_vb_v)^2}e^{-\pi(a_v-\theta_vb_v)^2}\chi_v'(a_v+\theta_vb_v)\\\chi_v'^{-1}(a_v-\theta_vb_v)
|a_v+\theta_vb_v|_{v}^{\frac{1}{2}+s}|a_v-\theta_vb_v|_{v}^{\frac{1}{2}+s}d^{\times}(a_v+\theta_vb_v)d^{\times}(a_v-\theta_vb_v).
\end{multline}

A direct calculation from \eqref{3.3} yields
\begin{align*}
Z_v(s,\Phi_v,\chi_v)=\mathbf{1}_{\text{$\chi_v$ unram}}\prod_{w\mid v}L_w(s+1/2,\chi_w),
\end{align*}
where the product runs over the two places $w\in \Sigma_E$ above $v\in \Sigma_F$.

\item Suppose $v\in \Sigma_{F,\infty}^{\mathrm{r,ns}}$. We have 
\begin{align*}
\Phi_v((0,1)\iota(x_v))=\Phi_v^{\circ}((|\tau_v|_v^{1/2}b_v,a_v))=\pi^{-1}e^{-2\pi(a_v^2-\tau_vb_v^2)}.
\end{align*}

With this substitution, \eqref{3.1} becomes
\begin{equation}\label{e3.1}
Z_v(s,\Phi_v,\chi_v)=\pi^{-1}\int_{\mathbb{C}^{\times}}
e^{-2\pi|z|_{\mathbb{C}}}\chi_v(z)|z|_{\mathbb{C}}^{\frac{1}{2}+s}d^{\times}z.
\end{equation} 

Recall that $\chi_v$ has the shape 
\begin{align*}
\chi_v(z)=\left(\frac{z}{|z|}\right)^{m_v}|z|_{\mathbb C}^{it_v/2}
\end{align*}
for some $m_v\in\mathbb{Z}$ and $t_v\in \mathbb{R}$. It follows from \eqref{e3.1} that 
\begin{align*}
Z_v(s,\Phi_v,\chi_v)=\mathbf{1}_{m_v=0}\Gamma_{\mathbb C}\left(s+1/2+it_v/2\right)=\mathbf{1}_{\text{$\chi_v$ unram}}L_v(s+1/2,\chi_v),
\end{align*}
where $\Gamma_{\mathbb{C}}$ is defined by \eqref{eq1.1}.

\item Suppose $v\in \Sigma_{F,\infty}^{\mathrm{c}}$. Write $x_v=a_v+\theta_vb_v$. Then 
\begin{align*}
\Phi_v((0,1)\iota(x_v))=\Phi_v^{\circ}((\theta_vb_v,a_v)\mathbf{B})=\pi^{-2}e^{-2\pi|a_v+\theta_vb_v|^2}e^{-2\pi|a_v-\theta_vb_v|^2},
\end{align*}
where $a_v, b_v\in \mathbb{C}$. Thus, as in \eqref{3.3}, we obtain
\begin{align*}
Z_v(s,\Phi_v,\chi_v)=\mathbf{1}_{\text{$\chi_v$ unram}}\prod_{w\mid v}L_w(s+1/2,\chi_w).
\end{align*}

\item Suppose $v<\infty$ is inert or ramified in $E$. By the definition of $\Phi_v^*$ in \eqref{eq3.1}, we have
\begin{equation}\label{e3.3}
\Phi_v((0,1)\iota(x_v))=q_v^{d_{E_v}/2}\mathbf{1}_{\mathcal{O}_{E_v}}(x_v),
\end{equation}
and hence
\begin{align*}
Z_v(s,\Phi_v,\chi_v)=\mathbf{1}_{\text{$\chi_v$ unram}}L_v(s+1/2,\chi_v).
\end{align*} 

\item Suppose $v<\infty$ splits in $E$ and $v\nmid\mathfrak n$. By construction, \eqref{e3.3} also holds in this case. Hence,
\begin{align*}
Z_v(s,\Phi_v,\chi_v)=\mathbf{1}_{\text{$\chi_v$ unram}}\prod_{w\mid v}L_w(s+1/2,\chi_w).
\end{align*}

\item Suppose $v\mid\mathfrak n$ with $e_v(\mathfrak{n})=r_v$. Write $x_v=a_v+\theta_vb_v$. We have  
\begin{equation}\label{3.14}
\Phi_v^{(1)}((0,1)\iota(x_v))=q_v^{r_v/2}\mathbf{1}_{\mathcal{O}_v}((a_v+\theta_vb_v)\varpi_v^{-r_v})\mathbf{1}_{\mathcal{O}_v}(a_v-\theta_vb_v).
\end{equation}

Let $\alpha_v:=a_v+\theta_vb_v$ and $\beta_v=a_v-\theta_vb_v$. Then $\chi_v(x_v)=\chi_v'(\alpha_v)\chi_v'^{-1}(\beta_v)$ for some $\chi_v'$. By the change of variables
\begin{align*}
(\alpha_v,\beta_v)\mapsto (\varpi_v^{r_v}\alpha_v,\beta_v),
\end{align*}
we obtain
\begin{multline*}
\int_{E_v^{\times}}
\Phi_v^{(1)}((0,1)\iota(x_v))\chi_v(x_v)|x_v|_{E_v}^{\frac{1}{2}+s}d^{\times}x_v=q_v^{-r_vs}\mathbf{1}_{\text{$\chi_v$ unramified}}\\
\chi_v'(\varpi_v^{r_v})\prod_{w\mid v}L_w(s+1/2,\chi_w).
\end{multline*}

The remaining case $j=2$ is treated similarly. Therefore, 
\begin{align*}
Z_v(s,\Phi_v,\chi_v)=q_v^{-r_vs}\big(\chi_v'(\varpi_v^{r_v})+\chi_v'^{-1}(\varpi_v^{r_v})\big)\mathbf{1}_{\text{$\chi_v$ unramified}}\prod_{w\mid v}L_w(s+1/2,\chi_w).
\end{align*} 
\end{itemize}

Combining the preceding local computations, we obtain
\begin{align*}
Z(s,\Phi_2,\chi)=Z(s,\Phi,\chi)
=N_F(\mathfrak{n})^{-s}
\lambda_{\chi}(\mathfrak{n})
\mathbf{1}_{\chi\in \widehat{\mathcal{C}}_{E/F}^{\text{un}}}
\Lambda(1/2+s,\chi)
\end{align*}
for $\Re(s)\gg 1$. The identity for general $s$ then follows by meromorphic continuation. The formula for $Z(s,\Phi_1,\chi)$ is obtained in the same way.
\end{proof}

\section{The Terms \texorpdfstring{$Z(\mathbf{s},\boldsymbol{\Phi})$}{} and \texorpdfstring{$Z^{\Diamond}(\mathbf{s},\boldsymbol{\Phi})$}{}}\label{sec4}

In this section we explicitly compute the integrals
$Z(\mathbf{s},\boldsymbol{\Phi})$ and $ Z^{\Diamond}(\mathbf{s},\boldsymbol{\Phi})$
for the automorphic data constructed in \textsection\ref{sect3}. 

At every untwisted finite
place let $g_{0,v}$ be the integral-basis matrix defined in \textsection\ref{sec3.2}; at a
twisting prime use the split eigenbasis matrix in \eqref{eq3.3}. At a
real nonsplit place use $\diag(\sqrt{|\tau_v|},1)$, and at a split
Archimedean place use
$\left(\begin{smallmatrix}\theta_v&-\theta_v\\1&1\end{smallmatrix}\right)$.
Denote their adelic product by $g_0$. The discriminant of the integral basis
$1,u_{0,v}+u_v\theta$ gives
\begin{equation}\label{eq4.det}
|\det g_0|=2^{-r_{\mathrm{ns}}}D_{E/F}^{1/2}.
\end{equation}
If $h_j^0,W_j^0$ are the translated data and
$\phi_0=\pi(g_0^{-1})\phi$, then
\begin{align*}
\Psi(\overline{W_\phi},W_1,h_2)=|\det g_0|^{-1-s_1-s_2}\Psi(\overline{W_{\phi_0}},W_1^0,h_2^0).
\end{align*}
The paired toric functional is
$\int_{\mathcal C_{E/F}}\phi_0(\iota(x)g_0)d^\times x$; hence the spectral
summand is independent of the phase of $\phi_0$.

We package the geometric coefficient as
\begin{equation}\label{eq4.Z}
\mathscr Z(s_1,s_2)=\frac{2^{\omega_F(\mathfrak n)}D_E^{1/2}\Lambda_F(1+2s_1)\Lambda_F(1+2s_2)\Lambda_E(1+s_1+s_2)}
{\pi^{r_{\mathrm{ns}}+r_2}D_F^{1/2}N_F(\mathfrak n)^{1/2+s_1+s_2}\Lambda_F(2+2s_1+2s_2)}\cdot H_{\mathfrak n}(s_1,s_2),
\end{equation}
where $H_{\mathfrak{n}}(s_1,s_2)$ is defined by \eqref{e3.6} in \textsection\ref{sec3.6}. 

\subsection{Calculation of \texorpdfstring{$Z(\mathbf{s},\boldsymbol{\Phi})$}{}}
Recall that
\begin{align*}
Z(\mathbf{s},\boldsymbol{\Phi}):=\int_{\mathcal{C}_{E/F}}\Eis(h_1(\cdot,s_1)h_2(\cdot,s_2))(\iota(x))d^{\times}x.
\end{align*}

Suppose $\Re(s_1)\gg 1$ and $\Re(s_2)\gg 1$. Unfolding as in \eqref{eq2.4}, we obtain
\begin{align*}
Z(\mathbf{s},\boldsymbol{\Phi})=\prod_{v\in \Sigma_F}Z_v(\mathbf{s},\Phi_{1,v},\Phi_{2,v}),
\end{align*}
where  
\begin{align*}
Z_v(\mathbf{s},\Phi_{1,v},\Phi_{2,v}):=\int_{E_v^{\times}}h_{1,v}(\iota(x_v),s_1)\Phi_{2,v}((0,1)\iota(x_v))|\det \iota(x_v)|_{v}^{\frac{1}{2}+s_2}
d^{\times}x_v.
\end{align*}

\begin{prop}\label{prop4.1}
For $\mathbf{s}\in \mathbb{C}^2$, we have the following identity of meromorphic functions:
\begin{equation}\label{fc4.1}
Z(\mathbf{s},\Phi_{1},\Phi_{2})=\mathscr Z(s_1,s_2).
\end{equation}
\end{prop}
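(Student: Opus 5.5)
We work first in the region $\Re(s_1),\Re(s_2)\gg1$, where the unfolding recorded just before the proposition gives the Euler product $Z(\mathbf s,\boldsymbol\Phi)=\prod_v Z_v(\mathbf s,\Phi_{1,v},\Phi_{2,v})$. We then identify each local factor with the corresponding local piece of $\mathscr Z(s_1,s_2)$ in \eqref{eq4.Z}. The global identity \eqref{fc4.1} follows by meromorphic continuation, since both sides are meromorphic in $\mathbf s$.

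The local computation rests on one observation. By the definition \eqref{e2.2} with $\chi_1=\omega_1=\mathbf 1$, we have $h_{1,v}(\iota(x_v),s_1)=|N(x_v)|_v^{1/2+s_1}\int_{F_v^\times}\Phi_{1,v}((0,t)\iota(x_v))|t|_v^{1+2s_1}d^\times t$. Moreover, $(0,t)\iota(x_v)=(0,1)\iota(tx_v)$. Substituting $y=tx_v$ turns $Z_v$ into a double integral over $E_v^\times\times F_v^\times$. Writing $x_v=y\,t^{-1}$, the $x$–integral becomes a Tate-type integral in which $\Phi_{2,v}((0,1)\iota(yt^{-1}))$ is paired with $|y|_{E_v}^{1+s_1+s_2}$.

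Our choices make $\Phi_{j,v}((0,1)\iota(x))$ a lattice (or Gaussian) function of $x\in E_v$, as computed in Proposition~\ref{lem3.1}. At unramified, inert and ramified places not dividing $\mathfrak n$, both are $q_v^{d_{E_v}/2}\mathbf 1_{\mathcal O_{E_v}}$. The integral then factors as follows:
\begin{itemize}
\item the $F_v^\times$-variable produces $\zeta_{F_v}(1+2s_1)\zeta_{F_v}(1+2s_2)/\zeta_{F_v}(2+2s_1+2s_2)$, after summing over the valuation of $t$ relative to $y$;
\item the $E_v^\times$-variable produces $\zeta_{E_v}(1+s_1+s_2)$;
\item the normalizations $q_v^{d_{E_v}/2}$ and the volumes $q_v^{-d_{E_v}/2}$, $q_v^{-d_{F_v}/2}$ assemble into the global factor $D_E^{1/2}D_F^{-1/2}$.
\end{itemize}
To make this precise, we compute $\int_{E_v^\times}\mathbf 1_{\mathcal O_{E_v}}(y)\mathbf 1_{\mathcal O_{E_v}}(yt^{-1})|y|^{1+s_1+s_2}$ and then integrate over $t$. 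The point is that $\mathbf 1_{\mathcal O_{E_v}}(yt^{-1})$ restricts $t$ to $\varpi_v^{k}\mathcal O_v^\times$ with $k$ bounded by the $F$-part of the valuation of $y$, which yields a geometric series. At the Archimedean places the Gaussians give Gamma factors. The constants $\pi^{-1}$ and $\pi^{-2}$ in $\Phi_v^\circ$, together with the Jacobian \eqref{eq4.det}, account for $\pi^{-r_{\mathrm{ns}}-r_2}$ and the power of $2$; these constants should be checked against $\Gamma_{\mathbb R}$ and $\Gamma_{\mathbb C}$.

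The main obstacle is the places $v\mid\mathfrak n$. There $\Phi_{2,v}=\Phi^{(1)}_v+\Phi^{(2)}_v$ is a sum of two skewed lattices in the split coordinates $(\alpha,\beta)=(a+\theta b,a-\theta b)$, while $\Phi_{1,v}$ is the standard lattice. We would write $E_v=F_v\times F_v$ and $t\in F_v^\times$ acting diagonally, so that $Z_v$ becomes an explicit triple sum over valuations $(e(\alpha),e(\beta),e(t))$. The constraint from $\Phi^{(1)}_v$ is $e(\alpha)\ge r_v+e(t)$, $e(\beta)\ge e(t)$; the constraint from $\Phi^{(2)}_v$ is the mirror image. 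Summing these geometric series splits into a diagonal part, which gives the terms $1-q_v^{-1-2s_2}+q_v^{r_v(s_1-s_2)}q_v^{-1-2s_2}(1-q_v^{-1-2s_1})$, and a finite sum over $0<k<r_v$, which gives $q_v^{s_1-s_2}(1-q_v^{r_v(s_1-s_2)})/(1-q_v^{s_1-s_2})$. After factoring out the unramified local factor and $q_v^{-r_v(1/2+s_1+s_2)}\cdot 2$, the remainder should be exactly $H_v(s_1,s_2)$. Doing this bookkeeping cleanly, and confirming the factor $2^{\omega_F(\mathfrak n)}$ and the power of $N_F(\mathfrak n)$, is the delicate step. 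We would verify the result at $r_v=1$ and at $s_1=s_2$ as sanity checks.
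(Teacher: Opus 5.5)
Your plan is correct and is essentially the paper's argument. You unfold to $\prod_v Z_v$ in the region of absolute convergence, evaluate the Archimedean factors by Gaussian and Beta integrals, and evaluate the finite factors by summing geometric series over $e_v(t)$, using three valuation ranges at $v\mid\mathfrak n$ (your substitution $y=tx_v$ is only a reordering of the same sum); meromorphic continuation then finishes. Two bookkeeping corrections:
\begin{enumerate}
\item The Archimedean scalars come out directly as $1$, $\pi^{-1}$, $\pi^{-1}$ at real split, real nonsplit and complex places. This is because one integrates over $E_v^\times$ with its intrinsic measure, so no Jacobian, power of $|\tau_v|$, or extra power of $2$ enters. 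The only power of $2$ in $\mathscr Z$ is $2^{\omega_F(\mathfrak n)}$, coming from the two lattices at $v\mid\mathfrak n$.
\item At $v\mid\mathfrak n$ the middle range $0<k<r_v$ gives only $\sum_{k=1}^{r_v-1}q_v^{k(s_1-s_2)}$. The last summand of $H_v$, which runs up to $k=r_v$, appears only after you split off the $k=r_v$ term of the tail $\sum_{k\ge r_v}$.
\end{enumerate}
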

\begin{proof}
Suppose $\Re(s_1)\gg 1$ and $\Re(s_2)\gg 1$. By definition of $h_{1,v}(\cdot,s_1)$ we have 
\begin{multline*}
Z_v(\mathbf{s},\Phi_{1,v},\Phi_{2,v})=\int_{E_v^{\times}}
\int_{F_v^{\times}}
\Phi_{1,v}((0,t)\iota(x_v))|t|_v^{1+2s_1}d^{\times}t\\
\Phi_{2,v}((0,1)\iota(x_v))|x_v|_{E_v}^{1+s_1+s_2}d^{\times}x_v.
\end{multline*}

We compute $Z_v(\mathbf{s},\Phi_{1,v},\Phi_{2,v})$ according to the definitions of
$\Phi_{1,v}$ and $\Phi_{2,v}$.

\begin{itemize}
\item Suppose $v\in \Sigma_{F,\infty}^{\mathrm{r,sp}}$. Write
$\alpha_v=a_v+\theta_vb_v$ and $\beta_v=a_v-\theta_vb_v$, where $\theta_v=\sqrt{\tau_v}$. In parallel with \eqref{3.3}, we obtain
\begin{multline}\label{4.1}
Z_v(\mathbf{s},\Phi_{1,v},\Phi_{2,v})=\int_{\mathbb{R}^{\times}}\int_{\mathbb{R}^{\times}}\int_{\mathbb{R}^{\times}}
e^{-\pi(1+t^2)\alpha_v^2}e^{-\pi(1+t^2)\beta_v^2}\\
|\alpha_v|_{v}^{1+s_1+s_2}|\beta_v|_{v}^{1+s_1+s_2}d^{\times}\alpha_vd^{\times}\beta_v|t|_v^{1+2s_1}d^{\times}t.
\end{multline}

The integral in \eqref{4.1} evaluates to
\begin{align*}
Z_v(\mathbf{s},\Phi_{1,v},\Phi_{2,v})=\prod_{w\mid v}\zeta_{E_w}(1+s_1+s_2)\int_{\mathbb{R}^{\times}}\frac{|t|_v^{1+2s_1}}{(1+t^2)^{1+s_1+s_2}}d^{\times}t.
\end{align*}

Since $\Re(s_1)\gg 1$ and $\Re(s_2)\gg 1$, we have
\begin{equation}\label{eq4.2}
\int_{\mathbb{R}^{\times}}\frac{|t|_v^{1+2s_1}}{(1+t^2)^{1+s_1+s_2}}d^{\times}t=\frac{\Gamma_{\mathbb{R}}(1+2s_1)\Gamma_{\mathbb{R}}(1+2s_2)}{\Gamma_{\mathbb{R}}(2+2s_1+2s_2)},
\end{equation}
where $\Gamma_{\mathbb{R}}$ is defined by \eqref{eq1.1}. Therefore,
\begin{equation}\label{eq4.3}
Z_v(\mathbf{s},\Phi_{1,v},\Phi_{2,v})=\frac{\Gamma_{\mathbb{R}}(1+2s_1)\Gamma_{\mathbb{R}}(1+2s_2)}{\Gamma_{\mathbb{R}}(2+2s_1+2s_2)}\prod_{w\mid v}\zeta_{E_w}(1+s_1+s_2).
\end{equation}

\item Suppose $v\in \Sigma_{F,\infty}^{\mathrm{r,ns}}$. In parallel with \eqref{e3.1}, we obtain
\begin{equation}\label{e4.2}
Z_v(\mathbf{s},\Phi_{1,v},\Phi_{2,v})=\pi^{-2}\int_{\mathbb{R}^{\times}}|t|_v^{1+2s_1}\int_{\mathbb{C}^{\times}}
e^{-2\pi(1+t^2)|z|_{\mathbb{C}}}|z|_{\mathbb{C}}^{1+s_1+s_2}d^{\times}zd^{\times}t.
\end{equation}

By a change of variables and \eqref{eq4.2}, it follows from \eqref{e4.2} that
\begin{equation}\label{eq4.4}
Z_v(\mathbf{s},\Phi_{1,v},\Phi_{2,v})=\pi^{-1}\frac{\Gamma_{\mathbb{R}}(1+2s_1)\Gamma_{\mathbb{R}}(1+2s_2)}{\Gamma_{\mathbb{R}}(2+2s_1+2s_2)}\zeta_{E_v}(1+s_1+s_2).
\end{equation}

\item Suppose $v\in \Sigma_{F,\infty}^{\mathrm{c}}$. As in \eqref{eq4.3} and \eqref{eq4.4}, we obtain
\begin{align*}
Z_v(\mathbf{s},\Phi_{1,v},\Phi_{2,v})=\pi^{-2}\prod_{w\mid v}\zeta_{E_w}(1+s_1+s_2)\int_{\mathbb{C}^{\times}}\frac{|t|_v^{1+2s_1}}{(1+|t|_v)^{2+2s_1+2s_2}}d^{\times}t.
\end{align*}
Recall that $|t|_v=t\bar{t}$. It follows from a change of variable and the Beta function that 
\begin{align*}
\int_{\mathbb{C}^{\times}}\frac{|t|_v^{1+2s_1}}{(1+|t|_v)^{2+2s_1+2s_2}}d^{\times}t=\frac{2\pi\Gamma(1+2s_1)\Gamma(1+2s_2)}{\Gamma(2+2s_1+2s_2)}.
\end{align*}

With $\Gamma_{\mathbb{C}}(s)=2(2\pi)^{-s}\Gamma(s)$, this is 
\begin{equation}\label{eq4.8}
Z_v(\mathbf{s},\Phi_{1,v},\Phi_{2,v})=\pi^{-1}\frac{\Gamma_{\mathbb{C}}(1+2s_1)\Gamma_{\mathbb{C}}(1+2s_2)}{\Gamma_{\mathbb{C}}(2+2s_1+2s_2)}\prod_{w\mid v}\zeta_{E_w}(1+s_1+s_2).
\end{equation}

\item Suppose $v<\infty$ is inert or ramified in $E$. By construction,
\begin{equation}\label{4.6}
Z_v(\mathbf{s},\Phi_{1,v},\Phi_{2,v})=\sum_{l\in \mathbb{Z}}\int_{\varpi_v^l\mathcal{O}_v^{\times}}Z_v(t;\mathbf{s},\Phi_{1,v},\Phi_{2,v})|t|_v^{1+2s_1}d^{\times}t,
\end{equation}
where
\begin{align*}
Z_v(t;\mathbf{s},\Phi_{1,v},\Phi_{2,v}):=\int_{E_v^{\times}}
\Phi_{1,v}((0,t)\iota(x_v))\Phi_{2,v}((0,1)\iota(x_v))|x_v|_{E_v}^{1+s_1+s_2}d^{\times}x_v.
\end{align*}

By the definitions of $\Phi_{1,v}$ and $\Phi_{2,v}$, we have the following.

\begin{itemize}
\item Suppose $l\geq 0$. Then for $t\in \varpi_v^l\mathcal{O}_v^{\times}$, 
\begin{align*}
Z_v(t;\mathbf{s},\Phi_{1,v},\Phi_{2,v})=q_v^{d_{E_v}}\int_{\mathcal{O}_{E_v}\cap E_v^{\times}}
|x_v|_{E_v}^{1+s_1+s_2}d^{\times}x_v=q_v^{d_{E_v}/2}\zeta_{E_v}(1+s_1+s_2).
\end{align*}

\item Suppose $l\leq -1$. Then for $t\in \varpi_v^l\mathcal{O}_v^{\times}$,
\begin{align*}
Z_v(t;\mathbf{s},\Phi_{1,v},\Phi_{2,v})=q_v^{d_{E_v}}\int_{\varpi_v^{-l}\mathcal{O}_{E_v}\cap E_v^{\times}}
|x_v|_{E_v}^{1+s_1+s_2}d^{\times}x_v=\frac{q_v^{d_{E_v}/2}\zeta_{E_v}(1+s_1+s_2)}{q_v^{(2+2s_1+2s_2)|l|}}.
\end{align*}
\end{itemize}

Substituting these identities into \eqref{4.6} yields
\begin{align*}
Z_v(\mathbf{s},\Phi_{1,v},\Phi_{2,v})=q_v^{\frac{d_{E_v}-d_{F_v}}{2}}\zeta_{E_v}(1+s_1+s_2)\Big[\sum_{l\geq 0}q_v^{-(1+2s_1)l}+\sum_{l\leq -1}q_v^{(1+2s_2)l}\Big].
\end{align*}
Here the factor $q_v^{-d_{F_v}/2}$ comes from the volume of $\mathcal{O}_v^{\times}$. Since
\begin{align*}
\sum_{l\geq 0}q_v^{-(1+2s_1)l}+\sum_{l\leq -1}q_v^{(1+2s_2)l}=\frac{\zeta_{F_v}(1+2s_1)\zeta_{F_v}(1+2s_2)}{\zeta_{F_v}(2+2s_1+2s_2)}.
\end{align*}
Therefore,
\begin{equation}\label{4.7}
Z_v(\mathbf{s},\Phi_{1,v},\Phi_{2,v})=\frac{\zeta_{F_v}(1+2s_1)\zeta_{F_v}(1+2s_2)}{\zeta_{F_v}(2+2s_1+2s_2)}q_v^{(d_{E_v}-d_{F_v})/2}\zeta_{E_v}(1+s_1+s_2).
\end{equation}

\item Suppose $v<\infty$ splits in $E$ and $v\nmid\mathfrak n$. Similar to \eqref{4.7}, we have
\begin{equation}\label{e4.10}
Z_v(\mathbf{s},\Phi_{1,v},\Phi_{2,v})=\frac{\zeta_{F_v}(1+2s_1)\zeta_{F_v}(1+2s_2)}{\zeta_{F_v}(2+2s_1+2s_2)}q_v^{(d_{E_v}-d_{F_v})/2}\prod_{w\mid v}\zeta_{E_w}(1+s_1+s_2).
\end{equation}

\item Suppose $v\mid\mathfrak n$. In parallel with \eqref{4.6}, we have the decomposition
\begin{equation}\label{4.8}
Z_v(\mathbf{s},\Phi_{1,v},\Phi_{2,v})=\sum_{l\in\mathbb Z}\int_{\varpi_v^l\mathcal{O}_v^{\times}}Z_v(t;\mathbf{s},\Phi_{1,v},\Phi_{2,v})|t|_v^{1+2s_1}d^{\times}t.
\end{equation}

By the definitions of $\Phi_{1,v}$ and $\Phi_{2,v}$, we have the following.

\begin{itemize}
\item Suppose $l\geq 0$. Then
\begin{align*}
Z_v(t;\mathbf{s},\Phi_{1,v},\Phi_{2,v})=\int_{E_v^{\times}}\Phi_{2,v}((0,1)\iota(x_v))
|x_v|_{E_v}^{1+s_1+s_2}d^{\times}x_v.
\end{align*}
By the definition \eqref{3.2}, together with a change of variables, we obtain 
\begin{align*}
Z_v(t;\mathbf{s},\Phi_{1,v},\Phi_{2,v})=2q_v^{-r_v(1/2+s_1+s_2)}\prod_{w\mid v}\zeta_{E_w}(1+s_1+s_2).
\end{align*}

\item Suppose $l\leq -1$. Then
\begin{equation}\label{4.14}
Z_v(t;\mathbf{s},\Phi_{1,v},\Phi_{2,v})=\int_{\varpi_v^{-l}\mathcal{O}_{E_v}\cap E_v^{\times}}\Phi_{2,v}((0,1)\iota(x_v))
|x_v|_{E_v}^{1+s_1+s_2}d^{\times}x_v.
\end{equation}

Substituting the definition \eqref{3.2} into \eqref{4.14} yields 
\begin{align*}
Z_v(t;\mathbf{s},\Phi_{1,v},\Phi_{2,v})=R_v(l)\cdot 
\prod_{w\mid v}\zeta_{E_w}(1+s_1+s_2),
\end{align*}
where
\begin{align*}
R_v(l):=2q_v^{r_v/2}
q_v^{-(1+s_1+s_2)\max\{|l|,r_v\}}q_v^{-(1+s_1+s_2)|l|}.
\end{align*}
\end{itemize}

By a straightforward calculation, 
\begin{multline*}
2q_v^{-r_v(1/2+s_1+s_2)}\sum_{l\geq 0}q_v^{-(1+2s_1)l}
+\sum_{l\geq 1}
R_v(l)q_v^{(1+2s_1)l}=\\
2H_v(s_1,s_2)\frac{\zeta_{F_v}(1+2s_1)\zeta_{F_v}(1+2s_2)}{q_v^{(1/2+s_1+s_2)r_v}\zeta_{F_v}(2+2s_1+2s_2)},
\end{multline*}
where $H_v(s_1,s_2)$ is defined in \textsection\ref{3.2.1}.  

It follows from the preceding computation and \eqref{4.8} that
\begin{equation}\label{e4.12}
Z_v(\mathbf{s},\Phi_{1,v},\Phi_{2,v})=H_v(s_1,s_2)\frac{2\zeta_{F_v}(1+2s_1)\zeta_{F_v}(1+2s_2)\zeta_{E_v}(1+s_1+s_2)}{q_v^{(1/2+s_1+s_2)r_v}\zeta_{F_v}(2+2s_1+2s_2)}.
\end{equation}
\end{itemize}

Multiplying the local calculations, which already include the Gaussian
scalars of \textsection\ref{sec3.1}, and using \eqref{eq4.det} proves
\eqref{fc4.1} for $\Re(s_1),\Re(s_2)\gg1$. The general identity follows
by meromorphic continuation.
\end{proof}

\subsection{Calculation of \texorpdfstring{$Z^{\Diamond}(\mathbf{s},\boldsymbol{\Phi})$}{}}
Recall that
\begin{align*}
Z^{\Diamond}(\mathbf{s},\boldsymbol{\Phi}):=\int_{\mathcal{C}_{E/F}}\Eis(h_1^{\Diamond}(\cdot,s_1)h_2(\cdot,s_2))(\iota(x))d^{\times}x.
\end{align*}

Suppose $\Re(s_1)\gg 1$ and $\Re(s_2)\gg 1$. Unfolding as in \eqref{eq2.4}, we obtain
\begin{align*}
Z^{\Diamond}(\mathbf{s},\boldsymbol{\Phi})=\prod_{v\in \Sigma_F}Z_v^{\Diamond}(\mathbf{s},\Phi_{1,v},\Phi_{2,v}),
\end{align*}
where  
\begin{align*}
Z_v^{\Diamond}(\mathbf{s},\Phi_{1,v},\Phi_{2,v}):=\int_{E_v^{\times}}h_{1,v}^{\Diamond}(\iota(x_v),s_1)\Phi_{2,v}((0,1)\iota(x_v))|\det \iota(x_v)|_{v}^{\frac{1}{2}+s_2}
d^{\times}x_v.
\end{align*}

The normalized intertwining relations are
\begin{equation}\label{eq4.intertwine}
h_1^\Diamond(\cdot,s)=D_E^{-s}h_1(\cdot,-s),\qquad h_2^\Diamond(\cdot,s)=D_E^{-s}N_F(\mathfrak n)^{-2s}h_2(\cdot,-s).
\end{equation}

\begin{prop}\label{prop4.2}
For $\mathbf{s}\in \mathbb{C}^2$, we have the following identity of meromorphic functions:
\begin{align*}
Z^{\Diamond}(\mathbf{s},\Phi_{1},\Phi_{2})=D_E^{-s_1}\mathscr Z(-s_1,s_2).
\end{align*}
\end{prop}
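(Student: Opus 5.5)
The plan is to reduce Proposition~\ref{prop4.2} to Proposition~\ref{prop4.1} by the normalized intertwining relation \eqref{eq4.intertwine}. The first relation there says $h_1^\Diamond(\cdot,s_1)=D_E^{-s_1}h_1(\cdot,-s_1)$ as sections of the induced representation. Substituting this into the definition of $Z^{\Diamond}$ gives
\begin{align*}
\Eis\bigl(h_1^{\Diamond}(\cdot,s_1)h_2(\cdot,s_2)\bigr)=D_E^{-s_1}\Eis\bigl(h_1(\cdot,-s_1)h_2(\cdot,s_2)\bigr),
\end{align*}
and integrating over $\mathcal C_{E/F}$ yields $Z^\Diamond(\mathbf s,\boldsymbol\Phi)=D_E^{-s_1}Z((-s_1,s_2),\boldsymbol\Phi)$. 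Proposition~\ref{prop4.1} then evaluates the right-hand side as $D_E^{-s_1}\mathscr Z(-s_1,s_2)$.

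The first issue is convergence. The series $\Eis(h_1h_2)$ converges when $\Re(s_1+s_2)$ is large. Its toric unfolding, as in \eqref{eq2.4}, also needs the $t$-integral defining $h_1$ to converge, which requires $\Re(s_1)$ large. For the $\Diamond$ term, I would therefore work in a region where $\Re(s_2)\gg1$ and $\Re(s_1)$ is negative but moderate, so that $h_1^\Diamond$ is given by its convergent intertwining integral while $h_1(\cdot,-s_1)$ is given by \eqref{e2.2}. In that region the identity of sections holds pointwise, and both toric integrals unfold to Euler products, so the identity of Euler products can be checked directly. Meromorphic continuation in $\mathbf s$ then extends it to $\mathbb C^2$. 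The function $\mathscr Z(-s_1,s_2)$ is already meromorphic, since it is built from completed zeta functions and the holomorphic factor $H_{\mathfrak n}$.

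The main obstacle is justifying \eqref{eq4.intertwine} itself for the specific $\Phi_1$, that is, computing the constant $D_E^{-s_1}$. The plan is to use the standard identity $h_1^\Diamond(g,s)=|\det g|^{1/2-s}\int\widehat{\Phi_1}((0,t)g)|t|^{1-2s}d^\times t$, which follows from Poisson summation or the local Tate functional equation, where $\widehat{\ }$ is the symplectic Fourier transform on $\mathbb A_F^2$. This reduces the claim to computing $\widehat{\Phi_{1,v}}$ place by place.
\begin{itemize}
\item At the Archimedean places the Gaussians $\Phi_v^\circ$ are self-dual. After translation by $g_v^\dagger$, their transform is the same Gaussian translated by the contragredient, so a factor $|\det g_v^\dagger|^{-1}$ appears.
\item At a finite place, $\Phi_1$ is a normalized translate of the characteristic function of $\mathcal O_{E_v}$. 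The dual lattice of $\mathcal O_{E_v}$ under the trace pairing is the inverse different. Here the transform equals $q_v^{\cdot}\mathbf 1$ of a rescaled lattice, which is $\mathcal O_{E_v}$ up to the scaling $\mathfrak D_{E/F}\mathfrak D_F^2$, together with the $\psi$-self-dual measure factor $q_v^{-d_{F_v}}$.
\end{itemize}
Collecting the powers of $|\det g_0|$ from \eqref{eq4.det} and the different exponents, the transformed section should be $h_1(\cdot,-s)$ up to a central translate. That translate costs $|\cdot|^{2s}$ with total norm $D_F^2D_{E/F}=D_E$, which gives the factor $D_E^{-s}$.

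I would confirm this constant at $v$-adic level by comparing the local unfolded integral $Z_v^\Diamond$ with $Z_v((-s_1,s_2))$ in the model inert case \eqref{4.7}. The Archimedean factors $\pi^{-r_{\mathrm{ns}}-r_2}$ cancel symmetrically in this comparison, and $\Phi_2$ is untouched by the change of variables.
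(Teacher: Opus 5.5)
Your argument is correct in outline and has the same skeleton as the paper's: use \eqref{eq4.intertwine} to get $Z^{\Diamond}(\mathbf s,\boldsymbol\Phi)=D_E^{-s_1}Z((-s_1,s_2),\boldsymbol\Phi)$, then evaluate with Proposition~\ref{prop4.1}. The difference is how you obtain the constant $D_E^{-s_1}$.

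The paper translates $h_{1,v}$ by $g_{0,v}$ to a spherical section $\tilde h_v$. Since spherical vectors form a one-dimensional space, the local constant is $\tilde h_v^{\Diamond}(I_2,s_1)/\tilde h_v(I_2,-s_1)$. Explicit Tate integrals give $|\det g_{0,v}|_v^{-2s_1}\zeta_{F_v}(2s_1)/\zeta_{F_v}(1-2s_1)$, times $2^{-2s_1}$ at real nonsplit places or $q_v^{-d_{F_v}/2}$ at finite places. The paper then removes $\Lambda_F(2s_1)/\Lambda_F(1-2s_1)$ with the functional equation.

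You instead use $M(s)f_\Phi(s)=f_{\widehat\Phi}(-s)$ together with the fact that each $\widehat{\Phi_{1,v}}$ is a dilate $|c_v|_v\Phi_{1,v}(c_v\,\cdot)$. The constant is then $\prod_v|c_v|_v^{2s_1}$.
\begin{itemize}
\item Your route is more conceptual: the constant is the self-duality defect of $\mathcal O_E$, and no Gamma or zeta bookkeeping is needed.
\item The paper's route yields explicit local constants $c_v(s)$, which it reuses for the modified lattice tests in \eqref{eq9.unit.matrix}.
\item Your identity is global only. Locally $M_v(s)f_{\Phi_v}(s)=\gamma(2s,\mathbf 1,\psi_v)^{-1}f_{\widehat{\Phi}_v}(-s)$. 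So you need Poisson summation, or $\prod_v\gamma(2s,\mathbf 1,\psi_v)=1$; the local Tate functional equation alone does not suffice.
\end{itemize}

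Three details of your sketch are wrong as stated and must be fixed.
\begin{itemize}
\item \textbf{Convergence region.} The intertwining integral defining $h_1^{\Diamond}(\cdot,s_1)$ converges only for $\Re(s_1)>0$, so your region with $\Re(s_1)$ negative fails. Take $0<\Re(s_1)<1/2$ and $\Re(s_2-s_1)\gg1$, as the paper does; there $h_1(\cdot,-s_1)$ is still given by \eqref{e2.2}.
\item \textbf{Real nonsplit Gaussian.} The Gaussian $\pi^{-1}e^{-2\pi(t_1^2+t_2^2)}$ is not self-dual for $\psi_{\mathbb R}$. Its transform is $\tfrac12\Phi_v^{\circ}(\cdot/2)$, which contributes an extra $2^{-2s_1}$ at each such place. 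This is exactly what cancels the factor $2^{-r_{\mathrm{ns}}}$ in \eqref{eq4.det}. With your self-duality claim you would get $2^{2s_1r_{\mathrm{ns}}}D_E^{-s_1}$. The real split and complex Gaussians are self-dual.
\item \textbf{Finite places.} On $E_v$ the symplectic form is $(x,y)\mapsto\mathrm{Tr}_{E/F}(x\bar y/2\theta)$. Hence the dual lattice of $\mathcal O_{E_v}$ is $2\theta\mathfrak D_{E_v}^{-1}$, not a rescaling by $\mathfrak D_{E/F}\mathfrak D_F^2$. One finds $|c_v|_v^2=|4\tau|_v^{-1}q_v^{-d_{E_v}}$ at finite $v$ and $|c_v|_v^2=|4\tau|_v^{-1}$ at every Archimedean $v$. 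The product formula removes $4\tau$ and leaves $\prod_v|c_v|_v^2=D_E^{-1}$, as required.
\end{itemize}
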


\begin{proof}
Assume first that
\begin{align*}
0<\Re(s_1)<1/2,\quad \Re(s_2-s_1)\gg 1.
\end{align*}
Define
\begin{align*}
\tilde{h}_v(g_v,s_1)
=
|\det g_v|_v^{s_1+1/2}
\int_{F_v^{\times}}
\Phi_v^{\circ}((0,t_v)g_v)|t_v|_v^{1+2s_1}d^{\times}t_v.
\end{align*}
Let $g_{0,v}$ be the matrix used in \eqref{eq4.det}, and define
$b_v=q_v^{d_{E_v}/2}$ at finite places and $b_v=1$ at infinity.
Then, for all $g_v\in G(F_v)$,
\begin{equation}\label{equ4.14}
h_{1,v}(g_v,s_1)
=
b_v|\det g_{0,v}|_v^{-s_1-1/2}
\tilde{h}_v(g_vg_{0,v},s_1).
\end{equation}

For each $v\in \Sigma_F$, let $\tilde{h}_v^{\Diamond}(g_v,s_1)$ denote the image of
$\tilde{h}_v(g_v,s_1)$ under the intertwining operator. Then
$\tilde{h}_v^{\Diamond}(g_v,s_1)$ is a spherical section of
$|\cdot|_v^{-s_1}\boxplus |\cdot|_v^{s_1}$. On the other hand,
$\tilde{h}_v(g_v,-s_1)$ is also a spherical section of the same induced
representation. Since the space of spherical vectors is one-dimensional, we have
\begin{equation}\label{equ4.16}
\tilde{h}_v^{\Diamond}(g_v,s_1)
=
\frac{\tilde{h}_v^{\Diamond}(I_2,s_1)}
{\tilde{h}_v(I_2,-s_1)}
\tilde{h}_v(g_v,-s_1),
\quad g_v\in G(F_v).
\end{equation}

\begin{itemize}
\item Suppose $v\in \Sigma_{F,\infty}$. Then
\begin{align*}
\tilde{h}_v^{\Diamond}(I_2,s_1)&=
\begin{cases}
\pi^{-1}2^{-1/2-s_1}\Gamma_{\mathbb R}(2s_1),&v\text{ real non-split},\\
\Gamma_{\mathbb R}(2s_1),&v\text{ real split},\\
\pi^{-1}\Gamma_{\mathbb C}(2s_1),&v\text{ complex},
\end{cases}
\\[3pt]
\tilde{h}_v(I_2,-s_1)&=
\begin{cases}
\pi^{-1}2^{-1/2+s_1}\Gamma_{\mathbb R}(1-2s_1),&v\text{ real non-split},\\
\Gamma_{\mathbb R}(1-2s_1),&v\text{ real split},\\
\pi^{-1}\Gamma_{\mathbb C}(1-2s_1),&v\text{ complex}.
\end{cases}
\end{align*}
Thus \eqref{equ4.16} becomes
\begin{align*}
\tilde{h}_v^{\Diamond}(g_v,s_1)
=
2^{-2s_1\mathbf1_{v\text{ real non-split}}}\zeta_{F_v}(2s_1)\zeta_{F_v}(1-2s_1)^{-1}
\tilde{h}_v(g_v,-s_1).
\end{align*}
Together with \eqref{equ4.14}, this implies
\begin{align*}
h_{1,v}^{\Diamond}(g_v,s_1)
=
|\det g_{0,v}|_v^{-2s_1}2^{-2s_1\mathbf1_{v\text{ real non-split}}}
\zeta_{F_v}(2s_1)\zeta_{F_v}(1-2s_1)^{-1}
h_{1,v}(g_v,-s_1).
\end{align*}
The local zeta integrals then take the form
\begin{multline}\label{eq4.16}
Z_v^{\Diamond}(\mathbf{s},\Phi_{1,v},\Phi_{2,v})
=
|\det g_{0,v}|_v^{-2s_1}2^{-2s_1\mathbf1_{v\text{ real non-split}}}\\
\zeta_{F_v}(2s_1)\zeta_{F_v}(1-2s_1)^{-1}
Z_v((-s_1,s_2),\Phi_{1,v},\Phi_{2,v}).
\end{multline}

Combining \eqref{eq4.3}, \eqref{eq4.4}, \eqref{eq4.8}, and \eqref{eq4.16} gives
\begin{equation}\label{4.19}
\prod_{v\in \Sigma_{F,\infty}}
Z_v^{\Diamond}(\mathbf{s},\Phi_{1,v},\Phi_{2,v})
=
\frac{
\pi^{-r_{\mathrm{ns}}-r_2}2^{-2ds_1}
\Lambda_{F_{\infty}}(2s_1)
\Lambda_{F_{\infty}}(1+2s_2)
\Lambda_{E_{\infty}}(1-s_1+s_2)
}{
|\tau_{\infty}|_{\infty}^{s_1}
\Lambda_{F_{\infty}}(2-2s_1+2s_2)
}.
\end{equation}

\item Suppose $v\in \Sigma_{F,\fin}$. Then
\begin{align*}
\tilde{h}_v^{\Diamond}(I_2,s_1)
=
q_v^{-d_{F_v}}\zeta_{F_v}(2s_1),
\quad
\tilde{h}_v(I_2,-s_1)
=
q_v^{-d_{F_v}/2}\zeta_{F_v}(1-2s_1).
\end{align*}
With these identities, \eqref{equ4.16} becomes
\begin{align*}
\tilde{h}_v^{\Diamond}(g_v,s_1)
=
q_v^{-d_{F_v}/2}
\zeta_{F_v}(2s_1)\zeta_{F_v}(1-2s_1)^{-1}
\tilde{h}_v(g_v,-s_1).
\end{align*}
Applying \eqref{equ4.14}, we obtain
\begin{align*}
h_{1,v}^{\Diamond}(g_v,s_1)
=
|\det g_{0,v}|_v^{-2s_1}q_v^{-d_{F_v}/2}
\zeta_{F_v}(2s_1)\zeta_{F_v}(1-2s_1)^{-1}
h_{1,v}(g_v,-s_1).
\end{align*}
Therefore
\begin{equation}\label{eq4.17}
Z_v^{\Diamond}(\mathbf{s},\Phi_{1,v},\Phi_{2,v})
=
|\det g_{0,v}|_v^{-2s_1}q_v^{-d_{F_v}/2}
\frac{\zeta_{F_v}(2s_1)}{\zeta_{F_v}(1-2s_1)}
Z_v((-s_1,s_2),\Phi_{1,v},\Phi_{2,v}).
\end{equation}
It follows from \eqref{4.6}, \eqref{e4.10}, \eqref{e4.12}, and
\eqref{eq4.17} that
\begin{align*}
\prod_{v\in \Sigma_{F,\fin}}
Z_v^{\Diamond}(\mathbf{s},\Phi_{1,v},\Phi_{2,v})
\end{align*}
has a holomorphic continuation to the region
\begin{align*}
\Re(s_1)\gg 1,\quad \Re(s_2-s_1)\gg 1,
\end{align*}
and in this region is equal to
\begin{equation}\label{e4.19}
\frac{
2^{\omega_F(\mathfrak{n})+2ds_1}N_F(\mathfrak{a})^{2s_1}D_E^{1/2}
\zeta_F(2s_1)\zeta_F(1+2s_2)\zeta_E(1-s_1+s_2)
}{
N_F(\mathfrak{n})^{1/2-s_1+s_2}
D_F
\zeta_F(2-2s_1+2s_2)
}
H_{\mathfrak{n}}(-s_1,s_2).
\end{equation}
\end{itemize}

By definition, $|\tau_{\infty}|_{\infty}=D_{E/F}N_F(\mathfrak{a})^2$. 
Combining \eqref{4.19} and \eqref{e4.19},  and then using the meromorphic
continuation of the Dedekind zeta functions, gives
\begin{align*}
Z^{\Diamond}(\mathbf{s},\Phi_{1},\Phi_{2})
=
\frac{
\pi^{-r_{\mathrm{ns}}-r_2}D_E^{1/2}
\Lambda_F(2s_1)\Lambda_F(1+2s_2)\Lambda_E(1-s_1+s_2)
}{
2^{-\omega_F(\mathfrak{n})}
D_{E/F}^{s_1}
N_F(\mathfrak{n})^{1/2-s_1+s_2}
D_F
\Lambda_F(2-2s_1+2s_2)
}
H_{\mathfrak{n}}(-s_1,s_2).
\end{align*} 
Therefore, Proposition \ref{prop4.2} follows from the functional equation,
the relation $D_E=D_{E/F}D_F^2$, and the normalized identity
\eqref{eq4.intertwine}. 
\end{proof}

\begin{remark}
Uniqueness of the spherical vector gives \eqref{eq4.16} and
\eqref{eq4.17}, reducing the computation of $Z_v^\Diamond$ to that of $Z_v$.
\end{remark}

\section{Estimates of \texorpdfstring{$\mathcal{P}_{\cusp}(\mathbf{s},\boldsymbol{\Phi})$}{} and \texorpdfstring{$\mathcal{P}_{\Eis}(\mathbf{s},\boldsymbol{\Phi})$}{}}\label{sec5}

\subsection{Estimate of \texorpdfstring{$\mathcal{P}_{\cusp}(\mathbf{s},\boldsymbol{\Phi})$}{}}\label{sec5.1}
 Recall that
\begin{equation}\label{eq5.1}
\mathcal{P}_{\cusp}(\mathbf{s},\boldsymbol{\Phi})
=
\sum_{\substack{\pi\in \mathcal{A}_0([G],\mathbf{1})\\
\phi\in\mathfrak{B}(\pi)}}
\widetilde{\Psi}
\bigl(\overline{W_{\phi}},W_1(\cdot,s_1),h_2(\cdot,s_2)\bigr)
\int_{\mathcal{C}_{E/F}}\phi(x)d^{\times}x .
\end{equation}

The main result in this section is the following. 
\begin{prop}\label{prop5.1}
Let $|\Re(s_1)|+|\Re(s_2)|\leq \varepsilon$ and $0<\varepsilon<10^{-3}$. Then
\begin{multline}\label{eq5.2}
\mathcal{P}_{\cusp}(\mathbf{s},\boldsymbol{\Phi})\ll D_E^{\frac{1}{4}+10\varepsilon}\sum_{\mathfrak{n}\subset \mathfrak{m}}N_F(\mathfrak{m})^{-\frac{1}{2}+1000\varepsilon}\sum_{\pi\in \mathcal{F}(\mathfrak{m},\mathbf{1})}\mathcal V_\infty(\pi)L(1/2,\pi_E)^{\frac{1}{2}}\\
|L_F(1/2+s_1+s_2,\pi)L_F(1/2-s_1+s_2,\pi)|,
\end{multline}	
where the implied constant depends on $F$, $\varepsilon$, $s_1$, and $s_2$.

Here $\mathcal V_\infty(\pi)=\exp(-c\sum_{v\mid\infty}|\nu_{\pi_v}|)$ for a sufficiently small fixed $c>0$, and $\nu_{\pi_v}$ is the spherical Archimedean parameter. The $L$-functions in this bound are finite. The family $\mathcal{F}(\mathfrak{m},\mathbf{1})$ is defined in \textsection\ref{sec3.1.3}.
\end{prop}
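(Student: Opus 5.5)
The plan is to bound each summand of \eqref{eq5.1} by a product of two factors: the Rankin--Selberg coefficient $\widetilde{\Psi}(\overline{W_\phi},W_1,h_2)$ and the toric period $\int_{\mathcal C_{E/F}}\phi(\iota(x))\,d^\times x$. Both factors factor into local pieces. We then sum over an orthonormal basis of each $\pi$, using that only finitely many $K$-types contribute.

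\textbf{Which $\pi$ and $\phi$ contribute.} The data $\Phi_1,\Phi_2$ are right-invariant under $g_0^{-1}K_v[0]g_0$ at $v\nmid\mathfrak n$. At $v\mid\mathfrak n$ they are invariant under a conjugate of $K_v[2r_v]$, or, after the split-eigenbasis translation, under the diagonal torus of level $r_v$. Hence $W_1h_2$ has bounded $K$-type, and $\widetilde\Psi$ vanishes unless $\pi$ is unramified outside $\mathfrak n$ and has conductor $\mathfrak m$ with $\mathfrak n\subset\mathfrak m$. At the Archimedean places the Gaussians are spherical after the translation by $g_0$, so only spherical $\pi_\infty$ survive. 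This yields the sum over $\mathfrak m\mid\mathfrak n$ and $\pi\in\mathcal F(\mathfrak m,\mathbf 1)$. The translation by $g_0$ and \eqref{eq4.det} account for the factor $|\det g_0|^{-1-s_1-s_2}\asymp D_{E/F}^{-1/2}$ up to $D_E^{\varepsilon}$ in the coefficient. For each $\pi$, the sum over $\phi\in\mathfrak B(\pi)$ runs over the $O(N_F(\mathfrak n/\mathfrak m)^{\varepsilon})$ oldforms in the relevant level. We choose $\mathfrak B(\pi)$ adapted to the translated level structure.

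\textbf{The Rankin--Selberg coefficient.} Unfold $\Psi$ into local integrals $\Psi_v(\overline{W_{\phi,v}},W_{1,v},h_{2,v})$ normalized by $L(1/2+s_1+s_2,\pi)L(1/2-s_1+s_2,\pi)$, using $\chi_i=\omega_i=\mathbf 1$ in the displayed $L$-factors of \S2.3. At unramified places the local ratio is $1$ after dividing by $\zeta_{F_v}(1+2s_2)$-type factors. At $v\mid\mathfrak n$, compute the integral against newform and oldform translates explicitly. The goal is a bound $\ll q_v^{(-1/2+O(\varepsilon))e_v(\mathfrak m)}$, after including the $q_v^{r_v/2}$ normalization in \eqref{3.2} and the $L^2$-normalization of $\phi$, which contributes $L(1,\pi,\mathrm{Ad})^{-1/2}\ll N(\mathfrak m)^{\varepsilon}$ by Hoffstein--Lockhart type bounds. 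At Archimedean places, repeated integration by parts against the Gaussian gives exponential decay in $|\nu_{\pi_v}|$; this decay is $\mathcal V_\infty(\pi)$, absorbing the Gamma ratios, which are polynomial in $\nu$ on the strip $|\Re s_i|\le\varepsilon$.

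\textbf{The toric period.} Apply Waldspurger's formula: $|\int_{\mathcal C_{E/F}}\phi|^2$ equals $\frac{\Lambda(1/2,\pi_E)}{\Lambda(1,\pi,\mathrm{Ad})}$ times a product of local toric integrals, and $\Lambda(1/2,\pi_E)$ is nonnegative. Here is the source of $D_E^{1/4}$. The local toric integrals at places ramified in $E$ and at $g_0$-translated spherical vectors have size $\asymp q_v^{-d_{E_v}/2}$ relative to volumes. Combined with the Archimedean conductor $D_E^{1/2}$ normalization of $\Lambda(1/2,\pi_E)\to L(1/2,\pi_E)$ and the global volume $L(1,\eta)$, which is $D_E^{\pm\varepsilon}$ by Siegel, the square root gives $D_E^{1/4+\varepsilon}L(1/2,\pi_E)^{1/2}$ after multiplying by the $D_{E/F}^{-1/2}$ above and the factor $D_E^{1/2}$ from $\Phi_v=q_v^{d_{E_v}/2}\Phi^*_v$. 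At $v\mid\mathfrak n$, which is split, the toric integral of oldforms is bounded by $O(1)$. Multiplying the two estimates and summing over oldforms gives \eqref{eq5.2}.

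\textbf{Main obstacle.} I expect the hardest step to be the uniform local bound at $v\mid\mathfrak n$ for the Rankin--Selberg integral against the non-spherical $\Phi_v^{(j)}$ for all oldform vectors, with exponent exactly $-1/2$ in $N(\mathfrak m)$. I would first try reducing to the split-torus model: there $\Phi_v^{(1)}+\Phi_v^{(2)}$ is a sum of two diagonal translates, so the integral becomes a Hecke-type sum of $\lambda_\pi(\varpi^k)$ weighted by $q^{-k/2}$. These sums are bounded using $\theta=7/64$ and the smallness of $|\Re s_i|$.
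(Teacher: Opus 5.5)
Your architecture matches the paper's. You unfold the spectral coefficient into local Rankin--Selberg integrals after translating by $g_0$, bound the toric period by Waldspurger's formula, restrict the conductor at $v\mid\mathfrak n$, and extract an Archimedean weight. The genuine gap is the discriminant bookkeeping. That bookkeeping is the whole content of the factor $D_E^{1/4}$, and as written it does not produce that exponent.

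Both $\Phi_1$ and $\Phi_2$ carry the scalar $q_v^{d_{E_v}/2}$ at every $v\nmid\mathfrak n$. So after translation the local Rankin--Selberg ratio at the places dividing $\mathfrak D_E$ is not $1$ but $J_v=q_v^{d_{E_v}-2d_{F_v}+d_{F_v}(s_2-s_1)}$ (Lemma~\ref{lem5.1}). Hence $\prod_{v<\infty}J_v=D_{E/F}D_F^{s_2-s_1}\asymp_F D_E$, whereas you count this normalization only once, as $D_E^{1/2}$. Together with $|\det g_0|^{-1-s_1-s_2}\asymp D_{E/F}^{-1/2+O(\varepsilon)}$, the spectral coefficient has size $D_E^{1/2}$.

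The toric period is the small factor, not the source of $D_E^{1/4}$. After translation the unit torus sits in $K_v$, and the local Waldspurger integrals are, up to bounded factors, the quotient volumes $q_v^{-(d_{E_v}-d_{F_v})/2}$. Their square roots give $D_E^{-1/4+\varepsilon}$.

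There is also no conductor factor $D_E^{1/2}$ in passing from $\Lambda(1/2,\pi_E)$ to $L(1/2,\pi_E)$. Here $\Lambda=L_\infty L$, and the Archimedean factor of $\pi_E$ does not see $D_E$.

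With your factors as listed, $D_{E/F}^{-1/2}\cdot D_E^{1/2}\cdot(D_E^{-1/2}D_E^{1/2})^{1/2}$, each summand would be $D_E^{O(\varepsilon)}$. That contradicts Waldspurger's formula and the unramified Rankin--Selberg computation, both of which are identities. The correct count is $D_E^{-1/2}\cdot D_E\cdot D_E^{-1/4}=D_E^{1/4}$.

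Two further steps need repair.

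First, invariance of $\Phi_v^{(1)}+\Phi_v^{(2)}$ under a conjugate of $K_v[2r_v]$ only gives conductor exponent at most $2r_v$, i.e.\ $\mathfrak n^2\subset\mathfrak m$. To reach $\mathfrak n\subset\mathfrak m$ you must treat $h_{2,v}^{(1)}$ and $h_{2,v}^{(2)}$ separately. Each is right-invariant under $K_v[r_v]$ or its Weyl conjugate (Lemma~\ref{lem5.5}, with \eqref{5.19} reducing $j=2$ to $j=1$).

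Second, repeated integration by parts yields only $O_A((1+|\nu_{\pi_v}|)^{-A})$. It does not give the exponential weight $\mathcal V_\infty(\pi)=\exp(-c\sum_{v\mid\infty}|\nu_{\pi_v}|)$ in the statement. The paper evaluates the Archimedean integral exactly via the Mellin transform of two $K$-Bessel functions (Lemma~\ref{lem5.2}), so the normalized local factor is a constant. The exponential decay then comes from Stirling's formula: compare the exponentially small $L_\infty(1/2\pm s_1+s_2,\pi)$ with the Bessel norm in the Rankin--Selberg norm formula for $A_\pi$ and with the Archimedean Waldspurger factors.

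Finally, your local target $q_v^{(-1/2+O(\varepsilon))e_v(\mathfrak m)}$ at $v\mid\mathfrak n$ is weaker than the paper's $r_v^6q_v^{(-1/2+100\varepsilon)r_v}$ (Corollary~\ref{cor5.8}). It would still suffice for the displayed bound.
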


\begin{remark}
The implied constant in \eqref{eq5.2} is independent of $\mathfrak{a}$
(see \eqref{e5.4} in \textsection\ref{5.1.3}), and hence independent of the
choice of $\tau$ used in the parametrization $E=F(\sqrt{\tau})$.
\end{remark}

\subsubsection{Waldspurger's Formula}\label{5.1.1}
We may take $\mathfrak{B}(\pi)\simeq \otimes_{v\in \Sigma_F}\mathfrak{B}_{\pi_v}$, where $\mathfrak{B}_{\pi_v}$ is an orthonormal basis of vectors in $\pi_v$. Thus an automorphic form $\phi\in\mathfrak{B}(\pi)$ corresponds, up to our mild abuse of notation, to a pure tensor
\begin{align*}
\otimes_{v\in \Sigma_F}\phi_v\in \pi=\otimes_{v\in \Sigma_F}'\pi_v.
\end{align*}
Let
\begin{align*}
\langle \pi(g)\phi,\phi\rangle=\prod_{v\in \Sigma_F}\langle \pi_v(g_v)\phi_v,\phi_v\rangle_v,\ \ \ g=\otimes_{v\in \Sigma_F}'g_v\in G(\mathbb{A}_F)
\end{align*}
be the associated matrix coefficient.

For a unit pure tensor, the squared Waldspurger formula, with the measures of \textsection\ref{1.1.1}, gives the bound
\begin{equation}\label{e5.2}
\Big|\int_{\mathcal{C}_{E/F}}\phi(\iota(x))d^{\times}x\Big|^2\ll_{F,\varepsilon}D_E^\varepsilon\frac{\Lambda_F(2)\Lambda(1/2,\pi_E)}{\Lambda(1,\pi,\Ad)}\prod_{v\in \Sigma_F}\mathcal{I}_v^{\sharp}(\phi_v)^2,
\end{equation}
where the fixed Haar scalar and the quadratic Hecke $L$-value factors arising from the quotient measure are absorbed into $D_E^\varepsilon$. No change of local measure is made. Here $\pi_E$ denotes the base change of $\pi$, and
\begin{equation}\label{fc5.2}
\mathcal I_v^{\sharp}(\phi_v)
:=\sqrt{\frac{L_v(1,\pi_v,\mathrm{Ad})L_v(1,\eta_v)}
{\zeta_{F_v}(2)L_v(1/2,\pi_{E,v})}
\int_{F_v^{\times}\backslash E_v^{\times}}
\frac{\langle \pi_v(\iota(x_v))\phi_v,\phi_v\rangle_v}
{\langle \phi_v,\phi_v\rangle_v}
d^{\times}x_v}.
\end{equation}
At a good unramified spherical place, $\mathcal I_v^{\sharp}(\phi_v)=1$; the factor $L_v(1,\eta_v)$ cancels $1+q_v^{-1}$ in the inert case. See \cite[\textsection 5--\textsection 6]{II10}. Choose normalized linear toric functionals $\ell_v$ with $|\ell_v(\phi_v)|=\mathcal I_v^{\sharp}(\phi_v)$ on unit vectors. Then the global period factors as $B_\pi\prod_v\ell_v(\phi_v)$, with $|B_\pi|$ bounded by \eqref{e5.2}; the linear functionals account for the phases of all basis vectors.

\subsubsection{Rankin--Selberg Integrals}

Recall the adelic change-of-basis matrix $g_0$ from \textsection\ref{sec4}. By \eqref{eq4.det},
\begin{equation}\label{f5.5}
|\det g_0|=2^{-r_{\mathrm{ns}}}D_{E/F}^{1/2}.
\end{equation}

At every untwisted finite place, \eqref{eq3.1} reads
\begin{align*}
\Phi_v^*(t_1,t_2)
=
\Phi_v^\circ((t_1,t_2)g_{0,v}),
\quad t_1,t_2\in F_v.
\end{align*}

At a twisting prime the split change of basis in \eqref{eq3.3} lies in
$K_v$ and gives the same spherical maximal-order test. Changing variables
$x\mapsto x(g_0)^{-1}$ in the Rankin--Selberg integral therefore gives
\begin{equation}\label{e5.1}
|\det g_0|^{1+s_1+s_2}
\widetilde{\Psi}
\bigl(\overline{W_{\phi^{\dag}}},W_1(\cdot,s_1),h_2(\cdot,s_2)\bigr)
=
\widetilde{\Psi}
\bigl(\overline{W_{\phi}},W_1^{\dag}(\cdot,s_1),h_2^{\dag}(\cdot,s_2)\bigr),
\end{equation}
where $\phi^{\dag}:=\pi(g_0)\phi$. Here
$W_1^{\dag}(\cdot,s_1)$ denotes the Whittaker function associated, via \eqref{2.2}, with
\begin{align*}
\Phi_1^{\dag}:=\bigotimes_{v\in \Sigma_{F,\infty}}\Phi_v^{\circ}\bigotimes_{v\in \Sigma_{F,\fin}}q_v^{d_{E_v}/2}\Phi_v^{\circ}, 
\end{align*}
and $h_2^{\dag}(\cdot,s_2)$ denotes the section associated, via \eqref{e2.2} and \eqref{3.2}, with
\begin{align*}
\Phi_2^{\dag}
:=
\bigotimes_{v\in \Sigma_{F,\infty}}\Phi_v^{\circ}
\bigotimes_{v\in \Sigma_{F,\fin}:\ e_v(\mathfrak{n})=0} q_v^{d_{E_v}/2}\Phi_v^{\circ}\bigotimes_{v\in \Sigma_{F,\fin}:\ e_v(\mathfrak{n})=r_v>0}\Phi_v^{\dag},
\end{align*}
where, for $v\in \Sigma_{F,\fin}$ with $e_v(\mathfrak{n})=r_v$, we set
\begin{align*}
\Phi_v^{\dag}(t_1,t_2):=q_v^{\frac{r_v}{2}}\Phi_v^{\circ}\left((t_1,t_2)\begin{pmatrix}
\varpi_v^{-r_v}\\
& 1
\end{pmatrix}\right)+q_v^{\frac{r_v}{2}}\Phi_v^{\circ}\left((t_1,t_2)\begin{pmatrix}
1\\
& \varpi_v^{-r_v}
\end{pmatrix}\right).
\end{align*}

For $v\in \Sigma_F$, initially in the region $\Re(s_1)\gg 1$ and
$\Re(s_2-s_1)\gg 1$, let
$\Psi_v
\bigl(\overline{W_{\phi,v}},W_{1,v}^{\dag}(\cdot,s_1),h_{2,v}^{\dag}(\cdot,s_2)\bigr)$
denote the local component of the absolutely convergent integral 
$\Psi
\bigl(\overline{W_{\phi}},W_1^{\dag}(\cdot,s_1),h_2^{\dag}(\cdot,s_2)\bigr)$,
and let
$\widetilde{\Psi}_v
\bigl(\overline{W_{\phi,v}},W_{1,v}^{\dag}(\cdot,s_1),h_{2,v}^{\dag}(\cdot,s_2)\bigr)$
be its meromorphic continuation to $(s_1,s_2)\in \mathbb{C}^2$. Define
\begin{align*}
\mathcal{P}_v^{\sharp}(\mathbf{s},\phi_v):=\frac{\widetilde{\Psi}_v
\bigl(\overline{W_{\phi,v}},W_{1,v}^{\dag}(\cdot,s_1),h_{2,v}^{\dag}(\cdot,s_2)\bigr)}{L_v(1/2+s_1+s_2,\widetilde{\pi}_v)L_v(1/2-s_1+s_2,\widetilde{\pi}_v)}.
\end{align*}
We fix the local Whittaker embeddings as follows. At finite places, choose $\delta_v$ with $\psi_v(t)=\psi_{0,v}(\delta_vt)$, where $\psi_{0,v}$ has conductor $\mathcal O_v$, and normalize the unit newvector by $W_v^\circ(a(\delta_v^{-1}))=1$, with $a(y)=\diag(y,1)$. At a real place its spherical Whittaker function is $|y|^{1/2}K_{\nu_{\pi_v}}(2\pi|y|)$, and at a complex place it is $|y|_v^{1/2}K_{2\nu_{\pi_v}}(4\pi|y|_v^{1/2})$. Let $A_\pi$ be the resulting global Fourier coefficient, so that $W_\phi=A_\pi\prod_vW_{\phi,v}$. No division by the value of an arbitrary oldvector at $I_2$ is made. By Rankin--Selberg theory,
\begin{align*}
\mathcal{P}_v^{\sharp}(\mathbf{s},\phi_v)\equiv 1
\end{align*} 
for all but finitely many places $v\in \Sigma_F$; see Lemma \ref{lem5.1} below.

\subsubsection{Factorization of $\mathcal{P}_{\cusp}(\mathbf{s},\boldsymbol{\Phi})$}\label{5.1.3}
Replacing $\phi$ by $\pi(g_0)\phi$ in \eqref{eq5.1}, and using
\eqref{f5.5} and \eqref{e5.1}, we obtain
\begin{multline}\label{e5.4}
\mathcal{P}_{\cusp}(\mathbf{s},\boldsymbol{\Phi})
=|\det g_0|^{-1-s_1-s_2}\sum_{\pi\in \mathcal{A}_0([G],\mathbf{1})}\overline{A_\pi}B_\pi\\
\Lambda_F(1/2+s_1+s_2,\widetilde{\pi})\Lambda_F(1/2-s_1+s_2,\widetilde{\pi})\prod_{v\in \Sigma_F}\sum_{\phi_v\in\mathfrak{B}_{\pi_v}}
\mathcal{P}_v^{\sharp}(\mathbf{s},\phi_v)\ell_v(\pi_v(g_v^{\dag})\phi_v).
\end{multline}

\subsubsection{Archimedean Integrals}\label{5.1.4}
\begin{lemma}\label{lem5.2}
We have the following identity of meromorphic functions:
\begin{align*}
\prod_{v\mid\infty}\mathcal{P}_v^{\sharp}(\mathbf{s},\phi_v)
=J_{\infty}(s_1,s_2)
\mathbf{1}_{\phi\ \text{is }K_{\infty}\text{-invariant}}
,
\end{align*}
where
\begin{equation}\label{eq5.8}
J_{\infty}(s_1,s_2)
:=
2^{-r_1-(1+s_1+s_2)r_{\mathrm{ns}}-2r_2}
\pi^{-2r_{\mathrm{ns}}-r_2},
\end{equation}
with the fixed Whittaker embeddings above; $K_\alpha$ denotes the $K$-Bessel function.
\end{lemma}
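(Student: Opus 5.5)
The plan is to compute each Archimedean local Rankin--Selberg integral directly in the Iwasawa coordinates of $N(F_v)\backslash\overline G(F_v)$, treating the real and complex cases separately. After the change of basis \eqref{e5.1}, the data $W_{1,v}^{\dag}$ and $h_{2,v}^{\dag}$ are built from the standard Gaussian $\Phi_v^{\circ}$, which is $K_v$-invariant up to the scalar normalizations $\pi^{-1}$, $1$, or $\pi^{-2}$ inherited from \textsection\ref{sec3.1}. Hence both $W_{1,v}^{\dag}(\cdot,s_1)$ and $h_{2,v}^{\dag}(\cdot,s_2)$ are right $K_v$-invariant. Integrating $g=n\,a(y)k$ over $k\in K_v$ first, the $k$-integral of $\overline{W_{\phi,v}}(a(y)k)$ projects $\phi_v$ onto its $K_v$-fixed component. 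Since $\mathfrak B_{\pi_v}$ is orthonormal and compatible with $K_v$-types, this vanishes unless $\phi_v$ is the spherical unit vector. This gives the indicator $\mathbf 1_{\phi\text{ is }K_\infty\text{-invariant}}$. It also forces $\pi_v$ to be spherical, so $W_{\phi,v}$ is the fixed $K$-Bessel Whittaker function.

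For the spherical vector, the integral reduces to the Mellin-type expression
\[
\int_{F_v^{\times}}\overline{W_{\phi,v}}(a(y))\,W_{1,v}^{\dag}(a(y),s_1)\,h_{2,v}^{\dag}(a(y),s_2)\,|y|_v^{-1}d^{\times}y,
\]
with $h_{2,v}^{\dag}(a(y),s_2)=|y|_v^{1/2+s_2}\int\Phi_v^\circ(0,t)|t|^{1+2s_2}d^\times t$, which is an explicit Gamma factor times $|y|_v^{1/2+s_2}$. Using \eqref{2.2}, the Whittaker function $W_{1,v}^{\dag}(a(y),s_1)$ of the Gaussian section is again an explicit $K$-Bessel function $|y|^{1/2}K_{s_1}(2\pi|y|)$ up to normalization, with $|y|^{1/2}_vK_{2s_1}(4\pi|y|_v^{1/2})$ in the complex case. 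The remaining integral is the classical Mellin transform of a product of two $K$-Bessel functions,
\[
\int_0^\infty K_\mu(y)K_\nu(y)y^{s}\tfrac{dy}{y}=2^{s-3}\frac{\prod_{\pm,\pm}\Gamma\bigl(\tfrac{s\pm\mu\pm\nu}{2}\bigr)}{\Gamma(s)}.
\]
Up to constants, this equals $\Gamma_{\mathbb R}$-products giving exactly $L_v(1/2+s_1+s_2,\widetilde\pi_v)L_v(1/2-s_1+s_2,\widetilde\pi_v)$, divided by $\Gamma_{\mathbb R}(1+2s_2)$. The denominator cancels against the Gamma factor from $h_{2,v}^{\dag}$. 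For complex places, the analogous formula holds with $\Gamma_{\mathbb C}$, after rescaling $y\mapsto y^{1/2}$.

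It then remains to collect constants. These are the $\pi^{-1}$ and $\pi^{-2}$ Gaussian normalizations and the factors of $2$ from the Mellin formula and from $\Gamma_{\mathbb C}(s)=2(2\pi)^{-s}\Gamma(s)$. There is also the exponent $1+s_1+s_2$ attached to $2^{-r_{\mathrm{ns}}}$, which comes from the $\sqrt 2$ scaling in $e^{-2\pi(t_1^2+t_2^2)}$ versus the standard $e^{-\pi(\cdot)}$ at real non-split places, producing $2^{-(1+s_1+s_2)}$ after substitution. The result must be checked against $J_\infty$ in \eqref{eq5.8}. The identity is first proved for $\Re(s_1)\gg1$, $\Re(s_2-s_1)\gg1$, and then extended by meromorphic continuation.

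I expect the main obstacle to be the precise bookkeeping of these normalizing constants, namely the powers of $2$ and $\pi$ in each of the three place types, rather than any structural issue. The $K$-type vanishing and the Bessel Mellin evaluation are standard.
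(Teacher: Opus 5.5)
Your proposal is correct and follows the paper's proof essentially step for step. Both arguments use the Iwasawa decomposition together with the $K_v$-invariance of the Gaussian data to produce the indicator, then the explicit section values and $K$-Bessel Whittaker functions, and then the Mellin formula for $K_\alpha K_\beta$ with $z=1/2+s_2$ at real places and $z=1+2s_2$ at complex places. The factor $\Gamma(z)$ cancels against $h_{2,v}^{\dag}(I_2,s_2)$, and meromorphic continuation finishes the argument.

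For the constant bookkeeping you defer, the real multiplicative integral runs over both signs of $y$, and $dy=2\,d\Re(y)\,d\Im(y)$ at complex places. With these conventions the local scalars are $2^{-2-s_1-s_2}\pi^{-2}$ at real non-split places, $1/2$ at real split places, and $1/(4\pi)$ at complex places. Their product is exactly $J_\infty(s_1,s_2)$.
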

\begin{proof}
Assume first that $\Re(s_2)$ is sufficiently large relative to $s_1$ and the spherical parameter. Iwasawa decomposition and the Mellin integral for two $K$-Bessel functions give
\begin{multline}\label{5.6}
\widetilde{\Psi}_v
\bigl(\overline{W_{\phi,v}},W_{1,v}^{\dag}(\cdot,s_1),h_{2,v}^{\dag}(\cdot,s_2)\bigr)
=
\mathbf{1}_{\phi\ \text{is }K_v\text{-invariant}}
J_v^{\mathrm{arch}}(s_1,s_2)\\
L_v(1/2+s_1+s_2,\widetilde{\pi}_v)
L_v(1/2-s_1+s_2,\widetilde{\pi}_v).
\end{multline}
Here $J_v^{\mathrm{arch}}$ equals $2^{-2-s_1-s_2}\pi^{-2}$ at a real nonsplit place, $1/2$ at a real split place, and $1/(4\pi)$ at a complex place. To verify the scalars, \eqref{e2.2} and \eqref{2.2} give the following Gaussian section and Whittaker function.
\begin{itemize}
\item Let $v\in \Sigma_{F,\infty}^{\mathrm{r,ns}}$. Substituting
$\Phi_v^{\circ}(t_1,t_2)=\pi^{-1}e^{-2\pi(t_1^2+t_2^2)}$ into the above formulas gives
\begin{align*}
&h_{2,v}^{\dag}(I_2,s_2)=
\pi^{-1}\int_{\mathbb{R}^{\times}}
e^{-2\pi t^2}|t|^{1+2s_2}d^{\times}t=\pi^{-1}2^{-s_2-1/2}\Gamma_{\mathbb{R}}(1+2s_2),\\
&W_{1,v}^{\dag}(a(y),s_1)=\pi^{-1}2^{1/2-s_1}|y|^{1/2}K_{s_1}(2\pi|y|).
\end{align*}

\item Let $v\in \Sigma_{F,\infty}^{\mathrm{r,sp}}$. Here
$\Phi_v^{\circ}(t_1,t_2)=e^{-\pi(t_1^2+t_2^2)}$, and hence
\begin{align*}
&h_{2,v}^{\dag}(I_2,s_2)=
\int_{\mathbb{R}^{\times}}
e^{-\pi t^2}|t|^{1+2s_2}d^{\times}t=\Gamma_{\mathbb{R}}(1+2s_2),\\
&W_{1,v}^{\dag}(a(y),s_1)=2|y|^{1/2}K_{s_1}(2\pi|y|).
\end{align*}

\item Let $v\in \Sigma_{F,\infty}^{\mathrm{c}}$. In this case
$\Phi_v^{\circ}((z_1, z_2))=\pi^{-2}e^{-2\pi(z_1\overline{z_1}+z_2\overline{z_2})}$, so
\begin{align*}
&h_{2,v}^{\dag}(I_2,s_2)=
\pi^{-2}\int_{\mathbb{C}^{\times}}e^{-2\pi |t|_v}
|t|_v^{1+2s_2}d^{\times}t=\pi^{-1}\Gamma_{\mathbb C}(1+2s_2),\\
&W_{1,v}^{\dag}(a(y),s_1)=4\pi^{-1}|y|_v^{1/2}K_{2s_1}(4\pi|y|_v^{1/2}).
\end{align*}
\end{itemize}

The identity
\begin{align*}
\int_0^\infty K_\alpha(t)K_\beta(t)t^{z-1}dt=\frac{2^{z-3}}{\Gamma(z)}\prod_{\epsilon,\delta=\pm1}\Gamma\left(\frac{z+\epsilon\alpha+\delta\beta}{2}\right),
\end{align*}
valid for $\Re z>|\Re\alpha|+|\Re\beta|$, follows by inserting the integral representations of the two Bessel functions and applying the Gamma and Beta integrals. Use $z=1/2+s_2$ at a real place and $z=1+2s_2$ at a complex place. The real multiplicative integral includes both signs, and at a complex place $dy=2\,d\Re(y)\,d\Im(y)$. This proves \eqref{5.6}; multiplying the local scalars gives \eqref{eq5.8}. Meromorphic continuation proves the general identity. In particular it is the Bessel coefficient, not the Whittaker value at $I_2$, that enters this calculation.
\end{proof}

\subsubsection{Non-Archimedean Integrals: $v\nmid \mathfrak{n}$}\label{sec5.1.5}
\begin{lemma}\label{lem5.1}
Let $v\in \Sigma_{F,\fin}$ with $e_v(\mathfrak{n})=0$. Then 
\begin{equation}\label{5.4}
\mathcal{P}_v^{\sharp}(\mathbf{s},\phi_v)
=J_v(s_1,s_2)
\mathbf{1}_{\phi\ \text{is }K_v\text{-invariant}}
,
\end{equation}
where 
\begin{equation}\label{equ5.11}
J_v(s_1,s_2):=q_v^{d_{E_v}-2d_{F_v}+d_{F_v}(s_2-s_1)}.
\end{equation}
\end{lemma}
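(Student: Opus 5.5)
We compute the local unramified Rankin--Selberg integral directly, working first in the region $\Re(s_1)\gg1$, $\Re(s_2-s_1)\gg1$ and then continuing meromorphically.

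\emph{Step 1: $K_v$-invariance.} At such $v$ the data are
\[
W_{1,v}^{\dag}\ \text{from}\ q_v^{d_{E_v}/2}\Phi_v^\circ,
\qquad
h_{2,v}^{\dag}\ \text{from}\ q_v^{d_{E_v}/2}\Phi_v^\circ .
\]
Since $\Phi_v^\circ=\mathbf 1_{\mathcal O_v^2}$ is $K_v$-invariant, both $W_{1,v}^{\dag}(\cdot,s_1)$ and $h_{2,v}^{\dag}(\cdot,s_2)$ are right $K_v$-invariant. In the Iwasawa coordinates the integral over $N\backslash\overline G(F_v)$ then contains an inner integral over $K_v$ of $\overline{W_{\phi,v}}(ak)$. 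This integral is the projection of $\phi_v$ onto the $K_v$-fixed line. Choose the orthonormal basis $\mathfrak B_{\pi_v}$ to contain the spherical vector whenever $\pi_v$ is unramified. Then the integral vanishes unless $\phi_v$ is that spherical vector, which produces the indicator $\mathbf 1_{\phi\text{ is }K_v\text{-invariant}}$. If $\pi_v$ is ramified, there is no fixed vector and the contribution is zero.

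\emph{Step 2: the spherical integral.} Reduce to
\[
\int_{F_v^\times}\overline{W_v^\circ}(a(y))\,W_{1,v}^{\dag}(a(y),s_1)\,h_{2,v}^{\dag}(a(y),s_2)\,|y|_v^{-1}\,d^\times y,
\]
with $h_{2,v}^{\dag}(a(y),s_2)=|y|_v^{1/2+s_2}h_{2,v}^{\dag}(I_2,s_2)$. Then compute each factor:
\begin{itemize}
\item $h_{2,v}^{\dag}(I_2,s_2)=q_v^{d_{E_v}/2}\int_{\mathcal O_v}|t|^{1+2s_2}d^\times t=q_v^{d_{E_v}/2-d_{F_v}/2}\zeta_{F_v}(1+2s_2)$.
\item $W_{1,v}^{\dag}(a(y),s_1)$ is obtained from \eqref{2.2} by Fourier-transforming $\mathbf 1_{\mathcal O_v}$ with respect to $\psi_v$. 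This gives $q_v^{-d_{F_v}/2}\mathbf 1_{\mathfrak D_F^{-1}}$, so the result is supported on $y\in\delta_v^{-1}\mathcal O_v$. It equals $q_v^{d_{E_v}/2}$ times a power of $q_v^{-d_{F_v}}$ times the Hecke polynomial $|y'|^{1/2}\sum_{i+j=e_v(y')}q_v^{(i-j)s_1}$ in the shifted variable $y'=\delta_v y$.
\item $W_v^\circ(a(y))$ is the Shintani formula in the same shifted variable, normalized by $W_v^\circ(a(\delta_v^{-1}))=1$.
\end{itemize}

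\emph{Step 3: evaluate and normalize.} Substituting $y=\delta_v^{-1}y'$ turns the integral into the classical unramified Rankin--Selberg sum. By the Cauchy identity for Schur polynomials $\sum_n\lambda_\pi(\varpi^n)\lambda_{s_1}(\varpi^n)X^n$, it equals
\[
\frac{L_v(1/2+s_1+s_2,\widetilde\pi_v)\,L_v(1/2-s_1+s_2,\widetilde\pi_v)}{\zeta_{F_v}(1+2s_2)}.
\]
The denominator cancels the factor $\zeta_{F_v}(1+2s_2)$ from $h_{2,v}^{\dag}(I_2)$, since $\pi$ has trivial central character. What remains is a pure power of $q_v$, which is to be collected:
\begin{itemize}
\item $q_v^{d_{E_v}}$ from the two normalizations;
\item $q_v^{-d_{F_v}}$ from the measure $\Vol(\mathcal O_v^\times)$ together with the additive Fourier transform;
\item the shift $|\delta_v^{-1}|^{\ldots}$ arising from $|y|^{1/2+s_2}$, the Haar factor $|y|^{-1}$, and the $|y|^{1/2\pm s_1}$ in $W_1$; this gives $q_v^{d_{F_v}(s_2-s_1)}$ up to a further $q_v^{-d_{F_v}}$.
\end{itemize}
Altogether this should produce $J_v=q_v^{d_{E_v}-2d_{F_v}+d_{F_v}(s_2-s_1)}$. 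Because both sides are rational functions of $q_v^{s_1}$ and $q_v^{s_2}$, meromorphic continuation is automatic.

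The main obstacle is the exponent bookkeeping in Step 3. One must track exactly how the different $\delta_v$ enters each of the three factors, especially the asymmetric $s_1$-dependence that comes from the Fourier transform in $W_1$. I would check the result against the case $d_{F_v}=0$, where $J_v$ must reduce to $q_v^{d_{E_v}}$.
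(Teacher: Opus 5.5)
Your proposal is correct and follows essentially the paper's route. $K_v$-invariance of the two Eisenstein data isolates the spherical vector, and after the substitution $x=\delta_v y$ the unramified Rankin--Selberg sum (Casselman--Shalika/Cauchy identity) gives the two $L$-factors divided by $\zeta_{F_v}(1+2s_2)$, which cancels the section value. For the exponent bookkeeping you left as ``should produce'', the paper writes $W_{1,v}^{\dag}(a(y),s_1)=q_v^{d_{E_v}/2-d_{F_v}/2-d_{F_v}s_1}W_{s_1,0}(a(\delta_v y))$. This combines with $q_v^{(d_{E_v}-d_{F_v})/2}$ from $h_{2,v}^{\dag}(I_2,s_2)$, with $q_v^{d_{F_v}(s_2-1/2)}$ from the substitution, and with $q_v^{-d_{F_v}/2}$ from $\Vol(\mathcal O_v^\times)$, and the product is exactly $J_v(s_1,s_2)$.
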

\begin{proof}
In the region $\Re(s_1)\gg 1$ and $\Re(s_2-s_1)\gg 1$, the definitions of
$W_{1,v}^{\dag}(\cdot,s_1)$ and $h_{2,v}^{\dag}(\cdot,s_2)$, together with the
Casselman--Shalika formula, give
\begin{multline}\label{e5.5}
\widetilde{\Psi}_v
\bigl(\overline{W_{\phi,v}},W_{1,v}^{\dag}(\cdot,s_1),h_{2,v}^{\dag}(\cdot,s_2)\bigr)
=
\mathbf{1}_{\phi\ \text{is }K_v\text{-invariant}}
J_v(s_1,s_2)\\
L_v(1/2+s_1+s_2,\widetilde{\pi}_v)
L_v(1/2-s_1+s_2,\widetilde{\pi}_v).
\end{multline}

By \eqref{e2.2}, we have
\begin{align*}
h_{2,v}^{\dag}(I_2,s_2)=
\int_{F_v^{\times}}
q_v^{d_{E_v}/2}\Phi_v^\circ((0,t))|t|_v^{1+2s_2}d^{\times}t=q_v^{\frac{d_{E_v}-d_{F_v}}{2}}\zeta_{F_v}(1+2s_2).
\end{align*}
Similarly, by \eqref{2.2},
\begin{align*}
W_{1,v}^{\dag}(a(y),s_1)=q_v^{d_{E_v}/2-d_{F_v}/2-d_{F_v}s_1}W_{s_1,0}(a(\delta_vy)),
\end{align*}
where $W_{s_1,0}$ is the spherical $\psi_{0,v}$-Whittaker function of $|\cdot|^{s_1}\boxplus|\cdot|^{-s_1}$ with value one at $I_2$. Substitute $x=\delta_vy$ in the Iwasawa integral. Its factor $q_v^{d_{F_v}(s_2-1/2)}$, the multiplicative-measure factor $q_v^{-d_{F_v}/2}$, and the two displayed Gaussian factors give $J_v(s_1,s_2)$. The spherical generating series is the product of the two local $L$-factors divided by $\zeta_{F_v}(1+2s_2)$, which cancels the section value. This proves \eqref{e5.5} and \eqref{5.4}; meromorphic continuation gives the general identity.
\end{proof}

\subsubsection{Non-Archimedean Integrals: $v\mid \mathfrak{n}$}\label{sec5.1.6}

\begin{lemma}\label{lemma5.3}
Let $v\mid\mathfrak n$. Transport the normalized toric functional to the diagonal torus by $g_v^\dag$, and denote the resulting functional by $\ell_v^{\mathrm{diag}}$. Then
\begin{equation}\label{eq5.10}
\ell_v(\pi_v(g_v^\dag)\phi_v)=\ell_v^{\mathrm{diag}}(\phi_v).
\end{equation}
\end{lemma}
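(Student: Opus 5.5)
The identity \eqref{eq5.10} is essentially a definition-chasing statement, and I would prove it by making the transport explicit. At a split place $v\mid\mathfrak n$ we have $E_v\simeq F_v\times F_v$. The matrix $g_v^\dag=\diag(\theta_v,1)\mathbf B$ from \eqref{eq3.3} diagonalizes the embedded torus. Concretely, I would first verify the conjugation relation
\begin{align*}
(g_v^\dag)^{-1}\iota(x_v)g_v^\dag=\begin{pmatrix}\alpha_v&\\&\beta_v\end{pmatrix},\qquad \alpha_v=a_v+\theta_vb_v,\ \beta_v=a_v-\theta_vb_v,
\end{align*}
possibly with the diagonal entries swapped depending on the ordering convention. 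This matches the computations \eqref{3.14} and in \textsection\ref{sec4}, where $(0,1)\iota(x_v)g_v^\dag$ is expressed through $(\alpha_v,\beta_v)$. Thus $\iota(E_v^\times)=g_v^\dag A_v^{\mathrm{diag}}(g_v^\dag)^{-1}$, where $A_v^{\mathrm{diag}}$ denotes the diagonal torus.

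Next, I would define $\ell_v^{\mathrm{diag}}(\phi_v):=\ell_v(\pi_v(g_v^\dag)\phi_v)$. This is a linear functional on $\pi_v$, and I would check that it is invariant under the diagonal torus, with the appropriate trivial central-character twist. The check is direct: for $t=(g_v^\dag)^{-1}\iota(x_v)g_v^\dag$,
\begin{align*}
\ell_v(\pi_v(g_v^\dag)\pi_v(t)\phi_v)=\ell_v(\pi_v(\iota(x_v))\pi_v(g_v^\dag)\phi_v)=\ell_v(\pi_v(g_v^\dag)\phi_v).
\end{align*}
With this definition, \eqref{eq5.10} holds tautologically.

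The substantive content is that this transported functional is again \emph{normalized}. In other words, $|\ell_v^{\mathrm{diag}}(\phi_v)|$ should equal the local Waldspurger quantity $\mathcal I_v^\sharp$ of \eqref{fc5.2}, computed for the diagonal torus with the Haar measure on $F_v^\times\backslash A_v^{\mathrm{diag}}$. To prove this, I would substitute $x_v\mapsto g_v^\dag x_v (g_v^\dag)^{-1}$ in the matrix-coefficient integral. By unitarity,
\begin{align*}
\langle\pi_v(\iota(x_v))\pi_v(g_v^\dag)\phi_v,\pi_v(g_v^\dag)\phi_v\rangle=\langle\pi_v(t)\phi_v,\phi_v\rangle .
\end{align*}
It then remains to show that the quotient measures on $F_v^\times\backslash E_v^\times$ and $F_v^\times\backslash A_v^{\mathrm{diag}}$ correspond under $(\alpha_v,\beta_v)$. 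The Jacobian of $(a_v,b_v)\mapsto(\alpha_v,\beta_v)$ is $|2\theta_v|_v$, which is a unit because $v\nmid 2\mathfrak D_{E/F}$ and $\mathfrak a$ is coprime to $\mathfrak n$. Moreover, at these places $\mathcal O_{E_v}\simeq\mathcal O_v\times\mathcal O_v$ via $x_v\mapsto(\alpha_v,\beta_v)$, and the unit-group volume normalizations agree because $d_{E_v}=2d_{F_v}$ at split places.

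The main obstacle is this measure and phase bookkeeping. One must confirm that $g_v^\dag$ lies in $K_v$, as asserted before \eqref{e5.1}, so that $\theta_v\in\mathcal O_v^\times$. One must also confirm that the choice of which prime above $v$ is labelled $\alpha_v$ affects only phases and not the modulus. I would settle the first point from $(\tau)=\mathfrak D_{E/F}\mathfrak a^2$ with $v\nmid\mathfrak a\mathfrak D_{E/F}$. The second follows from conjugation by the Weyl element $w\in K_v$, which swaps the diagonal entries and preserves the normalized modulus. Since the spectral summand in \eqref{e5.4} is phase-independent, as noted in \textsection\ref{sec4}, this suffices.
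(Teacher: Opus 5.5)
Your proposal is correct and follows essentially the same route as the paper: conjugation by $g_v^\dag$ carries $\iota(x_v)$ to exactly $\diag(a+b\theta_v,a-b\theta_v)$, so no entry swap occurs; unitarity then identifies the two matrix-coefficient integrals, and the phase of $\ell_v^{\mathrm{diag}}$ is fixed by transport. The one difference is that the paper simply equips $F_v^\times\backslash E_v^\times\simeq F_v^\times$ with the pushed-forward quotient measure, so your Jacobian and volume comparison (valid here, since $2\theta_v\in\mathcal O_v^\times$ at $v\mid\mathfrak n$) is a harmless extra check rather than a required step.
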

\begin{proof}
Writing $x_v=a+b\theta_v$, the split change of basis in \eqref{eq3.3} gives
\begin{align*}
(g_v^\dag)^{-1}\iota(x_v)g_v^\dag=\diag(a+b\theta_v,a-b\theta_v).
\end{align*}
Under $F_v^\times\backslash E_v^\times\simeq F_v^\times$, use the quotient measure pushed forward by this map. Unitarity gives
\begin{align*}
\int_{F_v^\times\backslash E_v^\times}\langle\pi_v(\iota(x_v))\pi_v(g_v^\dag)\phi_v,\pi_v(g_v^\dag)\phi_v\rangle_vd^\times x_v=\int_{F_v^\times}\langle\pi_v(a(t))\phi_v,\phi_v\rangle_vd^\times t.
\end{align*}
Choosing the phase of $\ell_v^{\mathrm{diag}}$ by transport gives \eqref{eq5.10}. This identifies the translated original-torus functional with the diagonal-torus functional, not with the original-torus functional on $\phi_v$.
\end{proof}

For $1\leq j\leq 2$, define
\begin{align*}
&h_{2,v}^{(1)}(g_v,s_2):=q_v^{\frac{r_v}{2}}|\det g_v|_v^{\frac{1}{2}+s_2}
\int_{F_v^{\times}}
\Phi_v^{\circ}\left((0,t)g_v\begin{pmatrix}
\varpi_v^{-r_v}\\
& 1
\end{pmatrix}\right)|t|_v^{1+2s_2}d^{\times}t,\\
&h_{2,v}^{(2)}(g_v,s_2):=q_v^{\frac{r_v}{2}}|\det g_v|_v^{\frac{1}{2}+s_2}
\int_{F_v^{\times}}
\Phi_v^{\circ}\left((0,t)g_v\begin{pmatrix}
1\\
& \varpi_v^{-r_v}
\end{pmatrix}\right)|t|_v^{1+2s_2}d^{\times}t.
\end{align*}

By the definition \eqref{3.2}, we have the decomposition
\begin{equation}\label{e5.8}
\mathcal{P}_v^{\sharp}(\mathbf{s},\phi_v)=\sum_{j=1}^{2}\mathcal{P}_v^{(j)}(\mathbf{s},\phi_v),
\end{equation}
where 
\begin{align*}
\mathcal{P}_v^{(j)}(\mathbf{s},\phi_v):=\frac{\widetilde{\Psi}_v
\bigl(\overline{W_{\phi,v}},W_{1,v}^{\dag}(\cdot,s_1),h_{2,v}^{(j)}(\cdot,s_2)\bigr)}{L_v(1/2+s_1+s_2,\widetilde{\pi}_v)L_v(1/2-s_1+s_2,\widetilde{\pi}_v)}.
\end{align*}

Therefore,
\begin{equation}\label{5.18}
\sum_{\phi_v\in\mathfrak{B}_{\pi_v}}
\mathcal{P}_v^{\sharp}(\mathbf{s},\phi_v)\ell_v^{\mathrm{diag}}(\phi_v)=\sum_{j=1}^{2}\mathcal{J}_v^{(j)}(\mathbf{s},\pi_v),
\end{equation}
where 
\begin{align*}
\mathcal{J}_v^{(j)}(\mathbf{s},\pi_v):=\sum_{\phi_v\in\mathfrak{B}_{\pi_v}}
\mathcal{P}_v^{(j)}(\mathbf{s},\phi_v)\ell_v^{\mathrm{diag}}(\phi_v).
\end{align*}

The normalized diagonal-torus functional is unique up to phase. The Weyl element $w=\left(\begin{smallmatrix}0&-1\\1&0\end{smallmatrix}\right)$ normalizes this torus and acts on its functional by a scalar of modulus one. Changing the orthonormal basis by $\phi_v\mapsto\pi_v(w)\phi_v$ therefore gives
\begin{align*}
|\mathcal{J}_v^{(j)}(\mathbf{s},\pi_v)|=\left|\sum_{\phi_v\in\mathfrak{B}_{\pi_v}}\mathcal{P}_v^{(j)}(\mathbf{s},\pi_v(w)\phi_v)\ell_v^{\mathrm{diag}}(\phi_v)\right|.
\end{align*}

Since $W_{1,v}^{\dag}(\cdot,s_1)$ is spherical, a change of variable gives
\begin{align*}
\mathcal{P}_v^{(1)}(\mathbf{s},\pi_v(w)\phi_v)=\mathcal{P}_v^{(2)}(\mathbf{s},\phi_v).
\end{align*}

Consequently, we obtain
\begin{equation}\label{5.19}
|\mathcal{J}_v^{(1)}(\mathbf{s},\pi_v)|=|\mathcal{J}_v^{(2)}(\mathbf{s},\pi_v)|.
\end{equation}

Let $r_{\pi_v}$ be the exponent of the arithmetic conductor of $\pi_v$. For $l\geq 0$, let $\mathfrak{B}_{\pi_v}^{K_v[l]}$ be the subset consisting of right-$K_v[l]$-invariant vectors in $\mathfrak{B}_{\pi_v}$. Here $K_v[l]$ is defined by \eqref{3.5}.

\begin{lemma}\label{lem5.5}
We have 
\begin{equation}\label{eq5.20}
\mathcal{J}_v^{(1)}(\mathbf{s},\pi_v)=\sum_{\phi_v\in\mathfrak{B}_{\pi_v}^{K_v[r_v]}}
\mathcal{P}_v^{(1)}(\mathbf{s},\phi_v)\ell_v^{\mathrm{diag}}(\phi_v).
\end{equation}
In particular, $\mathcal{J}_v^{(1)}(\mathbf{s},\pi_v)$ vanishes unless $r_{\pi_v}\leq r_v$. 
\end{lemma}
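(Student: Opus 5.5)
The plan is to show that the vector-valued functional $\phi_v\mapsto \mathcal P_v^{(1)}(\mathbf s,\phi_v)$ on $\pi_v$ is right-$K_v[r_v]$-invariant. The projection of $\mathcal J_v^{(1)}$ onto the invariant subspace then gives \eqref{eq5.20}.

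First I would observe that $\mathcal J_v^{(1)}(\mathbf s,\pi_v)$ does not depend on the orthonormal basis. It is the pairing of two continuous linear functionals, $\Lambda_1(\phi_v)=\mathcal P_v^{(1)}(\mathbf s,\phi_v)$, which is conjugate-linear in $\phi_v$ because $\overline{W_{\phi,v}}$ enters, and the linear functional $\ell_v^{\mathrm{diag}}$. Concretely, $\mathcal J_v^{(1)}$ is the evaluation of $\ell_v^{\mathrm{diag}}$ on the Riesz representative $u\in\pi_v$ of $\Lambda_1$, so that $\sum_{\phi_v}\Lambda_1(\phi_v)\phi_v=u$. Convergence and independence of basis follow from the rapid decay of local Rankin--Selberg integrals along $K$-types, which was already used in Proposition \ref{prop2.2}. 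Hence it suffices to show that $u$ is $K_v[r_v]$-invariant. We may then choose $\mathfrak B_{\pi_v}$ adapted to the orthogonal decomposition $\pi_v=\pi_v^{K_v[r_v]}\oplus(\pi_v^{K_v[r_v]})^\perp$. Then $\Lambda_1$ vanishes on the complement and only the invariant basis vectors survive.

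To prove the invariance, I would check that $W_{1,v}^{\dag}(\cdot,s_1)$ is right $K_v$-invariant (it is spherical) and that $h_{2,v}^{(1)}(\cdot,s_2)$ is right $K_v[r_v]$-invariant. For the second claim, write $a_r=\diag(\varpi_v^{-r_v},1)$. Then $h_{2,v}^{(1)}(gk)$ involves $\Phi_v^\circ((0,t)gka_r)=\Phi_v^\circ((0,t)ga_r\cdot a_r^{-1}ka_r)$. For $k\in K_v[r_v]$ the conjugate $a_r^{-1}ka_r$ has entries $a$, $\varpi_v^{r_v}b$, $\varpi_v^{-r_v}c$, $d$, so it lies in $K_v$, and $\Phi_v^\circ$ is $K_v$-invariant. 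The determinant factor is unchanged. Substituting $x\mapsto xk^{-1}$ in the Rankin--Selberg integral over $N\backslash\overline G$, which is legitimate in the region of absolute convergence, gives $\Lambda_1(\pi_v(k)\phi_v)=\Lambda_1(\phi_v)$ for $k\in K_v[r_v]$. Meromorphic continuation extends this identity to all $\mathbf s$. It follows that $\Lambda_1$ factors through the projection $e_{K_v[r_v]}=\int_{K_v[r_v]}\pi_v(k)\,dk$, so $u\in\pi_v^{K_v[r_v]}$.

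The ``in particular'' then follows from newform theory (Casselman). The space $\pi_v^{K_v[r_v]}$ is nonzero only if $r_{\pi_v}\le r_v$, and otherwise the sum in \eqref{eq5.20} is empty. The central character of $\pi_v$ is trivial, so the $K_v[l]$ here, with no condition imposed on $d$, is the correct group. The step I expect to need the most care is the basis-independence argument: it requires that $\ell_v^{\mathrm{diag}}$ is continuous and that the sum over the full smooth basis converges absolutely. I would handle this by noting that the sum is finite on each $K_v$-isotypic piece and that only the $K_v[r_v]$-fixed vectors contribute. One can therefore first average over $K_v[r_v]$, which is a finite-dimensional projection at a non-Archimedean place, and then expand, with no analytic issue.
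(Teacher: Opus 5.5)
Your proposal is correct and follows the paper's argument. Right-$K_v[r_v]$-invariance of $h_{2,v}^{(1)}$ (your conjugation check $a_r^{-1}ka_r\in K_v$ is exactly what the paper leaves implicit), together with sphericity of $W_{1,v}^{\dag}$, makes $\phi_v\mapsto\mathcal P_v^{(1)}(\mathbf s,\phi_v)$ invariant, so only $K_v[r_v]$-fixed vectors contribute, and Casselman's newform theory gives $r_{\pi_v}\le r_v$. Your basis-independence remark also usefully makes explicit that $\mathfrak B_{\pi_v}$ must be adapted to $\pi_v^{K_v[r_v]}$, which the paper assumes tacitly.
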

\begin{proof}
By the definition, for all
$g_v\in G(F_v)$ and $k_v\in K_v[r_v]$, we have
\begin{equation}\label{e5.20}
h_{2,v}^{(1)}(g_vk_v,s_2)=h_{2,v}^{(1)}(g_v,s_2).
\end{equation}

Since $W_{1,v}^{\dag}(\cdot,s_1)$ is spherical, it follows from \eqref{e5.20} that $\mathcal{P}_v^{(1)}(\mathbf{s},\phi_v)$
vanishes unless $\pi_v$ has arithmetic conductor dividing $\mathfrak{p}_v^{r_v}$. Therefore, \eqref{eq5.20} follows. 
\end{proof}

Let $W_v^{\circ}$ denote a unit newvector in the Whittaker model of $\pi_v$, and let $\phi_v^{\circ}\in \pi_v$ be the corresponding vector. 

For $i,j\in\mathbb Z_{\geq0}$ and $t\in F_v^\times$, define the diagonal-torus matrix coefficient
\begin{align*}
C_{i,j}(t):=\langle \pi_v(a(t))\pi_v(\diag(1,\varpi_v^i))W_v^{\circ},
\pi_v(\diag(1,\varpi_v^j))W_v^{\circ}\rangle_v. 
\end{align*}

\begin{lemma}\label{lem5.6}
Let $i, j\in \mathbb{Z}_{\geq 0}$. Then
\begin{equation}\label{e5.22}
\int_{F_v^{\times}}C_{i,j}(t)d^{\times}t=\frac{|L_v(1/2,\pi_v)|^2}{\displaystyle\int_{F_v^\times}|W_v^\circ(a(y))|^2d^\times y}.
\end{equation}
\end{lemma}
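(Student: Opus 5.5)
The plan is to compute everything in the Kirillov model of $\pi_v$, with the inner product $\langle W,W'\rangle_v$ realized as the Kirillov integral $\int_{F_v^\times}W(a(y))\overline{W'(a(y))}\,d^\times y$. Dividing by $\int|W_v^\circ(a(y))|^2d^\times y$ in \eqref{e5.22} then accounts for whatever scalar normalization was used to make $W_v^\circ$ a unit vector. The identity should reduce to a product of two local Hecke integrals at the center.

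\textbf{Step 1: write the matrix coefficient explicitly.} Since $\pi\in\mathcal A_0([G],\mathbf 1)$, the central character is trivial, so
\[
\diag(1,\varpi_v^i)=\varpi_v^i\cdot a(\varpi_v^{-i})
\]
acts by $a(\varpi_v^{-i})$. Right translation by $a(t)$ acts on Kirillov functions by $y\mapsto yt$. Hence, up to the normalizing denominator,
\[
C_{i,j}(t)=\int_{F_v^\times}W_v^\circ(a(yt\varpi_v^{-i}))\,\overline{W_v^\circ(a(y\varpi_v^{-j}))}\,d^\times y .
\]

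\textbf{Step 2: interchange and factor.} Integrate over $t$ and apply Fubini. Substitute $u=yt\varpi_v^{-i}$ for fixed $y$, then $y\mapsto y\varpi_v^{j}$. The $d^\times$-invariance removes all dependence on $i$ and $j$, and the double integral factors as
\[
\Big(\int_{F_v^\times}W_v^\circ(a(u))\,d^\times u\Big)\,\overline{\Big(\int_{F_v^\times}W_v^\circ(a(y))\,d^\times y\Big)}.
\]
Each factor is the local Hecke zeta integral $Z(s,W_v^\circ)=\int W_v^\circ(a(y))|y|^{s-1/2}d^\times y$ at $s=1/2$. With the fixed normalization $W_v^\circ(a(\delta_v^{-1}))=1$, the newvector theory (Casselman, or the explicit Kirillov newvectors for each conductor type) gives $Z(s,W_v^\circ)=L_v(s,\pi_v)$. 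The factors $|\delta_v|^{\,\cdot}$ and the volume of $\mathcal O_v^\times$ have to cancel against the normalization, or be absorbed into the denominator. The product is then $L_v(1/2,\pi_v)\overline{L_v(1/2,\pi_v)}=|L_v(1/2,\pi_v)|^2$, which is \eqref{e5.22}.

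\textbf{Step 3 (main obstacle): justify Fubini.} The integrand in $(y,t)$ after the change of variables is $|W_v^\circ(a(u))|\,|W_v^\circ(a(y))|$, so absolute convergence amounts to $W_v^\circ\circ a\in L^1(F_v^\times,d^\times)$. The Kirillov newvector vanishes for $|y|$ large, since it is supported in $\delta_v^{-1}\mathcal O_v$. As $y\to0$ it is a finite combination of $|y|^{1/2}\alpha(y)$ with $\alpha$ among the Satake characters, and $|\alpha|\le|\cdot|^{\pm\theta}$ with $\theta\le 7/64<1/2$ by Kim--Sarnak (and $\theta=0$ for tempered or ramified components). Hence $|W_v^\circ(a(y))|\ll|y|^{1/2-\theta}$ near $0$, which is integrable for $d^\times y$.

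If one prefers not to invoke the bound toward Ramanujan, there is a fallback: insert $|t|^{s}$ together with a compensating weight, compute in a half-plane where everything converges, and continue meromorphically in $s$ to $s=0$ using the rationality of the local Hecke integrals in $q_v^{-s}$. The remaining care is bookkeeping of the Whittaker normalization constants ($\delta_v$, $q_v^{-d_{F_v}/2}$) so that they match the denominator in \eqref{e5.22}.
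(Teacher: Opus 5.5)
This is correct and is essentially the paper's argument: you realize the inner product in the Kirillov model, remove both translations by a change of variables so that the $t$-integral becomes $\bigl|\int_{F_v^\times}W_v^\circ(a(y))\,d^\times y\bigr|^2$ divided by the newvector norm, and then evaluate this central Hecke integral as $L_v(1/2,\pi_v)$ by the newvector (Casselman--Shalika) formula. Your Fubini check, which the paper omits, is fine, and local unitarity alone already gives the exponent $<1/2$ it needs, without Kim--Sarnak.

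The normalization bookkeeping you left open is trivial in this setting. Since $v\mid\mathfrak n$ is prime to $\mathfrak D_F$, one has $d_{F_v}=0$, $\delta_v\in\mathcal O_v^\times$ and $\Vol(\mathcal O_v^\times)=1$. Your alternative of absorbing such factors into the denominator would not work in general: at a place with $d_{F_v}>0$ the numerator would pick up a factor $q_v^{-d_{F_v}}$, and the fixed denominator cannot absorb it.
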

\begin{proof}
The invariant inner product in the Kirillov model is the $L^2(F_v^\times)$ pairing divided by the displayed norm of the newvector. Changing variables in the two Mellin integrals removes both translations and gives
\begin{align*}
\int_{F_v^\times}C_{i,j}(t)d^\times t=\frac{\left|\int_{F_v^\times}W_v^\circ(a(y))d^\times y\right|^2}{\displaystyle\int_{F_v^\times}|W_v^\circ(a(y))|^2d^\times y}.
\end{align*}
Thus \eqref{e5.22} follows from the Casselman--Shalika formula.
\end{proof}

\begin{lemma}\label{lem5.7}
Let $0<\varepsilon<10^{-3}$. Suppose $|\Re(s_1)|+|\Re(s_2)|\leq \varepsilon$. Then 
\begin{equation}\label{e5.23}
\mathcal{J}_v^{(1)}(\mathbf{s},\pi_v)\ll r_v^6\cdot
q_v^{(-\frac12+100\varepsilon)r_v}\mathbf{1}_{r_{\pi_v}\leq r_v},
\end{equation}
where the implied constant depends only on $\varepsilon$.  
\end{lemma}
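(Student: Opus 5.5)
By Lemma~\ref{lem5.5}, the sum defining $\mathcal{J}_v^{(1)}(\mathbf{s},\pi_v)$ reduces to the finite-dimensional space $\pi_v^{K_v[r_v]}$ of right-$K_v[r_v]$-invariant vectors. It vanishes unless $r_{\pi_v}\le r_v$, which accounts for the indicator in \eqref{e5.23}. Assume $r_{\pi_v}\le r_v$.

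\textbf{Step 1: express the sum in terms of the oldforms.} The space $\pi_v^{K_v[r_v]}$ is spanned by the oldforms $\pi_v(\diag(1,\varpi_v^i))W_v^\circ$ for $0\le i\le r_v-r_{\pi_v}$. Its dimension is at most $r_v+1$. Since the sum over an orthonormal basis of a sesquilinear-in-basis expression is basis independent, I would rewrite it as
\[
\sum_{i,j}G^{ij}\,\mathcal{P}_v^{(1)}(\mathbf{s},\phi_i)\,\ell_v^{\mathrm{diag}}(\phi_j),
\]
where $\phi_i$ denotes the $i$-th oldform and $G^{ij}$ is the inverse Gram matrix of the oldforms. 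The Gram matrix is given by the matrix coefficients $C_{i,j}(1)$.

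\textbf{Step 2: bound the toric factors.} For the toric factor, $|\ell_v^{\mathrm{diag}}(\phi_j)|$ is controlled via \eqref{fc5.2}. Indeed, $|\ell_v^{\mathrm{diag}}(\phi)|^2$ is the normalized torus integral of the matrix coefficient. By Lemma~\ref{lem5.6} this integral equals $|L_v(1/2,\pi_v)|^2$ divided by the Whittaker norm, for every pair of oldforms. Under the bound $\theta\le 7/64$ towards Ramanujan, the local $L$-factors and $L_v(1,\mathrm{Ad})$ at $\Re s=1/2$ are bounded by $O(1)$. Hence each $\ell_v^{\mathrm{diag}}(\phi_j)$ is $O(1)$ up to the normalization of $\phi_j$.

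\textbf{Step 3: bound the Gram matrix.} For the inverse Gram matrix, I would use the explicit Gram matrix of oldforms. For unramified $\pi_v$ these are the classical formulas in terms of the Satake parameters, and for ramified $\pi_v$ the Gram matrix is nearly diagonal. These formulas give $|G^{ij}|\ll r_v^{2}$ polynomially, uniformly in $q_v$.

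\textbf{Step 4: compute the Rankin--Selberg factor.} The key computation is $\mathcal{P}_v^{(1)}(\mathbf{s},\phi_i)$ for the section $h_{2,v}^{(1)}$. This section is a $\diag(\varpi_v^{-r_v},1)$-translate of the spherical section, scaled by $q_v^{r_v/2}$. After the change of variable $g\mapsto g\,\diag(\varpi_v^{r_v},1)$, the local integral becomes the unramified-type Rankin--Selberg integral $\int W_{\phi_i}W_{1,v}^{\dag}h^\circ$ with $W_{\phi_i}$ translated. Using the Iwasawa decomposition, $h_{2,v}^{(1)}$ restricted to $K_v$ is the characteristic function of the cells of $K_v[r_v]\backslash K_v$. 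The integral becomes a finite sum over $0\le l\le r_v$ of Mellin-type sums. These sums have the form
\[
\sum_{m}W_{\phi_i}(a(\varpi^m)\,\kappa_l)\,W_{s_1}(a(\varpi^m))\,q^{-m(\ldots)},
\]
with $\kappa_l$ coset representatives, and carry the shape of \eqref{eq3.Tl}. The factor $q_v^{r_v/2}$ is compensated by the volume $q_v^{-r_v}$ of $K_v[r_v]$, and the $\det$ factor produces $q_v^{-r_v(1/2+s_1+s_2)}$ roughly. Bounding the Hecke eigenvalues and $W_{s_1}$ values by $q_v^{j\theta+j|\Re s_1|}$ and summing the geometric series gives $|\mathcal{P}_v^{(1)}|\ll r_v^{3}q_v^{(-1/2+O(\varepsilon))r_v}$, after dividing by the two $L$-factors, which are bounded below.

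\textbf{Step 5: combine.} Multiplying the bounds from Steps 2--4 and summing over the at most $(r_v+1)^2$ pairs $(i,j)$ produces $r_v^6 q_v^{(-1/2+100\varepsilon)r_v}$. The crude exponent $100\varepsilon$ absorbs both $|\Re s_i|\le\varepsilon$ and the Ramanujan-type losses, the latter being harmless after enlarging constants only if one uses $\theta$ small.

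\textbf{Main obstacle.} The main obstacle is the uniform control of Steps 3--4, in particular the decay $q_v^{-r_v/2}$ in the translated Rankin--Selberg integral. I would first try to obtain it from the support of $h_{2,v}^{(1)}$ via the volume of $K_v[r_v]$ and the translation. I would handle the possible $\theta$-loss by noting that the relevant Whittaker values for $m\ge0$ grow at most like $(m+1)q^{m\theta}$, and that the decay rate $q^{-m(1/2+\Re s_2)}$ dominates.
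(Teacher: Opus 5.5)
Your outline has the same architecture as the paper's proof. Both reduce to $K_v[r_v]$-invariants by Lemma~\ref{lem5.5}, control the change of basis among the oldforms $\pi_v(\diag(1,\varpi_v^i))W_v^\circ$, and bound the toric factor by Lemma~\ref{lem5.6} together with the fact that the Kirillov norm of $W_v^\circ$ is at least one. Both then bound the Rankin--Selberg factor and count.

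Two ingredients are obtained differently.
\begin{itemize}
\item For the basis, the paper uses the explicit orthonormal basis of \cite[Lemma~9]{BM15} with bounded coefficients, which gives $\mathcal I_v^\sharp\ll r_v$ per basis vector. Your inverse Gram matrix encodes the same information. You can simplify further: since the central character is trivial, $\phi_j=\pi_v(a(\varpi_v^{-j}))\phi_0$, so all $\ell_v^{\mathrm{diag}}(\phi_j)$ are equal.
\item More substantially, the paper never computes the Rankin--Selberg factor. It imports \eqref{5.25} from the algebraic estimate in \cite[Lemma~7.4]{Yan26a}, perturbing its monomials in $q_v^{\pm s_1},q_v^{\pm s_2}$ into the region $|\Re s_1|+|\Re s_2|\le\varepsilon$. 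Your Step~4 replaces that citation by a direct evaluation over the cells of $B(\mathcal O_v)\backslash K_v/K_v[r_v]$.
\end{itemize}
This makes your argument self-contained, but all the content of the lemma sits in Step~4, and as written it needs repair in four places.

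\textbf{Per-cell bookkeeping.} On the double coset of $\left(\begin{smallmatrix}1&\\ \varpi_v^l&1\end{smallmatrix}\right)$, the section $h_{2,v}^{(1)}$ has size $q_v^{(2l-r_v)/2+O(\varepsilon r_v)}$ and the cell has volume $\asymp q_v^{-l}$. So each of the $r_v+1$ cells contributes $q_v^{-r_v/2}$ times a Mellin sum, and the small cell $K_v[r_v]$ is not special. Combining its volume with a further determinant factor $q_v^{-r_v(1/2+s_1+s_2)}$, as you suggest, would give $q_v^{-r_v}$.

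\textbf{The Ramanujan loss.} A loss $q_v^{\theta r_v}$ cannot be absorbed into $q_v^{100\varepsilon r_v}$ when $\varepsilon$ is small, so your remark about using $\theta$ small must be replaced by the actual mechanism. Let $k$ be the shift in the argument of $W_v^\circ$ produced by the oldform and the cell.
\begin{itemize}
\item If $k>0$, the Mellin sum is $\ll (k+1)q_v^{-k(1/2-\theta)}$.
\item If $k<0$, reindexing moves the shift onto $W_{1,v}^\dag$, whose Satake parameters are $q_v^{\pm s_1}$.
\end{itemize}
Thus $\theta$ enters only through convergent sums independent of $r_v$.

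\textbf{Ramified $\pi_v$.} Describing the cell sums by shifted Hecke eigenvalues is valid only for unramified $\pi_v$. When $1\le r_{\pi_v}\le r_v$, you also need $W_v^\circ$ on cells not contained in $K_v[r_{\pi_v}]$. On the cells lying in $B(F_v)wK_v[r_{\pi_v}]$ you can use the Atkin--Lehner relation. On the intermediate cells you can use unitarity of $\pi_v$ together with the support of these translates in the Kirillov model.

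\textbf{The power of $r_v$.} Your count gives $r_v^7$ rather than $r_v^6$. This is harmless, since $r_v\ll_\varepsilon q_v^{\varepsilon r_v}$.
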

\begin{proof}
By Lemma \ref{lem5.5}, the quantity $\mathcal{J}_v^{(1)}(\mathbf{s},\pi_v)$ vanishes unless $r_{\pi_v}\leq r_v$. 

By \cite[Lemma 9]{BM15}, the subspace of right-$K_v[r_v]$-invariant vectors in the Whittaker model of $\pi_v$ has an orthonormal basis $\{W_{v,n_v}:\ 0\leq n_v \leq r_v-r_{\pi_v}\}$, where
\begin{equation}\label{e7.5}
W_{v,n_v}=\sum_{i=0}^{n_v}\xi_{\pi_v}(\mathfrak{p}_v^{i},\mathfrak{p}_v^{n_v})\pi_v\left(\begin{pmatrix}
1\\
&\varpi_v^{i}
\end{pmatrix}\right)W_v^{\circ}
\end{equation}
for some explicit coefficients $\xi_{\pi_v}(\mathfrak{p}_v^{i},\mathfrak{p}_v^{n_v})\ll 1$. 

The newvector Kirillov norm in Lemma~\ref{lem5.6} is at least one, since $W_v^\circ(I_2)=1$ and $v\mid\mathfrak n$ is good. The local adjoint and quadratic normalizing factors are uniformly bounded; in the spherical case their ratio with this norm is exactly one. Hence, by \eqref{fc5.2}, Lemma~\ref{lem5.6}, and the bounded coefficients in \eqref{e7.5}, we have
\begin{equation}\label{5.26}
\mathcal{I}_v^{\sharp}(\pi_v(g_v^{\dag})\phi_v)\ll r_v.
\end{equation}

Moreover, the proof of \cite[Lemma 7.4]{Yan26a} is an algebraic estimate in
$q_v^{\pm s_1}$ and $q_v^{\pm s_2}$.  On the region
$|\Re(s_1)|+|\Re(s_2)|\leq\varepsilon$, every monomial occurring there differs
from its value at $\mathbf s=\mathbf0$ by at most
$q_v^{O(\varepsilon r_v)}$.  Enlarging the harmless numerical coefficient of
$\varepsilon$ therefore gives, uniformly on compact subsets of this region,
\begin{equation}\label{5.25}
\mathcal{P}_v^{(1)}(\mathbf{s},\phi_v)\ll
r_v^4\cdot q_v^{(-\frac12+100\varepsilon)r_v}.
\end{equation}

There are at most $r_v+1$ basis vectors in \eqref{eq5.20}. Summing the bounds \eqref{5.25} and \eqref{5.26} therefore gives \eqref{e5.23}.
\end{proof}

\begin{cor}\label{cor5.8}
Let $0<\varepsilon<10^{-3}$ and $|\Re(s_1)|+|\Re(s_2)|\leq \varepsilon$. Then 
\begin{equation}\label{5.28}
\sum_{\phi_v\in\mathfrak{B}_{\pi_v}}
\mathcal{P}_v^{\sharp}(\mathbf{s},\phi_v)\ell_v^{\mathrm{diag}}(\phi_v)
\ll r_v^6q_v^{(-\frac12+100\varepsilon)r_v}
\mathbf{1}_{r_{\pi_v}\leq r_v}.
\end{equation}	
\end{cor}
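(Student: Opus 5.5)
The corollary follows by combining the decomposition \eqref{5.18} with the symmetry \eqref{5.19} and the bound of Lemma~\ref{lem5.7}. By \eqref{5.18},
\begin{align*}
\sum_{\phi_v\in\mathfrak{B}_{\pi_v}}\mathcal{P}_v^{\sharp}(\mathbf{s},\phi_v)\ell_v^{\mathrm{diag}}(\phi_v)=\mathcal{J}_v^{(1)}(\mathbf{s},\pi_v)+\mathcal{J}_v^{(2)}(\mathbf{s},\pi_v).
\end{align*}
The triangle inequality and \eqref{5.19} then bound the left-hand side by $2|\mathcal{J}_v^{(1)}(\mathbf{s},\pi_v)|$. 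So it suffices to control $\mathcal{J}_v^{(1)}$.

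For this we invoke Lemma~\ref{lem5.7} under the same hypotheses $0<\varepsilon<10^{-3}$ and $|\Re(s_1)|+|\Re(s_2)|\leq\varepsilon$. It gives $\mathcal{J}_v^{(1)}(\mathbf{s},\pi_v)\ll r_v^6q_v^{(-\frac12+100\varepsilon)r_v}\mathbf{1}_{r_{\pi_v}\leq r_v}$. The factor $2$ is absorbed into the implied constant, which depends only on $\varepsilon$.

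The indicator $\mathbf{1}_{r_{\pi_v}\leq r_v}$ also transfers to the full sum. Lemma~\ref{lem5.5} shows that $\mathcal{J}_v^{(1)}$ vanishes when $r_{\pi_v}>r_v$. Since $|\mathcal{J}_v^{(2)}|=|\mathcal{J}_v^{(1)}|$ by \eqref{5.19}, the term $\mathcal{J}_v^{(2)}$ vanishes in that case too. Hence the whole sum vanishes unless $r_{\pi_v}\leq r_v$, and \eqref{5.28} follows.

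The one point to check is that \eqref{5.19} holds as an identity of absolute values for the specific basis $\mathfrak{B}_{\pi_v}$ used in \eqref{5.18}. The potential difficulty is that the relation between $\mathcal{J}_v^{(1)}$ and $\mathcal{J}_v^{(2)}$ was obtained by changing the orthonormal basis via $\pi_v(w)$. This is harmless here: the sum $\sum_{\phi_v}\mathcal{P}_v^{(j)}(\mathbf{s},\phi_v)\ell_v^{\mathrm{diag}}(\phi_v)$ is sesquilinear in $\phi_v$ (through $\overline{W_{\phi,v}}$) and linear through $\ell_v^{\mathrm{diag}}$. It therefore equals a trace-type expression independent of the choice of orthonormal basis. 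Apart from this, no further estimates beyond those of Lemma~\ref{lem5.7} are required.
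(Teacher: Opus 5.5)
Your proposal is correct and follows the paper's proof exactly: \eqref{5.18}, \eqref{5.19} and the triangle inequality reduce the sum to $|\mathcal{J}_v^{(1)}(\mathbf{s},\pi_v)|$, and Lemma~\ref{lem5.7} then gives the bound. Your basis-independence remark, that the contraction is conjugate-linear through $\overline{W_{\phi,v}}$ and linear through $\ell_v^{\mathrm{diag}}$ and hence a trace, correctly justifies the change of basis by $\pi_v(w)$ that the paper uses without comment in deriving \eqref{5.19}; your separate argument for the indicator is redundant but harmless, since the bound on $|\mathcal{J}_v^{(1)}|$ from Lemma~\ref{lem5.7} already contains it.
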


\begin{proof}
By \eqref{5.18}, \eqref{5.19}, and the triangle inequality we obtain
\begin{equation}\label{5.29}
\sum_{\phi_v\in\mathfrak{B}_{\pi_v}}
\mathcal{P}_v^{\sharp}(\mathbf{s},\phi_v)\ell_v^{\mathrm{diag}}(\phi_v)\ll \big|\mathcal{J}_v^{(1)}(\mathbf{s},\pi_v)\big|.
\end{equation}

Therefore, \eqref{5.28} follows from \eqref{5.29} and Lemma \ref{lem5.7}.
\end{proof}

\subsubsection{Proof of Proposition \ref{prop5.1}}
Apply the Rankin--Selberg norm formula \cite[\textsection 2.2.2]{MV10} to $A_\pi$ and the squared Waldspurger bound \eqref{e5.2} to $B_\pi$ in \eqref{e5.4}. The reciprocal adjoint factors and the quadratic Hecke $L$-value factors cost $(D_EN_F(\mathfrak n))^\varepsilon$; see also \cite[Theorem~1.1]{FMP17} for the corresponding test-vector normalization. At an untwisted finite place the translated integer-ring embedding puts the unit torus in $K_v$; at a ramified place there are at most two uniformizer cosets. The quotient volume is $q_v^{-(d_{E_v}-d_{F_v})/2}$. Thus the product of the square-root toric factors contributes $D_E^{-1/4+\varepsilon}$ up to an $F$-factor. On the other hand, \eqref{equ5.11} gives $\prod_{v<\infty}J_v\ll_{F,\mathbf s}D_E$, while \eqref{f5.5} contributes $D_E^{-1/2+O(\varepsilon)}$. Their product is $D_E^{1/4+O(\varepsilon)}$. At the twisting primes Corollary~\ref{cor5.8} supplies $N_F(\mathfrak n)^{-1/2+O(\varepsilon)}$, with the polynomial factors in $r_v$ absorbed by the divisor bound. The conductor divides $\mathfrak n$, and $N_F(\mathfrak n)^{-1/2}\leq N_F(\mathfrak m)^{-1/2}$ for $\mathfrak m\mid\mathfrak n$. Finally, the Bessel integrals above, the Archimedean norm factors, and Stirling's formula leave a rapidly decreasing weight bounded by $\mathcal V_\infty(\pi)$ after absorbing polynomial spectral factors and decreasing $c$. This proves \eqref{eq5.2}, locally uniformly in the shifts.

\subsection{Estimate of \texorpdfstring{$\mathcal{P}_{\Eis}(\mathbf{s},\boldsymbol{\Phi})$}{}}\label{sec5.2}
Recall that
\begin{align*}
\mathcal{P}_{\Eis}(\mathbf{s},\boldsymbol{\Phi})
=
\sum_{\mu\in \widehat{C}_F}
\frac{1}{4\pi i\rho_F}
\int_{i\mathbb{R}}
\sum_{h\in \mathfrak{B}(\mu,\overline{\mu})}
\widetilde{\Psi}(\lambda,h;\mathbf{s})
\int_{\mathcal{C}_{E/F}}E(\iota(x),h,\lambda)d^{\times}xd\lambda.
\end{align*}

\subsubsection{Notation}\label{sec5.2.1}

For $\Re(s_1)\gg 1$ and $\Re(s_2-s_1)\gg 1$, write
$\mu_E=\mu\circ N_{E/F}$ and define
\begin{align*}
\mathbf{L}_v(\mathbf{s},\lambda,\mu_v):=\prod_{\epsilon\in \{\pm 1\}}L_v(1/2+\epsilon\lambda+s_1+s_2,\mu_v^{\epsilon})L_v(1/2+\epsilon\lambda-s_1+s_2,\mu_v^{\epsilon}).
\end{align*}
Define $\mathbf{L}(\mathbf{s},\lambda,\mu)=\prod_{v\in \Sigma_F}\mathbf{L}_v(\mathbf{s},\lambda,\mu_v)$ and 
\begin{align*}
&\mathcal{P}_v^{\sharp}(\mathbf{s},h_v,\lambda):=L_v(1-2\lambda,\mu_v^{-2})\mathbf{L}_v(\mathbf{s},\lambda,\mu_v)^{-1}\widetilde{\Psi}_v
\bigl(
\overline{W_{E(\cdot,h,-\overline{\lambda}),v}},
W_{1,v}^{\dag}(\cdot,s_1),h_{2,v}^{\dag}(\cdot,s_2)
\bigr),\\
&\mathcal I_v^{\sharp}(h_v,\lambda):=L_v(1+2\lambda,\mu_v^2)L_{E_v}(1/2+\lambda,\mu_{E,v})^{-1}\int_{F_v^{\times}\backslash E_v^{\times}}S(h_v)(\iota(x_v)g_v^{\dag},\lambda)d^{\times}x_v.
\end{align*}
We identify $\mathcal{P}_v^{\sharp}(\mathbf{s},h_v,\lambda)$, $\mathbf{L}(\mathbf{s},\lambda,\mu)$ and $\mathcal I_v^{\sharp}(h_v,\lambda)$ with their meromorphic continuations.

\subsubsection{Simplification of $\mathcal{P}_{\Eis}(\mathbf{s},\boldsymbol{\Phi})$}
Unfolding the definition \eqref{e2.5} as in \eqref{eq2.4}, we obtain 
\begin{align*}
\int_{\mathcal{C}_{E/F}}E(\iota(x)g_0,h,\lambda)d^{\times}x=\int_{\mathbb{A}_F^{\times}\backslash\mathbb{A}_E^{\times}}S(h)(\iota(x)g_0,\lambda)d^{\times}x,\ \ \Re(\lambda)\gg 1. 
\end{align*}
Consequently, by meromorphic continuation, 
\begin{equation}\label{5.61}
\int_{\mathcal{C}_{E/F}}E(\iota(x)g_0,h,\lambda)d^{\times}x=\Lambda_E(1/2+\lambda,\mu_E)\Lambda(1+2\lambda,\mu^2)^{-1}\prod_{v\in \Sigma_F}\mathcal I_v^{\sharp}(h_v,\lambda).
\end{equation}
Here $\Lambda(1+2\lambda,\mu^2)$ refers to the complete Hecke $L$-function. 

We will show that $\mathcal I_v^{\sharp}(h_v,\lambda)\equiv 1$ for all but finitely many places $v\in \Sigma_F$, so that the product on the right-hand side of \eqref{5.61} is well-defined. 

Similarly to \eqref{e5.4}, we have
\begin{multline*}
\mathcal{P}_{\Eis}(\mathbf{s},\boldsymbol{\Phi})
=|\det g_0|^{-1-s_1-s_2}
\sum_{\mu\in \widehat{C}_F}\frac{1}{4\pi i\rho_F}
\int_{i\mathbb{R}}\mathbf{L}(\mathbf{s},\lambda,\mu)\\
\Lambda_E(1/2+\lambda,\mu_E)\Lambda(1+2\lambda,\mu^2)^{-1}\\
\Lambda(1-2\lambda,\mu^{-2})^{-1}\prod_{v\in \Sigma_F}\sum_{h_v\in \mathfrak{B}_v(\mu_v,\overline{\mu}_v)}\mathcal{P}_v^{\sharp}(\mathbf{s},h_v,\lambda)\mathcal I_v^{\sharp}(h_v,\lambda)d\lambda,
\end{multline*}
where $\mathfrak{B}_v(\mu_v,\overline{\mu}_v)$ is an orthonormal basis of the local representation such that $\mathfrak{B}(\mu,\overline{\mu})=\otimes_{v\in \Sigma_F}'\mathfrak{B}_v(\mu_v,\overline{\mu}_v)$. 

When $\mu_v$ is unramified, let $h_v^{\circ}$ be the spherical vector with $h_v^{\circ}(I_2)=1$. At a ramified twisting place use the unit Casselman newvector, without imposing a value at $I_2$. By definition, $\Phi_{1,v}^{\dag}$ is everywhere unramified, and $\Phi_{2,v}^{\dag}$ is right-$K_v$-invariant for $v\nmid\mathfrak n$. Therefore, for each $v\nmid\mathfrak n$, the definitions of $\Phi_{1,v}^{\dag}$ and $\Phi_{2,v}^{\dag}$ imply that
\begin{align*}
\widetilde{\Psi}_v
\bigl(
\overline{W_{E(\cdot,h,-\overline{\lambda}),v}},
W_{1,v}^{\dag}(\cdot,s_1),h_{2,v}^{\dag}(\cdot,s_2)
\bigr)\equiv 0
\end{align*}
unless $h_v=h_v^{\circ}$, which forces $\mu_v$ to be unramified. 


As a consequence, we have
\begin{multline}\label{5.62}
\mathcal{P}_{\Eis}(\mathbf{s},\boldsymbol{\Phi})
=|\det g_0|^{-1-s_1-s_2}
\sum_{\mu\in \widehat{C}_F}\frac{1}{4\pi i\rho_F}
\int_{i\mathbb{R}}\frac{\mathbf{L}(\mathbf{s},\lambda,\mu)\Lambda_E(1/2+\lambda,\mu_E)}{\Lambda(1+2\lambda,\mu^2)\Lambda(1-2\lambda,\mu^{-2})}
\\
\prod_{v\nmid\mathfrak n}\mathcal{P}_v^{\sharp}(\mathbf{s},h_v^{\circ},\lambda)\mathcal I_v^{\sharp}(h_v^{\circ},\lambda)\mathbf{1}_{r_{\mu_v}=0}\\
\times\prod_{v\mid\mathfrak n}\sum_{h_v\in \mathfrak{B}_v(\mu_v,\overline{\mu}_v)}\mathcal{P}_v^{\sharp}(\mathbf{s},h_v,\lambda)\mathcal I_v^{\sharp}(h_v,\lambda)d\lambda,
\end{multline}
where $r_{\mu_v}$ is the arithmetic conductor exponent of $\mu_v$.

Let $v\mid\mathfrak{n}$ and $r_v=e_v(\mathfrak{n})$. Similar to \eqref{e5.8} we have
\begin{align*}
\mathcal{P}_v^{\sharp}(\mathbf{s},h_v,\lambda)=\sum_{j=1}^{2}\mathcal{P}_v^{(j)}(\mathbf{s},h_v,\lambda),
\end{align*}
where
\begin{align*}
\mathcal{P}_v^{(j)}(\mathbf{s},h_v,\lambda):={}&L_v(1-2\lambda,\mu_v^{-2})\mathbf{L}_v(\mathbf{s},\lambda,\mu_v)^{-1}\widetilde{\Psi}_v\bigl(
\overline{W_{E(\cdot,h,-\overline{\lambda}),v}},W_{1,v}^{\dag}(\cdot,s_1),\\
&h_{2,v}^{(j)}(\cdot,s_2)\bigr).
\end{align*}

By \eqref{5.62}, 
\begin{multline}\label{5.63}
\mathcal{P}_{\Eis}(\mathbf{s},\boldsymbol{\Phi})
=|\det g_0|^{-1-s_1-s_2}
\sum_{\mu\in \widehat{C}_F}\frac{1}{4\pi i\rho_F}
\int_{i\mathbb{R}}\frac{\mathbf{L}(\mathbf{s},\lambda,\mu)\Lambda_E(1/2+\lambda,\mu_E)}{\Lambda(1+2\lambda,\mu^2)\Lambda(1-2\lambda,\mu^{-2})}
\\
\prod_{v\nmid\mathfrak n}\mathcal{P}_v^{\sharp}(\mathbf{s},h_v^{\circ},\lambda)\mathcal I_v^{\sharp}(h_v^{\circ},\lambda)\mathbf{1}_{r_{\mu_v}=0}\prod_{v\mid\mathfrak n}\sum_{j=1}^{2}\mathcal{J}_v^{(j)}(\mathbf{s},h_v,\lambda)d\lambda,
\end{multline}
where
\begin{align*}
\mathcal{J}_v^{(j)}(\mathbf{s},h_v,\lambda):=\sum_{h_v\in \mathfrak{B}_v(\mu_v,\overline{\mu}_v)}\mathcal{P}_v^{(j)}(\mathbf{s},h_v,\lambda)\mathcal I_v^{\sharp}(h_v,\lambda).
\end{align*}

Transport the toric functional to the diagonal model as in Lemma~\ref{lemma5.3}. On the unitary axis its Weyl scalar has modulus one, so the two contractions have the same absolute value by the change of
orthonormal basis used in \eqref{5.19}. For $\mu_v=\mathbf1$ the scalar is one, and continuation gives the exact identity used for the residues:
\begin{equation}\label{5.32}
\mathcal{J}_v^{(1)}(\mathbf{s},h_v,\lambda)=\mathcal{J}_v^{(2)}(\mathbf{s},h_v,\lambda),\qquad \mu_v=\mathbf1.
\end{equation}

For $l\geq 0$, let $\mathfrak{B}_v(\mu_v,\overline{\mu}_v)^{K_v[l]}$ be the subset consisting of right-$K_v[l]$-invariant vectors. In parallel to Lemma \ref{lem5.5}, we obtain 
\begin{equation}\label{5.33}
\mathcal{J}_v^{(1)}(\mathbf{s},h_v,\lambda)=\sum_{h_v\in \mathfrak{B}_v(\mu_v,\overline{\mu}_v)^{K_v[r_v]}}\mathcal{P}_v^{(1)}(\mathbf{s},h_v,\lambda)\mathcal I_v^{\sharp}(h_v,\lambda).
\end{equation}
In particular, $\mathcal{J}_v^{(1)}(\mathbf{s},h_v,\lambda)$ vanishes unless $r_{\pi_v}\leq r_v$, which amounts to $r_{\mu_v}\leq r_v/2$.  
 
\subsubsection{Local Integrals at $v\nmid\mathfrak n$}
For $v\nmid\mathfrak n$, the spherical section
$S(h_v^\circ)$ is the Godement section for $\Phi_v^\circ$ divided by
its value at $I_2$; see \cite[p.~6]{JZ87}. Unfolding in the
integral-basis coordinates gives, at a finite place,
\begin{align*}
\mathcal I_v^\sharp(h_v^\circ,\lambda)
=\mu_v(\det g_{0,v})|\det g_{0,v}|_v^{1/2+\lambda}q_v^{(d_{F_v}-d_{E_v})/2}.
\end{align*}

At infinity the corresponding identity is
\[
\mathcal I_v^\sharp(h_v^\circ,\lambda)=
\mu_v(\det g_{0,v})|\det g_{0,v}|_v^{1/2+\lambda}
\begin{cases}
\pi\,2^{1/2+\lambda}\mu_v(2),&v\text{ real non-split},\\
1,&v\text{ real split},\\
\pi,&v\text{ complex}.
\end{cases}
\]
The local calculations in \textsection\ref{5.1.4}--\textsection\ref{sec5.1.5}
apply to $\mu_v|\cdot|_v^\lambda\boxplus\mu_v^{-1}|\cdot|_v^{-\lambda}$.
The scalar
$\upsilon_\mu=\prod_{v\nmid\mathfrak n}
\mu_v(\det g_{0,v})\prod_{v\in\Sigma_{F,\infty}^{\mathrm{r,ns}}}\mu_v(2)\prod_{v<\infty}\mu_v(\delta_v)^{-1}$,
satisfies $|\upsilon_\mu|=1$. Multiplying the local factors, we obtain
\begin{multline}\label{5.60}
\prod_{v\nmid\mathfrak n}
\mathcal P_v^\sharp(\mathbf s,h_v^\circ,\lambda)
\mathcal I_v^\sharp(h_v^\circ,\lambda)
=\upsilon_\mu \pi^{r_{\mathrm{ns}}+r_2}D_F^{1/2+\lambda}D_E^{-1/2}
D_{E/F}^{1/4+\lambda/2}\\
\times 2^{r_1+2r_2}J_\infty(s_1,s_2)\prod_{v\in\Sigma_{F,\fin}}J_v(s_1,s_2).
\end{multline}
Here $J_{\infty}$ and $J_v$ are defined by \eqref{eq5.8} and \eqref{equ5.11}. The extra factors are the normalized Eisenstein Whittaker coefficients: $2/L_v(1-2\lambda,\mu_v^{-2})$ at a real place, $4/L_v(1-2\lambda,\mu_v^{-2})$ at a complex place, and $q_v^{d_{F_v}\lambda}\mu_v(\delta_v)^{-1}/L_v(1-2\lambda,\mu_v^{-2})$ at a finite place. The local scattering denominator cancels in $\mathcal P_v^\sharp$. In particular \eqref{5.60} is an identity of raw basis contractions, not a product of Whittaker values at $I_2$.

\subsubsection{Local Integrals at $v\mid\mathfrak n$}
We now treat the places $v\mid\mathfrak n$. Suppose $|\Re(s_1)|+|\Re(s_2)|\leq \varepsilon$.  

By the Weyl symmetry above, it suffices to bound $j=1$ on the unitary axis. Let $h_v\in \mathfrak{B}_v(\mu_v,\overline{\mu}_v)^{K_v[r_v]}$. Following the arguments in \cite[\textsection 7.2]{Yan26a}, we have
\begin{equation}\label{5.37}
\mathcal{P}_v^{(1)}(\mathbf{s},h_v,\lambda)\ll
r_v^4\cdot q_v^{(-\frac12+100\varepsilon)r_v},
\end{equation}
where the implied constant depends only on $\varepsilon$. 

For $i\leq r_v$, we have 
\begin{align*}
\int_{F_v^{\times}\backslash E_v^{\times}}S(h_v^{\circ})(\iota(x_vg_v^{\dag})\diag(1,\varpi_v^i),\lambda)d^{\times}x_v\ll 1,
\end{align*}
where the implied constant is absolute. Consequently, by the construction of $h_v\in \mathfrak{B}_v(\mu_v,\overline{\mu}_v)^{K_v[r_v]}$ as a linear combination, with bounded coefficients, of right translations of $h_v^{\circ}$ by $\diag(1,\varpi_v^i)$ for some $0\leq i\leq r_v$, we obtain
\begin{equation}\label{5.38}
\mathcal I_v^{\sharp}(h_v,\lambda)\ll r_v+1.
\end{equation}

Combining the Weyl symmetry above with \eqref{5.33}, \eqref{5.37}, and \eqref{5.38} gives
\begin{equation}\label{5.39}
\sum_{j=1}^{2}\mathcal{J}_v^{(j)}(\mathbf{s},h_v,\lambda)\ll
r_v^6\cdot q_v^{(-\frac12+100\varepsilon)r_v}
\mathbf{1}_{r_{\mu_v}\leq r_v/2}.
\end{equation}

Since $\prod_{v<\infty}J_v(s_1,s_2)\ll_{F,\mathbf s}D_E/D_F^2$,
\eqref{f5.5} and \eqref{5.60} contribute
$D_E^{1/4+O(\varepsilon)}$ on $\Re\lambda=0$.
Substitute \eqref{5.60} and \eqref{5.39} into \eqref{5.63} to obtain
the following estimate.
\begin{prop}\label{prop5.9}
Let $|\Re(s_1)|+|\Re(s_2)|\leq \varepsilon$ and $0<\varepsilon<10^{-3}$. Then 
\begin{align*} 
\mathcal{P}_{\Eis}(\mathbf{s},\boldsymbol{\Phi})
\ll N_F(\mathfrak{n})^{-\frac{1}{2}+101\varepsilon}D_E^{1/4+100\varepsilon}
\sum_{\mu\in \widehat{C}_F(\mathfrak{n})}
\int_{i\mathbb{R}}\frac{|\mathbf{L}(\mathbf{s},\lambda,\mu)\Lambda_E(1/2+\lambda,\mu_E)|}{|\Lambda(1+2\lambda,\mu^2)|^2}d\lambda,
\end{align*}	
where $\widehat C_F(\mathfrak n)$ is defined in \textsection\ref{sec3.1.3}.
The implied constant depends on $F$ and $\varepsilon$ and is locally
uniform in the shifts.
\end{prop}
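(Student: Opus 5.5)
The plan is to start from the factorized expression \eqref{5.63} and bound each factor on the line $\Re(\lambda)=0$ in the region $|\Re(s_1)|+|\Re(s_2)|\leq\varepsilon$. I would first restrict the $\mu$-sum. By \eqref{5.62}, a character $\mu$ contributes only if $\mu_v$ is unramified at every $v\nmid\mathfrak n$. By \eqref{5.33} and the vanishing statement after it, $\mathcal J_v^{(1)}$, and hence by the Weyl symmetry also $\mathcal J_v^{(2)}$ in absolute value, vanishes unless $r_{\mu_v}\leq r_v/2$ for each $v\mid\mathfrak n$. So only $\mu\in\widehat C_F(\mathfrak n)$ survive, and it remains to bound the absolute value of the integrand for each such $\mu$.

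Next I would insert the local information. For $v\nmid\mathfrak n$, the product of $\mathcal P_v^\sharp\mathcal I_v^\sharp$ is given exactly by \eqref{5.60}. Since $|\upsilon_\mu|=1$ and $|D_F^{\lambda}D_{E/F}^{\lambda/2}|=1$ on $\Re\lambda=0$, its absolute value is
\[
\pi^{r_{\mathrm{ns}}+r_2}D_F^{1/2}D_E^{-1/2}D_{E/F}^{1/4}\,2^{r_1+2r_2}\,|J_\infty|\prod_v|J_v|.
\]
By \eqref{equ5.11}, $\prod_{v<\infty}|J_v|\ll_{F}D_E^{1+O(\varepsilon)}/D_F^{2}$, with the shift exponent $d_{F_v}(s_2-s_1)$ affecting only $D_F$. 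Together with $|\det g_0|^{-1-s_1-s_2}\ll D_E^{-1/2+O(\varepsilon)}$ from \eqref{f5.5} and $D_{E/F}\leq D_E$, this gives $D_E^{1/4+O(\varepsilon)}$ up to $F$-dependent constants. The $J_\infty$ factor is bounded, locally uniformly in $\mathbf s$. For $v\mid\mathfrak n$, \eqref{5.39} gives $r_v^6q_v^{(-1/2+100\varepsilon)r_v}$. Taking the product over $v\mid\mathfrak n$ and absorbing $\prod_v r_v^6\ll N_F(\mathfrak n)^{\varepsilon}$ by the divisor bound yields the factor $N_F(\mathfrak n)^{-1/2+101\varepsilon}$.

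Finally, I would handle the global $L$-quotient. On $\Re\lambda=0$ with $\mu$ unitary, $\Lambda(1-2\lambda,\mu^{-2})=\overline{\Lambda(1+2\lambda,\mu^{2})}$. Hence the denominator has absolute value $|\Lambda(1+2\lambda,\mu^2)|^2$. Collecting all factors, the constant $1/(4\pi\rho_F)$ depending only on $F$, gives the claimed bound. Adjusting the numerical multiples of $\varepsilon$ then yields the stated exponents $101\varepsilon$ and $100\varepsilon$.

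The main obstacle is the justification of \eqref{5.37}, namely transferring the cuspidal local bound \eqref{5.25} of \cite[\textsection 7.2]{Yan26a} to the Eisenstein local vectors uniformly in $\lambda\in i\mathbb R$ and in ramified $\mu_v$ with $r_{\mu_v}\leq r_v/2$. I would handle this as in Lemma~\ref{lem5.7}: use the basis of $K_v[r_v]$-invariants as bounded combinations of translates of the newvector, as in \eqref{e7.5}, and note that the estimate is algebraic in $q_v^{\pm s_j}$ and $q_v^{\pm\lambda}$. On the unitary axis these have modulus one, so the monomials are controlled by $q_v^{O(\varepsilon r_v)}$. One must also check that the normalization $L_v(1-2\lambda,\mu_v^{-2})$ does not introduce poles on $i\mathbb R$. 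This is clear away from $\lambda=0$ with $\mu_v^2$ unramified, and at $\lambda=0$ the singularity is cancelled by the corresponding zero of the local Whittaker coefficient.
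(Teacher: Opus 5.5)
Your proposal is correct and follows the paper's own argument. You restrict to $\mu\in\widehat C_F(\mathfrak n)$ via \eqref{5.62} and \eqref{5.33}, substitute \eqref{5.60} and \eqref{5.39} into \eqref{5.63}, combine with \eqref{f5.5} to get $D_E^{1/4+O(\varepsilon)}$, use $|\Lambda(1-2\lambda,\mu^{-2})|=|\Lambda(1+2\lambda,\mu^2)|$ on $i\mathbb R$, and take \eqref{5.37} from \cite[\textsection 7.2]{Yan26a} as the paper does. One small correction: the worry about poles of the local factor $L_v(1-2\lambda,\mu_v^{-2})$ is unnecessary, because $\Re(1-2\lambda)=1$ on $i\mathbb R$ gives $|L_v(1-2\lambda,\mu_v^{-2})|\leq(1-q_v^{-1})^{-1}$ there, with no singularity at $\lambda=0$ to cancel.
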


\begin{cor}\label{cor5.central}
At the central point, uniformly for the test levels occurring in the
mollifier and amplifier,
\begin{align*}
\mathcal P_{\cusp}(\mathbf0,\boldsymbol\Phi)+
\mathcal P_{\Eis}(\mathbf0,\boldsymbol\Phi)
\ll_{F,\varepsilon}D_E^{5/12+\varepsilon}N_F(\mathfrak n)^{2/3+\varepsilon}.
\end{align*}
\end{cor}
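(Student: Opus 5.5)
We specialize Propositions~\ref{prop5.1} and \ref{prop5.9} to $\mathbf s=\mathbf 0$ and bound the resulting spectral moments by Cauchy--Schwarz and standard large-sieve/moment estimates over $F$. For the cuspidal part, Proposition~\ref{prop5.1} gives
\begin{align*}
\mathcal P_{\cusp}(\mathbf0,\boldsymbol\Phi)\ll D_E^{1/4+\varepsilon}\sum_{\mathfrak m\mid\mathfrak n}N_F(\mathfrak m)^{-1/2+\varepsilon}\sum_{\pi\in\mathcal F(\mathfrak m,\mathbf1)}\mathcal V_\infty(\pi)L(1/2,\pi_E)^{1/2}|L_F(1/2,\pi)|^2 .
\end{align*}
Since $L(1/2,\pi_E)=L(1/2,\pi)L(1/2,\pi\otimes\eta_{E/F})$, I apply Cauchy--Schwarz in $\pi$ to split off
\begin{align*}
\Big(\sum_{\pi}\mathcal V_\infty(\pi)|L(1/2,\pi)|^{4}\Big)^{1/2}\Big(\sum_\pi \mathcal V_\infty(\pi)L(1/2,\pi_E)\Big)^{1/2}\cdot(\text{correction})
\end{align*}
or, more efficiently, use H\"older with exponents adapted to the three factors. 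The fourth moment of $L(1/2,\pi)$ over $\mathcal F(\mathfrak m,\mathbf 1)$ with smooth Archimedean weight is $\ll N_F(\mathfrak m)^{1+\varepsilon}$ (spectral large sieve over $F$), and the first moment of the base-change value $L(1/2,\pi_E)$, which is a $\mathrm{GL}_2\times\mathrm{GL}_2$ Rankin--Selberg type value of conductor $\asymp N_F(\mathfrak m)^2D_{E/F}^2$, is bounded by $\ll (N_F(\mathfrak m)D_E^{1/3})^{1+\varepsilon}$-type bounds obtained from a Weyl-type subconvex twisted moment (the exponent $1/3$ being what produces $5/12=1/4+1/6$). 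Concretely I aim for
\begin{align*}
\sum_{\pi\in\mathcal F(\mathfrak m,\mathbf1)}\mathcal V_\infty(\pi)L(1/2,\pi_E)^{1/2}|L(1/2,\pi)|^2\ll D_E^{1/6+\varepsilon}N_F(\mathfrak m)^{7/6+\varepsilon},
\end{align*}
so that after multiplying by $D_E^{1/4}N_F(\mathfrak m)^{-1/2}$ and summing over the $O(N_F(\mathfrak n)^\varepsilon)$ divisors one gets $D_E^{5/12+\varepsilon}N_F(\mathfrak n)^{2/3+\varepsilon}$.

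The key intermediate step, and the main obstacle, is this hybrid moment bound. I would first try: H\"older with exponents $(4,4,2)$ applied to $L(1/2,\pi)^{1/2}\cdot L(1/2,\pi\otimes\eta)^{1/2}\cdot|L(1/2,\pi)|^{3/2}$, using the fourth moment of $L(1/2,\pi)$ (giving $N_F(\mathfrak m)^{1+\varepsilon}$), a second moment of $L(1/2,\pi\otimes\eta)$ over the family (large sieve: $\ll (N_F(\mathfrak m)+N_F(\mathfrak m)^{1/2}D_{E/F})^{1+\varepsilon}$), and an interpolation between the fourth moment and the Weyl-strength pointwise bound $L(1/2,\pi\otimes\eta)\ll (D_{E/F}N_F(\mathfrak m))^{1/3+\varepsilon}$ for the quadratic twist (Conrey--Iwaniec/Petrow--Young type, available over number fields in the required uniformity). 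Balancing these exponents should give $D_E^{1/6}$; if the large sieve alone is lossy I would instead use the pointwise Weyl bound on $L(1/2,\pi\otimes\eta)^{1/2}$, giving $D_E^{1/6}N_F(\mathfrak m)^{1/6}$ times the fourth-moment-controlled $\sum|L(1/2,\pi)|^{5/2}\ll N_F(\mathfrak m)^{1+\varepsilon}$, which matches the target exactly.

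For the Eisenstein part, Proposition~\ref{prop5.9} at $\mathbf s=\mathbf0$ reduces to integrals of $|L(1/2+\lambda,\mu)|^4|L(1/2+\lambda,\mu)L(1/2+\lambda,\mu\eta)|/|L(1+2\lambda,\mu^2)|^2$ over $\mu\in\widehat C_F(\mathfrak n)$ and $\lambda\in i\mathbb R$; the denominator is $\gg(\log)^{-1}$ by standard zero-free regions, the Archimedean Gamma factors give rapid decay in $\lambda$ and in the infinity type of $\mu$, and the same H\"older scheme with $\mathrm{GL}_1$ fourth moments and Weyl-type bounds for $L(1/2+\lambda,\mu\eta)$ yields a bound no larger than the cuspidal one, noting that $\#\widehat C_F(\mathfrak n)\ll N_F(\mathfrak n)^{1/2+\varepsilon}$. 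Uniformity in the test levels follows since all constants in Propositions~\ref{prop5.1}, \ref{prop5.9} depend only on $F,\varepsilon$.
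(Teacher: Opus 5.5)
Your fallback route is exactly the paper's: Proposition~\ref{prop5.1} at $\mathbf s=\mathbf 0$, the factorization $L(1/2,\pi_E)=L(1/2,\pi)L(1/2,\pi\otimes\eta_{E/F})$, a Weyl bound applied only to $|L(1/2,\pi\otimes\eta_{E/F})|^{1/2}$, and H\"older to get $\sum_\pi\mathcal V_\infty(\pi)|L(1/2,\pi)|^{5/2}\ll N_F(\mathfrak m)^{1+\varepsilon}$. That H\"older step interpolates between the fourth moment and the Weyl count; state the count explicitly, since the fourth moment alone does not bound a $5/2$-moment.

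Discard your first-try alternatives. The large-sieve bound $\sum_\pi|L(1/2,\pi\otimes\eta_{E/F})|^2\ll(N_F(\mathfrak m)+N_F(\mathfrak m)^{1/2}D_{E/F})^{1+\varepsilon}$ gives, after any H\"older split, at best $D_E^{1/4}$ from the twisted factor. The total is then of size $D_E^{1/2}$, no better than the main-term scale. The asserted first moment $\sum_\pi L(1/2,\pi_E)\ll(N_F(\mathfrak m)D_E^{1/3})^{1+\varepsilon}$ is unsupported.

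The genuine gap is the input you take as known. You need $|L(1/2,\pi\otimes\eta_{E/F})|\ll D_E^{1/3+\varepsilon}N_F(\mathfrak m)^{1/3+\varepsilon}$, with polynomial dependence on $\pi_\infty$, uniformly over $\pi\in\mathcal F(\mathfrak m,\mathbf 1)$ and over a general number field $F$. This is not available off the shelf:
\begin{itemize}
\item Conrey--Iwaniec and Petrow--Young work over $\mathbb Q$.
\item \cite[Theorem~1.1]{Nel19} gives the $D_E$-aspect Weyl exponent with only unspecified polynomial dependence on the level of $\pi$.
\end{itemize}
With $N_F(\mathfrak m)^{A}$ in place of $N_F(\mathfrak m)^{1/3}$ you would still get $D_E^{5/12}$, but not the exponent $2/3$ on $N_F(\mathfrak n)$. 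That exponent is what the mollifier and amplifier ranges use.

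The paper fills this gap by rerunning Nelson's cubic-moment argument with explicit local weights:
\begin{itemize}
\item At $v\mid\mathfrak m$ with $e_v(\mathfrak m)=r$, the pre-Kuznetsov kernel $\mathbf 1_{K_0(\mathfrak p_v^r)}$ gives a nonnegative weight $\gg q_v^{-r}$ on every generic representation of conductor dividing $\mathfrak p_v^r$.
\item At $v\mid\mathfrak D_{E/F}$, the ramified-character weight is $\gg q_v^{-1}$.
\item In both cases the dual weight is $O(C(\omega_v)^{-1})$, and this is cancelled separately on each exact dual-conductor stratum.
\end{itemize}
Hence the cubic moment is $O((D_EN_F(\mathfrak m))^{\varepsilon})$, while the weight at $\pi\otimes\eta_{E/F}$ is $\gg(D_EN_F(\mathfrak m))^{-1-\varepsilon}$. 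Positivity then gives the hybrid bound.

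The same input, for $L(1/2+\lambda,\mu\eta_{E/F})$, is needed in the continuous part. There, your $\mathrm{GL}_1$ fourth moment with convexity for the remaining factor is an acceptable substitute for the Eisenstein fourth moment \cite[Theorem~F]{Yan26a}, with room to spare in $N_F(\mathfrak n)$. Two corrections apply:
\begin{itemize}
\item $\widehat C_F(\mathfrak n)$ is in general infinite, since its Archimedean components vary. The count $N_F(\mathfrak n)^{1/2+\varepsilon}$ must therefore be a count weighted by the Stirling decay.
\item The lower bound for $|L(1+2\lambda,\mu^2)|$ needs Siegel's theorem when $\mu^2$ is quadratic, not just the standard zero-free region.
\end{itemize}
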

\begin{proof}
For the auxiliary levels $\mathfrak m\mid\mathfrak n$ occurring here,
which are coprime to $\mathfrak D_{E/F}$, we use the finite-level form
of Nelson's quadratic-twist Weyl bound for
$\pi\in\mathcal F(\mathfrak m,\mathbf1)$:
\begin{equation}\label{eq1.nelson.level}
|L(1/2,\pi\otimes\eta_{E/F})|
\ll_{F,\varepsilon,\pi_\infty}D_E^{1/3+\varepsilon}
N_F(\mathfrak m)^{1/3+\varepsilon}.
\end{equation}
The dependence on $\pi_\infty$ is polynomial in its analytic conductor.
We record the finite-level calculation, since only polynomial dependence
is stated in \cite[Theorem~1.1]{Nel19}.

At $v\mid\mathfrak m$, with $r=e_v(\mathfrak m)\geq1$, use the
pre-Kuznetsov kernel $\mathbf1_{K_0(\mathfrak p_v^r)}$ in
\cite[\textsection 13]{Nel19}. Lemma~13.1 of that paper gives a
nonnegative weight, which is $\gg q_v^{-r}$ on generic representations
of conductor at most $\mathfrak p_v^r$, using any fixed bound towards
Ramanujan. This includes principal series, twists of Steinberg, and
supercuspidals.
Here $\eta_v$ is unramified, so twisting preserves the conductor.
For the dual weight, write $q=q_v$ and $a(\omega)$ for the conductor
exponent of a local unitary character $\omega$. The transform
\cite[(2.30)]{Nel19} satisfies
\begin{align*}
|\widetilde h_v(\omega)|\ll
\begin{cases}
1,&a(\omega)=0,\\
q^{-2},&r=1,\ a(\omega)=1,\\
0,&\text{otherwise}.
\end{cases}
\end{align*}
To check this directly, the two Fourier transforms in
\cite[(2.15) and (2.21)]{Nel19} reduce to
\begin{align*}
V^\sharp(x,y)=\int_{|\xi|\geq q^r|x|}
\big(\mathbf1_{|\xi|\leq1}-q^{-1}\mathbf1_{|\xi|\leq q}\big)
\psi_v(-\xi y)\,d\xi.
\end{align*}
For $r\geq2$, its contribution to (2.31) of that paper is supported on
$e_v(t)\geq r-1$ and equals
$(e_v(t)-r+1)(1-q^{-1})-q^{-1}$. Its Mellin transform vanishes on
ramified characters and is bounded on unramified ones. For $r=1$, the
unit ranges add only characters of conductor exponent one, with weight
$O(q^{-2})$. The same geometric series in the normalized double Hecke
integrals \cite[(15.9)]{Nel19} are bounded by
$(r+1)^{O(1)}q^{O(\alpha r)}$ on the contours of
\cite[Proposition~15.5]{Nel19}.

At $v\mid\mathfrak D_{E/F}$ outside $2\mathfrak D_F$, use the
ramified-character weights of \cite[Theorem~3.1 and Proposition~3.3]{Nel19}.
The cubic weight is $\gg q_v^{-1}$; the dual weight is $O(1)$ on
unramified characters and $O(q_v^{-1})$ on characters of conductor
exponent one, and vanishes at higher conductors. Thus, at every finite
place, the dual weight is at most a fixed multiple of $C(\omega_v)^{-1}$.
Applying \cite[(4.6)]{Nel19} separately at each exact dual conductor
cancels these factors. The divisor bound absorbs the number of conductor
strata and the factors $(r+1)^{O(1)}$. The degenerate terms satisfy the
same bound by \cite[\textsection 14--\textsection 15]{Nel19}, taking
$\alpha$ sufficiently small in terms of $\varepsilon$.
The finitely many local types above $2\mathfrak D_F$ affect only the
$F$-constant. Hence the cubic moment is
$O_{F,\varepsilon}((D_EN_F(\mathfrak m))^\varepsilon)$, with polynomial
Archimedean dependence, while its weight at $\pi\otimes\eta_{E/F}$ is
$\gg_{F,\varepsilon}(D_EN_F(\mathfrak m))^{-1-\varepsilon}$.
Positivity proves \eqref{eq1.nelson.level}.

Apply Proposition~\ref{prop5.1} and factor
$L(1/2,\pi_E)=L(1/2,\pi)L(1/2,\pi\otimes\eta_{E/F})$.
Use \eqref{eq1.nelson.level} only for the twisted factor. For
$M=N_F(\mathfrak m)$, its square root contributes
$D_E^{1/6+\varepsilon}M^{1/6+\varepsilon}$, with polynomial Archimedean
factors absorbed by $\mathcal V_\infty$. H\"older's inequality, the fourth
moment \cite[Theorem~E]{Yan26a}, and the Weyl count bound the remaining
$5/2$-th moment by $M^{1+\varepsilon}$; see \eqref{eq7.level.secondmoment}. The local factor $M^{-1/2+\varepsilon}$ in
\eqref{eq5.2} and the sum over $\mathfrak m\mid\mathfrak n$ give
$N_F(\mathfrak n)^{2/3+\varepsilon}$.
For the continuous spectrum, use Proposition~\ref{prop5.9} and the
Eisenstein fourth moment \cite[Theorem~F]{Yan26a}, with the
scattering denominator included in the spectral measure. The same interpolation
and Stirling decay apply. Multiplication by the common factor
$D_E^{1/4+\varepsilon}$ proves the asserted bound.
\end{proof}

\section{Computation of \texorpdfstring{$\widetilde{\Psi}_{\Res}^{(i)}(\mathbf{s},\boldsymbol{\Phi})$}{}}\label{sec6}
In this section, we compute the residual terms $\widetilde{\Psi}_{\Res}^{(i)}(\mathbf{s},\boldsymbol{\Phi})$, $1\leq i\leq 4$, explicitly. 
\subsection{Singular Spectral Component}
For $s_1, s_2, \lambda\in \mathbb{C}$, we define 
\begin{equation}\label{eq6.1}
\mathcal{I}_{\mathrm{sin}}(\mathbf{s},\boldsymbol{\Phi},\lambda):=(2\rho_F)^{-1}
\sum_{h\in \mathfrak{B}(\mathbf{1},\mathbf{1})}
\widetilde{\Psi}(\lambda,h;\mathbf{s})
\int_{\mathcal{C}_{E/F}}E(\iota(x),h,\lambda)d^{\times}x.
\end{equation}
Here $\widetilde{\Psi}(\lambda,h;\mathbf{s})$ is defined in \textsection\ref{sec2.4.2}. The contribution of $\mu=\mathbf1$ to $\mathcal P_{\Eis}(\mathbf s,\boldsymbol\Phi)$ is
\[
\frac1{2\pi i}\int_{i\mathbb R}\mathcal I_{\mathrm{sin}}(\mathbf s,\boldsymbol\Phi,\lambda)\,d\lambda.
\]

With the convention of Definition~\ref{defn2.1}, the terms with $i=1,3$ are residues of $\mathcal{I}_{\mathrm{sin}}$, while those with $i=2,4$ are their negatives. We have the following explicit expression for $\mathcal{I}_{\mathrm{sin}}(\mathbf{s},\boldsymbol{\Phi},\lambda)$. 

\begin{prop}\label{prop6.1}
Let $s_1, s_2, \lambda\in \mathbb{C}$. We have 
\begin{multline}\label{f6.1}
\mathcal{I}_{\mathrm{sin}}(\mathbf{s},\boldsymbol{\Phi},\lambda)=\frac{2^{\omega_F(\mathfrak n)-1}\pi^{-r_{\mathrm{ns}}}}{\rho_F}D_F^{-1+2s_2}D_E^{1/4-(s_1+s_2)/2+\lambda/2}N_F(\mathfrak n)^{-1/2-s_2}\\
\frac{\mathbf{L}(\mathbf{s},\lambda,\mathbf{1})\Lambda_E(1/2+\lambda)}{\Lambda_F(1+2\lambda)\Lambda_F(1-2\lambda)}R_{\mathfrak{n}}(s_1,s_2;\lambda),
\end{multline}
where $R_{\mathfrak{n}}(s_1,s_2;\lambda)$ is defined in
\eqref{eq3.Rr}.  	
\end{prop}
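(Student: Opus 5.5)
The plan is to specialize the factorization \eqref{5.63} of $\mathcal P_{\Eis}$ to the single character $\mu=\mathbf 1$, taking the factors at $v\nmid\mathfrak n$ from \eqref{5.60}. At $v\mid\mathfrak n$ we compute the local contraction exactly rather than bounding it. By \eqref{5.61} and the change of basis $\phi\mapsto\pi(g_0)\phi$ behind \eqref{e5.1}, the definition \eqref{eq6.1} becomes
\[
\mathcal I_{\mathrm{sin}}=(2\rho_F)^{-1}|\det g_0|^{-1-s_1-s_2}\frac{\mathbf L(\mathbf s,\lambda,\mathbf 1)\Lambda_E(1/2+\lambda)}{\Lambda_F(1+2\lambda)\Lambda_F(1-2\lambda)}\prod_{v\nmid\mathfrak n}\mathcal P_v^\sharp\mathcal I_v^\sharp(h_v^\circ,\lambda)\prod_{v\mid\mathfrak n}\sum_{j=1}^2\mathcal J_v^{(j)}(\mathbf s,h_v,\lambda).
\]
This holds first in the region of absolute convergence, and then in general as an identity of meromorphic functions.

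\textbf{Global constant.} Insert \eqref{5.60} with $\mu=\mathbf 1$, so $\upsilon_\mu=1$, together with \eqref{f5.5}, \eqref{eq5.8} and \eqref{equ5.11}. Then collect the powers:
\begin{itemize}
\item $|\det g_0|^{-1-s_1-s_2}=2^{r_{\mathrm{ns}}(1+s_1+s_2)}D_{E/F}^{-(1+s_1+s_2)/2}$;
\item $D_F^{1/2+\lambda}D_E^{-1/2}D_{E/F}^{1/4+\lambda/2}$ from \eqref{5.60};
\item $\prod_vJ_v=D_ED_F^{-2}D_F^{s_2-s_1}$ from \eqref{equ5.11};
\item $2^{r_1+2r_2}J_\infty=2^{-(1+s_1+s_2)r_{\mathrm{ns}}}\pi^{-2r_{\mathrm{ns}}-r_2}$, multiplied by $\pi^{r_{\mathrm{ns}}+r_2}$.
\end{itemize}
The powers of $2$ cancel and leave $\pi^{-r_{\mathrm{ns}}}$. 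Using $D_E=D_F^2D_{E/F}$, the discriminant powers should reorganize into $D_F^{-1+2s_2}D_E^{1/4-(s_1+s_2)/2+\lambda/2}$. The sign conventions on $J_v$ and $\lambda$ must be checked carefully here, since this is where the factor $D_F^{2s_2}$ has to come out right.

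\textbf{Twisting places $v\mid\mathfrak n$.} By \eqref{5.32} we have $\mathcal J_v^{(1)}=\mathcal J_v^{(2)}$, so this produces the factor $2^{\omega_F(\mathfrak n)}$. Together with $(2\rho_F)^{-1}$ it gives the $2^{\omega_F(\mathfrak n)-1}/\rho_F$ in \eqref{f6.1}. It then remains to show
\[
\mathcal J_v^{(1)}(\mathbf s,h_v,\lambda)=q_v^{-r_v(1/2+s_2)}q_v^{-d_{F_v}(\ldots)}\mathscr R_{v,r_v}(s_1,s_2;\lambda),
\]
with the normalizing power matching $N_F(\mathfrak n)^{-1/2-s_2}$.

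For this we use \eqref{5.33} and the orthonormal basis $W_{v,n}$, $0\le n\le r_v$, of $K_v[r_v]$-invariants from \eqref{e7.5}. The pairing $\mathcal P^{(1)}_v\cdot\mathcal I^\sharp_v$ is a Hermitian contraction, so it is independent of the basis. We therefore rewrite it via the reproducing kernel of $K_v[r_v]$-fixed vectors, i.e. as a single integral of $h_{2,v}^{(1)}$ against the toric functional on the diagonal torus (Lemma~\ref{lemma5.3}). Concretely:
\begin{itemize}
\item Expand the spherical Whittaker function of $\mathbf 1|\cdot|^{\lambda}\boxplus|\cdot|^{-\lambda}$ and $W_{1,v}^\dag$ by Casselman--Shalika. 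Their coefficients are the symmetric sums $S_{s_1}(j)$ and the analogous sums in $x=q^{-1/2+\lambda}$.
\item Decompose the Iwasawa integral over the valuation layers $l$ of the lower-left entry relative to $\varpi^{r}$.
\item The layers $0\le l<r$ produce the series $T_l$ in \eqref{eq3.Tl}, with the correction term coming from the mean of $\psi_v$ on the outer unit layer. The layer $l=r$ gives $T_r$ in \eqref{eq3.Tr}.
\item The toric functional on the translates $\diag(1,\varpi^i)$ supplies the weights $y^l$ and $(1-y)$, and the factor $(1+y)^{-1}$ arises from the Kirillov norm.
\item Dividing by $\mathbf L_v$ and by $L_v(1-2\lambda)$, as in the definition of $\mathcal P^\sharp_v$, yields the prefactor $q^{r/2}\zeta_{F_v}(1-2\lambda)/\mathbf L_v^-$ of $\mathscr R_{v,r}$. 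The $\zeta_{F_v}(1+2\lambda)$ coming from the section value cancels, as noted after \eqref{eq3.Rr}.
\end{itemize}

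\textbf{Checks and main obstacle.} As a consistency check, for squarefree $\mathfrak n$ the result should reduce to \eqref{eq3.9}. The main obstacle is this last local computation at $v\mid\mathfrak n$. Organizing the non-spherical layers so that exactly the expressions $T_l$ appear, with the correct $q$-power normalizations, is delicate. I would first do $r=1$ by hand against \eqref{eq3.9}, and then handle general $r$ by the layer decomposition above, with meromorphic continuation in $\lambda$ at the end.
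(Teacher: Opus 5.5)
\textbf{What is right.} Your global bookkeeping is correct. With $\mu=\mathbf 1$, the factors \eqref{5.60}, \eqref{f5.5}, \eqref{eq5.8} and \eqref{equ5.11} combine to $\pi^{-r_{\mathrm{ns}}}D_F^{-1+2s_2}D_E^{1/4-(s_1+s_2)/2+\lambda/2}$, and \eqref{5.32} supplies $2^{\omega_F(\mathfrak n)}$. Since $d_{F_v}=0$ at $v\mid\mathfrak n$, what remains to prove is exactly $\mathcal J_v^{(1)}=q_v^{-r_v(1/2+s_2)}\mathscr R_{v,r_v}(s_1,s_2;\lambda)$, which is \eqref{6.14}.

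\textbf{The gap: the local computation.} This is the real content of the proposition, and your outline does not produce it. The series $T_l$ of \eqref{eq3.Tl} are built from $S_{s_1}(j)S_{s_2}(j+r-2l)$ and the character $\overline\psi_v(\varpi_v^{-l}y_v)$. They are Casselman--Shalika data of \emph{two} spherical Eisenstein Whittaker functions, at $s_1$ and $s_2$: precisely the values of the section $T(\cdot,\lambda)$ of Lemma~\ref{lem6.2} at $\left(\begin{smallmatrix}1&\\ \varpi_v^l&1\end{smallmatrix}\right)$.

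The integral you propose to expand is $\widetilde\Psi_v(\overline{W_{E(\cdot,h,-\overline\lambda),v}},W_{1,v}^\dag,h_{2,v}^{(1)})$. There the $s_2$-datum is a section rather than a Whittaker function, and the $\lambda$-Whittaker function contributes sums in $q^{\pm\lambda}$. A layer decomposition of it therefore does not produce the $T_l$. At best it yields some other rational function that you would still have to identify with $\mathscr R_{v,r}$, and your outline gives no way to do this.

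\textbf{The missing step.} You need the local functional equation \eqref{c6.6} from \cite[Lemma~6.3]{Yan26a}. It moves the Whittaker function onto $h_{2,v}^{(1)}$ and the section onto the $\lambda$-series, and it does two things you need.
\begin{itemize}
\item It produces the gamma ratio in \eqref{e6.4}, which is exactly what converts $\mathbf L_v(\mathbf s,\lambda,\mathbf 1)$ into $\mathbf L_v^-(\mathbf s,\lambda)$. Your last bullet silently identifies these two different products. Also, $\mathcal P_v^\sharp$ multiplies by $L_v(1-2\lambda)$; it does not divide by it.
\item Via \eqref{e6.3}, it writes the Rankin--Selberg functional as the $L^2(K)$ pairing with an explicit vector $\mathfrak F_v$. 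This is what makes your ``reproducing kernel'' step concrete: $\sum_h\langle\mathfrak F_v,h\rangle\,\ell(h)=\ell(\mathfrak F_v)$, as in \eqref{e6.5}. No Gram matrix or Kirillov norm enters.
\end{itemize}

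\textbf{Where the weights come from.} Your attribution of the weights $y^l$, $1-y$, $(1+y)^{-1}$ does not match the actual computation.
\begin{itemize}
\item The toric periods of the translates $h_{1,v}^\dag(\cdot\,\diag(1,\varpi_v^i),\lambda)$ are all equal to $\zeta_{E_v}(1/2+\lambda)$ by \eqref{6.11}. So they cannot produce $y^l$ or $1-y$.
\item $(1+y)^{-1}$ cannot be a norm factor: a unitary-axis norm is even in $\lambda$, and $(1+y)^{-1}$ is not.
\end{itemize}
In the paper the mechanism is as follows. One writes $\mathfrak F_v=q_v^{-r_vs_2}\sum_i\alpha_ih_{1,v}^\dag(\cdot\,\diag(1,\varpi_v^i),\lambda)$ and evaluates at the $r+1$ representatives using \eqref{6.16}. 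This gives the linear system $\sum_i\alpha_iy^{|l-i|}=y^lT_l/\zeta_{F_v}(1+2\lambda)$. Summing the entries of the tridiagonal inverse of $(y^{|i-l|})$ then yields \eqref{6.17}, and this is where $y^l$, $(1+y)^{-1}$ and $(1-y)/(1+y)$ come from. The $T_l$ themselves follow from Macdonald's formula, with the correction term coming from the Ramanujan sum on the layer $e_v(y_v)=l-1$.

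Your proposed check of $r=1$ against \eqref{eq3.9} is a good sanity test, but it does not replace this general-$r$ argument.
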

\begin{proof}
By \eqref{5.32}, \eqref{5.60}, the density
$d\lambda/(4\pi i\rho_F)$, and the translated toric period, we obtain  
\begin{multline}\label{e6.2}
\mathcal{I}_{\mathrm{sin}}(\mathbf{s},\boldsymbol{\Phi},\lambda)=\frac{2^{\omega_F(\mathfrak n)-1}\pi^{-r_{\mathrm{ns}}}}{\rho_F}D_F^{-1+2s_2}D_E^{1/4-(s_1+s_2)/2+\lambda/2}\\
\times\frac{\mathbf{L}(\mathbf{s},\lambda,\mathbf{1})\Lambda_E(1/2+\lambda)}{\Lambda_F(1+2\lambda)\Lambda_F(1-2\lambda)}
\prod_{v\mid\mathfrak n}\frac{\zeta_{F_v}(1+2\lambda)\zeta_{F_v}(1-2\lambda)D_v(\mathbf s;\lambda)}{\mathbf L_v(\mathbf s,\lambda,\mathbf1)\zeta_{E_v}(1/2+\lambda)},
\end{multline}
where $D_v(\mathbf{s};\lambda)$ is given by
\begin{align*}
\sum_{h_v\in \mathfrak{B}_v(\mathbf{1},\mathbf{1})}\widetilde{\Psi}_v
\bigl(
\overline{W_{E(\cdot,h,-\overline{\lambda}),v}},
W_{1,v}^{\dag}(\cdot,s_1),h_{2,v}^{(1)}(\cdot,s_2)
\bigr)\int_{F_v^{\times}\backslash E_v^{\times}}S(h_v)(\iota(x_v)g_v^{\dag},\lambda)d^{\times}x_v.
\end{align*}

At $v\mid\mathfrak n$, the normalized spherical section has toric period
$\zeta_{E_v}(1/2+\lambda)/\zeta_{F_v}(1+2\lambda)$ and Rankin--Selberg coefficient
$\mathbf L_v(\mathbf s,\lambda,\mathbf1)/\zeta_{F_v}(1-2\lambda)$.
The local factor in \eqref{e6.2} replaces this spherical contraction by $D_v$.

By \cite[Lemma 6.3]{Yan26a}, we have
\begin{multline}\label{c6.6}
\widetilde{\Psi}_v
\bigl(
\overline{W_{E(\cdot,h,-\overline{\lambda}),v}},
W_{1,v}^{\dag}(\cdot,s_1),h_{2,v}^{(1)}(\cdot,s_2)
\bigr)=\gamma(1/2-\lambda-s_2+s_1,\mathbf{1},\psi_v)\\
\gamma(1/2-\lambda-s_2-s_1,\mathbf{1},\psi_v)
\widetilde{\Psi}_v(W_{1,v}^{\dag}(\cdot,s_1),W_{2,v}^{(1)}(\cdot,s_2),\overline{S(h_v)(\cdot,-\overline{\lambda})}),
\end{multline}
where $W_{2,v}^{(1)}(\cdot,s_2)$ denotes the Jacquet integral of $h_{2,v}^{(1)}(\cdot,s_2)$ relative to $\overline{\psi}_v$. Here $\gamma(s,\mathbf{1},\psi_v)$ is the gamma factor, explicitly given by
\begin{equation}\label{eq6.2}
\gamma(s,\mathbf{1},\psi_v)=\zeta_{F_v}(s)^{-1}\zeta_{F_v}(1-s).
\end{equation}

Assume that
\begin{equation}\label{eq6.5}
|\Re(s_1)|+|\Re(s_2)|<\varepsilon<10^{-3},\qquad |\Re(\lambda)|<1/2-10\varepsilon.
\end{equation}
In this range, $\widetilde{\Psi}_v(W_{1,v}^{\dag}(\cdot,s_1),W_{2,v}^{(1)}(\cdot,s_2),\overline{S(h_v)(\cdot,-\overline{\lambda})})$ converges absolutely and unfolds to
\begin{equation}\label{e6.3}
\widetilde{\Psi}_v(W_{1,v}^{\dag}(\cdot,s_1),W_{2,v}^{(1)}(\cdot,s_2),\overline{S(h_v)(\cdot,-\overline{\lambda})})=\int_{K_v}\mathfrak{F}_v(k_v,\lambda)\overline{S(h_v)(k_v,-\overline{\lambda})}dk_v,
\end{equation}
where
\begin{align*}
\mathfrak{F}_v(g_v,\lambda):=\int_{F_v^{\times}}W_{1,v}^{\dag}\left(\begin{pmatrix}
y_v\\
& 1
\end{pmatrix}g_v,s_1\right)
W_{2,v}^{(1)}\left(\begin{pmatrix}
y_v\\
& 1
\end{pmatrix}g_v,s_2\right)|y_v|_v^{-\frac{1}{2}-\lambda}d^{\times}y_v. 
\end{align*}

Combining \eqref{c6.6}, \eqref{eq6.2}, and \eqref{e6.3}, we obtain
\begin{equation}\label{e6.4}
D_v(\mathbf{s};\lambda)=\frac{\zeta_{F_v}(1/2+\lambda+s_2-s_1)\zeta_{F_v}(1/2+\lambda+s_2+s_1)}{\zeta_{F_v}(1/2-\lambda-s_2+s_1)\zeta_{F_v}(1/2-\lambda-s_2-s_1)}\cdot \mathfrak{G}_v(\mathbf{s},\lambda),
\end{equation}
where  
\begin{align*}
\mathfrak{G}_v(\mathbf{s},\lambda):=\sum_{h_v\in \mathfrak{B}_v(\mathbf{1},\mathbf{1})}\int_{K_v}\mathfrak{F}_v(k_v,\lambda)\overline{S(h_v)(k_v,-\overline{\lambda})}dk_v\int_{F_v^{\times}\backslash E_v^{\times}}S(h_v)(\iota(x_v)g_v^{\dag},\lambda)d^{\times}x_v. 
\end{align*}

Since $\mathfrak{F}_v(k_v,\lambda)\in |\cdot|_v^{\lambda}\boxplus |\cdot|_v^{-\lambda}$, the orthogonal decomposition is
\begin{equation}\label{e6.5}
\mathfrak{G}_v(\mathbf{s},\lambda)=\int_{F_v^{\times}\backslash E_v^{\times}}\mathfrak{F}_v(\iota(x_v)g_v^{\dag},\lambda)d^{\times}x_v. 
\end{equation}

Let $g_v\in G(F_v)$. By the definition \eqref{2.2}, we have
\begin{equation}\label{eq6.6}
W_{2,v}^{(1)}(g_v,s_2)=q_v^{-r_vs_2}W_{1,v}^{\dag}\left(g_v\begin{pmatrix}
1\\
& \varpi_v^{r_v}
\end{pmatrix},s_2\right). 
\end{equation}

Substituting \eqref{eq6.6} into the definition of $\mathfrak{F}_v(g_v,\lambda)$ yields
\begin{multline*}
\mathfrak{F}_v(g_v,\lambda)=q_v^{-r_vs_2}\int_{F_v^{\times}}W_{1,v}^{\dag}\left(\begin{pmatrix}
y_v\\
& 1
\end{pmatrix}g_v,s_1\right)\\
W_{1,v}^{\dag}\left(\begin{pmatrix}
y_v\\
& 1
\end{pmatrix}g_v\begin{pmatrix}
1\\
& \varpi_v^{r_v}
\end{pmatrix},s_2\right)|y_v|_v^{-\frac{1}{2}-\lambda}d^{\times}y_v. 
\end{multline*}

By Lemma \ref{lem6.2} below, there are coefficients $\alpha_i$ satisfying 
\begin{equation}\label{e6.10}
\mathfrak{F}_v(g_v,\lambda)=q_v^{-r_vs_2}\sum_{i=0}^{r_v}\alpha_ih_{1,v}^{\dag}(g_v\diag(1,\varpi_v^i),\lambda) 
\end{equation}
for all $g_v\in G(F_v)$, such that 
\begin{equation}\label{eq6.11}
\sum_{i=0}^{r_v}\alpha_i=q_v^{-r_v/2}\frac{\mathbf L_v^-(\mathbf s,\lambda)}{\zeta_{F_v}(1+2\lambda)\zeta_{F_v}(1-2\lambda)}\mathscr R_{v,r_v}(s_1,s_2;\lambda),
\end{equation}
where $\mathscr R_{v,r_v}(s_1,s_2;\lambda)$ is defined by
\eqref{eq3.Rr}.

On the other hand, by \eqref{e2.2}, we have 
\begin{multline*}
\int_{F_v^{\times}\backslash E_v^{\times}}h_{1,v}^{\dag}(\iota(x_v)g_v^{\dag}\diag(1,\varpi_v^i),\lambda)d^{\times}x_v=q_v^{-(1/2+\lambda)i}\int_{E_v^{\times}}|N_{E_v/F_v}(x_v)|_v^{\frac{1}{2}+\lambda}\\
\Phi_v^{\circ}((0,1)\iota(x_v)g_v^{\dag}\diag(1,\varpi_v^i))d^{\times}x_v
\end{multline*}

By the definition of $g_v^{\dag}$, as in \eqref{3.14}, and a change of variable, we therefore obtain 
\begin{equation}\label{6.11}
\int_{F_v^{\times}\backslash E_v^{\times}}h_{1,v}^{\dag}(\iota(x_v)g_v^{\dag}\diag(1,\varpi_v^i),\lambda)d^{\times}x_v=\zeta_{E_v}(1/2+\lambda). 
\end{equation}

Using \eqref{e6.5}, \eqref{e6.10}, \eqref{eq6.11}, and \eqref{6.11}, we get
\begin{equation}\label{6.12}
\mathfrak{G}_v(\mathbf{s},\lambda)=q_v^{-r_v(1/2+s_2)}\zeta_{E_v}(1/2+\lambda)\frac{\mathbf L_v^-(\mathbf s,\lambda)}{\zeta_{F_v}(1+2\lambda)\zeta_{F_v}(1-2\lambda)}\mathscr R_{v,r_v}(s_1,s_2;\lambda).
\end{equation}

Recall that 
\begin{align*}
\mathbf{L}_v(\mathbf{s},\lambda,\mathbf{1})=\prod_{\epsilon\in \{\pm 1\}}\zeta_{F_v}(1/2+\epsilon\lambda+s_1+s_2)\zeta_{F_v}(1/2+\epsilon\lambda-s_1+s_2).
\end{align*}

Together with \eqref{e6.4}, identity \eqref{6.12} gives, for $v\mid\mathfrak n$, 
\begin{equation}\label{6.14}
\frac{D_v(\mathbf{s};\lambda)}{\mathbf{L}_v(\mathbf{s},\lambda,\mathbf{1})\zeta_{E_v}(1/2+\lambda)}
=\frac{\mathscr R_{v,r_v}(s_1,s_2;\lambda)}{q_v^{r_v(1/2+s_2)}\zeta_{F_v}(1+2\lambda)\zeta_{F_v}(1-2\lambda)},
\end{equation}
where $\mathbf{L}_v^-(\mathbf{s},\lambda)$ is defined by \eqref{equ3.8}.  

Multiplying \eqref{6.14} over $v\mid\mathfrak n$ supplies
$N_F(\mathfrak n)^{-1/2-s_2}R_{\mathfrak n}(s_1,s_2;\lambda)$
in \eqref{e6.2}, proving \eqref{f6.1} in the region \eqref{eq6.5}.
The general case follows by meromorphic continuation.
\end{proof}

\begin{lemma}\label{lem6.2}
Let $v\mid\mathfrak n$, $r\geq 1$, and $g_v\in G(F_v)$. Define $T(g_v,\lambda)$ by
\begin{align*}
\int_{F_v^{\times}}W_{1,v}^{\dag}\left(\begin{pmatrix}
y_v\\
& 1
\end{pmatrix}g_v,s_1\right)
W_{1,v}^{\dag}\left(\begin{pmatrix}
y_v\\
& 1
\end{pmatrix}g_v\begin{pmatrix}
1\\
&\varpi_v^{r}
\end{pmatrix},s_2\right)|y_v|_v^{-\frac{1}{2}-\lambda}d^{\times}y_v. 
\end{align*}
Then 
\begin{align*}
T(g_v,\lambda)=\sum_{i=0}^{r}\alpha_ih_{1,v}^{\dag}(g_v\diag(1,\varpi_v^i),\lambda),
\end{align*}
with coefficients $\alpha_i$ satisfying  
\begin{equation}\label{eq6.15}
\sum_{i=0}^{r}\alpha_i
=q_v^{-r/2}\frac{\mathbf L_v^-(\mathbf s,\lambda)}{\zeta_{F_v}(1+2\lambda)\zeta_{F_v}(1-2\lambda)}\mathscr R_{v,r}(s_1,s_2;\lambda),
\end{equation}
where $\mathscr R_{v,r}(s_1,s_2;\lambda)$ is defined by \eqref{eq3.Rr}.
\end{lemma}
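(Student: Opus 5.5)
We work in the split model at $v\mid\mathfrak n$, where everything is spherical and explicit. Write $W_{1,v}^{\dag}(\cdot,s)$ for the spherical Whittaker function of $|\cdot|^{s}\boxplus|\cdot|^{-s}$ attached to $q_v^{d_{E_v}/2}\Phi_v^\circ$; since $v\nmid\mathfrak D_F$ we may take $\delta_v=1$, so $W_{1,v}^\dag(a(\varpi^j)k,s)=q^{-j/2}S_s(j)\mathbf 1_{j\ge0}$ up to a harmless constant, by Casselman--Shalika. The map $g_v\mapsto T(g_v,\lambda)$ is left $(N,\psi\overline\psi)$-invariant, hence left $N$-invariant, and transforms under $a(y)$ and the center by $|y|^{1/2+\lambda}$. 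It is therefore a vector in $|\cdot|^\lambda\boxplus|\cdot|^{-\lambda}$. It is right-invariant under $K_v\cap \diag(1,\varpi^r)K_v\diag(1,\varpi^{-r})\supseteq K_v[r]$, since $W_{1,v}^\dag$ is spherical in both slots. The $K_v[r]$-fixed space of this unramified principal series is spanned, by the theory of oldforms as in \cite[Lemma 9]{BM15}, by the translates $h_{1,v}^\dag(\cdot\,\diag(1,\varpi^i),\lambda)$, $0\le i\le r$, for generic $\lambda$, and these are linearly independent. This yields the expansion with some coefficients $\alpha_i$, first in the region of absolute convergence and then by meromorphic continuation.

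To compute $\sum_i\alpha_i$ we use the toric evaluation \eqref{6.11}: each translate $h_{1,v}^\dag(\cdot\,\diag(1,\varpi^i),\lambda)$ has the same diagonal-torus period $\zeta_{E_v}(1/2+\lambda)$. It therefore suffices to integrate $T(\iota(x)g_v^\dag,\lambda)$ over $F_v^\times\backslash E_v^\times\simeq F_v^\times$ (diagonal torus $a(t)$) and divide by $\zeta_{E_v}(1/2+\lambda)$.

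Concretely, decompose $T$ along the Iwasawa cells of $g=a(t)$ and the Bruhat cells $K_v=\bigsqcup_l B(\mathcal O)\,n^-(\varpi^l)\,K_v[r]$, $0\le l\le r$. Equivalently, evaluate $T$ on the coset representatives $\left(\begin{smallmatrix}1&\\ \varpi^l&1\end{smallmatrix}\right)$ together with the torus $a(t)$. For $g=a(\varpi^m)$ one gets, after substituting $y\mapsto y\varpi^{-m}$, a sum $\sum_j x^jS_{s_1}(j)S_{s_2}(j+r)$ times explicit powers. For the intermediate cells $1\le l<r$ one moves the lower unipotent through the Whittaker functions. The $l$-cell contributes the shifted sum $\sum_{j\ge l}x^jS_{s_1}(j)S_{s_2}(j+r-2l)$, together with the correction coming from the average of $\psi$ over the outer valuation layer $\varpi^{l-1}\mathcal O^\times$, which produces the $-(q-1)^{-1}$ term. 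This reproduces exactly $T_l$ in \eqref{eq3.Tl}, and the big cell gives $T_r$.

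Weighting the cells by their volumes gives $T_0+y^rT_r+(1-y)\sum_{1\le l<r}y^lT_l$. Here the factors $y^l(1-y)$ and $y^r$ are the relative volumes of the cells against the section $|\cdot|^{1/2+\lambda}$, and $(1+y)^{-1}$ is the normalization of $h_{1,v}^\dag$ relative to the $\mathcal O$-lattice. Comparing with the definition of $\mathscr R_{v,r}$ then isolates the prefactor $q^{-r/2}\mathbf L_v^-(\mathbf s,\lambda)/(\zeta_{F_v}(1+2\lambda)\zeta_{F_v}(1-2\lambda))$. The factor $\zeta_{F_v}(1-2\lambda)$ in $\mathscr R$ comes from the value of the section at the identity, and $\zeta_{F_v}(1+2\lambda)$ cancels, as noted after \eqref{eq3.Rr}.

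The main obstacle is the bookkeeping in the intermediate cells: tracking how $n^-(\varpi^l)$ acts on the Whittaker function $W_{1,v}^\dag(\cdot\,\diag(1,\varpi^r),s_2)$, and getting the outer-layer character sum and the cell volumes exactly right. We would first check the case $r=1$ against \eqref{eq3.9}, and then confirm the general normalization using the symmetry $\lambda\mapsto-\lambda$ and the special values in \eqref{eq3.Ridentities}.
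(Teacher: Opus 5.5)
Your proposal is correct, but it extracts $\sum_i\alpha_i$ by a different mechanism than the paper.

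\textbf{The two routes.} The paper evaluates the expansion at the $r+1$ representatives $\left(\begin{smallmatrix}1&\\ \varpi_v^l&1\end{smallmatrix}\right)$. This gives the linear system $\sum_i\alpha_iy^{|l-i|}=U_l$ with $y=q_v^{-1/2-\lambda}$, and the paper then sums the columns of the explicit tridiagonal inverse of $(y^{|i-l|})$. That route yields every $\alpha_i$ individually and is purely algebraic, valid whenever $y^2\neq1$. You instead apply $P(f)=\int_{F_v^\times}f(g_v^\dag a(t))\,d^\times t$. Since $\diag(1,\varpi_v^i)=\varpi_v^i a(\varpi_v^{-i})$ and the central character is trivial, $P$ is invariant under these right translations. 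It therefore takes the common value $\zeta_{E_v}(1/2+\lambda)$ on every basis vector, and $\sum_i\alpha_i=P(T)/P(h_{1,v}^\dag)$.

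\textbf{What each buys.} Your route avoids the matrix inversion and explains the weights as torus-orbit volumes. It also shows that only this toric period is used in \eqref{6.12}: one could compute $\mathfrak G_v=q_v^{-r_vs_2}P(T)$ directly, without the basis expansion. The cost is that you need a common strip of absolute convergence, e.g.\ $-1/2<\Re\lambda<1/2-|\Re s_1|-|\Re s_2|$, followed by meromorphic continuation.

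\textbf{Bookkeeping to fix when you write it out.}
\begin{enumerate}
\item The orbit must be $g_v^\dag a(t)$, not $a(t)$; the period over $a(t)$ itself diverges.
\item For $|t|\le1$ one has $g_v^\dag a(t)\in B(F_v)\,n^-(t)$, which contributes $|t|^{1/2+\lambda}T_{\min(e_v(t),r)}$.
\item For $|t|>1$ the point lands in the cell of $w$, which contributes $|t|^{-1/2-\lambda}T_0$.
\end{enumerate}
Hence
\[
P(T)=\frac{T_0+y^rT_r+(1-y)\sum_{l=1}^{r-1}y^lT_l}{1-y},\qquad P(h_{1,v}^\dag)=(1-y)^{-2}.
\]
This gives $\sum_i\alpha_i=(1+y)^{-1}\zeta_{F_v}(1+2\lambda)^{-1}\bigl(T_0+y^rT_r+(1-y)\sum_{l=1}^{r-1}y^lT_l\bigr)$. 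That is exactly \eqref{eq6.15} once the definition of $\mathscr R_{v,r}$ is unwound. The factors $q_v^{r/2}$, $\mathbf L_v^-$ and $\zeta_{F_v}(1-2\lambda)$ there cancel identically; they do not come from the value of the section at the identity.

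\textbf{Cell labels.} Your labels are reversed. The identity cell $B(\mathcal O_v)K_v[r]$ gives $T_r=T(I_2)$, and its sum involves $S_{s_2}(j-r)$, not $S_{s_2}(j+r)$. The big cell $l=0$, which contains $w$, gives $T_0$; this is \eqref{eq3.Tl} at $l=0$, with no correction term.
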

\begin{proof}
The map $g_v\mapsto T(g_v,\lambda)$ defines a right-$K_v[r]$-invariant section in $|\cdot|_v^{\lambda}\boxplus |\cdot|_v^{-\lambda}$. Its restriction to $K_v$ is determined by its values at the $r+1$ representatives used below. The spherical translates form a basis for generic $\lambda$, since their evaluation matrix is invertible. Thus
\begin{equation}\label{6.15}
T(g_v,\lambda)=\sum_{i=0}^{r}\alpha_i h_{1,v}^{\dag}(g_v\diag(1,\varpi_v^i),\lambda)
\end{equation} 
for some $\alpha_i\in \mathbb{C}$. 

Let $0\leq l\leq r$ and $g_v=\begin{pmatrix}
1\\
\varpi_v^l & 1
\end{pmatrix}$. Put $T_l:=T(g_v,\lambda)$.
Then 
\begin{equation}\label{e6.17}
g_v\begin{pmatrix}
1\\
& \varpi_v^i
\end{pmatrix}=
\begin{cases}
\begin{pmatrix}
1\\
& \varpi_v^i
\end{pmatrix}\begin{pmatrix}
1\\
\varpi_v^{l-i} & 1
\end{pmatrix},\ \quad & \text{if $l\geq i$},\\
\begin{pmatrix}
\varpi_v^{i-l}&1\\
& \varpi_v^{l}
\end{pmatrix}\begin{pmatrix}
& -1\\
1& \varpi_v^{i-l} 
\end{pmatrix},\ \quad & \text{if $l< i$}.
\end{cases}
\end{equation}

By the definition \eqref{e2.2}, we have 
\begin{equation}\label{6.16}
h_{1,v}^{\dag}(g_v\diag(1,\varpi_v^i),\lambda)=\zeta_{F_v}(1+2\lambda)q_v^{(1/2+\lambda)(l-|l-i|)}.
\end{equation}

Write $y:=q_v^{-(1/2+\lambda)}$ and $U_l:=y^{l}T_l/\zeta_{F_v}(1+2\lambda)$. With \eqref{6.16}, identity \eqref{6.15} becomes
\begin{align*}
\sum_{i=0}^{r}\alpha_i y^{|l-i|}=U_l,\ \ \ 0\leq l\leq r. 
\end{align*}

Let $A=(y^{|i-l|})_{0\leq i,l\leq r}$ be the $(r+1)\times (r+1)$ matrix. Suppose $y^2\neq 1$. Then $A$ is invertible and
\begin{align*}
A^{-1}=\frac{1}{1-y^2}
\begin{pmatrix}
1 & -y & 0 & \cdots & 0\\
-y & 1+y^2 & -y & \cdots & 0\\
0 & -y & 1+y^2 & \cdots & 0\\
\vdots & \vdots & \vdots & \ddots & -y\\
0 & 0 & 0 & -y & 1
\end{pmatrix}. 
\end{align*}
The product of this tridiagonal matrix with $A$ has entries zero off
the diagonal and $1-y^2$ on the diagonal.

Therefore,
\begin{align*}
\begin{cases}
\alpha_0
=
\frac{U_0-yU_1}{1-y^2},\\
\alpha_i
=
\frac{-yU_{i-1}+(1+y^2)U_i-yU_{i+1}}{1-y^2},
\quad 1\leq i\leq r-1,\\
\alpha_r
=
\frac{U_r-yU_{r-1}}{1-y^2}.
\end{cases}
\end{align*}

It follows that
\begin{equation}\label{6.17}
\sum_{i=0}^{r}\alpha_i
=
\frac{1}{1+y}
\left(U_0+U_r\right)
+
\frac{1-y}{1+y}\sum_{l=1}^{r-1}U_l.
\end{equation}

Substituting $U_l=q_v^{-(1/2+\lambda)l}T_l/\zeta_{F_v}(1+2\lambda)$ into \eqref{6.17}, we obtain
\begin{equation}\label{e6.20}
\zeta_{F_v}(1+2\lambda)\sum_{i=0}^{r}\alpha_i
=
\frac{q_v^{1/2+\lambda}}{q_v^{1/2+\lambda}+1}
\left(T_0+q_v^{-r(1/2+\lambda)}T_r\right)+
\frac{q_v^{1/2+\lambda}-1}{q_v^{1/2+\lambda}+1}
\sum_{l=1}^{r-1}q_v^{-l(1/2+\lambda)}T_l.
\end{equation}

We now evaluate the terms appearing in \eqref{e6.20}. 
\begin{itemize}
\item Suppose $l=r$. Then 
\begin{align*}
T_r=\int_{F_v^{\times}}W_{1,v}^{\dag}\left(\begin{pmatrix}
y_v\\
& 1
\end{pmatrix},s_1\right)
W_{1,v}^{\dag}\left(\begin{pmatrix}
y_v\\
& \varpi_v^{r}
\end{pmatrix},s_2\right)|y_v|_v^{-\frac{1}{2}-\lambda}d^{\times}y_v. 
\end{align*}
By Macdonald's formula, we have
\begin{multline}\label{6.20}
\mathbf{L}_v^-(\mathbf{s},\lambda)^{-1}T_r
=
q_v^{r\lambda}
\Bigg[
\frac{q_v^{(r+1)s_1}-q_v^{-(r+1)s_1}}{q_v^{s_1}-q_v^{-s_1}}
-
q_v^{\lambda-\frac12}
\left(q_v^{s_2}+q_v^{-s_2}\right)
\frac{q_v^{rs_1}-q_v^{-rs_1}}{q_v^{s_1}-q_v^{-s_1}}\\
+
q_v^{2\lambda-1}
\frac{q_v^{(r-1)s_1}-q_v^{-(r-1)s_1}}{q_v^{s_1}-q_v^{-s_1}}\Bigg],
\end{multline}
with the convention that the last fraction is $0$ when $r=1$. 

\item Suppose $0\leq l<r$. By \eqref{e6.17}, we have  
\begin{align*}
T_l=\int_{F_v^{\times}}W_{1,v}^{\dag}\left(\begin{pmatrix}
y_v\\
& 1
\end{pmatrix},s_1\right)
W_{1,v}^{\dag}\left(\begin{pmatrix}
y_v\\
& 1
\end{pmatrix}\begin{pmatrix}
\varpi_v^{r-l}&1\\
& \varpi_v^{l}
\end{pmatrix},s_2\right)|y_v|_v^{-\frac{1}{2}-\lambda}d^{\times}y_v. 
\end{align*}
 
Therefore,
\begin{align*}
T_l=\int_{F_v^{\times}}W_{1,v}^{\dag}\left(\begin{pmatrix}
y_v\\
& 1
\end{pmatrix},s_1\right)
W_{1,v}^{\dag}\left(\begin{pmatrix}
\varpi_v^{r-2l} y_v\\
& 1
\end{pmatrix},s_2\right)\overline{\psi}_v(\varpi_v^{-l}y_v)|y_v|_v^{-\frac{1}{2}-\lambda}d^{\times}y_v. 
\end{align*}

By Macdonald's formula and Ramanujan sums, we have
\begin{multline}\label{6.21}
T_l=
q_v^{l\lambda+\frac{l-r}{2}}(q_v^{s_1}-q_v^{-s_1})^{-1}(q_v^{s_2}-q_v^{-s_2})^{-1}
\bigg[
\frac{q_v^{(l+1)s_1+(r-l+1)s_2}
}{1-q_v^{-1/2+\lambda+s_1+s_2}}\\
-
\frac{q_v^{(l+1)s_1-(r-l+1)s_2}
}{1-q_v^{-1/2+\lambda+s_1-s_2}}-\frac{q_v^{-(l+1)s_1+(r-l+1)s_2}}{1-q_v^{-1/2+\lambda-s_1+s_2}}+\frac{q_v^{-(l+1)s_1-(r-l+1)s_2}}{
1-q_v^{-1/2+\lambda-s_1-s_2}}
\bigg]
\\
-
\mathbf{1}_{l\geq 1}
\frac{
q_v^{(l-1)\lambda+\frac{l+1-r}{2}}
}{
q_v-1
}
\frac{q_v^{ls_1}-q_v^{-ls_1}}{q_v^{s_1}-q_v^{-s_1}}
\frac{q_v^{(r-l)s_2}-q_v^{-(r-l)s_2}}{q_v^{s_2}-q_v^{-s_2}}.
\end{multline}
\end{itemize}

Therefore, \eqref{eq6.15} follows from \eqref{e6.20}, \eqref{6.20}, and \eqref{6.21}. 
\end{proof}

\subsection{Exact Formula of \texorpdfstring{$\widetilde{\Psi}_{\Res}^{(i)}(\mathbf{s},\boldsymbol{\Phi})$}{}}
\begin{prop}\label{prop6.residues}
Let $|\Re(s_1)|+|\Re(s_2)|<1/2$. Then
\begin{equation}\label{eq6.res12}
\widetilde\Psi_{\mathrm{Res}}^{(1)}(\mathbf s,\boldsymbol\Phi)=\widetilde\Psi_{\mathrm{Res}}^{(2)}(\mathbf s,\boldsymbol\Phi)=-2^{-1}D_E^{-s_1-s_2}N_F(\mathfrak n)^{-2s_2}\mathscr Z(-s_1,-s_2),
\end{equation}
and
\begin{equation}\label{eq6.res34}
\widetilde\Psi_{\mathrm{Res}}^{(3)}(\mathbf s,\boldsymbol\Phi)=\widetilde\Psi_{\mathrm{Res}}^{(4)}(\mathbf s,\boldsymbol\Phi)=-2^{-1}D_E^{-s_2}N_F(\mathfrak n)^{-2s_2}\mathscr Z(s_1,-s_2).
\end{equation}
\end{prop}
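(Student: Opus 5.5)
Since all characters are trivial, Definition~\ref{defn2.1} gives $\mu_1=\mu_2=\mu_3=\mu_4=\mathbf 1$. Hence each $\widetilde\Psi_{\Res}^{(i)}(\mathbf s,\boldsymbol\Phi)$ is obtained by integrating over $\mathcal C_{E/F}$ and taking a residue of the single meromorphic function $\mathcal I_{\mathrm{sin}}(\mathbf s,\boldsymbol\Phi,\lambda)$ of \eqref{eq6.1}. Concretely,
\[
\widetilde\Psi_{\Res}^{(1)}=\Res_{\lambda=s_1+s_2-1/2}\mathcal I_{\mathrm{sin}},\quad
\widetilde\Psi_{\Res}^{(2)}=-\Res_{\lambda=1/2-s_1-s_2}\mathcal I_{\mathrm{sin}},
\]
and similarly for $i=3,4$ with $s_1$ replaced by $-s_1$. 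Interchanging the toric integral with the finite basis sum and the residue is legitimate because $\mathcal C_{E/F}$ is compact. I will then substitute the closed form \eqref{f6.1} of Proposition~\ref{prop6.1} and compute the four residues explicitly.

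The poles come from $\mathbf L(\mathbf s,\lambda,\mathbf 1)$. In it, $\Lambda_F(1/2+\lambda-s_1-s_2)$ has a pole at $\lambda=1/2+s_1+s_2$ and $\Lambda_F(1/2-\lambda+s_1+s_2)$ has a pole at $\lambda=s_1+s_2-1/2$, which is the case $i=1$. The other two poles, for $i=3,4$, come from the factors with $s_2-s_1$.

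\emph{Reduction by symmetry.} The expression \eqref{f6.1} is almost invariant under $\lambda\mapsto-\lambda$. The functions $\mathbf L(\mathbf s,\lambda,\mathbf 1)$, $\Lambda_F(1+2\lambda)\Lambda_F(1-2\lambda)$ and $R_{\mathfrak n}$ are invariant, the last by the first identity in \eqref{eq3.Ridentities}. The only non-invariant part is $D_E^{\lambda/2}\Lambda_E(1/2+\lambda)$. By the functional equation $\Lambda_E(1/2+\lambda)=D_E^{-\lambda}\Lambda_E(1/2-\lambda)$, this product is invariant as well. Hence $\mathcal I_{\mathrm{sin}}$ is even in $\lambda$, and the residue at $-\lambda_0$ is minus the residue at $\lambda_0$. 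With the sign conventions this gives $\widetilde\Psi^{(1)}=\widetilde\Psi^{(2)}$ and $\widetilde\Psi^{(3)}=\widetilde\Psi^{(4)}$. It therefore suffices to compute $i=2$ and $i=4$.

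\emph{Evaluating the residue for $i=2$.} At $\lambda_0=1/2-s_1-s_2$ the simple pole comes from the factor $\Lambda_F(1/2+\lambda_0+s_1+s_2)$ evaluated near $\Lambda_F(1)$, i.e. through the pole of $\Lambda_F$ at $1$. Writing $\Lambda_F(s)=\Lambda_F(1-s)$, the residue of $\Lambda_F$ at $1$ is $\rho_F D_F^{-1/2}$ up to the Archimedean normalization, and this cancels the $\rho_F^{-1}$ in front. The remaining three $L$-factors become
\[
\Lambda_F(1-2s_2)\Lambda_F(1-2s_1)\Lambda_F(2-2s_1-2s_2),
\]
up to $\lambda\to-\lambda$ reflections. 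The denominator becomes $\Lambda_F(2-2s_1-2s_2)\Lambda_F(2s_1+2s_2)$. Also $\Lambda_E(1/2+\lambda_0)=\Lambda_E(1-s_1-s_2)$, which the functional equation converts to $D_E^{1/2-s_1-s_2}$ times $\Lambda_E(s_1+s_2)$. After functional equations for $\Lambda_F$ (each contributing a power of $D_F$), this reorganizes into the shape of $\mathscr Z(-s_1,-s_2)$ in \eqref{eq4.Z}.

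For the local factor, the second identity in \eqref{eq3.Ridentities} gives $R_{\mathfrak n}=N_F(\mathfrak n)^{s_1}H_{\mathfrak n}(-s_1,-s_2)$. Together with $N_F(\mathfrak n)^{-1/2-s_2}$ this produces $N_F(\mathfrak n)^{-1/2+s_1+s_2}\cdot N_F(\mathfrak n)^{-2s_2}$, as required. The case $i=4$ is identical, using the third identity $R=q^{-rs_1}H_v(s_1,-s_2)$, and yields $\mathscr Z(s_1,-s_2)$.

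\emph{Main obstacle.} The bookkeeping of constants is where I expect the difficulty. The powers $D_F^{-1+2s_2}$, $D_E^{1/4-(s_1+s_2)/2+\lambda/2}$, $\pi^{-r_{\mathrm{ns}}}$ versus $\pi^{-r_{\mathrm{ns}}-r_2}$, $2^{\omega_F(\mathfrak n)-1}$, and the residue of $\Lambda_F$ must combine exactly into $-\tfrac12 D_E^{-s_1-s_2}\mathscr Z(-s_1,-s_2)$, including the overall sign. The sign comes from reversing the orientation of the pole in $\Lambda_F(1/2-\lambda+\cdots)$ together with the sign convention of Definition~\ref{defn2.1}. I would carry this out by first checking the identity at the level of completed $\Lambda_F,\Lambda_E$ with normalized discriminant powers ($\Lambda_F(s)D_F^{s/2}$ symmetric). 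I would then verify consistency against the intertwining relation \eqref{eq4.intertwine} and Proposition~\ref{prop4.2}, which must hold since the residues are designed to cancel against $Z,Z^\Diamond$ in the symmetric four-term main term.
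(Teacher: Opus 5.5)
The part you defer is the whole content of the statement, namely the explicit constant, and the intermediate formulas you do write would not produce \eqref{eq6.res12} if followed.

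\textbf{What matches the paper.} Your structural argument is the paper's proof. Evenness of $\mathcal I_{\mathrm{sin}}$ in $\lambda$ follows from the first identity in \eqref{eq3.Ridentities} and $\Lambda_E(1/2+\lambda)=D_E^{-\lambda}\Lambda_E(1/2-\lambda)$. This pairs $i=1$ with $i=2$ and $i=3$ with $i=4$. You then evaluate the poles near $1/2$ using the residue of $\Lambda_F$ at $1$ and the two specialization identities.

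\textbf{Errors in the constant bookkeeping.}
\begin{itemize}
\item In the paper's normalization $\Lambda_F=\prod_v\zeta_{F_v}$, the functional equation is $\Lambda_F(s)=D_F^{1/2-s}\Lambda_F(1-s)$, not $\Lambda_F(s)=\Lambda_F(1-s)$.
\item $\operatorname{Res}_{s=1}\Lambda_F(s)=\rho_F\prod_{v\mid\infty}\zeta_{F_v}(1)=\rho_F\pi^{-r_2}$, with no $D_F^{-1/2}$. This $\pi^{-r_2}$ is what turns the $\pi^{-r_{\mathrm{ns}}}$ of \eqref{f6.1} into the $\pi^{-r_{\mathrm{ns}}-r_2}$ of \eqref{eq4.Z}.
\item Every factor of $\mathbf L(\mathbf s,\lambda,\mathbf 1)$ carries $+s_2$, so there is no factor $\Lambda_F(1/2+\lambda-s_1-s_2)$.
\item At $\lambda_0=1/2-s_1-s_2$ the polar factor is $\Lambda_F(1/2+\lambda+s_1+s_2)$. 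The other three factors are $\Lambda_F(1-2s_1)\Lambda_F(2s_1+2s_2)\Lambda_F(2s_2)$, not the ones you list. The middle one cancels $\Lambda_F(1-2\lambda_0)$ in the denominator, leaving $\Lambda_F(1+2\lambda_0)=\Lambda_F(2-2s_1-2s_2)$.
\item Do not apply the functional equation to $\Lambda_E(1/2+\lambda_0)=\Lambda_E(1-s_1-s_2)$; the power you attach to it also has the wrong sign. That factor already appears in $\mathscr Z(-s_1,-s_2)$. The power $D_E^{1/2-s_1-s_2}$ comes from $D_E^{\lambda_0/2}$ in the prefactor of \eqref{f6.1}.
\end{itemize}

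\textbf{The corrected computation.} The only functional equation needed is $\Lambda_F(2s_2)=D_F^{1/2-2s_2}\Lambda_F(1-2s_2)$, which converts $D_F^{-1+2s_2}$ into $D_F^{-1/2}$. Combined with $R_{\mathfrak n}(s_1,s_2;\lambda_0)=N_F(\mathfrak n)^{s_1}H_{\mathfrak n}(-s_1,-s_2)$, this gives
\begin{align*}
\operatorname{Res}_{\lambda=\lambda_0}\mathcal I_{\mathrm{sin}}
&=\frac{2^{\omega_F(\mathfrak n)-1}D_E^{1/2-s_1-s_2}\Lambda_F(1-2s_1)\Lambda_F(1-2s_2)\Lambda_E(1-s_1-s_2)}{\pi^{r_{\mathrm{ns}}+r_2}D_F^{1/2}N_F(\mathfrak n)^{1/2-s_1+s_2}\Lambda_F(2-2s_1-2s_2)}H_{\mathfrak n}(-s_1,-s_2)\\
&=\tfrac12D_E^{-s_1-s_2}N_F(\mathfrak n)^{-2s_2}\mathscr Z(-s_1,-s_2).
\end{align*}

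The polar factor is $\Lambda_F(1+(\lambda-\lambda_0))$, so this residue carries no orientation sign. The sign of $\widetilde\Psi^{(2)}_{\mathrm{Res}}$ is exactly the minus sign of Definition~\ref{defn2.1}. The $1/2$ is $2^{\omega_F(\mathfrak n)-1}/2^{\omega_F(\mathfrak n)}$.

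\textbf{The case $i=4$.} At $\lambda_0'=1/2+s_1-s_2$:
\begin{itemize}
\item the pole is in $\Lambda_F(1/2+\lambda-s_1+s_2)$;
\item the factor $\Lambda_F(2s_2-2s_1)$ cancels $\Lambda_F(1-2\lambda_0')$;
\item $R_{\mathfrak n}=N_F(\mathfrak n)^{-s_1}H_{\mathfrak n}(s_1,-s_2)$;
\item the $D_E$-power is $D_E^{1/2-s_2}$.
\end{itemize}
This is not the $s_1\mapsto-s_1$ image of the $i=2$ power, because the prefactor $D_E^{-(s_1+s_2)/2}$ in \eqref{f6.1} is not even in $s_1$. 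So \eqref{eq6.res34} must be computed directly rather than by substitution; the same steps then give it.
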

\begin{proof}
By Definition \ref{defn2.1}, the four terms are the signed residues of the singular contraction in Proposition \ref{prop6.1} at
\[
\lambda=s_2+s_1-1/2,\quad 1/2-s_2-s_1,\quad s_2-s_1-1/2,\quad 1/2+s_1-s_2,
\]
respectively. By \eqref{eq3.Ridentities} and the functional equation of $\Lambda_E(1/2+\lambda)$, the contraction in \eqref{f6.1} is even in $\lambda$, so the residues at opposite poles differ by a sign. At the two poles near $1/2$, the residue of the completed zeta factor in $\mathbf L(\mathbf s,\lambda,\mathbf1)$ is $\rho_F\pi^{-r_2}$, since $\prod_{v\mid\infty}\zeta_{F_v}(1)=\pi^{-r_2}$. This cancels the factor $\rho_F^{-1}$ already incorporated in \eqref{eq6.1} and \eqref{f6.1}; the remaining $\pi^{-r_2}$ combines with $\pi^{-r_{\mathrm{ns}}}$ to give $\pi^{-r_{\mathrm{ns}}-r_2}$. The factors at $v\mid\mathfrak n$ are specialized only after their common Euler factors have been cancelled meromorphically. The two specialization identities in \eqref{eq3.Ridentities}, together with the functional equation $\Lambda_F(2s_2)=D_F^{1/2-2s_2}\Lambda_F(1-2s_2)$, identify the resulting expressions with \eqref{eq4.Z}. The powers of $D_E$ and $N_F(\mathfrak n)$ are exactly those displayed above. Finally, the contour signs in Definition \ref{defn2.1} and the factor $1/2$ in the normalized contraction give the common coefficient $-1/2$. This proves \eqref{eq6.res12} and \eqref{eq6.res34}.
\end{proof}

The minus sign before the residual sum in \eqref{e1.2} turns each pair of
equal residues in \eqref{eq6.res12}--\eqref{eq6.res34} into one full
contribution: the pair $i=1,2$ supplies the $(-s_1,-s_2)$ branch, and the
pair $i=3,4$ supplies the $(s_1,-s_2)$ branch. Together with the geometric
branches $(s_1,s_2)$ and $(-s_1,s_2)$, these give exactly the four sign
choices in \eqref{eq7.main}.

\begin{lemma}\label{lem6.residual.audit}
In the computation of Proposition \ref{prop6.residues}, every product containing
$R_{\mathfrak n}(s_1,s_2;\lambda)$ is interpreted as a meromorphic product
before $\lambda$ is replaced by the indicated affine function of
$(s_1,s_2)$. With this convention no additional factor
$\mathbf L_v^-(\mathbf s,\lambda)$ occurs. For $0<\varepsilon<10^{-3}$ and
$|\Re(s_1)|+|\Re(s_2)|\leq\varepsilon$, the two positive
specializations satisfy, after meromorphic continuation,
\begin{equation}\label{eq6.residual.audit.bound}
 \max_{\lambda\in\{1/2-s_1-s_2,\,1/2+s_1-s_2\}}
 \left|R_{\mathfrak n}(s_1,s_2;\lambda)\right|
 \ll_{F,\varepsilon}N_F(\mathfrak n)^{C_F\varepsilon},
\end{equation}
uniformly in this small-shift region.
The same estimate holds for every fixed-order derivative of either specialized product, after decreasing the size of the compact set and replacing $C_F$ by a constant depending also on the order of differentiation.
The local factors are also regular at the two negative specializations
$\lambda=s_2\pm s_1-1/2$. The poles in the global main terms come from the
completed zeta factors in $\mathscr Z$. Thus the central main term in
Corollary~\ref{cor7.2} is the joint finite part of the four-term sum in
Theorem~\ref{thm7.1}.
\end{lemma}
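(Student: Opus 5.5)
The plan is to work entirely locally at each $v\mid\mathfrak n$. We treat $\mathscr R_{v,r}(s_1,s_2;\lambda)$ as a rational function in $q^{\pm\lambda},q^{\pm s_1},q^{\pm s_2}$, obtained by summing the geometric series in \eqref{eq3.Tl}--\eqref{eq3.Tr}.

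First we address the interpretation. By \eqref{e6.20}, \eqref{6.20} and \eqref{6.21}, each $T_l$ equals $\mathbf L_v^-(\mathbf s,\lambda)$ times a Laurent polynomial, up to the explicit correction term. The correction term carries no denominator other than $q-1$. Hence, after the division by $\mathbf L_v^-(\mathbf s,\lambda)$ in \eqref{eq3.Rr}, the expression
\[
\mathscr R_{v,r}\cdot\zeta_{F_v}(1-2\lambda)^{-1}(1+y)
\]
is a Laurent polynomial plus $\mathbf L_v^-(\mathbf s,\lambda)^{-1}$ times a Laurent polynomial. This shows that, once the cancellation is performed before substituting $\lambda$, no stray factor $\mathbf L_v^-$ survives. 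It also shows that the only possible poles of $\mathscr R_{v,r}$ come from $\zeta_{F_v}(1-2\lambda)$ and $(1+q^{-1/2-\lambda})^{-1}$.

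Next we check regularity at the four specializations. At $\lambda=1/2-s_1-s_2$ or $1/2+s_1-s_2$ with small real shifts, we have $|q^{-1/2-\lambda}|\approx q^{-1}$ and $|q^{2\lambda-1}|\approx q^{O(\varepsilon)}$. The factor $\zeta_{F_v}(1-2\lambda)=(1-q^{-1+2\lambda})^{-1}$ could blow up, so here we do not estimate directly. Instead we use the exact identities \eqref{eq3.Ridentities}, which give $\mathscr R_{v,r}=q^{\pm rs_1}H_v(\mp s_1,-s_2)$. The function $H_v$ is visibly a finite sum. Its denominator $1-q^{-2\mp2s_1+2s_2}$ is bounded away from $0$. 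The factor $(1-q^{r(s_1-s_2)})/(1-q^{s_1-s_2})$ is a geometric sum of $r$ terms, each of size $q^{O(r\varepsilon)}$. Thus $|\mathscr R_{v,r}|\le C(r+1)q_v^{Cr\varepsilon}$. For the negative specializations $\lambda=s_2\pm s_1-1/2$, the evenness $\mathscr R_{v,r}(\lambda)=\mathscr R_{v,r}(-\lambda)$ from \eqref{eq3.Ridentities} reduces regularity to the positive case.

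Multiplying over $v\mid\mathfrak n$ gives
\[
\prod_v C(r_v+1)q_v^{Cr_v\varepsilon}\ll_\varepsilon N_F(\mathfrak n)^{C_F\varepsilon}
\]
by the divisor bound, which is \eqref{eq6.residual.audit.bound}. For derivatives we apply Cauchy's estimate on a polydisc of radius $\asymp\varepsilon$ inside the region. This is where the compact set is shrunk: each derivative costs only a constant depending on $\varepsilon$ and the order, and the polydisc sup is bounded by the same argument with $2\varepsilon$. The final assertion about the poles and the joint finite part then follows. The local factors are holomorphic near $\mathbf s=0$, so all singularities of the four-term sum in Theorem~\ref{thm7.1} sit in the completed zeta factors $\Lambda_F(1\pm2s_j)$ and $\Lambda_E(1\pm s_1\pm s_2)$ of $\mathscr Z$.

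The main obstacle is justifying \eqref{eq3.Ridentities} uniformly in $r$, since the paper only sketches it via a recurrence. If needed, we would instead verify directly that the rational expression for $\mathscr R_{v,r}$ has removable singularity along $q^{-1+2\lambda}=1$. To do this, we would show that the bracket $T_0+y^rT_r+(1-y)\sum_l y^lT_l$ vanishes on the divisors of $\mathbf L_v^-$ in a way that cancels the relevant zeta pole. We would also bound each monomial by $q^{O(r\varepsilon)}$, the number of monomials being $O(r^2)$.
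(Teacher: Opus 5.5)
Your proposal is correct and follows the paper's proof. Both arguments use \eqref{eq3.Ridentities} to rewrite the two positive specializations as $q^{\pm rs_1}H_v(\mp s_1,-s_2)$, expand the geometric quotient in $H_v$ as a finite sum to get local bounds $(r+1)q^{O(r\varepsilon)}$, multiply over $v\mid\mathfrak n$ using the divisor bound, and invoke evenness in $\lambda$ for the negative specializations. The differences are cosmetic: you obtain the derivative bounds from Cauchy estimates on small polydiscs rather than by differentiating the finite sums termwise, and you add a structural check that $T_l/\mathbf L_v^-$ is a Laurent polynomial where the paper verifies the $r=1$ cancellation explicitly (also, the denominator of $H_v(\mp s_1,-s_2)$ is $1-q^{-2\pm 2s_1+2s_2}$, a harmless sign slip).
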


\begin{proof}
For $v\mid\mathfrak n$, write $q=q_v$ and $r=e_v(\mathfrak n)$. For $r=1$,
the denominator in \eqref{eq3.9} is nonzero near either central
specialization $\lambda=\pm1/2$. In the unsimplified computation, the
apparent pole at $\lambda=1/2$ cancels by
\begin{align*}
 \lim_{\lambda\to1/2}
 \zeta_{F_v}(1-2\lambda)(1-q^{\lambda-1/2})=1/2.
\end{align*}
For $\lambda=1/2-u$, the cancellation is even visible before taking the
limit:
\[
 \zeta_{F_v}(2u)(1-q^{-u})=(1+q^{-u})^{-1}.
\]
For general $r$, the identities \eqref{eq3.Ridentities} express both
positive specializations in terms of $q^{\pm rs_1}H_v(\pm s_1,-s_2)$.
Replace the geometric quotients in $H_v$ by finite sums. Their derivatives
of any fixed order are $O_{A,\varepsilon}((1+r)^Aq^{Cr\varepsilon})$ on a
sufficiently small neighborhood of the origin. Multiplication over
$v\mid\mathfrak n$ and the divisor bound prove
\eqref{eq6.residual.audit.bound} and its differentiated form. The symmetry
in $\lambda$ gives the same regularity at the negative specializations.
The global zeta poles are handled by summing the four branches before
taking the finite part.
\end{proof}

\section{Twisted Second Moment of Class Group \texorpdfstring{$L$}{L}-functions}\label{sec7}
Using the geometric factor $\mathscr Z$ from \eqref{eq4.Z}, define
\begin{equation}\label{eq7.main}
\mathscr M(\mathbf s;\mathfrak n)=2^{1-\omega_F(\mathfrak n)}N_F(\mathfrak n)^{s_2}
\sum_{\epsilon_1,\epsilon_2=\pm1}\frac{\mathscr Z(\epsilon_1s_1,\epsilon_2s_2)}{D_E^{(1-\epsilon_1)s_1/2+(1-\epsilon_2)s_2/2} N_F(\mathfrak n)^{(1-\epsilon_2)s_2}}.
\end{equation}
The finite part is always taken after summing the four terms; the individual summands need not be regular at the origin.

\begin{thm}\label{thm7.1}
Let $\mathfrak n$ be supported on primes that split in $E$ and be coprime to $2\mathfrak D_{E/F}\mathfrak D_F$. For $|\Re(s_1)|+|\Re(s_2)|<1/2$ one has the meromorphic identity
\begin{equation}\label{7.1}
\sum_{\chi\in\widehat{\mathcal C}_{E/F}^{\mathrm{un}}}\frac{\lambda_\chi^*(\mathfrak n)\Lambda(1/2+s_1,\chi)\Lambda(1/2+s_2,\overline\chi)}{L(1,\eta_{E/F})}=\mathscr M(\mathbf s;\mathfrak n)+\mathcal E_{\mathrm{Spec}}(\mathbf s;\mathfrak n),
\end{equation}
where $\mathcal E_{\mathrm{Spec}}$ is the normalized cuspidal plus continuous contraction of \textsection\ref{sec5}. The identity is locally uniform on compact subsets away from its polar hyperplanes.
\end{thm}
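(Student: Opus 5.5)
The plan is to specialize Theorem~\ref{thm2.3} to the data of \textsection\ref{sec3.1.3}: $\Phi_1,\Phi_2$ as constructed there, with $\chi_1=\chi_2=\omega_1=\omega_2=\xi_1=\xi_2=\mathbf 1$. Then each side of \eqref{2.4} is identified using the computations already made.

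\textbf{Left-hand side.} Proposition~\ref{lem3.1} gives
\[
Z(s_1,\Phi_1,\chi)Z(s_2,\Phi_2,\overline\chi)=\mathbf 1_{\chi\in\widehat{\mathcal C}^{\mathrm{un}}_{E/F}}N_F(\mathfrak n)^{-s_2}\lambda_{\overline\chi}(\mathfrak n)\Lambda(1/2+s_1,\chi)\Lambda(1/2+s_2,\overline\chi).
\]
Since $\lambda_{\overline\chi}(\mathfrak n)=\lambda_\chi(\mathfrak n)=2^{\omega_F(\mathfrak n)}\lambda^*_\chi(\mathfrak n)$, the left side of \eqref{2.4} becomes $2^{\omega_F(\mathfrak n)-1}N_F(\mathfrak n)^{-s_2}$ times the left side of \eqref{7.1}. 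Multiplying \eqref{2.4} through by $2^{1-\omega_F(\mathfrak n)}N_F(\mathfrak n)^{s_2}$ explains the normalizing prefactor in \eqref{eq7.main}.

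\textbf{Geometric and residual terms.} Proposition~\ref{prop4.1} gives $Z(\mathbf s,\boldsymbol\Phi)=\mathscr Z(s_1,s_2)$, which is the $(\epsilon_1,\epsilon_2)=(1,1)$ summand. Proposition~\ref{prop4.2} gives $Z^\Diamond(\mathbf s,\boldsymbol\Phi)=D_E^{-s_1}\mathscr Z(-s_1,s_2)$, which is the $(-1,1)$ summand. Proposition~\ref{prop6.residues} says the residues come in equal pairs with coefficient $-1/2$, so
\[
-\sum_{i=1}^4\widetilde\Psi^{(i)}_{\Res}=D_E^{-s_1-s_2}N_F(\mathfrak n)^{-2s_2}\mathscr Z(-s_1,-s_2)+D_E^{-s_2}N_F(\mathfrak n)^{-2s_2}\mathscr Z(s_1,-s_2).
\]
These are precisely the $(-1,-1)$ and $(1,-1)$ summands of \eqref{eq7.main}: the exponent of $D_E$ is $(1-\epsilon_1)s_1/2+(1-\epsilon_2)s_2/2$, and the exponent of $N_F(\mathfrak n)$ is $(1-\epsilon_2)s_2$. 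So after normalization the four non-spectral terms sum to $\mathscr M(\mathbf s;\mathfrak n)$. We then set $\mathcal E_{\mathrm{Spec}}:=2^{1-\omega_F(\mathfrak n)}N_F(\mathfrak n)^{s_2}(\mathcal P_{\cusp}+\mathcal P_{\Eis})$, which is the normalized contraction of \textsection\ref{sec5}.

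\textbf{Meromorphy and local uniformity.} Both sides of \eqref{2.4} are meromorphic in $|\Re s_1|+|\Re s_2|<1/2$ by Proposition~\ref{prop2.2}. The identity therefore holds as meromorphic functions once it holds on an open set, and the equalities above are themselves meromorphic identities. Local uniformity away from polar hyperplanes follows from the locally normal convergence established in the proof of Proposition~\ref{prop2.2}. Lemma~\ref{lem6.residual.audit} shows that the individual summands of $\mathscr M$ are regular except along the zeta polar hyperplanes of $\mathscr Z$.

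\textbf{Main obstacle.} The main risk is bookkeeping of signs and constants in the residue step. This means checking that the contour orientation in Definition~\ref{defn2.1}, the evenness in $\lambda$ used in Proposition~\ref{prop6.residues}, and the global minus sign in \eqref{2.4} combine to give a coefficient of $+1$, not $-1$ or $1/2$, on each residual branch. I would verify this first at $\mathfrak n=\mathcal O_F$ by checking symmetry of $\mathscr M$ under $s_2\mapsto -s_2$ combined with the functional equation. After that the theorem is a direct assembly.
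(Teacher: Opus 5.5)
Your proposal is correct and matches the paper's proof. Both specialize Theorem~\ref{thm2.3} to the data of \textsection\ref{sec3.1.3} and use Proposition~\ref{lem3.1} to produce the normalizing factor $2^{1-\omega_F(\mathfrak n)}N_F(\mathfrak n)^{s_2}$. Both then read off the four sign branches of $\mathscr M(\mathbf s;\mathfrak n)$ from Propositions~\ref{prop4.1}, \ref{prop4.2} and \ref{prop6.residues}, and take the leftover normalized cuspidal and continuous contractions as $\mathcal E_{\mathrm{Spec}}$. Your proposed sign check at $\mathfrak n=\mathcal O_F$ is a reasonable sanity test but is not needed, since Proposition~\ref{prop6.residues} already supplies the coefficient $-1/2$ on each paired residue.
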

\begin{proof}
Insert Proposition \ref{lem3.1} into Theorem \ref{thm2.3}. Since $\operatorname{vol}(\mathcal C_{E/F})=2L(1,\eta_{E/F})$ and $\lambda_\chi=2^{\omega_F(\mathfrak n)}\lambda_\chi^*$, solving for the moment produces the factor $2^{1-\omega_F(\mathfrak n)}$. Equations \eqref{eq4.Z}, \eqref{eq4.intertwine}, \eqref{eq6.res12}, and \eqref{eq6.res34} give exactly the four sign choices in \eqref{eq7.main}. The remaining terms are precisely the contractions defining $\mathcal E_{\mathrm{Spec}}(\mathbf s;\mathfrak n)$.
\end{proof}

For $\epsilon_1,\epsilon_2\in\{\pm1\}$ set
\begin{equation}\label{eq7.signterm}
\mathscr M_{\epsilon_1,\epsilon_2}(\mathbf s;\mathfrak n):=2^{1-\omega_F(\mathfrak n)}D_E^{-(1-\epsilon_1)s_1/2-(1-\epsilon_2)s_2/2}N_F(\mathfrak n)^{\epsilon_2s_2}\mathscr Z(\epsilon_1s_1,\epsilon_2s_2).
\end{equation}
Then
\begin{equation}\label{eq7.signsum}
\mathscr M(\mathbf s;\mathfrak n)=\sum_{\epsilon_1,\epsilon_2=\pm1}\mathscr M_{\epsilon_1,\epsilon_2}(\mathbf s;\mathfrak n).
\end{equation}
The two constant terms of the first Eisenstein series give the two choices of $\epsilon_1$. Applying \eqref{eq4.intertwine} to the second Eisenstein series changes $s_2$ to $-s_2$ and contributes $D_E^{-s_2}N_F(\mathfrak n)^{-2s_2}$, which is the ratio of the $\epsilon_2=-1$ and $\epsilon_2=1$ prefactors in \eqref{eq7.signterm}. The determinant \eqref{eq4.det} and the Gaussian factor $\pi^{-r_{\mathrm{ns}}-r_2}$ have already been incorporated in $\mathscr Z$. Finally, Proposition \ref{prop6.residues} supplies the contour corrections with the same transformed factors and signs. Thus the apparent diagonal and residual pieces combine before specialization into the symmetric meromorphic sum \eqref{eq7.signsum}. In particular, the central value is the joint finite part of this sum, not the sum of four separately specialized terms.

\begin{cor}\label{cor7.2}
For every $\varepsilon>0$,
\begin{equation}\label{equ7.2}
\sum_{\chi\in\widehat{\mathcal C}_{E/F}^{\mathrm{un}}}\frac{\lambda_\chi^*(\mathfrak n)|\Lambda(1/2,\chi)|^2}{L(1,\eta_{E/F})}=\mathscr M^\heartsuit(\mathfrak n)+O_{F,\varepsilon}(D_E^{5/12+\varepsilon}N_F(\mathfrak n)^{2/3+\varepsilon}),
\end{equation}
where
\begin{equation}\label{eq7.finitepart}
\mathscr M^\heartsuit(\mathfrak n)=-\frac1{4\pi^2}\int_{|s_1|=\delta}\int_{|s_2|=2\delta}\frac{\mathscr M(\mathbf s;\mathfrak n)}{s_1s_2}ds_2ds_1
\end{equation}
for any sufficiently small fixed $\delta>0$, with both circles positively
oriented. The integral extracts the joint constant term after summing the
four sign branches.
\end{cor}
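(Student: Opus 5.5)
The plan is to specialize Theorem~\ref{thm7.1} to $\mathbf s$ near $\mathbf 0$ and then take the joint constant term on both sides. The left side of \eqref{7.1} is holomorphic near the origin: each $\Lambda(1/2+s,\chi)$ with $\chi$ everywhere unramified and nontrivial is entire. The trivial character $\chi=\mathbf 1$ does occur, but $\Lambda_E(1/2+s)$ is regular at $s=0$. So the left side is holomorphic on a polydisc around $\mathbf 0$. Its value at $\mathbf 0$ is exactly the left side of \eqref{equ7.2}, since $\Lambda(1/2,\overline\chi)=\overline{\Lambda(1/2,\chi)}$ for unitary unramified $\chi$ (complex conjugation symmetry of the completed $L$-function). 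By the Cauchy formula on the torus $|s_1|=\delta$, $|s_2|=2\delta$, this value equals $-\frac1{4\pi^2}\oint\oint(\cdot)\,\frac{ds_2\,ds_1}{s_1s_2}$ applied to the left side.

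Next I apply the same double contour integral to the right side of \eqref{7.1}. The main term $\mathscr M(\mathbf s;\mathfrak n)$ yields $\mathscr M^\heartsuit(\mathfrak n)$ by definition \eqref{eq7.finitepart}. Two points must be checked here.
\begin{itemize}
\item Individual branches $\mathscr M_{\epsilon_1,\epsilon_2}$ have poles along hyperplanes such as $s_1=0$, $s_2=0$, $s_1\pm s_2=0$, coming from $\Lambda_F(1\pm 2s_i)$ and $\Lambda_E(1+\epsilon_1s_1+\epsilon_2s_2)$. Taking radii $\delta$ and $2\delta$ keeps the contours off $s_1=\pm s_2$, so the integrals are well defined.
\item The integrand is regular on the contours, with $R_{\mathfrak n}$ and $H_{\mathfrak n}$ controlled by Lemma~\ref{lem6.residual.audit}.
\end{itemize}
Since the sum \eqref{eq7.signsum} equals a function holomorphic near $\mathbf 0$ minus $\mathcal E_{\mathrm{Spec}}$, no ambiguity arises. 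Indeed $\mathscr M$ is itself regular at the origin, because $\mathcal E_{\mathrm{Spec}}$ is regular there: the spectral sums converge absolutely near the unitary axis with no contour crossing for small shifts. The double integral of $\mathscr M$ is therefore just its finite value, for any sufficiently small $\delta$.

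It remains to bound the contribution of $\mathcal E_{\mathrm{Spec}}$, namely $-\frac1{4\pi^2}\oint\oint\mathcal E_{\mathrm{Spec}}(\mathbf s;\mathfrak n)\frac{ds_2ds_1}{s_1s_2}$. This is $O(\max_{\text{contour}}|\mathcal E_{\mathrm{Spec}}|)$, or simply $\mathcal E_{\mathrm{Spec}}(\mathbf 0;\mathfrak n)$ by holomorphy. Up to the normalization $2^{1-\omega_F(\mathfrak n)}\le 2$ and the factor $\mathrm{vol}(\mathcal C_{E/F})/L(1,\eta_{E/F})=2$, $\mathcal E_{\mathrm{Spec}}(\mathbf 0)$ is $\mathcal P_{\cusp}(\mathbf 0,\boldsymbol\Phi)+\mathcal P_{\Eis}(\mathbf 0,\boldsymbol\Phi)$. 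Corollary~\ref{cor5.central} then gives $O_{F,\varepsilon}(D_E^{5/12+\varepsilon}N_F(\mathfrak n)^{2/3+\varepsilon})$.

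If one prefers to avoid evaluating exactly at $\mathbf 0$, one can instead use Propositions~\ref{prop5.1} and~\ref{prop5.9} uniformly on the small torus with $\delta\le\varepsilon$. The resulting bound $D_E^{5/12+O(\varepsilon)}$ from the Hölder/Nelson argument in Corollary~\ref{cor5.central} is locally uniform in the shifts. Rescaling $\varepsilon$ then gives the same estimate.

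The main obstacle is justifying that only the \emph{sum} of the four branches is regular and that the finite-part integral is independent of $\delta$ and equals $\mathscr M^\heartsuit$. I would handle this by arguing via the identity \eqref{7.1} itself, rather than by examining cancellation of the individual zeta poles. The left side and $\mathcal E_{\mathrm{Spec}}$ are both holomorphic on a fixed small polydisc by locally uniform convergence, so $\mathscr M$ is too. The polar hyperplanes in Theorem~\ref{thm7.1} then belong only to individual summands. The only genuine delicacy is the uniformity in $\mathfrak n$ and $D_E$ of the spectral bound near the origin, which is supplied by Corollary~\ref{cor5.central} and Lemma~\ref{lem6.residual.audit}.
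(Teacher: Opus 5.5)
Your proposal is correct and follows the paper's proof. You apply Cauchy's formula to \eqref{7.1}, note that the four residual branches are already absorbed into $\mathscr M$, and bound the spectral contraction by the estimates of \textsection\ref{sec5} (equivalently \eqref{eq7.spectral.central}). The only difference is that you use holomorphy of $\mathcal E_{\mathrm{Spec}}$ near $\mathbf 0$ to reduce its contour integral to the central value, whereas the paper uses bounds that are locally uniform on the small contours, with $\delta$ small in terms of $\varepsilon$; both give the stated error.
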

\begin{proof}
Apply Cauchy's formula to \eqref{7.1} to extract the finite part in
\eqref{eq7.finitepart}. The estimates in \textsection\ref{sec5}, locally uniform in the shifts, give the stated error by taking $\delta$ sufficiently small in terms of $\varepsilon$; see also \eqref{eq7.spectral.central} below. The four residual branches have already been incorporated in $\mathscr M$ and produce no additional error term.
\end{proof}

The two geometric branches and two paired residue contributions form the
four-term main expression $\mathscr M$; there is no separate residual error.
Its combined central
value has the natural prefactor $D_E^{1/2}N_F(\mathfrak n)^{-1/2}$, together
with local weights, logarithms, and derivatives of the $L$-factors in
\eqref{eq4.Z}. For $\omega_F(\mathfrak n)\leq2$ and $e_v(\mathfrak n)\leq4$,
Lemma~\ref{lem7.main.bound} bounds it by
$O_{F,\varepsilon}(D_E^{1/2+\varepsilon}N_F(\mathfrak n)^{-1/2+\varepsilon})$.
The remainder consists entirely of the genuine cuspidal and continuous
spectra, with the error bound in \eqref{equ7.2}. These sizes refer to the combined terms, not to
separate central specializations of the polar branches.

For completeness, we record how the spectral estimate used here follows from the two contractions of \textsection\ref{sec5}. At the center their absolute values are bounded by
\begin{multline*}
N_F(\mathfrak n)^{-1/2+\varepsilon}D_E^{1/4+\varepsilon}
\sum_{\mu\in\widehat C_F(\mathfrak n)}\int_{i\mathbb R}
\frac{|\mathbf L(\mathbf0,\lambda,\mu)\Lambda_E(1/2+\lambda,\mu_E)|}
{|\Lambda_F(1+2\lambda,\mu^2)\Lambda_F(1-2\lambda,\mu^{-2})|}\,d\lambda\\
+D_E^{1/4+\varepsilon}\sum_{\mathfrak m\mid\mathfrak n}
\sum_{\pi\in\mathcal F(\mathfrak m,\mathbf1)}
\frac{\mathcal V_\infty(\pi)L(1/2,\pi_E)^{1/2}|L_F(1/2,\pi)|^2}
{N_F(\mathfrak m)^{1/2-\varepsilon}}.
\end{multline*}
Here the first line is the continuous contraction and the second is the cuspidal contraction, with the finite $L$-functions and the rapidly decreasing weight $\mathcal V_\infty$ of Proposition~\ref{prop5.1}. The character $\mu_E=\mu\circ N_{E/F}$ occurs only in the toric numerator, whereas the squares $\mu^{\pm2}$ occur only in the scattering denominators. At an unramified inert place the factor $L_v(1,\eta_v)$ in the normalized Waldspurger quotient cancels $1+q_v^{-1}$; at $v\mid\mathfrak n$ the oldvector basis contraction supplies the displayed factor $N_F(\mathfrak n)^{-1/2+\varepsilon}$ before the conductor sum.

The conductor condition is $\mathfrak m\mid\mathfrak n$, equivalently
$\mathfrak n\subseteq\mathfrak m$, as in Proposition~\ref{prop5.1}.
By \eqref{eq1.nelson.level},
\begin{equation}\label{eq7.weyl.inputs}
|L(1/2,\pi_E)|^{1/2}
\ll_{F,\varepsilon}D_E^{1/6+\varepsilon}
N_F(\mathfrak m)^{1/6+\varepsilon}|L_F(1/2,\pi)|^{1/2},
\end{equation}
with polynomial Archimedean dependence. The remaining moment has exponent
$5/2$. The fourth moment \cite[Theorem~E]{Yan26a} applies to all cuspidal
representations of conductor dividing $\mathfrak m$, without a restriction
on their local representation types. H\"older's inequality gives
\begin{equation}\label{eq7.level.secondmoment}
\begin{aligned}
\sum_{\pi\in\mathcal F(\mathfrak m,\mathbf1)}
\mathcal V_\infty(\pi)|L_F(1/2,\pi)|^{5/2}
&\leq\Big(\sum_\pi\mathcal V_\infty(\pi)|L_F(1/2,\pi)|^4\Big)^{5/8}
\Big(\sum_\pi\mathcal V_\infty(\pi)\Big)^{3/8}\\
&\ll_{F,\varepsilon}N_F(\mathfrak m)^{1+\varepsilon}.
\end{aligned}
\end{equation}
Here both sums on the right are over $\mathcal F(\mathfrak m,\mathbf1)$;
the second is bounded by the standard Weyl count. Oldvector dimensions
are at most $\prod_{v\mid\mathfrak n}(e_v(\mathfrak n)+1)$ and are
absorbed by $N_F(\mathfrak n)^\varepsilon$.
The polynomial Archimedean factor in \eqref{eq7.weyl.inputs} is absorbed
by decreasing the exponential decay rate of $\mathcal V_\infty$.
Summing the finite conductor factors over $\mathfrak m\mid\mathfrak n$
gives the required level exponent.
The continuous spectrum is treated in the same way, using
\cite[Theorem~F]{Yan26a}. Consequently,
\begin{equation}\label{eq7.spectral.central}
\mathcal E_{\mathrm{Spec}}(\mathbf0;\mathfrak n)\ll_{F,\varepsilon}D_E^{5/12+\varepsilon}N_F(\mathfrak n)^{2/3+\varepsilon}.
\end{equation}
The same argument is locally uniform on the small Cauchy contours in \eqref{eq7.finitepart}; this proves the error term in Corollary \ref{cor7.2} without introducing a separate residual error.

\begin{lemma}\label{lem7.main.bound}
If $\omega_F(\mathfrak n)\leq2$ and $e_v(\mathfrak n)\leq4$ for every $v$, then
\begin{equation}\label{eq7.main.bound}
|\mathscr M^\heartsuit(\mathfrak n)|\ll_{F,\varepsilon}D_E^{1/2+\varepsilon}N_F(\mathfrak n)^{-1/2+\varepsilon}.
\end{equation}
\end{lemma}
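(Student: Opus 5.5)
Under the stated hypotheses the bound reduces to a Cauchy estimate. We work directly with the contour representation \eqref{eq7.finitepart}. Fix $\delta>0$ small in terms of $\varepsilon$. Then
\[
|\mathscr M^\heartsuit(\mathfrak n)|\ll \delta^{-2}\cdot\delta\cdot 2\delta\cdot\max_{|s_1|=\delta,\,|s_2|=2\delta}|\mathscr M(\mathbf s;\mathfrak n)|,
\]
so it suffices to bound $\mathscr M(\mathbf s;\mathfrak n)$ uniformly on the torus $|s_1|=\delta$, $|s_2|=2\delta$. Both circles avoid the polar hyperplanes of the four branches $\mathscr M_{\epsilon_1,\epsilon_2}$. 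These hyperplanes are $s_1=0$, $s_2=0$, and $\epsilon_1s_1+\epsilon_2s_2=0$ from $\Lambda_E(1+\epsilon_1s_1+\epsilon_2s_2)$, and the last is avoided because $|s_2|=2|s_1|$. We may therefore bound each branch \eqref{eq7.signterm} separately on the torus; no cancellation between branches is needed once we stay on the contour.

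For a single branch, we unpack \eqref{eq4.Z}. The prefactor is $2^{\omega_F(\mathfrak n)}D_E^{1/2}D_F^{-1/2}N_F(\mathfrak n)^{-1/2-\epsilon_1s_1-\epsilon_2s_2}$, and the twists in \eqref{eq7.signterm} contribute $D_E^{O(\delta)}N_F(\mathfrak n)^{O(\delta)}$. On the torus the completed zeta values $\Lambda_F(1\pm2s_j)$ and $\Lambda_F(2\pm2s_1\pm2s_2)^{-1}$ are $O_{F,\delta}(1)$, since we stay at distance $\asymp\delta$ from $s=1$ and $\Lambda_F$ has no zeros near $\Re s=2$. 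The only $E$-dependent analytic factor is $\Lambda_E(1+\epsilon_1s_1+\epsilon_2s_2)$, whose argument has real part in $[1-3\delta,1+3\delta]$ and lies at distance $\gg\delta$ from $1$. Here we use the convexity (Phragmén–Lindelöf) bound: writing $\Lambda_E=\Gamma$-factors$\cdot\zeta_E$, the $\Gamma$-factors are $O_{F,\delta}(1)$ because the degree of $E$ is fixed by $F$, and $\zeta_E(\sigma+it)\ll_{F,\delta} D_E^{3\delta/2+\varepsilon}$ for $|\sigma-1|\le3\delta$ and $t$ bounded. This gives $|\Lambda_E|\ll D_E^{O(\delta)}$. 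We choose $\delta$ so that these exponents are at most $\varepsilon$.

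It remains to bound the local factor $H_{\mathfrak n}(\pm s_1,\pm s_2)=\prod_{v\mid\mathfrak n}H_v$. Under the hypotheses there are at most two factors, each with $r_v\le4$. Replacing the geometric quotient $\frac{q^{s_1-s_2}(1-q^{r(s_1-s_2)})}{1-q^{s_1-s_2}}$ by the finite sum $\sum_{k=1}^{r}q^{k(s_1-s_2)}$ (as in the proof of Lemma~\ref{lem6.residual.audit}) shows that $H_v$ is holomorphic on the torus. The same rewriting gives $|H_v|\ll (1+r_v)\,q_v^{O(r_v\delta)}\ll q_v^{O(\delta)}$, because the denominator $1-q_v^{-2-2s_1-2s_2}$ is bounded away from $0$. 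Hence $|H_{\mathfrak n}|\ll N_F(\mathfrak n)^{O(\delta)}$, and $2^{\omega_F(\mathfrak n)}\le4$.

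Multiplying these estimates gives $|\mathscr M(\mathbf s;\mathfrak n)|\ll_{F,\varepsilon}D_E^{1/2+\varepsilon}N_F(\mathfrak n)^{-1/2+\varepsilon}$ on the torus, and the Cauchy estimate above then yields \eqref{eq7.main.bound}. The main point to verify carefully is the uniform convexity bound for $\zeta_E$ slightly to the left of $1$. This bound carries no dependence on the number of ramified primes and no contribution from Siegel zeros, since we only need an upper bound, which $\zeta_E(\sigma)\ll D_E^{(1-\sigma)/2+\varepsilon}$ supplies. The restrictions $\omega_F(\mathfrak n)\le2$ and $r_v\le4$ are used only to make the local factor and $2^{\omega_F(\mathfrak n)}$ harmless constants.
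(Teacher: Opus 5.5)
Your argument is correct, but it is organized differently from the paper's proof.

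\textbf{The paper's route.} The paper works at the origin. It forms the four-term sum \eqref{eq7.signsum} and uses the cancellation of its polar terms, so that the joint finite part becomes a combination of Taylor coefficients through order three. It then bounds $\partial_{s_1}^a\partial_{s_2}^bH_v(0,0)\ll(1+e_v(\mathfrak n)\log q_v)^{a+b+1}$, and bounds the fixed-order derivatives at $1$ of the zeta and $L(s,\eta_{E/F})$ factors by Cauchy's formula and convexity.

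\textbf{Your route.} You apply one Cauchy estimate directly to the defining integral \eqref{eq7.finitepart} on a torus of fixed radius $\delta=\delta(\varepsilon)$. This choice of $\delta$ is admissible uniformly in $E$ and $\mathfrak n$. The reason is that the only polar divisors near the origin are the hyperplanes through it: $\Lambda_E$ occurs only in numerators, and $1-q_v^{-2-2s_1-2s_2}$ and $\Lambda_F(2+2s_1+2s_2)$ stay away from zero. Because the torus misses every polar hyperplane of every branch, you can bound the four branches separately by the triangle inequality. You therefore never need the Laurent expansions or the cancellation between branches that the paper invokes. The price is a loss $D_E^{O(\delta)}N_F(\mathfrak n)^{O(\delta)}$, which is harmless for the stated bound.

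\textbf{What your route buys.} It is shorter, and it is slightly more general. The factor $2^{\omega_F(\mathfrak n)}$ in \eqref{eq4.Z} cancels against $2^{1-\omega_F(\mathfrak n)}$ in \eqref{eq7.signterm}, so $2^{\omega_F(\mathfrak n)}$ never needs to be bounded. Your estimate $|H_v|\ll(1+r_v)q_v^{O(r_v\delta)}$, combined with the divisor bound, gives $N_F(\mathfrak n)^{O(\delta)+\varepsilon}$ for every admissible $\mathfrak n$. So the hypotheses $\omega_F(\mathfrak n)\le2$ and $e_v(\mathfrak n)\le4$ are not needed in your argument.

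\textbf{What the paper's route buys.} It exhibits $\mathscr M^\heartsuit(\mathfrak n)$ as an explicit combination of derivatives of $H_{\mathfrak n}$ and of the $L$-factors at $1$. This is the shape exploited later in the coefficient extraction of \textsection\ref{sec9}. Your fixed-radius estimate, by contrast, inherently loses a power $D_E^{O(\delta)}$.
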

\begin{proof}
Form the four-term sum \eqref{eq7.signsum} before taking its finite part. At each $v\mid\mathfrak n$, replace every geometric quotient in $H_v$ by its finite sum. For $a+b\leq3$ the derivatives satisfy
\[
\partial_{s_1}^a\partial_{s_2}^bH_v(0,0)\ll_{a,b}(1+e_v(\mathfrak n)\log q_v)^{a+b+1}
\]
at the origin. Since there are at most two such places and each exponent is at most four, their product is $O_{F,\varepsilon}(N_F(\mathfrak n)^\varepsilon)$. The joint finite part uses derivatives only through order three. The remaining derivatives are fixed-order derivatives at $1$ of the completed Dedekind and quadratic Hecke factors occurring in \eqref{eq4.Z}; Cauchy's formula and convexity bound them by $O_{F,\varepsilon}(D_E^\varepsilon)$ after the polar terms cancel in \eqref{eq7.signsum}. Finally, the explicit prefactor in \eqref{eq4.Z} is $D_E^{1/2}N_F(\mathfrak n)^{-1/2}$ up to these derivative losses. This proves \eqref{eq7.main.bound}.
\end{proof}

\section{Twisted First Moment of Class Group \texorpdfstring{$L$}{L}-functions}\label{sec8}
The main result of this section is the following first moment formula.
\begin{thm}\label{thm8.1}
Let $\mathfrak{n}$ be an integral ideal whose prime divisors split in $E$ and which is coprime to $2\mathfrak{D}_{E/F}\mathfrak{D}_F$. Let $m\geq 10$ and $|\Re(s_2)|\leq 1$. We have
\begin{equation}\label{e8.1}
\sum_{\chi\in \widehat{\mathcal{C}}_{E/F}^{\text{un}}}\frac{\lambda_{\chi}^*(\mathfrak{n})\Lambda(1/2+s_2,\chi)}{L(1,\eta_{E/F})}=\mathcal{M}(s_2;\mathfrak{n})+\mathcal{M}^{\Diamond}(s_2;\mathfrak{n})+O(D_E^{-m/2}N_F(\mathfrak{n})^{m+5}),
\end{equation}
as an identity of meromorphic functions in $s_2$, where the implied constant depends only on $F$ and $m$. Here,
\begin{align*}
\mathcal M(s_2;\mathfrak n)&:=2\pi^{-r_{\mathrm{ns}}-r_2}2^{-d(1/2+s_2)}D_E^{1/2}D_F^{-1/2}N_F(\mathfrak n)^{-1/2-s_2}\Lambda_F(1+2s_2),\\
\mathcal M^\Diamond(s_2;\mathfrak n)&:=2\pi^{-r_{\mathrm{ns}}-r_2}2^{-d(1/2-s_2)}D_E^{1/2-s_2}D_F^{-1/2}N_F(\mathfrak n)^{-1/2+s_2}\Lambda_F(1-2s_2).
\end{align*}
\end{thm}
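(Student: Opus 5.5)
The plan is to run the single-Eisenstein analogue of the argument in \textsection\ref{sec2}, as the remark after Theorem~\ref{thmFirstMomentIntro} suggests. We integrate $E_2(\iota(x),s_2)$ against the trivial character of $\mathcal C_{E/F}$ and evaluate the period in two ways.

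\textbf{Spectral side.} By \eqref{2.8} the period is $Z(s_2,\Phi_2,\mathbf 1)$. To bring in the whole family $\widehat{\mathcal C}^{\mathrm{un}}_{E/F}$, we instead pair $E_2$ with the function $x\mapsto\sum_\chi Z(0,\cdot)$-type projector. Concretely, we take the toric Fourier expansion of $E_2(\iota(x),s_2)$, whose coefficient at $\chi$ is $Z(s_2,\Phi_2,\chi)$, and evaluate at the identity. Using Proposition~\ref{lem3.1} and $\vol(\mathcal C_{E/F})=2L(1,\eta_{E/F})$, this gives
\[
E_2(I_2,s_2)=\frac{N_F(\mathfrak n)^{-s_2}}{2L(1,\eta_{E/F})}\sum_{\chi\in\widehat{\mathcal C}^{\mathrm{un}}_{E/F}}\lambda_\chi(\mathfrak n)\Lambda(1/2+s_2,\chi).
\]
More robustly, one can average $E_2(\iota(x),s_2)$ over $x$ in the image of the local unit groups $\prod_v\mathcal O_{E_v}^\times$; this is exactly how $\Phi_1$ selects the unramified characters. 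Either way, the left side of \eqref{e8.1}, up to $2^{1-\omega_F(\mathfrak n)}N_F(\mathfrak n)^{s_2}$, becomes a value or unit-average of $E_2$ at the point $\iota(1)g_0$-translate. The dual side then comes from the Fourier expansion \eqref{eq2.6} of $E_2$ at that point.

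\textbf{Dual side.} Write $E_2=h_2+h_2^\Diamond+\mathcal G_2$. The constant terms $h_2$ and $h_2^\Diamond$ are computed exactly as in \textsection\ref{sec4}. The local Gaussian and lattice integrals give $\Lambda_F(1+2s_2)$ with the factor $2^{\omega_F(\mathfrak n)}N_F(\mathfrak n)^{-1/2-s_2}$ from the two pieces of \eqref{3.2} and the determinant \eqref{eq4.det}. The intertwining relation \eqref{eq4.intertwine} gives the $\Diamond$ term with $D_E^{-s_2}N_F(\mathfrak n)^{2s_2}$. After normalization these are $\mathcal M$ and $\mathcal M^\Diamond$, and the factors $2^{-d(1/2\pm s_2)}$ and $\pi^{-r_{\mathrm{ns}}-r_2}$ come from the Archimedean sections as in \eqref{4.19}. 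The key bookkeeping check is that the unit averaging does not alter the constant terms, since $h_2$ is right $K$-invariant up to the $\mathfrak n$-twist, and at $v\mid\mathfrak n$ the torus is split with image in $K_v$.

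\textbf{Main obstacle.} The main difficulty is bounding the nonconstant term $\sum_{\alpha\in F^\times}W_2(a(\alpha)\iota(x)g_0,s_2)$ by $D_E^{-m/2}N_F(\mathfrak n)^{m+5}$. The point is that the Archimedean components of $g_0$ are $\diag(|\tau_v|^{1/2},1)$ or its split analogue. Hence the evaluation point sits high in the cusp, at height roughly $|\tau_\infty|^{1/2}\asymp D_{E/F}^{1/2}N_F(\mathfrak a)$, divided by the level scale $N_F(\mathfrak n)$ coming from $\varpi_v^{-r_v}$. The Whittaker function decays exponentially in the height: it is a product of $K_{s_2}(2\pi|y|)$ factors times finite-place lattice conditions that restrict $\alpha$ to a fractional ideal. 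So I would use the $\mathrm{GL}_1$-spectral (Mellin) expansion of $\mathcal G_2$ over $C_F$, or simply the bound $K_{s}(y)\ll_m y^{-m}$. Summing over $\alpha$ in the relevant lattice modulo units then gives height$^{-m}$ times a lattice count. Uniformity in $\mathfrak a$ follows by moving $\mathfrak a$ into the lattice, as in the remark after Proposition~\ref{prop5.1}. The $N_F(\mathfrak n)^{m+5}$ loss comes from the level shrinking the height and from the number of lattice points. The Mellin shift of the contour to $\Re=m$ must avoid poles of $\Lambda(\cdot)\Lambda(\cdot)$, which is fine for $|\Re s_2|\le1$ once $m\ge10$. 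Finally, meromorphic continuation in $s_2$ extends the identity from $\Re s_2\gg1$.
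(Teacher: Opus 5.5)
Your proposal follows the paper's proof. Toric Fourier inversion at $I_2$ gives \eqref{8.2}, the Fourier expansion \eqref{8.4} splits off the constant terms computed in Lemmas~\ref{lem8.2}--\ref{lem8.3}, and Lemma~\ref{lem8.4} bounds $\mathcal G_2(I_2,s_2)$ through the $\mathrm{GL}_1$-spectral expansion over $\widehat C_F$: the translation built into $\Phi_2$ produces the factor $2^{ds}D_{E/F}^{-s/2}N_F(\mathfrak n)^{s}$, and one works directly on $\Re s=m+2$ with no contour shift. Two small corrections: the intertwining factor in \eqref{eq4.intertwine} is $D_E^{-s_2}N_F(\mathfrak n)^{-2s_2}$, and you should keep the Mellin route rather than the coordinatewise bound $K_s(y)\ll_m y^{-m}$, which by itself does not make the sum over $\alpha\in F^\times$ converge along orbits of the unit group.
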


\begin{remark}\label{rem8.1-normalization}
The local formulas \eqref{8.15}--\eqref{8.17} include the Gaussian
scalars of \textsection\ref{sec3.1}. In particular, the coefficients of the poles of
$\mathcal M(s_2;\mathfrak n)$ and
$\mathcal M^{\Diamond}(s_2;\mathfrak n)$ at $s_2=0$ are opposite. Their sum is
therefore holomorphic there, as required by the left-hand side of
\eqref{e8.1}.
\end{remark}

\begin{remark}
When $F=\mathbb{Q}$ and $E$ is imaginary quadratic, the asymptotic formula \eqref{e8.1} was obtained in \cite[\textsection 3]{Blo04} with an exponential error term, and was later generalized to the case of general CM fields in \cite{Yan20}. Both proofs rely on the structure of CM points, which is not available for the general quadratic extension $E/F$ considered here. The argument below specializes the coarse Fourier spectral reciprocity of \cite[Proposition~3.1]{Yan26c} to the present spherical data. The local computations make the twisting weights and the uniform error explicit; the latter suffices for our applications.
\end{remark}

Taking $s_2=0$ in Theorem \ref{thm8.1} leads to the following. 
\begin{cor}\label{cor8.3}
Let $m\geq 10$ and $0<\varepsilon<10^{-3}$. We have
\begin{align*}
\sum_{\chi\in \widehat{\mathcal{C}}_{E/F}^{\text{un}}}\frac{\lambda_{\chi}^*(\mathfrak{n})\Lambda(1/2,\chi)}{L(1,\eta_{E/F})}=\mathcal{M}(\mathfrak{n})+\mathcal{M}^{\Diamond}(\mathfrak{n})+O(D_E^{-m/2}N_F(\mathfrak{n})^{m+5}),
\end{align*}
where the implied constant depends only on $F$ and $m$. Here, 
\begin{align*}
\mathcal{M}(\mathfrak{n}):=\frac{1}{2\pi i}\int_{|s_2|=\varepsilon}\frac{\mathcal{M}(s_2;\mathfrak{n})}{s_2}ds_2,\quad \mathcal{M}^{\Diamond}(\mathfrak{n}):=\frac{1}{2\pi i}\int_{|s_2|=\varepsilon}\frac{\mathcal{M}^{\Diamond}(s_2;\mathfrak{n})}{s_2}ds_2.
\end{align*}	
\end{cor}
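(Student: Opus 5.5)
This is a direct specialization of Theorem \ref{thm8.1} at the central point $s_2=0$. The only real issue is that $\mathcal M(s_2;\mathfrak n)$ and $\mathcal M^{\Diamond}(s_2;\mathfrak n)$ each have a simple pole at $s_2=0$, coming from $\Lambda_F(1\pm 2s_2)$, so neither can be evaluated there. By Remark~\ref{rem8.1-normalization}, the polar coefficients are opposite and the sum is holomorphic near the origin. The plan is therefore to apply Cauchy's formula on the small circle $|s_2|=\varepsilon$ to both sides of \eqref{e8.1}.

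First, the left-hand side of \eqref{e8.1} is a finite sum over $\widehat{\mathcal C}_{E/F}^{\mathrm{un}}$, since $E/F$ is CM in the intended application (or, in general, it converges absolutely with Archimedean Gamma weights). Each summand $\Lambda(1/2+s_2,\chi)$ is holomorphic near $s_2=0$, because $\chi$ nontrivial has entire $L$-function. The trivial character needs care: $\Lambda_E(1/2+s_2)$ is also regular at $s_2=0$, since its poles are at $1/2+s_2\in\{0,1\}$. Hence
\[
\frac{1}{2\pi i}\int_{|s_2|=\varepsilon}\frac{\mathrm{LHS}(s_2)}{s_2}\,ds_2=\mathrm{LHS}(0).
\]
Second, the error term in \eqref{e8.1} is a meromorphic function $\mathcal R(s_2)$ with $|\mathcal R(s_2)|\ll_{F,m} D_E^{-m/2}N_F(\mathfrak n)^{m+5}$, uniformly for $|\Re(s_2)|\le 1$. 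Dividing by $s_2$ and integrating over $|s_2|=\varepsilon$ bounds its contribution by the same quantity. The factor $\varepsilon^{-1}\cdot 2\pi\varepsilon/2\pi=1$ is harmless, so no dependence on $\varepsilon$ enters the implied constant.

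Third, the main terms. Integrating $(\mathcal M+\mathcal M^{\Diamond})(s_2)/s_2$ over the circle gives, by linearity, exactly $\mathcal M(\mathfrak n)+\mathcal M^{\Diamond}(\mathfrak n)$ as defined in the statement. Since the sum is holomorphic, this also equals its value at $0$. The individual integrals need not be values, and the statement only records them as contour integrals, so nothing more is needed. Choose $\varepsilon<10^{-3}$ so that the circle avoids all other poles of the $\Lambda_F(1\pm2s_2)$ factors; these lie at $s_2=\pm1/2$.

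The main obstacle is the second step. It requires that the $O$-term in Theorem~\ref{thm8.1} be a genuine pointwise bound on a meromorphic remainder that is holomorphic on the circle, rather than only at isolated points. I would justify this by noting that the remainder equals $\mathrm{LHS}-\mathcal M-\mathcal M^{\Diamond}$. That difference is holomorphic on $|s_2|=\varepsilon$, since the only poles of the main terms in the strip are at $0$ and $\pm 1/2$. The uniform bound then holds on the circle by Theorem~\ref{thm8.1} itself.
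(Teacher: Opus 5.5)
Your proposal is correct and follows the paper's own argument: apply Cauchy's formula on $|s_2|=\varepsilon$ to \eqref{e8.1}, use the holomorphy of the left-hand side at $s_2=0$ and the cancellation of the opposite polar parts of $\mathcal M(s_2;\mathfrak n)$ and $\mathcal M^{\Diamond}(s_2;\mathfrak n)$, and bound the remainder uniformly on the circle. The only cosmetic difference is that the paper takes this uniformity directly from Lemma~\ref{lem8.4}, whose bound for $\mathcal G_2(I_2,s_2)$ holds for all $|\Re(s_2)|\leq 1$ and so also shows the remainder is regular. You instead deduce this by writing the remainder as the left-hand side minus the main terms.
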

\begin{proof}
Apply Cauchy's formula to \eqref{e8.1} on a sufficiently small circle around $s_2=0$. The left-hand side is holomorphic there. On the right-hand side, the residues of $\mathcal M(s_2;\mathfrak n)$ and $\mathcal M^\Diamond(s_2;\mathfrak n)$ cancel: this follows either from the functional equation of $\Lambda_F$ in the displayed formulas of Theorem \ref{thm8.1}, or directly from the paired constant terms in \eqref{8.4}. Hence the contour integral is the finite part of their sum. The estimate in Lemma \ref{lem8.4} is uniform on the circle, so its contour integral has the same bound. This proves the corollary.
\end{proof}

When $F$ is totally real and $E/F$ is CM, taking the finite part at $s_2=0$
in the two main terms gives
\begin{equation}\label{e8.3-cm}
\mathcal{M}(\mathfrak n)+\mathcal{M}^{\Diamond}(\mathfrak n)
=A_F\sqrt{\frac{D_E}{N_F(\mathfrak n)}}
\left(\log\frac{D_E^{1/2}}{N_F(\mathfrak n)}+B_F\right),
\end{equation}
where $A_F>0$ and $B_F\in\mathbb R$ are explicit constants depending only on
the fixed Haar and Gamma-factor conventions over $F$. Formula \eqref{e8.3-cm}
is the form used in the mollification argument below. For $F=\mathbb Q$, after
passing from completed to finite $L$-functions, it is the first-moment formula
in \cite[(3.1)]{Blo04}.

\subsection{Spectral Expansion}
Let $x\in \mathbb{A}_E^{\times}$. By the toric Fourier expansion along $\widehat{\mathcal{C}}_{E/F}$, together with \eqref{e2.8}, we obtain 
\begin{equation}\label{8.1}
E_2(\iota(x),s_2)=\frac1{2L(1,\eta_{E/F})}\sum_{\chi\in \widehat{\mathcal{C}}_{E/F}}\overline{\chi}(x)Z(s_2,\Phi_2,\chi),
\end{equation}
where $Z(s_2,\Phi_2,\chi)$ is defined by \eqref{2.8}.

By Proposition \ref{lem3.1}, we obtain from \eqref{8.1}, with $x=1$, that 
\begin{equation}\label{8.2}
E_2(I_2,s_2)=\frac{N_F(\mathfrak{n})^{-s_2}}{2L(1,\eta_{E/F})}\sum_{\chi\in \widehat{\mathcal{C}}_{E/F}^{\text{un}}}\lambda_{\chi}(\mathfrak{n})\Lambda(1/2+s_2,\chi).
\end{equation}

\subsection{Fourier Expansion}
By the Fourier expansion  \eqref{eq2.6} we have 
\begin{equation}\label{8.4}
E_2(I_2,s_2)=h_2(I_2,s_2)+h_2^{\Diamond}(I_2,s_2)+\mathcal{G}_2(I_2,s_2).
\end{equation}

\begin{lemma}\label{lem8.2}
The following identity holds:
\begin{equation}\label{e8.5}
h_2(I_2,s_2)=\pi^{-r_{\mathrm{ns}}-r_2}2^{\omega_F(\mathfrak{n})-(1/2+s_2)d}D_E^{1/2}D_F^{-1/2}N_F(\mathfrak{n})^{-(1/2+2s_2)}\Lambda_F(1+2s_2).
\end{equation}
\end{lemma}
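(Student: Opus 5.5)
The plan is to compute $h_2(I_2,s_2)$ directly from the definition \eqref{e2.2} with $\chi_2=\omega_2=\mathbf 1$ and $g=I_2$. Here
\[
h_2(I_2,s_2)=\int_{\mathbb A_F^\times}\Phi_2((0,t))|t|^{1+2s_2}d^\times t,
\]
a Tate integral for $\zeta_F(1+2s_2)$. Since $\Phi_2=\Phi=\otimes_v\Phi_v$, it factors as $\prod_v\int_{F_v^\times}\Phi_v((0,t))|t|_v^{1+2s_2}d^\times t$ for $\Re(s_2)\gg1$. I will evaluate each local factor using the explicit test data of \textsection\ref{sect3.1}, multiply the results, and then conclude by meromorphic continuation. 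This is the same computation as $h_{2,v}^{\dag}(I_2,s_2)$ in Lemmas~\ref{lem5.2} and \ref{lem5.1}, except that the translations $g_v^\dag$ and $g_{0,v}$ are now retained.

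\emph{Archimedean places.} At $v\in\Sigma^{\mathrm{r,ns}}_{F,\infty}$ we have $(0,t)g_v^\dag=(0,t)$. Hence the factor is $\pi^{-1}\int e^{-2\pi t^2}|t|^{1+2s_2}d^\times t=\pi^{-1}2^{-1/2-s_2}\Gamma_{\mathbb R}(1+2s_2)$, as already recorded in Lemma~\ref{lem5.2}. At a real split place, $(0,t)g_v^\dag=(t,t)$, so the Gaussian is $e^{-2\pi t^2}$ and the factor is $2^{-1/2-s_2}\Gamma_{\mathbb R}(1+2s_2)$. At a complex place, $(0,t)\mathbf B=(t,t)$ again, giving $\pi^{-2}\int e^{-4\pi|t|_v}|t|_v^{1+2s_2}d^\times t=\pi^{-1}2^{-(1+2s_2)}\Gamma_{\mathbb C}(1+2s_2)$. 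Since $d=r_1+2r_2$, the product of the powers of $2$ is $2^{-(1/2+s_2)d}$, and the powers of $\pi$ combine to $\pi^{-r_{\mathrm{ns}}-r_2}$.

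\emph{Finite places $v\nmid\mathfrak n$.} Here $\Phi_v((0,t))=q_v^{d_{E_v}/2}\mathbf 1_{\mathcal O_{E_v}}(t)$, and $t\in F_v$ lies in $\mathcal O_{E_v}$ iff $t\in\mathcal O_v$. The factor is therefore $q_v^{(d_{E_v}-d_{F_v})/2}\zeta_{F_v}(1+2s_2)$.

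\emph{Places $v\mid\mathfrak n$.} The split matrix \eqref{eq3.3} gives $(0,t)g_v^\dag=(t,t)$. Each of the two summands $\Phi_v^{(j)}$ then forces $t\in\varpi_v^{r_v}\mathcal O_v$, so the factor is
\[
2q_v^{r_v/2}q_v^{-r_v(1+2s_2)}q_v^{-d_{F_v}/2}\zeta_{F_v}(1+2s_2).
\]
Since $d_{E_v}=0$ at these places, this fits the pattern of the previous case with an extra $2\,q_v^{-r_v(1/2+2s_2)}$. The product over $v\mid\mathfrak n$ yields $2^{\omega_F(\mathfrak n)}N_F(\mathfrak n)^{-(1/2+2s_2)}$.

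\emph{Assembling.} Over all finite places, $\prod_vq_v^{(d_{E_v}-d_{F_v})/2}=(D_E/D_F)^{1/2}$, because $\prod_vq_v^{d_{E_v}}=D_E$ and $\prod_vq_v^{d_{F_v}}=D_F$. Combining this with $\prod_v\zeta_{F_v}(1+2s_2)=\Lambda_F(1+2s_2)$ and the Archimedean constants gives \eqref{e8.5}.

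I expect the main obstacle to be the bookkeeping of the power of $2$ at the split Archimedean and complex places. The factor $2^{-1/2-s_2}$ per real place and $2^{-(1+2s_2)}$ per complex place comes from $\mathbf B$ doubling the quadratic form on $(0,t)$. These factors must combine exactly to $2^{-(1/2+s_2)d}$, consistently with the Gaussian normalizations $\pi^{-1}$ and $\pi^{-2}$ and with $\Gamma_{\mathbb C}(s)=2(2\pi)^{-s}\Gamma(s)$. I will double-check this against the pole-cancellation requirement of Remark~\ref{rem8.1-normalization} with $h_2^\Diamond$ via \eqref{eq4.intertwine}.
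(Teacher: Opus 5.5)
Your proposal is correct and matches the paper's proof. The paper likewise factors $h_2(I_2,s_2)=\int_{\mathbb A_F^\times}\Phi((0,t))|t|^{1+2s_2}d^\times t$ over places, evaluates the Archimedean, untwisted finite, and twisting-prime factors as you do, and assembles them using $\prod_v q_v^{d_{E_v}-d_{F_v}}=D_E/D_F$. The only cosmetic difference is that at the split Archimedean places the paper writes $\mathbf B=\sqrt2\cdot k$ with $k\in K_v$, where you compute $(0,t)\mathbf B=(t,t)$ directly; also, at real non-split places the factor $2^{-1/2-s_2}$ comes from the Gaussian $e^{-2\pi t^2}$ itself rather than from $\mathbf B$, as your own computation there shows.
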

\begin{proof}
Suppose $\Re(s_2)\gg 1$. By definition, 
\begin{align*}
h_2(I_2,s_2)=
\int_{\mathbb{A}_F^{\times}}
\Phi((0,t))|t|^{1+2s_2}d^{\times}t.
\end{align*}

For later use, note that for $v\in \Sigma_{F,\infty}$, 
\begin{equation}\label{8.6}
\mathbf{B}=\begin{pmatrix}
1 & -1\\
1 & 1
\end{pmatrix}=\sqrt{2}\cdot \begin{pmatrix}
1/\sqrt{2} & -1/\sqrt{2}\\
1/\sqrt{2} & 1/\sqrt{2}
\end{pmatrix},\quad \begin{pmatrix}
1/\sqrt{2} & -1/\sqrt{2}\\
1/\sqrt{2} & 1/\sqrt{2}
\end{pmatrix}\in K_v.
\end{equation}
Moreover,
\begin{equation}\label{8.7}
\int_{\mathbb{R}^{\times}}e^{-2\pi t^2} |t|^{1+2s_2}d^{\times}t
=
2^{-s_2-1/2}\Gamma_{\mathbb{R}}(1+2s_2).
\end{equation}

Using \eqref{8.6}, \eqref{8.7}, and the definition of $\Phi_v$ in \textsection\ref{sec3.1}, a change of variables gives 
\begin{equation}\label{8.8}
\int_{F_{\infty}^{\times}}
\Phi_{\infty}((0,t_{\infty}))|t_{\infty}|_{\infty}^{1+2s_2}d^{\times}t_{\infty}=\pi^{-r_{\mathrm{ns}}-r_2}2^{-(1/2+s_2)d}\zeta_{F_{\infty}}(1+2s_2).
\end{equation}

From the definition of $\Phi_v$ in \textsection\ref{sec3.2}, we have
\begin{equation}\label{8.9}
\prod_{v<\infty,\ v\nmid\mathfrak n}\zeta_{F_v}(1+2s_2)^{-1}\int_{F_v^{\times}}
\Phi_v((0,t_v))|t_v|_{v}^{1+2s_2}d^{\times}t_v=D_E^{1/2}D_F^{-1/2}.	
\end{equation}

Let $v\mid\mathfrak n$. Since $\mathbf{B}\in K_v$, the definition \eqref{3.2} gives
\begin{multline}\label{8.10}
\int_{F_v^{\times}}
\Phi_v((0,t_v))|t_v|_{v}^{1+2s_2}d^{\times}t_v=q_v^{r_v/2}\int_{F_v^{\times}}
\big[\Phi_v^{\circ}((\varpi_v^{-r_v}t_v,t_v))\\
+\Phi_v^{\circ}((t_v,\varpi_v^{-r_v}t_v))\big]|t_v|_{v}^{1+2s_2}d^{\times}t_v.
\end{multline}

By a change of variable in \eqref{8.10}, we obtain 
\begin{equation}\label{8.11}
\int_{F_v^{\times}}
\Phi_v((0,t_v))|t_v|_{v}^{1+2s_2}d^{\times}t_v=2q_v^{-(1/2+2s_2)r_v}\zeta_{F_v}(1+2s_2).
\end{equation}

Multiplying \eqref{8.8}, \eqref{8.9}, and \eqref{8.11} proves
\eqref{e8.5} for $\Re(s_2)\gg1$. The remaining cases follow by
meromorphic continuation.
\end{proof}

\begin{lemma}\label{lem8.3}
The following identity holds:
\begin{align*}
h_2^{\Diamond}(I_2,s_2)=\pi^{-r_{\mathrm{ns}}-r_2}2^{\omega_F(\mathfrak n)-d(1/2-s_2)}D_E^{1/2-s_2}D_F^{-1/2}N_F(\mathfrak n)^{-1/2}\Lambda_F(1-2s_2).
\end{align*}
\end{lemma}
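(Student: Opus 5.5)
The plan is to deduce Lemma \ref{lem8.3} from Lemma \ref{lem8.2} via the normalized intertwining relation \eqref{eq4.intertwine}, $h_2^\Diamond(\cdot,s)=D_E^{-s}N_F(\mathfrak n)^{-2s}h_2(\cdot,-s)$, evaluated at $g=I_2$. Substituting $s_2\mapsto -s_2$ in \eqref{e8.5} gives
\[
h_2(I_2,-s_2)=\pi^{-r_{\mathrm{ns}}-r_2}2^{\omega_F(\mathfrak n)-(1/2-s_2)d}D_E^{1/2}D_F^{-1/2}N_F(\mathfrak n)^{-1/2+2s_2}\Lambda_F(1-2s_2).
\]
Multiplying by $D_E^{-s_2}N_F(\mathfrak n)^{-2s_2}$ yields exactly the claimed formula: the level powers combine to $N_F(\mathfrak n)^{-1/2}$ and the discriminant power becomes $D_E^{1/2-s_2}$.

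The substance is therefore \eqref{eq4.intertwine} for $h_2$, which I would check place by place as in the proof of Proposition \ref{prop4.2}. For $\Re(s_2)$ in a region where the intertwining integral converges, factor $h_2^\Diamond(I_2,s_2)=\prod_v\int_{F_v}h_{2,v}(wu,s_2)\,du$.

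At places $v\nmid\mathfrak n$, $h_{2,v}$ is a right translate by $g_{0,v}$ (or by $g_v^\dag$ at infinity) of the spherical Godement section, up to the scalar $b_v$. Uniqueness of spherical vectors gives \eqref{equ4.16}, and hence the local ratio $|\det g_{0,v}|_v^{-2s_2}q_v^{-d_{F_v}/2}\zeta_{F_v}(2s_2)/\zeta_{F_v}(1-2s_2)$, together with the extra factor $2^{-2s_2}$ at real nonsplit places. The Archimedean places give the analogous ratio with $\Gamma$-factors. Taking the product over these places and using \eqref{eq4.det} and $|\tau_\infty|=D_{E/F}N_F(\mathfrak a)^2$, the $\mathfrak a$-contributions cancel between the finite and infinite parts. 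The global functional equation $\Lambda_F(2s_2)=D_F^{1/2-2s_2}\Lambda_F(1-2s_2)$ then converts the quotient into the factor $D_E^{-s_2}$ times the passage from $\Lambda_F(1+2s_2)$ to $\Lambda_F(1-2s_2)$ with the correct $D_F$-power.

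At $v\mid\mathfrak n$, $\Phi_v=\Phi_v^{(1)}+\Phi_v^{(2)}$, and each summand is a translate of the spherical section by $g_v^\dag\,\diag(\varpi_v^{-r_v},1)$ or $g_v^\dag\,\diag(1,\varpi_v^{-r_v})$, where $g_v^\dag\in K_v$ up to scaling. The intertwining operator commutes with right translation, so it suffices to compute the spherical scalar once. The determinant factor $|\varpi_v^{-r_v}|_v^{-2s_2}$, together with the value at $I_2$ computed exactly as in \eqref{8.11} but with $s_2$ replaced by $-s_2$, gives the local factor $2q_v^{-r_v/2}\zeta_{F_v}(1-2s_2)$. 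This accounts for the $N_F(\mathfrak n)^{-2s_2}$ in \eqref{eq4.intertwine}.

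The main obstacle will be bookkeeping the powers of $2$ and $D_F$ from the Gaussian normalizations and the self-dual measures. As a cross-check, I would verify that the residues of $h_2(I_2,s_2)$ and $h_2^\Diamond(I_2,s_2)$ at $s_2=0$ cancel, which Remark \ref{rem8.1-normalization} requires. The identity is then extended from the convergence region by meromorphic continuation.
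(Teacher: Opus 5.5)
Your proposal is correct, but it follows a different route from the paper.

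\textbf{The paper's route.} The paper does not use \eqref{eq4.intertwine} here. It writes $h_2^{\Diamond}(I_2,s_2)=\prod_v\int_{F_v}\int_{F_v^\times}\Phi_v((t,c))|t|_v^{2s_2}d^\times t\,dc$ and evaluates each local double integral directly:
\begin{itemize}
\item Gaussian integrals at the Archimedean places;
\item $|u_v|_v^{2s_2}q_v^{d_{E_v}/2-d_{F_v}}\zeta_{F_v}(2s_2)$ at $v\nmid\mathfrak n$;
\item the lattice count \eqref{8.20}--\eqref{8.21}, giving $2q_v^{-r_v/2}\zeta_{F_v}(2s_2)$, at $v\mid\mathfrak n$.
\end{itemize}
It then uses $\prod_{v<\infty}|u_v|_v=2^dN_F(\mathfrak a)$, $|\tau_\infty|_\infty=D_{E/F}N_F(\mathfrak a)^2$, and the functional equation of $\Lambda_F$.

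\textbf{Your route.} You reduce to Lemma~\ref{lem8.2} through the $h_2$-case of \eqref{eq4.intertwine}. The paper records that case but its local argument (Proposition~\ref{prop4.2}) only treats $h_1$, so your place-by-place justification supplies something the paper leaves implicit, and it is correct. At $v\nmid\mathfrak n$ one has $\Phi_{2,v}=\Phi_{1,v}$, so the constants from \eqref{equ4.16} apply verbatim. At $v\mid\mathfrak n$, both translates $g_v^\dag\diag(\varpi_v^{-r_v},1)$ and $g_v^\dag\diag(1,\varpi_v^{-r_v})$ have $|\det|_v=q_v^{r_v}$. Commuting the intertwiner with right translation therefore gives $h_{2,v}^\Diamond(\cdot,s)=q_v^{-2r_vs}\,\zeta_{F_v}(2s)\zeta_{F_v}(1-2s)^{-1}\,h_{2,v}(\cdot,-s)$.

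\textbf{Two bookkeeping corrections.}
\begin{itemize}
\item The factor $2q_v^{-r_v/2}\zeta_{F_v}(1-2s_2)$ you name is the local factor of $q_v^{-2r_vs_2}h_{2,v}(I_2,-s_2)$. It still has to be multiplied by that spherical ratio, which recovers the paper's $2q_v^{-r_v/2}\zeta_{F_v}(2s_2)$.
\item The functional equation $\Lambda_F(2s_2)=D_F^{1/2-2s_2}\Lambda_F(1-2s_2)$ is needed only to show that the product of the local constants and $|\det g_0|^{-2s_2}$ equals $D_E^{-s_2}N_F(\mathfrak n)^{-2s_2}$. In that product, the $2^{-2s_2}$ at real nonsplit places cancels the $2^{-r_{\mathrm{ns}}}$ in \eqref{eq4.det}. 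The factor $\Lambda_F(1-2s_2)$ itself comes simply from substituting $-s_2$ in \eqref{e8.5}.
\end{itemize}

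\textbf{What each route buys.} Yours is more structural. It gives the relation for every $g$, not just $I_2$. It explains why the $\mathfrak n$-power in $h_2^\Diamond(I_2,s_2)$ does not depend on $s_2$. It also makes the residue cancellation of Remark~\ref{rem8.1-normalization} automatic. The paper's direct computation is self-contained and needs no spherical intertwining constants. It also serves as an independent check of \eqref{eq4.intertwine} at $g=I_2$.
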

\begin{proof}
Inserting \eqref{e2.2} into the definition of $h_2^{\Diamond}(I_2,s_2)$, and making a change of variables, we obtain 
\begin{align*}
h_2^{\Diamond}(I_2,s_2)=\prod_{v\in \Sigma_F}h_{2,v}^{\Diamond}(I_2,s_2),
\end{align*}
where
\begin{align*}
h_{2,v}^{\Diamond}(I_2,s_2):=\int_{F_v}\int_{F_v^{\times}}
\Phi_v((t,c))|t|^{2s_2}d^{\times}tdc.
\end{align*}

We consider the following local cases. 
\begin{itemize}
\item Suppose $v\in \Sigma_{F,\infty}^{\mathrm{r,ns}}$. Using \eqref{8.6}, the definition of $\Phi_v$, and a change of variables, we obtain 
\begin{equation}\label{8.15}
h_{2,v}^{\Diamond}(I_2,s_2)=\pi^{-1}|\tau_v|_v^{-s_2}
\int_{\mathbb{R}}e^{-2\pi c^2}dc\int_{\mathbb{R}^{\times}}
e^{-2\pi t^2}|t|^{2s_2}d^{\times}t
=\frac{|\tau_v|_v^{-s_2}}{\pi\,2^{1/2+s_2}}\Gamma_{\mathbb{R}}(2s_2).
\end{equation}

\item Suppose $v\in \Sigma_{F,\infty}^{\mathrm{r,sp}}$. We obtain
\begin{equation}\label{8.16}
h_{2,v}^{\Diamond}(I_2,s_2)=\frac{|\tau_v|_v^{-s_2}}{2^{1/2+s_2}}\int_{\mathbb{R}}e^{-\pi c^2}dc\int_{\mathbb{R}^{\times}}
e^{-\pi t^2}|t|^{2s_2}d^{\times}t=\frac{|\tau_v|_v^{-s_2}}{2^{1/2+s_2}}\Gamma_{\mathbb{R}}(2s_2).	
\end{equation}

\item Suppose $v\in \Sigma_{F,\infty}^{\mathrm{c}}$. We obtain
\begin{equation}\label{8.17}
h_{2,v}^{\Diamond}(I_2,s_2)=\pi^{-2}|\tau_v|_v^{-s_2}\int_{\mathbb C}e^{-4\pi|c|^2}dc\int_{\mathbb C^\times}
e^{-4\pi|t|^2}|t|^{4s_2}d^\times t
=\frac{|\tau_v|_v^{-s_2}\Gamma_{\mathbb C}(2s_2)}{\pi\,2^{1+2s_2}}.
\end{equation}

\item Suppose $v<\infty$ with $v\nmid\mathfrak n$. Then
\begin{align*}
\begin{aligned}
h_{2,v}^{\Diamond}(I_2,s_2)&=q_v^{d_{E_v}/2}\int_{F_v}\int_{F_v^{\times}}
\Phi_v^\circ(t/u_v,c-u_{0,v}t/u_v)|t|_v^{2s_2}d^\times t\,dc
\\&=|u_v|_v^{2s_2}q_v^{d_{E_v}/2-d_{F_v}}\zeta_{F_v}(2s_2).
\end{aligned}
\end{align*}

\item Suppose $v\mid\mathfrak n$. Then
\begin{align*}
h_{2,v}^{\Diamond}(I_2,s_2)=q_v^{r_v/2}h_{2,v}^{\Diamond,(1)}(I_2,s_2)+q_v^{r_v/2}h_{2,v}^{\Diamond,(2)}(I_2,s_2),\end{align*}
where 
\begin{align*}
&h_{2,v}^{\Diamond,(1)}(I_2,s_2):=\int_{F_v}\int_{F_v^{\times}}
\Phi_v^{\circ}\left((t,c)\begin{pmatrix}
1 & -1\\
1 & 1
\end{pmatrix}\begin{pmatrix}
\varpi_v^{-r_v}\\
& 1
\end{pmatrix}\right)|t|_v^{2s_2}d^{\times}tdc,\\
&h_{2,v}^{\Diamond,(2)}(I_2,s_2):=\int_{F_v}\int_{F_v^{\times}}
\Phi_v^{\circ}\left((t,c)\begin{pmatrix}
1 & -1\\
1 & 1
\end{pmatrix}\begin{pmatrix}
1\\
& \varpi_v^{-r_v}
\end{pmatrix}\right)|t|_v^{2s_2}d^{\times}tdc.
\end{align*}

By a change of variables, 
\begin{equation}\label{8.20}
h_{2,v}^{\Diamond,(1)}(I_2,s_2)=\int_{\mathcal{O}_v}\int_{F_v^{\times}}
\mathbf{1}_{\mathcal{O}_v}((c+t)\varpi_v^{-r_v})|t|_v^{2s_2}d^{\times}tdc.
\end{equation}

The condition $(c+t)\varpi_v^{-r_v}\in \mathcal{O}_v$ is equivalent to
\begin{equation}\label{8.21}
t\in \mathcal{O}_v-\{0\},\quad c\in -t+\varpi_v^{r_v}\mathcal{O}_v.
\end{equation}

Swapping the order of integration in \eqref{8.20}, and using \eqref{8.21}, we obtain 
\begin{align*}
h_{2,v}^{\Diamond,(1)}(I_2,s_2)=q_v^{-r_v}\zeta_{F_v}(2s_2).
\end{align*}
The same computation applies to the second term:
\begin{align*}
h_{2,v}^{\Diamond,(2)}(I_2,s_2)=q_v^{-r_v}\zeta_{F_v}(2s_2).	
\end{align*}
\end{itemize}

The integral-basis discriminant satisfies
$\prod_{v<\infty}|u_v|_v=2^dN_F(\mathfrak a)$; the twisting primes
have $|u_v|_v=1$. Multiplying the local identities, and using
$|\tau_\infty|_\infty=D_{E/F}N_F(\mathfrak a)^2$, gives
\[
h_2^\Diamond(I_2,s_2)=
\pi^{-r_{\mathrm{ns}}-r_2}2^{\omega_F(\mathfrak n)-d/2+ds_2}
D_E^{1/2}D_F^{-1}D_{E/F}^{-s_2}
N_F(\mathfrak n)^{-1/2}\Lambda_F(2s_2).
\]
Lemma~\ref{lem8.3} follows from the functional equation of $\Lambda_F$.
The identity extends by meromorphic continuation.
\end{proof}

\begin{lemma}\label{lem8.4}
Let $|\Re(s_2)|\leq 1$ and $m\geq 10$. We have 
\begin{equation}\label{8.24}
\mathcal{G}_2(I_2,s_2)\ll N_F(\mathfrak{n})^{10}\left(\frac{\sqrt{D_E}}{N_F(\mathfrak{n})}\right)^{-m},
\end{equation}	
where the implied constant depends on $F$ and $m$.
\end{lemma}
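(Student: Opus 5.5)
We bound $\mathcal{G}_2(I_2,s_2)=\sum_{\alpha\in F^\times}W_2(a(\alpha),s_2)$ by factoring each summand over places and using rapid decay at infinity. Initially take $\Re(s_2)\gg1$. By \eqref{2.2}, $W_2(a(\alpha),s_2)=\prod_v W_{2,v}(a(\alpha),s_2)$, and each local factor is the partial Fourier transform of $\Phi_v$ in the second variable, integrated against $|t|_v^{1+2s_2}d^\times t$. Then extend to $|\Re(s_2)|\le1$, using that the Archimedean and non-Archimedean factors are entire (up to harmless normalizations) in this strip once the constant terms have been removed.

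\emph{Finite places.} For $v\nmid\mathfrak n$, $\Phi_v=q_v^{d_{E_v}/2}\Phi_v^\circ((\cdot)g_{0,v})$. Computing the Fourier transform shows that $W_{2,v}(a(\alpha),s_2)$ vanishes unless $\alpha$ lies in a fractional ideal of the form $\mathfrak D_F^{-1}\mathfrak a^{-1}$ at $v$. Its value is a finite divisor-type sum $\sum_{j}|\,\cdot\,|^{\pm s_2}$ of length $\le e_v(\alpha)+1$, times $q_v^{d_{E_v}/2}$-type normalizations; the product of these normalizations is $D_E^{1/2}D_F^{-1/2}$, as in \eqref{8.9}. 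For $v\mid\mathfrak n$, the two summands in \eqref{3.2} give an analogous factor supported on $\alpha\in\mathfrak p_v^{-r_v}\cdot(\text{unit lattice})$ and bounded by $q_v^{O(r_v)}$. Multiplying, $\alpha$ is confined to a fixed lattice $\Lambda\supseteq\mathfrak n^{-1}\mathfrak D_F^{-1}\mathfrak a^{-1}$, and
\[
\Big|\prod_{v<\infty}W_{2,v}(a(\alpha),s_2)\Big|\ll D_E^{1/2}N_F(\mathfrak n)^{O(1)}\tau(\alpha\Lambda^{-1})\,N(\alpha\Lambda^{-1})^{1},
\]
where the exponent $1$ accounts for $|\Re s_2|\le1$.

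\emph{Infinite places.} At $v\mid\infty$ the section is a Gaussian translated by $g_v^\dag$, which contains the scaling $\diag(|\tau_v|^{1/2},1)$ (times $\mathbf B\in\sqrt2K_v$). Hence $W_{2,v}(a(\alpha),s_2)$ is a $K$-Bessel function $|\,\cdot\,|^{1/2}K_{s_2}(c\,|\alpha_v|\,|\tau_v|^{1/2})$ times a power of $|\tau_v|$; compare Lemma \ref{lem5.2}. This gives the bound $\ll_m (1+|\alpha_v||\tau_v|^{1/2})^{-M}|\tau_v|^{O(1)}$ for any $M$. By the choice $(\tau)=\mathfrak D_{E/F}\mathfrak a^2$, the effective scale of $\alpha$ is $\prod_v|\alpha_v||\tau_v|^{1/2}\asymp N(\alpha\Lambda^{-1})\cdot\sqrt{D_E}/N_F(\mathfrak n)$ after normalizing by $\Lambda$, and the dependence on $\mathfrak a$ cancels.

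\emph{Summation.} It remains to sum over the lattice points $\alpha\in\Lambda\smallsetminus\{0\}$ of the product of the local bounds. This is the main obstacle: a bound on the product $\prod_v|\alpha_v|$ does not control each $|\alpha_v|$, so one must use units. We would sum over $\alpha$ modulo $\mathcal O_F^\times$ and, for each class, over the units, using the decay $\prod_v(1+y_v)^{-M}$ to make the unit sum converge to $\ll (\prod_v y_v)^{-M/d}$ (the standard Siegel-domain argument, as for $\mathcal G$ in \cite{Yan26a}). The number of ideal classes of norm $\le X$ is $O(X)$, so summing over norms gives
\[
\mathcal G_2(I_2,s_2)\ll D_E^{1/2+O(1)\cdot 0}N_F(\mathfrak n)^{O(1)}\Big(\tfrac{\sqrt{D_E}}{N_F(\mathfrak n)}\Big)^{-m-1}.
\]
We then absorb $D_E^{1/2}\le(\sqrt{D_E}/N_F(\mathfrak n))\,N_F(\mathfrak n)$ to reach $N_F(\mathfrak n)^{10}(\sqrt{D_E}/N_F(\mathfrak n))^{-m}$. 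The bound is trivial when $\sqrt{D_E}\le N_F(\mathfrak n)$, so we may assume the ratio is at least $1$. The care needed is in tracking the exponent of $N_F(\mathfrak n)$ so that it stays below $10$ uniformly in $m$; if it does not, we would first get the ratio to power $-m-10$ and then trade.
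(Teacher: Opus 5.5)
Your route differs from the paper's. The paper never sums over $\alpha$ directly. It inserts $|y|^{r}$ and expands $y\mapsto |y|^r\sum_{\alpha}W_2(a(\alpha y),s_2)$ along $F^\times\backslash\mathbb A_F^\times$. This writes $\mathcal G_2(I_2,s_2)$ as an integral of the Hecke integrals $\Psi(W_2(\cdot,s_2),\xi|\cdot|^{r+it})$ over Hecke characters. These factor as $\Lambda(1/2+s+s_2,\xi)\Lambda(1/2+s-s_2,\xi)$ times explicit local factors containing $D_{E/F}^{-s/2}N_F(\mathfrak n)^{s}$. A trivial bound on $\Re s=m+2$, using Stirling and a count of the characters that occur, then gives $D_E^{-m/2-1/4}N_F(\mathfrak n)^{m+5+\varepsilon}$. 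The Mellin transform absorbs both the unit group and the behaviour of the Bessel functions near $0$, and the discriminant decay appears as an explicit power. Your direct summation needs only supports and sizes of local Whittaker functions, and if completed it gives a better power of $N_F(\mathfrak n)$. But it must handle those two issues by hand, and as written it does not yet prove the stated inequality.

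The gap is the one you flag, and your proposed remedy does not close it. Put $R=\sqrt{D_E}/N_F(\mathfrak n)$. When $R\asymp 1$ the lemma asserts $\mathcal G_2(I_2,s_2)\ll N_F(\mathfrak n)^{10}$, and no power of $R$ can absorb a power of $N_F(\mathfrak n)$. So unspecified factors $N_F(\mathfrak n)^{O(1)}$ and $|\tau_v|^{O(1)}$ do not suffice. For the same reason, proving $R^{-m-10}$ and ``trading'' fails, and the case $R\le 1$ is not trivial.

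Two local claims also need correcting.
\begin{itemize}
\item At $v\mid\infty$ the factor is exactly $|\tau_v|_v^{-1/4-s_2/2}f(Y_v)$, with $Y_v=|\alpha|_v|\tau_v|_v^{1/2}$ and $f(Y)=Y^{1/2}K_{s_2}(2\pi Y)$ (or its complex analogue). Since $f(Y)\asymp Y^{1/2-|\Re s_2|}$ as $Y\to0$, your bound $(1+Y)^{-M}$ is false for $|\Re s_2|>1/2$. With the correct bound $Y^{1/2-|\Re s_2|-\varepsilon}(1+Y)^{-M}$, the unit sum is $\ll X^{-M}$ for $X=\prod_vY_v\geq1$ and $\ll X^{-1/2-\varepsilon}$ for $X<1$.
\item At finite places, up to powers of $D_F$, the product is $D_E^{1/2}|\alpha|_{\mathrm{fin}}^{1/2}\prod_v|u_v|_v^{\Re s_2}$ times divisor sums $\ll\tau(\mathfrak b)N(\mathfrak b)^{|\Re s_2|}$, where $\mathfrak b=\alpha\Lambda^{-1}$. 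The places $v\mid\mathfrak n$ contribute a further factor $2^{\omega_F(\mathfrak n)}N_F(\mathfrak n)^{|\Re s_2|-1/2}$.
\end{itemize}

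With these in place, $\mathfrak a$ cancels exactly, $X\asymp_F N(\mathfrak b)R$, and the summand satisfies
\[
\ll_F D_E^{1/4-\Re(s_2)/2}N_F(\mathfrak n)^{|\Re s_2|+\varepsilon}N(\mathfrak b)^{-1/2+|\Re s_2|+\varepsilon}\prod_{v\mid\infty}|f(Y_v)|.
\]
Note that the prefactor reaches $D_E^{3/4}$, not $D_E^{1/2}$, when $\Re s_2=-1$. Write $D_E^{3/4}=(RN_F(\mathfrak n))^{3/2}$ and sum over $\mathfrak b$. This gives $\mathcal G_2(I_2,s_2)\ll N_F(\mathfrak n)^{5/2+\varepsilon}R^{-m}$ in both regimes $R\geq1$ and $R<1$, which is comfortably inside \eqref{8.24}. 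With this bookkeeping supplied, your argument goes through.
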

\begin{proof}
Let $r>1+|\Re(s_2)|$ be a large constant. Consider 
\begin{align*}
F(y):=|y|^{r}\sum_{\alpha\in F^{\times}}
W_2\left(\begin{pmatrix}
\alpha y\\
& 1
\end{pmatrix},s_2\right),\quad y\in \mathbb{A}_F^{\times}.
\end{align*}

For $r$ sufficiently large, we have $F(y)\in L^1(F^{\times}\backslash\mathbb{A}_F^{\times})\cap L^2(F^{\times}\backslash\mathbb{A}_F^{\times})$. Expanding $F(\cdot)$ along the spectrum of $F^{\times}\backslash\mathbb{A}_F^{\times}$ gives
\begin{equation}\label{8.25}
F(y)=\frac1{2\pi\rho_F}\sum_{\xi\in \widehat{C}_F}\int_{\mathbb{R}}\int_{F^{\times}\backslash\mathbb{A}_F^{\times}}F(a)\xi(a)|a|^{it}d^{\times}a\, \overline{\xi}(y)|y|^{-it} dt.
\end{equation}

For $r$ sufficiently large, 
\begin{align*}
\int_{F^{\times}\backslash\mathbb{A}_F^{\times}}F(a)\xi(a)|a|^{it}d^{\times}a=\Psi(W_2(\cdot,s_2),\xi|\cdot|^{r+it}),
\end{align*}
where the Hecke integral on the right-hand side is defined in \textsection\ref{sec2.3.1}. Taking $y=1$ in \eqref{8.25}, we obtain
\begin{equation}\label{8.26}
\mathcal{G}_2(I_2,s_2)=F(1)=\frac1{2\pi\rho_F}\sum_{\xi\in \widehat{C}_F}\int_{\mathbb{R}}\Psi(W_2(\cdot,s_2),\xi|\cdot|^{r+it})dt.
\end{equation}

By definition, 
\begin{align*}
\Psi(W_2(\cdot,s_2),\xi|\cdot|^s)=\prod_{v\in \Sigma_F}\Psi_v(W_{2,v}(\cdot,s_2),\xi_v|\cdot|_v^s),
\end{align*}
where 
\begin{align*}
\Psi_v(W_{2,v}(\cdot,s_2),\xi_v|\cdot|_v^s):=\int_{F_v^{\times}}
W_{2,v}\left(\begin{pmatrix}
a_v\\
& 1
\end{pmatrix},s_2\right)\xi_v(a_v)|a_v|_v^sd^{\times}a_v.
\end{align*}

We record the required local computations. For the Hecke integrals below,
take $\theta_v=\sqrt{|\tau_v|}$ at a real non-split place; thus every
argument of $\xi_v$ belongs to $F_v^\times$.
\begin{itemize}
\item Suppose $v\in \Sigma_{F,\infty}^{\mathrm{r,ns}}$. By \eqref{2.2} and a change of variables, we have
\begin{multline}\label{e8.27}
\Psi_v(W_{2,v}(\cdot,s_2),\xi_v|\cdot|_v^s)=\int_{F_v^{\times}}
\int_{F_v^{\times}}\int_{F_v}
\Phi_{v}((a_v,c_v))\overline{\psi}_v(c_vt_v)dc_v\\ 
\xi_v(t_v)|t_v|_v^{1/2-s_2+s}d^{\times}t_v\xi_v(a_v)|a_v|_v^{1/2+s_2+s}d^{\times}a_v.
\end{multline}
By definition,  
\begin{equation}\label{8.28}
\Phi_{v}((a_v,c_v))=\Phi_v^{\circ}((|\tau_v|_v^{1/2}a_v,c_v))=\pi^{-1}e^{-2\pi|\tau_v|_va_v^2}e^{-2\pi c_v^2}.
\end{equation}
After substituting \eqref{8.28} into \eqref{e8.27} and changing
variables, we obtain
\begin{multline}\label{8.29}
\Psi_v(W_{2,v}(\cdot,s_2),\xi_v|\cdot|_v^s)=\pi^{-1}\xi_v(\theta_v)^{-1}|\tau_v|_v^{-\frac{1}{4}-\frac{s_2+s}{2}}\int_{F_v^{\times}}\int_{F_v}e^{-2\pi c_v^2}\overline{\psi}_v(c_vt_v)dc_v\\
\xi_v(t_v)|t_v|_v^{1/2-s_2+s}d^{\times}t_v\int_{F_v^{\times}}
e^{-2\pi a_v^2}\xi_v(a_v)|a_v|_v^{1/2+s_2+s}d^{\times}a_v\mathbf{1}_{\text{$\xi_v$  unram}}.
\end{multline}
Here $\mathbf{1}_{\text{$\xi_v$  unram}}$ denotes the indicator that $\xi_v$ is unramified. 

Computing the local integrals in \eqref{8.29}, we obtain 
\begin{equation}\label{8.30}
\Psi_v(W_{2,v}(\cdot,s_2),\xi_v|\cdot|_v^s)=\frac{L_v(1/2+s+s_2,\xi_v)L_v(1/2+s-s_2,\xi_v)}{\pi\,2^{1/2+s_2}\xi_v(\theta_v)|\tau_v|_v^{1/4+(s_2+s)/2}}\mathbf{1}_{\text{$\xi_v$ unram}}.
\end{equation}

\item Suppose $v\in \Sigma_{F,\infty}^{\mathrm{r,sp}}\bigsqcup \Sigma_{F,\infty}^{\mathrm{c}}$. By \eqref{8.6} and a change of variables,  
\begin{equation}\label{8.27}
\Psi_v(W_{2,v}(\cdot,s_2),\xi_v|\cdot|_v^s)=\frac{L_v(1/2+s+s_2,\xi_v)L_v(1/2+s-s_2,\xi_v)}{2^{[F_v:\mathbb R](1/2+s_2)}\xi_v(\theta_v)|\tau_v|_v^{1/4+(s_2+s)/2}}\mathbf{1}_{\text{$\xi_v$ unram}}.
\end{equation}

\item Now suppose $v<\infty$ and $v\nmid\mathfrak n$. Put
$S_F=\{v<\infty:v\mid2\mathfrak D_F\}$ and
$a_\pm=1/2+s\pm s_2$. Using \eqref{eq3.1} in the local Hecke integral
and integrating first in $c_v$ gives
\begin{multline*}
\Psi_v=q_v^{(d_{E_v}-d_{F_v})/2}|u_v|_v^{a_+}\xi_v(u_v)
\\\int_{\mathcal O_v\cap F_v^\times}\int_{\varpi_v^{-d_{F_v}}\mathcal O_v\cap F_v^\times}
\overline\psi_v(u_{0,v}bt)\xi_v(bt)|b|_v^{a_+}|t|_v^{a_-}
d^\times t\,d^\times b.
\end{multline*}
If $v\notin S_F$, then $d_{F_v}=0$ and $u_{0,v}\in\mathcal O_v$.
The additive character in this integral is trivial, and hence
\begin{equation}\label{8.32}
\Psi_v(W_{2,v}(\cdot,s_2),\xi_v|\cdot|_v^s)=q_v^{(d_{E_v}-d_{F_v})/2}|u_v|_v^{a_+}\xi_v(u_v)\prod_{\epsilon=\pm1}L_v(1/2+s+\epsilon s_2,\xi_v)\mathbf1_{\xi_v\text{ unram}}.
\end{equation}
At $v\in S_F$, use the displayed double integral. Its normalized quotient
by the two local $L$-factors is uniformly bounded on $\Re s=r$ for
$r$ sufficiently large, and vanishes unless the conductor exponent of
$\xi_v$ is bounded in terms of $F$: $\mathfrak a$ is prime to
$S_F$, and the integral-basis discriminant bounds the valuations of
$u_v$ and $u_{0,v}$ there independently of $E$. These finitely many
quotients are included in $\mathcal T_{\mathfrak n}$ below.

\item Finally suppose $v\mid\mathfrak n$. Then 
\begin{equation}\label{e8.33}
W_{2,v}(g_v,s_2)=W_{2,v}^{(1)}(g_v,s_2)+W_{2,v}^{(2)}(g_v,s_2), 
\end{equation}
where 
\begin{align*}
&W_{2,v}^{(1)}(g_v,s_2):=q_v^{-r_vs_2}W_v^{\circ}\left(g_vg_v^{\dag}\begin{pmatrix}
\varpi_v^{-r_v}\\
& 1
\end{pmatrix},s_2\right),\\
&W_{2,v}^{(2)}(g_v,s_2):=q_v^{-r_vs_2}W_v^{\circ}\left(g_vg_v^{\dag}\begin{pmatrix}
1\\
& \varpi_v^{-r_v}
\end{pmatrix},s_2\right).
\end{align*}
Since $2\theta_v\in\mathcal O_v^\times$, we have
\begin{align*}
g_v^{\dag}\begin{pmatrix}
\varpi_v^{-r_v}\\
& 1
\end{pmatrix}\in \begin{pmatrix}
1 & \theta_v\\
& 1
\end{pmatrix}\begin{pmatrix}
1\\
& \varpi_v^{-r_v}
\end{pmatrix}K_v.
\end{align*}
After a change of variables,
\begin{equation}\label{8.33}
\Psi_v(W_{2,v}^{(1)}(\cdot,s_2),\xi_v|\cdot|_v^s)=q_v^{r_v(s-s_2)}\mathcal{T}_v^{(1)}(s_2;s,\xi_v)\prod_{\epsilon\in \{\pm 1\}}L_v(1/2+s+\epsilon s_2,\xi_v),
\end{equation}
where 
\begin{multline*}
\mathcal{T}_v^{(1)}(s_2;s,\xi_v):=\overline{\xi}_v(\theta_v\varpi_v^{r_v})\prod_{\epsilon\in \{\pm 1\}}L_v(1/2+s+\epsilon s_2,\xi_v)^{-1}\\
\sum_{n\geq 0}\frac{\xi_v(\varpi_v)^n
}{q_v^{ns}}W_v^{\circ}\left(\begin{pmatrix}
\varpi_v^n\\
& 1
\end{pmatrix},s_2\right)
G_v(\xi_v,\psi_v;n-r_v).
\end{multline*}
Here
\begin{align*}
G_v(\xi_v,\psi_v;n-r_v):=\int_{\mathcal{O}_v^{\times}}\psi_v(\varpi_v^{n-r_v}\beta)\xi_v(\beta)d^{\times}\beta.
\end{align*}
Similarly,
\begin{equation}\label{8.34}
\Psi_v(W_{2,v}^{(2)}(\cdot,s_2),\xi_v|\cdot|_v^s)=q_v^{r_v(s-s_2)}\mathcal{T}_v^{(2)}(s_2;s,\xi_v)\prod_{\epsilon\in \{\pm 1\}}L_v(1/2+s+\epsilon s_2,\xi_v),
\end{equation}
Define $\mathcal{T}_v^{(2)}(s_2;s,\xi_v)$ analogously to $\mathcal{T}_v^{(1)}(s_2;s,\xi_v)$. The second translate has unipotent coordinate $-\theta_v$, so replace $G_v(\xi_v,\psi_v;n-r_v)$ by $G_v(\xi_v,\overline{\psi}_v;n-r_v)$.

Let $r_{\xi_v}$ be the conductor exponent of $\xi_v$. By the basic property of Gauss sums, $G_v(\xi_v,\psi_v;n-r_v)\equiv 0$ unless $r_{\xi_v}\leq \max\{r_v-n,0\}$. Substituting \eqref{8.33} and \eqref{8.34} into \eqref{e8.33}, we obtain
\begin{equation}\label{8.35}
\frac{\Psi_v(W_{2,v}(\cdot,s_2),\xi_v|\cdot|_v^s)}{\prod_{\epsilon\in \{\pm 1\}}L_v(1/2+s+\epsilon s_2,\xi_v)}=q_v^{r_v(s-s_2)}\mathbf{1}_{r_{\xi_v}\leq r_v}\sum_{j=1}^2\mathcal{T}_v^{(j)}(s_2;s,\xi_v).
\end{equation}
\end{itemize}

Let $u_{\mathrm{fin}}$ have component $u_v$ at $v\nmid\mathfrak n$ and $1$
at $v\mid\mathfrak n$; the twisting-place phases are already included in
$\mathcal T_v^{(j)}$. Substituting
\eqref{8.30}, \eqref{8.27}, \eqref{8.32}, and \eqref{8.35} into
\eqref{8.26}, and using $\prod_{v<\infty}|u_v|_v=2^dN_F(\mathfrak a)$,
yields
\begin{multline}\label{8.37}
\mathcal{G}_2(I_2,s_2)=\frac{\pi^{-r_{\mathrm{ns}}}}{2\pi i\rho_F}D_{E/F}^{-\frac{1+2s_2}{4}}D_E^{1/2}D_F^{-1/2}N_F(\mathfrak{n})^{-s_2}\sum_{\xi\in \widehat{C}_F(\mathfrak{n}^2;S_F)}\xi_{\mathrm{fin}}(u_{\mathrm{fin}})\overline{\xi_\infty(\theta_\infty)}\\
\int_{(r)}\Lambda(1/2+s+s_2,\xi)\Lambda(1/2+s-s_2,\xi)
2^{ds}D_{E/F}^{-\frac{s}{2}}N_F(\mathfrak{n})^{s}\mathcal{T}_{\mathfrak{n}}(s_2;s,\xi)ds,
\end{multline}
where
\begin{align*}
\mathcal T_{\mathfrak n}(s_2;s,\xi):=\mathcal E_{S_F}(s_2;s,\xi)
\prod_{v\mid\mathfrak n}\sum_{j=1}^2\mathcal T_v^{(j)}(s_2;s,\xi_v).
\end{align*}
The exceptional factor removes the common factors already extracted in
\eqref{8.37}:
\[
\mathcal E_{S_F}(s_2;s,\xi):=\prod_{v\in S_F}
\frac{\Psi_v(W_{2,v}(\cdot,s_2),\xi_v|\cdot|_v^s)}
{q_v^{(d_{E_v}-d_{F_v})/2}|u_v|_v^{a_+}\xi_v(u_v)L_v(a_+,\xi_v)L_v(a_-,\xi_v)},
\]
Thus it is the product of the normalized
double integrals above, without repeating their prefactors.
The family $\widehat C_F(\mathfrak n^2;S_F)$ consists of
characters unramified outside $S_F\cup\{v:v\mid\mathfrak n\}$ and at
infinity, with $r_{\xi_v}\leq e_v(\mathfrak n)$ at $v\mid\mathfrak n$
and a fixed conductor bound depending only on $F$ at $v\in S_F$.

We have the trivial bound 
\begin{equation}\label{8.38}
\mathcal{T}_{\mathfrak{n}}(s_2;s,\xi)\ll N_F(\mathfrak{n})^{\varepsilon},
\end{equation}
where the implied constant depends on $F$, $r$, and $\varepsilon$. By the triangle inequality and \eqref{8.38}, we obtain from \eqref{8.37}
\begin{equation}\label{8.39}
\mathcal G_2(I_2,s_2)\ll_{F,r,\varepsilon}
D_E^{(1-2\Re(s_2))/4}D_{E/F}^{-r/2}
N_F(\mathfrak n)^{r-\Re(s_2)+\varepsilon}\mathcal S_{\mathfrak n}(s_2),
\end{equation}
where
\begin{align*}
\mathcal{S}_{\mathfrak{n}}(s_2):=\sum_{\xi\in \widehat{C}_F(\mathfrak{n}^2;S_F)}\int_{(r)}\big|\Lambda(1/2+s+s_2,\xi)\Lambda(1/2+s-s_2,\xi)\big|\,|ds|.
\end{align*}

By Stirling's formula,
\begin{equation}\label{8.40}
\mathcal{S}_{\mathfrak{n}}(s_2)\ll N_F(\mathfrak{n})^{2},	
\end{equation}
where the implied constant depends on $r$ and $F$. Taking $r=m+2$ in
\eqref{8.39} proves \eqref{8.24}, with room in the stated power of
$N_F(\mathfrak n)$.
\end{proof}

\subsection{Proof of Theorem \ref{thm8.1}}
Combine \eqref{8.2} and \eqref{8.4} with
Lemmas~\ref{lem8.2}--\ref{lem8.4}. The conversion from $E_2(I_2,s_2)$
to the normalized moment contributes the factor
$2^{1-\omega_F(\mathfrak n)}N_F(\mathfrak n)^{s_2}$.
In \eqref{8.39}--\eqref{8.40}, take $r=m+2$. Including the factor
$N_F(\mathfrak n)^{-\Re(s_2)}$ when forming the moment gives
$O_{F,m,\varepsilon}(D_E^{-m/2}N_F(\mathfrak n)^{m+4+\varepsilon})$,
which implies the stated bound.

\section{Nonvanishing of Class Group \texorpdfstring{$L$}{L}-functions}\label{sec9}
Throughout this section, $F$ is a fixed totally real field and $E/F$ is
CM. Write
\[
h_{E/F}:=|\mathrm{Cl}_{E/F}|,
\qquad
c_{E/F}:=\prod_{\mathfrak p\mid\mathfrak D_{E/F}}
\left(1-N_F(\mathfrak p)^{-1}\right).
\]
By the relative class number formula,
\begin{equation}\label{eq9.nv.classnumber}
h_{E/F}\asymp_F D_E^{1/2}L(1,\eta_{E/F});
\end{equation}
the Hasse unit index is absorbed in the $F$-constants. We multiply the
split-prime mollifier by inverse Euler factors at small ramified primes.
The unit-group tests below realize this modification in the two moment
formulas.

\subsection{Moments with Ramified Euler Factors Removed}
We first bound the ramified Euler product.

\begin{lemma}\label{lem9.ramified.relative}
For every $\varepsilon>0$,
\begin{equation}\label{eq9.ramified.relative}
 c_{E/F}^{-1}\ll_{F,\varepsilon}D_E^{\varepsilon}.
\end{equation}
\end{lemma}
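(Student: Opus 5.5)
We bound the logarithm of $c_{E/F}^{-1}$ by a standard divisor-type argument. Write $\omega$ for the number of prime ideals $\mathfrak p$ of $\mathcal O_F$ dividing $\mathfrak D_{E/F}$. Then
\[
\log c_{E/F}^{-1}=-\sum_{\mathfrak p\mid\mathfrak D_{E/F}}\log\bigl(1-N_F(\mathfrak p)^{-1}\bigr)\le \sum_{\mathfrak p\mid\mathfrak D_{E/F}}\frac{2}{N_F(\mathfrak p)},
\]
using $-\log(1-x)\le 2x$ for $0\le x\le 1/2$. The sum is largest, for a given $\omega$, when the $\mathfrak p$ are the $\omega$ primes of smallest norm. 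It is therefore at most $2\sum_{N_F(\mathfrak p)\le P}N_F(\mathfrak p)^{-1}$, where $P$ is the norm of the $\omega$-th smallest prime.

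By Mertens' theorem for $F$ (a consequence of the simple pole of $\zeta_F$ at $s=1$), we have $\sum_{N_F(\mathfrak p)\le P}N_F(\mathfrak p)^{-1}=\log\log P+O_F(1)$. The prime ideal theorem bounds $P$ in terms of $\omega$: the number of prime ideals of norm at most $x$ is $\gg_F x/\log x$, so $P\ll_F \omega\log(\omega+2)$. Combining, $\log c_{E/F}^{-1}\le 2\log\log(\omega+3)+O_F(1)$, that is,
\[
c_{E/F}^{-1}\ll_F \bigl(\log(\omega+3)\bigr)^{2}.
\]

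It remains to bound $\omega$ by $D_E$. Each prime divisor of $\mathfrak D_{E/F}$ has norm at least $2$, so $2^{\omega}\le N_F(\mathfrak D_{E/F})=D_{E/F}\le D_E$. Hence $\omega\le \log D_E/\log 2$, and
\[
c_{E/F}^{-1}\ll_F(\log\log D_E)^{2}\ll_{F,\varepsilon}D_E^{\varepsilon}.
\]
This proves \eqref{eq9.ramified.relative}, and in fact gives the stronger doubly-logarithmic bound.

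None of these steps is a real obstacle. The only point needing care is that the implied constants in Mertens' theorem and the prime ideal theorem over $F$ depend only on $F$, which is fixed in this section. If one prefers to avoid Mertens, the cruder bound $1-N_F(\mathfrak p)^{-1}\ge 1/2$ gives $c_{E/F}^{-1}\le 2^{\omega}$. This is $D_E^{O(1/\log\log D_E)}$ by the usual divisor-bound estimate $\omega\ll \log D_E/\log\log D_E$, which again suffices.
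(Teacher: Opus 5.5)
Your proof is correct, but it is organized differently from the paper's.

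The paper splits the Euler product at $N_F(\mathfrak p)=D_E^{\varepsilon/2}$. Mertens' estimate over $F$ bounds the small-prime part by $O_F(\log D_E)$. Since $D_{E/F}\le D_E$, at most $2/\varepsilon$ prime divisors of $\mathfrak D_{E/F}$ can exceed the threshold, and each contributes a factor $1+O(D_E^{-\varepsilon/2})$, so the large-prime part is $O_\varepsilon(1)$.

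You instead observe that the worst case is when the ramified primes are the $\omega$ smallest prime ideals. You then bound $\omega$ via $2^{\omega}\le D_{E/F}$ and locate the $\omega$-th prime with a Chebyshev-type lower bound for the prime-ideal counting function. (The prime ideals above the first $\omega$ rational primes would serve equally well here.) This needs one input beyond the paper's, but it gives the much stronger bound $c_{E/F}^{-1}\ll_F(\log\log D_E)^2$. Using Mertens' product theorem in place of $-\log(1-x)\le 2x$ would even give $O_F(\log\log D_E)$, which is essentially optimal, as the case where $\mathfrak D_{E/F}$ is divisible by all primes of small norm shows. The paper's threshold argument yields only $O_F(\log D_E)$, but that is all the application requires.

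Your fallback via $c_{E/F}^{-1}\le 2^{\omega}$ is the most elementary route. Note only that the bound $\omega\ll_F\log D_E/\log\log D_E$ for ideals follows from the rational divisor-count bound because $\omega_F(\mathfrak a)\le d\,\omega(N_F(\mathfrak a))$.
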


\begin{proof}
For \eqref{eq9.ramified.relative}, split the product at
$N_F(\mathfrak p)\leq D_E^{\varepsilon/2}$ and use the Mertens estimate for
$F$ on the first part.  On the complementary part, the number of prime-ideal
divisors of $\mathfrak D_{E/F}$ is at most
$2\varepsilon^{-1}$, up to a constant depending on $F$, and each factor is
$1+O(D_E^{-\varepsilon/2})$.  This proves \eqref{eq9.ramified.relative}.
\end{proof}

For each prime $\mathfrak p\mid\mathfrak D_{E/F}$, let
$\mathfrak P_{\mathfrak p}$ be the unique prime of $E$ above $\mathfrak p$,
write $q_{\mathfrak p}=N_F(\mathfrak p)$ and
$\epsilon_{\mathfrak p}(\chi)=\chi(\mathfrak P_{\mathfrak p})\in\{\pm1\}$.

We have the following twisted moments with ramified Euler factors removed.
\begin{thm} \label{thm9.unit.group}
Let $E/F$ be a CM quadratic extension of a fixed totally real field $F$.
Let $\mathfrak n$ be an integral ideal supported on primes that split
in $E$ and coprime to $2\mathfrak D_{E/F}\mathfrak D_F$.
Let $S$ be any subset of the prime ideals dividing
$\mathfrak D_{E/F}$ that are coprime to $6\mathfrak D_F$. Put
\[
R_S:=\prod_{\mathfrak p\in S}q_{\mathfrak p},\qquad
c_S:=\prod_{\mathfrak p\in S}(1-q_{\mathfrak p}^{-1}),\qquad
g_S:=\prod_{\mathfrak p\in S}(1+q_{\mathfrak p}^{-1}),
\]
and
\[
\mathcal B_S(s,\chi):=\prod_{\mathfrak p\in S}
\big(1-\epsilon_{\mathfrak p}(\chi)q_{\mathfrak p}^{-1/2-s}\big).
\]
For $m\geq10$ and $\varepsilon>0$, the central first moment is
\begin{multline}\label{eq9.unit.first.center}
\sum_{\chi\in\widehat{\mathcal C}_{E/F}^{\mathrm{un}}}
\frac{\lambda_\chi^*(\mathfrak n)\mathcal B_S(0,\chi)
\Lambda(1/2,\chi)}{L(1,\eta_{E/F})}
=c_S A_F\sqrt{\frac{D_E}{N_F(\mathfrak n)}}
\Big(\log\frac{D_E^{1/2}}{N_F(\mathfrak n)}+B_F+
\sum_{\mathfrak p\in S}\frac{\log q_{\mathfrak p}}{q_{\mathfrak p}-1}\Big)
\\+O_{F,m,\varepsilon}\big(D_E^{-m/2-3/4+\varepsilon}
N_F(\mathfrak n)^{m+4+\varepsilon}R_S^{m+3/2+\varepsilon}\big),
\end{multline}
where $A_F$ and $B_F$ are as in \eqref{e8.3-cm}.
The central second moment is
\begin{equation}\label{eq9.unit.second}
\sum_{\chi\in\widehat{\mathcal C}_{E/F}^{\mathrm{un}}}
\frac{\lambda_\chi^*(\mathfrak n)\mathcal B_S(0,\chi)^2
|\Lambda(1/2,\chi)|^2}{L(1,\eta_{E/F})}
=\mathscr M_S^\heartsuit(\mathfrak n)+O_{F,\varepsilon}\big(D_E^{5/12+\varepsilon}
N_F(\mathfrak n)^{2/3+\varepsilon}R_S^\varepsilon\big).
\end{equation}
Here $\mathscr M_S^\heartsuit(\mathfrak n)$ is the finite part at
$\mathbf s=\mathbf0$ of the main term in \eqref{eq9.unit.main},
taken as in \eqref{eq7.finitepart}.
\end{thm}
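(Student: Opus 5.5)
The plan is to realize the factor $\mathcal B_S(s,\chi)$ by changing the local test data $\Phi_{j,v}$ at the places $v\in S$, and then to rerun the arguments of \textsection\ref{sec4}--\textsection\ref{sec8}. For $\mathfrak p\in S$ the local field $E_v$ is ramified, and $\epsilon_{\mathfrak p}(\chi)=\chi(\varpi_{E_v})$ for everywhere unramified $\chi$. The \emph{unit-group test} replaces $q_v^{d_{E_v}/2}\mathbf 1_{\mathcal O_{E_v}}$ by the normalized indicator $q_v^{d_{E_v}/2}\mathbf 1_{\mathcal O_{E_v}^\times}$ of the unit group. This modified test still singles out unramified $\chi_v$. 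Its local Tate integral equals $1=L_v(1/2+s,\chi_v)\,(1-\chi_v(\varpi_{E_v})q_v^{-1/2-s})$, because $|\varpi_{E_v}|_{E_v}=q_v^{-1}$. So the computation in Proposition~\ref{lem3.1} gives $Z(s,\Phi_j^S,\chi)=\mathcal B_S(s,\chi)Z(s,\Phi_j,\chi)$.

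For the first moment I would use $\Phi_2^S$ in \textsection\ref{sec8}. For the second moment I would put unit-group tests into both $\Phi_1$ and $\Phi_2$ and apply Theorem~\ref{thm2.3}. The left-hand sides then take the stated form; for the second moment this uses $\overline{\epsilon_{\mathfrak p}(\chi)}=\epsilon_{\mathfrak p}(\chi)$.

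For the first moment I would redo Lemmas~\ref{lem8.2}--\ref{lem8.4} at $v\in S$. The constant term $h_{2,v}(I_2,s_2)$ gains the factor $(1-q_v^{-1-2s_2})$, since $\mathcal O_{E_v}^\times$ sits inside $\mathcal O_{E_v}$ with complement $\varpi_{E_v}\mathcal O_{E_v}$. The intertwined term $h_{2,v}^\Diamond$ gains the analogous factor at $-s_2$. Taking the finite part at $s_2=0$ as in Corollary~\ref{cor8.3} should give a product over $v$ of factors of the form $(1-q_v^{-1})$ times $(1+\text{derivative terms})$. This yields $c_S$ in front and the logarithmic-derivative correction $\sum_{\mathfrak p\in S}\log q_{\mathfrak p}/(q_{\mathfrak p}-1)$ inside the bracket, in parallel with \eqref{e8.3-cm}.

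In the non-constant term, the local Hecke integral at $v\in S$ is now a difference of two lattice integrals. Its normalized quotient is bounded, but it allows conductor exponent up to $1$ in $\xi_v$ and brings a scale factor $q_v$. Running the Mellin-shift argument of Lemma~\ref{lem8.4} with $r=m+2$ therefore costs $R_S^{r-1/2+\varepsilon}$. This step needs care: the bookkeeping of the powers of $D_{E/F}$ should produce the exponent $-m/2-3/4$.

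For the second moment I would handle four groups of terms in turn.
\begin{itemize}
\item \emph{Geometric terms.} Recompute $Z_v$ and $Z_v^\Diamond$ at $v\in S$ from \eqref{4.6} with the unit-group tests. This gives an explicit rational local weight replacing the trivial factor, which defines $\mathscr M_S$ in \eqref{eq9.unit.main}.
\item \emph{Residues.} Proposition~\ref{prop6.residues} has to be redone with the new local Eisenstein contraction at $v\in S$. This is where the main obstacle lies: one must check that the four branches still combine symmetrically, i.e.\ the analogue of \eqref{eq3.Ridentities} for the ramified unit-group weight. I would try first to verify it by direct geometric-series summation, since the local data have conductor at most one.
\item \emph{Spectral terms.} Redo Proposition~\ref{prop5.1} and Proposition~\ref{prop5.9} at $v\in S$. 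The unit-group test is invariant under $K_v[1]$, so only representations of conductor dividing $\mathfrak p_v$ enter, with at most two oldvectors. The local weights are bounded by $O(1)$, with the normalization losses $q_v^{O(\varepsilon)}$.
\item \emph{Error bound.} Insert these local bounds into the argument of Corollary~\ref{cor5.central}, with level $\mathfrak n\prod_{\mathfrak p\in S}\mathfrak p$. The Nelson bound already allows conductor exponent one at $v\mid\mathfrak D_{E/F}$, and the fourth-moment input is applied with the $S$-level absorbed by the divisor bound. This gives the error $R_S^\varepsilon$, since the $q_v^{-1/2}$ oldform saving and the level growth cancel up to $\varepsilon$.
\end{itemize}
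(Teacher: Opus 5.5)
Your overall plan (unit-group tests at $v\in S$ with local Tate integral $1$, then rerunning \textsection\ref{sec4}--\textsection\ref{sec8}) is the paper's, but two steps fail as written.

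\textbf{The intertwined constant term.} The section $h_v^\times=h_{0,v}-h_{1,v}$ attached to $q_v^{d_{E_v}/2}\mathbf 1_{\mathcal O_{E_v}^\times}$ is only Iwahori-invariant. So the spherical-uniqueness argument of Proposition~\ref{prop4.2} does not apply, and $(h_v^\times)^\Diamond(s)$ is \emph{not} a scalar multiple of $h_v^\times(-s)$. The section $h_{1,v}(s)$ is $q_v^{-1/2-s}$ times the right translate of $h_{0,v}(s)$ by $\iota(\varpi_{E_v}^{-1})$, and the intertwiner commutes with translation. Hence $h_{1,v}^\Diamond(s)=c_v(s)q_v^{-2s}h_{1,v}(-s)$, with $c_v$ the spherical scalar, and therefore $(h_v^\times)^\Diamond(s)=c_v(s)\bigl(h_v^\times(-s)+(1-q_v^{-2s})h_{1,v}(-s)\bigr)$. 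At $I_2$ the factor for $h_2^\Diamond$ is the constant $1-q_v^{-1}$, not $1-q_v^{-1+2s_2}$.

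This changes the answer. Let $f,g$ be the local factors multiplying $\mathcal M$ and $\mathcal M^\Diamond$. Since these have opposite residues at $s_2=0$, the bracket correction is governed by $f'(0)-g'(0)$. Your choice $g(s)=f(-s)$ produces $2\sum_{\mathfrak p\in S}\log q_{\mathfrak p}/(q_{\mathfrak p}-1)$, whereas $g\equiv c_S$ gives the stated term.

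The same non-scalar relation governs $Z^\Diamond$ and the residual branches of the second moment. For the residues, which you flag as the main obstacle, the paper does not verify an analogue of the $\mathscr R_{v,r}$ identities by geometric series. Instead it uses that the residue of the spectral contraction at $\lambda_0$ equals $\rho_F\operatorname{Eis}(f)$, with $(f,\lambda_0)=(h_1^\Diamond h_2^\Diamond,1/2-s_1-s_2)$ or $(h_1h_2^\Diamond,1/2+s_1-s_2)$. This holds for every lattice pair $(\Phi_{a,v},\Phi_{b,v})$, and $Q_{v,\epsilon}$ is then read off from the $2\times2$ matrix of the $Z_{ab,v}$.

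A minor point on the first-moment error: ramified $\xi_v$ still give zero, so the character family does not grow. The loss $R_S^{m+3/2}$ comes from the extra factor $1-\xi_v(\varpi_v)^{-1}q_v^{s-s_2-1/2}$ on $\Re s=m+2$.

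\textbf{The factor $R_S^{\varepsilon}$ in the second-moment error.} Forms that are unramified twists of Steinberg at $v\in S$ have level $\mathfrak m\mathfrak d$ with $\mathfrak d\mid\prod_{v\in S}\mathfrak p_v$. At that level the H\"older, fourth-moment and Weyl-count bound is $(Md)^{1+\varepsilon}$. This is linear in $d$ and is not absorbed by a divisor bound. With $O(1)$ local weights, as in your spectral bullet, you get $R_S^{1+\varepsilon}$. With a $q_v^{-1/2}$ saving, as in your error bullet, you still get $R_S^{1/2+\varepsilon}$; the two do not cancel up to $\varepsilon$.

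The missing input is that the normalized contraction on $\mathrm{St}\otimes\mu_v$ is $O(q_v^{-1})$. The paper pairs the unit-group Whittaker function $W^\times$ with the $I_v^-$-fixed Steinberg vector and divides by $L_v(1/2,\pi_v)^2$, obtaining exactly $(a-1)/(q_v+1)$ with $a=\mu_v(\varpi_v)$.

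You also need that Nelson's Weyl bound loses nothing at $v\in S$. This holds because $\pi_v\otimes\eta_v$ has conductor exponent two and lies in $\Sigma_{F_v}(\eta_v)$ whether $\pi_v$ is spherical or Steinberg. Then $M^{-1/2}M^{1/6}d^{-1}(Md)^{1+\varepsilon}=M^{2/3+O(\varepsilon)}d^{O(\varepsilon)}$. Summing over $\mathfrak d$, with $C^{|S|}\ll R_S^\varepsilon$, gives the stated error.
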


\begin{proof}
In the coordinates of \textsection\ref{sec3.2}, replace both local tests
at $v\in S$ by
\begin{align*}
\Phi_v^\times=q_v^{d_{E_v}/2}
(\mathbf1_{\mathcal O_{E_v}}-\mathbf1_{\mathfrak P_v})
=q_v^{d_{E_v}/2}\mathbf1_{\mathcal O_{E_v}^\times}.
\end{align*}
For an unramified character $\chi_v$ trivial on $F_v^\times$,
\begin{align*}
Z_v(s,\Phi_v^\times,\chi_v)
=(1-\epsilon_v(\chi_v)q_v^{-1/2-s})L_v(1/2+s,\chi_v)=1.
\end{align*}

Fix $v\in S$ and write $q=q_v$.
Let $\Phi_{0,v}=q^{d_{E_v}/2}\mathbf1_{\mathcal O_{E_v}}$ and
$\Phi_{1,v}=q^{d_{E_v}/2}\mathbf1_{\mathfrak P_v}$, with corresponding
sections $h_{0,v}$ and $h_{1,v}$. Since
$\mathfrak P_v\cap F_v=\mathfrak p_v$, one has
\[
h_{1,v}(I_2,s)=q^{-1-2s}h_{0,v}(I_2,s).
\]
By the local intertwining calculation preceding \eqref{eq4.17},
\[
h_{0,v}^{\Diamond}(g,s)=c_v(s)h_{0,v}(g,-s),\qquad
c_v(s)=|\det g_{0,v}|_v^{-2s}q_v^{-d_{F_v}/2}
\frac{\zeta_{F_v}(2s)}{\zeta_{F_v}(1-2s)},
\]
where $g_{0,v}$ is the change-of-basis matrix used in \eqref{equ4.14}.
The section $h_{1,v}(s)$ is $q^{-1/2-s}$ times a translate of
$h_{0,v}(s)$ by multiplication by an $E_v$-uniformizer inverse.
The intertwiner commutes with this translation, whence
\[
h_{1,v}^{\Diamond}(s)=c_v(s)q^{-2s}h_{1,v}(-s).
\]
Thus, with $h_v^\times=h_{0,v}-h_{1,v}$,
\begin{equation}\label{eq9.unit.matrix}
(h_v^\times)^{\Diamond}(s)
=c_v(s)\left(h_v^\times(-s)+(1-q^{-2s})h_{1,v}(-s)\right).
\end{equation}
Consequently the two first-moment constant terms are $\prod_{\mathfrak p\in S}(1-q_{\mathfrak p}^{-1-2s})
\mathcal M(s;\mathfrak n)$ and $c_S\mathcal M^\Diamond(s;\mathfrak n)$. Their finite part at $s=0$ is the main term in \eqref{eq9.unit.first.center}.

For the first moment, at $v\in S$ choose the trace-zero integral
basis $1,u_v\theta$. Then $a\theta+c$ is a unit precisely when
$a\in u_v\mathcal O_v$ and $c\in\mathcal O_v^\times$.
Since $\psi_v$ has conductor $\mathcal O_v$,
\[
\int_{\mathcal O_v^\times}\overline\psi_v(ct)\,dc
=\mathbf1_{\mathcal O_v}(t)-q_v^{-1}\mathbf1_{\mathfrak p_v^{-1}}(t).
\]
Substitution in the Hecke integral \eqref{e8.27} multiplies
\eqref{8.32}, for unramified $\xi_v$, by
$1-\xi_v(\varpi_v)^{-1}q_v^{s-s_2-1/2}$.
The integral still vanishes for ramified $\xi_v$, so the character
family in \eqref{8.37} does not change. At $s_2=0$ and
$\Re s=m+2$, the product of these extra factors is
$O_{F,m,\varepsilon}(R_S^{m+3/2+\varepsilon})$.
Keeping the full discriminant power in \eqref{8.39} and using
\eqref{8.40} proves the error estimate in \eqref{eq9.unit.first.center}, since
\begin{align*}
D_E^{1/4}D_{E/F}^{-(m+2)/2}\ll_{F,m}D_E^{-m/2-3/4}. 
\end{align*}

To compute the second-moment main term, write $Z_{ab,v}(\mathbf s):=Z_v(\mathbf s,\Phi_{a,v},\Phi_{b,v})$,
$a,b\in\{0,1\}$, with $Z_v$ as in Proposition~\ref{prop4.1}.
Substituting the defining integral for $h_{a,v}$ into $Z_{ab,v}$,
let $x\in E_v^\times$ and $t\in F_v^\times$ denote the outer and
inner integration variables, respectively. The support conditions are
$x\in\mathfrak P_v^b$ and $tx\in\mathfrak P_v^a$.
Writing $j=e_{E_v}(x)$ and $k=e_v(t)$, these become
$j\geq b$ and $j+2k\geq a$, since $E_v/F_v$ is ramified quadratic. With
$u_i=q^{-1-2s_i}$ and $r=q^{-1-s_1-s_2}$, we can sum the geometric series explicitly:
\begin{align*}
\begin{pmatrix}Z_{00,v}&Z_{01,v}\\Z_{10,v}&Z_{11,v}\end{pmatrix}
=\frac{q^{(d_{E_v}-d_{F_v})/2}}{(1-u_1)(1-u_2)}
\begin{pmatrix}1+r&u_2+r\\u_1+r&r(1+r)\end{pmatrix}.
\end{align*}
For $\epsilon=(\epsilon_1,\epsilon_2)\in\{\pm1\}^2$, write
\[
t_i=q^{-(1-\epsilon_i)s_i},\qquad
u_{i,\epsilon}=q^{-1-2\epsilon_i s_i},\qquad
r_\epsilon=q^{-1-\epsilon_1s_1-\epsilon_2s_2},
\]
and
\begin{equation}\label{eq9.unit.branch}
Q_{v,\epsilon}(\mathbf s)
=1-t_1\frac{u_{1,\epsilon}+r_\epsilon}{1+r_\epsilon}
-t_2\frac{u_{2,\epsilon}+r_\epsilon}{1+r_\epsilon}
+t_1t_2r_\epsilon.
\end{equation}
The full modified main term is
\begin{equation}\label{eq9.unit.main}
\mathscr M_S(\mathbf s;\mathfrak n)
=\sum_{\epsilon_1,\epsilon_2=\pm1}
\mathscr M_{\epsilon_1,\epsilon_2}(\mathbf s;\mathfrak n)
\prod_{v\in S}Q_{v,\epsilon}(\mathbf s),
\end{equation}
and its finite part is taken as in \eqref{eq7.finitepart}.
For a negative sign in variable $i$, \eqref{eq9.unit.matrix}
replaces the lattice difference by
$\Phi_{0,v}-q^{-2s_i}\Phi_{1,v}$ at shift $-s_i$.
For the residual branches, the unfolding in
\textsection\ref{sec2.4.2}, applied to the constant-term monomial
$|a|^{1/2+\lambda_0}f(k)$ in \eqref{eq2.6}, evaluates the residue as
\[
\underset{\lambda=\lambda_0}{\Res}
\sum_h\widetilde\Psi(\lambda,h;\mathbf s)E(g,h,\lambda)
=\rho_F\Eis(f)(g).
\]
The norm integral has residue one and the norm-one idele
classes have volume $\rho_F$. Here $(f,\lambda_0)$ is
$(h_1^\Diamond h_2^\Diamond,1/2-s_1-s_2)$ or
$(h_1h_2^\Diamond,1/2+s_1-s_2)$.
After applying the functional equation and the signed-residue
normalization, the contribution at each paired pole is $-\Eis(f)/2$,
as in the proof of
Proposition~\ref{prop6.residues}. Thus the minus sign in \eqref{2.3}
makes each pair contribute $\Eis(f)$ with coefficient one.
This holds for every lattice pair
$(\Phi_{a,v},\Phi_{b,v})$, $(a,b)\in\{(0,0),(0,1),(1,0),(1,1)\}$,
initially at distinct poles and then by meromorphic continuation. Dividing the resulting local
combination by $Z_{00,v}(\epsilon_1s_1,\epsilon_2s_2)$ proves
\eqref{eq9.unit.branch}, and hence \eqref{eq9.unit.main}.

In integral coordinates, $q^{-d_{E_v}/2}\Phi_v^\times
=\mathbf1_{\mathcal O_v\times\mathcal O_v^\times}$.
Let $I_v^-\subset K_v=\mathrm{GL}_2(\mathcal O_v)$ consist of matrices
with upper-right entry in $\mathfrak p_v$, and take
$\operatorname{vol}(K_v)=1$. The corresponding central section is one
on $n(u)I_v^-$, $u\in\mathcal O_v/\mathfrak p_v$, and zero on $wI_v^-$.
The two lattice sections have Whittaker functions $W_0(j)$ and
$q^{-1/2}W_0(j+1)$, where $W_0(j)=(j+1)q^{-j/2}$ for $j\geq0$ and
is zero otherwise. Their difference is $W^\times(j)=W^\times(a(\varpi_v^j),0)$, so
\[
W^\times(j)=
\begin{cases}
q^{-j/2}\bigl((j+1)-(j+2)/q\bigr),&j\geq0,\\
-q^{-1/2},&j=-1,\\
0,&j<-1.
\end{cases}
\]
Iwahori invariance allows only unramified principal series and
unramified twists of Steinberg; ramified principal series and
supercuspidals have no $I_v^-$-fixed vectors. In the spherical case, both normalized local contractions
are $O(1)$ by the standard Whittaker formulas: the oldvector Gram
matrix and its inverse, the normalized torus average, and the local
$L$-factors are uniformly bounded, using a fixed bound towards Ramanujan.

For $\pi_v=\mathrm{St}\otimes\mu_v$, write $a=\mu_v(\varpi_v)\in\{\pm1\}$.
The standard newvector has Whittaker values $a^j q^{-j}$ for $j\geq0$.
Its $w$-translate, fixed by $I_v^-$, has values
$-a^{j+2}q^{-j-1}$ for $j\geq-1$. Its norm is bounded above and below
by absolute constants. Integration over $K_v/I_v^-$ contributes
$q/(q+1)$. Thus the local Rankin--Selberg integral, with the common
lattice scalar removed, equals
\[
\frac{q}{q+1}\left\{\frac aq-
\frac1q\sum_{j\geq0}\left(\frac aq\right)^j
\left(j+1-\frac{j+2}{q}\right)\right\}.
\]
After division by $L_v(1/2,\pi_v)^2=(1-a/q)^{-2}$, the value is
$(a-1)/(q+1)$. Hence the full normalized contraction,
including the bounded toric functional, is $O(q^{-1})$.
For small shifts, the same geometric series are
$O(q^{-1+O(\varepsilon)})$. Consequently, if the conductor is $\mathfrak m\mathfrak d$,
where $\mathfrak m\mid\mathfrak n$ and $\mathfrak d\mid\prod_{v\in S}\mathfrak p_v$,
the additional local weight is
$C^{|S|}N_F(\mathfrak d)^{-1+\varepsilon}$, for a fixed $C$.

At $v\in S$, the two possibilities for $\pi_v$ give
\begin{align*}
\pi_v\otimes\eta_v=
\begin{cases}
(\mu_v\eta_v)\boxplus(\mu_v^{-1}\eta_v),&\pi_v\text{ spherical},\\
\mathrm{St}\otimes(\mu_v\eta_v),&\pi_v=\mathrm{St}\otimes\mu_v.
\end{cases}
\end{align*}
Here $\mu_v$ is unramified, and in the second case $\mu_v^2=1$.
Both twists therefore lie in $\Sigma_{F_v}(\eta_v)$ as defined in
\cite[\textsection 3.1]{Nel19}; both have conductor exponent two.
The same ramified-character weight is $\gg q_v^{-1}$ on both, by
\cite[Theorem~3.1]{Nel19}, and its dual and degenerate bounds are the
same by \cite[Proposition~3.3 and Lemma~15.6]{Nel19}.
Thus replacing a spherical component by Steinberg at these places does
not add an auxiliary-level factor to the Weyl estimate. The calculation
proving \eqref{eq1.nelson.level} gives
\begin{align*}
|L(1/2,\pi\otimes\eta_{E/F})|
\ll_{F,\varepsilon,\pi_\infty}D_E^{1/3+\varepsilon}
N_F(\mathfrak m)^{1/3+\varepsilon},
\end{align*}
with polynomial Archimedean dependence, even when the conductor of
$\pi$ is $\mathfrak m\mathfrak d$ as above. Apply it only to the twisted
factor in the base-change factorization.
For $M=N_F(\mathfrak m)$ and $d=N_F(\mathfrak d)$, the
fourth-moment estimate and Weyl count used in
\eqref{eq7.level.secondmoment}, now at level $\mathfrak m\mathfrak d$,
bound the spectral sum by
\[
\sum_{\pi\in\mathcal F(\mathfrak m\mathfrak d,\mathbf1)}
\mathcal V_\infty(\pi)|L(1/2,\pi)|^{5/2}
\ll_{F,\varepsilon}(Md)^{1+\varepsilon}.
\]
The factors from \textsection\ref{sec5}, the Weyl bound, and the
local calculation above therefore contribute
\[
M^{-1/2+\varepsilon}M^{1/6+\varepsilon}
 d^{-1+\varepsilon}(Md)^{1+\varepsilon}
=M^{2/3+O(\varepsilon)}d^{O(\varepsilon)}.
\]
Polynomial Archimedean factors are absorbed by $\mathcal V_\infty$.
Summing over $\mathfrak m\mid\mathfrak n$ and
$\mathfrak d\mid\prod_{v\in S}\mathfrak p_v$, and using
$(2C)^{|S|}\ll_{F,\varepsilon}R_S^\varepsilon$, proves the error estimate
in \eqref{eq9.unit.second} after renaming $\varepsilon$.
In the continuous spectrum, $I_v^-$-invariance
forces the inducing characters to be unramified at $v\in S$, so no
continuous conductor stratum is added there. Use
\cite[Theorem~F]{Yan26a} with its scattering denominator as the measure,
and apply the same H\"older inequality with the corresponding weighted
Weyl count. The finite conductor restriction in that theorem is
$c(\mu\boxplus\mu^{-1})\mid\mathfrak m$, exactly the one required here. This proves \eqref{eq9.unit.second}.
\end{proof}

\begin{lemma}\label{lem9.fixed.primes}
The moment formulas \eqref{eq9.unit.first.center} and \eqref{eq9.unit.second},
with main term \eqref{eq9.unit.main}, also hold for any subset $S$ of the
primes dividing $\mathfrak D_{E/F}$ and any integral ideal $\mathfrak n$
supported on split primes, provided $e_v(\mathfrak n)\leq2$ at
$v\mid6\mathfrak D_F$. The error estimates are unchanged.
\end{lemma}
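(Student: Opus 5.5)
The plan is to rerun the proof of Theorem~\ref{thm9.unit.group} with the two restrictions relaxed: $S$ may contain ramified primes dividing $6\mathfrak D_F$, and $\mathfrak n$ may have exponent at most $2$ at places above $6\mathfrak D_F$. Both relaxations touch only the finitely many places above $6\mathfrak D_F$, and that set depends only on $F$. At such a place we can no longer use the trace-zero basis $1,u_v\theta$ or the normalizations $d_{F_v}=0$ and $2\theta_v\in\mathcal O_v^\times$. The local test functions can still be written down: the unit-group test $\Phi_v^\times=q_v^{d_{E_v}/2}\mathbf 1_{\mathcal O_{E_v}^\times}$ for $v\in S$, and the twisting tests \eqref{3.2} for $v\mid\mathfrak n$, written in the integral-basis coordinates of \textsection\ref{sec3.2}. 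The first step is to check the toric side. Proposition~\ref{lem3.1} and the identity $Z_v(s,\Phi_v^\times,\chi_v)=1$ are lattice computations on $E_v^\times$ and use no hypothesis on $v\nmid 6$. Hence the left-hand sides of \eqref{eq9.unit.first.center} and \eqref{eq9.unit.second} are unchanged.

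The second step is the main terms. The geometric matrix of $Z_{ab,v}$ and the intertwining relation \eqref{eq9.unit.matrix} use only the supports $\mathcal O_{E_v}$ and $\mathfrak P_v$, together with the uniqueness of spherical vectors as in \eqref{equ4.16}. They therefore hold verbatim, with the same factor $q^{(d_{E_v}-d_{F_v})/2}$. At a place $v\mid\mathfrak n$ above $6\mathfrak D_F$, I would recompute $H_v$ from the sums \eqref{4.8}--\eqref{4.14}. The only changes are a bounded $d_{F_v}$-shift in the volume normalizations and possibly the matrix $g_v^\dag$, which is no longer in $K_v$. These contribute local constants depending only on $F$. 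The residual identities \eqref{eq3.Ridentities}, evaluated for $r\le 2$ directly rather than by the recurrence, should still pair the residues with $\mathscr Z$. The upshot is that \eqref{eq9.unit.main} persists, with extra bounded local factors absorbed into the $F$-dependent constants.

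The third step is the error terms. In the first moment I would use the general double integral displayed before \eqref{8.32}, which already covers $v\in S_F$. This double integral is bounded on $\Re s=r$ and forces a conductor bound depending only on $F$. Multiplying it by the unit-group factor changes the exponent of $R_S$ only by an $F$-constant, and only at places of bounded norm; this is absorbed into the implied constant. For the second moment, the spectral weights at these places are governed by Lemma~\ref{lem5.7} and the Iwahori analysis. When $e_v(\mathfrak n)\le2$, only finitely many local representation types occur, the conductor exponent is bounded in terms of $F$, and the local contractions are $O_F(1)$. The Weyl input \eqref{eq1.nelson.level} and its Steinberg variant are already stated as affecting only the $F$-constant above $2\mathfrak D_F$.

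I expect the main obstacle to be the residual/main-term identity at a twisting prime above $2$ or $3$. The verification of \eqref{eq3.Ridentities} via the recurrence and the Weyl symmetry \eqref{5.32} used $g_v^\dag\in K_v$, that is $2\theta_v$ a unit. My first attempt would be to replace $g_v^\dag$ by $g_v^\dag k$ for a suitable $k$ with $g_v^\dag k$ upper triangular times $K_v$. This turns the non-integral translation into a unipotent phase and a bounded diagonal factor. I would then check the four residue specializations by hand for $r\in\{1,2\}$ and confirm that they still recombine with the geometric branches.
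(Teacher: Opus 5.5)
Your handling of the error terms matches what the paper does: bounded local contractions, a conductor bound depending only on $F$, and enlarging $S_F$ to the places above $6\mathfrak D_F$. So does your observation that the lattice matrix for $Z_{ab,v}$ and \eqref{eq9.unit.matrix} hold verbatim at a ramified $v\in S$ above $6\mathfrak D_F$.

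The gap is in the main terms. The lemma asserts the formulas \emph{with main term} \eqref{eq9.unit.main}, that is, exactly, and \S\ref{sec9} depends on this exactness. The Selberg diagonalization needs the exact values $b_{\mathfrak p}(0,0)=2\sqrt q/(q+1)$ and $h(\mathfrak p)$ at every split prime in $\mathcal Q(Y)$, including those above $6\mathfrak D_F$. The cancellation of $g_S/c_S^2$ needs $Q_{v,\epsilon}(\mathbf 0)=(1-q^{-1})^2/(1+q^{-1})$ exactly. Your plan is to recompute $H_v$ with the non-integral $g_v^\dagger$ and absorb ``extra bounded local factors into the $F$-dependent constants.'' That step is not available: a main term has no implied constant, and such factors would in general depend on $E_v$ and on $e_v(\mathfrak n)$, not only on $F$. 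Likewise, ``\eqref{eq3.Ridentities} should still pair the residues with $\mathscr Z$'' is left unproved. The derivation behind those identities (Lemma~\ref{lem6.2}) uses an additive character of conductor $\mathcal O_v$ and $g_v^\dagger\in K_v$, and both fail at these places.

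The missing idea is to compare each test with the untwisted lattice test, for which the global formula already holds at every place. Define the tests intrinsically on $E_v$ through $x\mapsto(0,1)\iota(x)$, normalized by $q^{d_{E_v}/2}$. That normalization is also what makes your first-step Tate integrals exactly $L$-factors when $v\mid\mathfrak D_F$. Concretely, at a split $v$ above $6\mathfrak D_F$ take $\Phi_{0,v}=q^{d_{E_v}/2}\mathbf 1_{\mathcal O_v^2}$ and $\Phi_{r,v}=q^{d_{E_v}/2+r/2}(\mathbf 1_{\mathfrak p_v^r\times\mathcal O_v}+\mathbf 1_{\mathcal O_v\times\mathfrak p_v^r})$ in $E_v=F_v\times F_v$ coordinates. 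Computing from supports alone gives:
\begin{itemize}
\item $h_{r,v}(I_2,s)/h_{0,v}(I_2,s)=2q^{-r/2-2rs}$;
\item $h^\Diamond_{r,v}(I_2,s)/h^\Diamond_{0,v}(I_2,s)=2q^{-r/2}$;
\item $Z_v(s_1,s_2;\Phi_{0,v},\Phi_{r,v})/Z_v(s_1,s_2;\Phi_{0,v},\Phi_{0,v})=2q^{-r/2-r(s_1+s_2)}H_{\mathfrak p_v^r}(s_1,s_2)$;
\item $h^\Diamond_{r,v}(s)=c_v(s)q^{-2rs}h_{r,v}(-s)$.
\end{itemize}
Every dependence on $d_{F_v}$, $d_{E_v}$ and $2\theta_v$ sits in a scalar common to $\Phi_{0,v}$ and $\Phi_{r,v}$, so it cancels. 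These are exactly the ratios at good places.

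For the residual branches you need neither \eqref{eq3.Ridentities} nor the Weyl symmetry \eqref{5.32}. As in the proof of Theorem~\ref{thm9.unit.group}, each paired residue contributes $\Eis(f)$ with $f=h_1^\Diamond h_2^\Diamond$ or $h_1h_2^\Diamond$, for any lattice pair. So the intertwining relation alone fixes the normalization of all four sign branches. This removes the obstacle you anticipated at twisting primes above $2$ or $3$, and the main terms agree exactly, not up to constants.
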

\begin{proof}
Only the fixed set $T_F=\{v:v\mid6\mathfrak D_F\}$ needs consideration.
At a ramified $v\in S\cap T_F$, the matrix for $Z_{ab,v}$ above is valid
also in residue characteristic two: its derivation uses only
$j\geq b$ and $j+2k\geq a$. All dependence on the differents is in the
common scalar $q_v^{(d_{E_v}-d_{F_v})/2}$, which cancels upon division
by $Z_{00,v}$. Likewise, $c_v(s)$ is common to the two lattices in
\eqref{eq9.unit.matrix}. The Tate integral of the normalized unit-group
test is $1$, and the ratios for the two first-moment constant terms are
$1-q_v^{-1-2s}$ and $1-q_v^{-1}$. Thus neither the first-moment factor
nor $Q_{v,\epsilon}$ acquires an extra constant at these places.

At a split $v\in T_F$, write $q=q_v$ and, in $E_v=F_v\times F_v$
coordinates, let $\Phi_{0,v}=q^{d_{E_v}/2}\mathbf1_{\mathcal O_v^2}$ and
\begin{align*}
\Phi_{r,v}=q^{d_{E_v}/2+r/2}
\big(\mathbf1_{\mathfrak p_v^r\times\mathcal O_v}
+\mathbf1_{\mathcal O_v\times\mathfrak p_v^r}\big),\qquad r\in\{1,2\}.
\end{align*}
Let $h_{r,v}$ be the corresponding section. Since
$\operatorname{vol}(\mathcal O_{E_v}^{\times})=q^{-d_{E_v}/2}$, the
Tate integrals satisfy
\begin{align*}
Z_v(s,\Phi_{0,v},\chi_v)&=L_v(1/2+s,\chi_v),\\
Z_v(s,\Phi_{r,v},\chi_v)&=2q^{-rs}\lambda_\chi^*(\mathfrak p_v^r)
L_v(1/2+s,\chi_v).
\end{align*}
Integration on the diagonal $F_v^\times$, and then in the complementary
coordinate for the intertwined section, gives
\begin{align*}
\frac{h_{r,v}(I_2,s)}{h_{0,v}(I_2,s)}=2q^{-r/2-2rs},\qquad
\frac{h_{r,v}^\Diamond(I_2,s)}{h_{0,v}^\Diamond(I_2,s)}=2q^{-r/2}.
\end{align*}
For the second moment, summing the regions $e_v(x_1)\geq e_v(x_2)$ and
$e_v(x_1)<e_v(x_2)$ gives
\begin{align*}
\frac{Z_v(s_1,s_2;\Phi_{0,v},\Phi_{r,v})}
{Z_v(s_1,s_2;\Phi_{0,v},\Phi_{0,v})}
=2q^{-r/2-r(s_1+s_2)}H_{\mathfrak p_v^r}(s_1,s_2).
\end{align*}
These are precisely the ratios in \textsection\ref{sec3.6} and
\textsection\ref{sec8}. Moreover,
\begin{align*}
h_{r,v}^\Diamond(s)=c_v(s)q^{-2rs}h_{r,v}(-s),
\end{align*}
so all four sign branches have the stated normalization. Hence the main
terms agree exactly.

For the errors, $F_v^\times/F_v^{\times2}$ is finite, so there are only
finitely many quadratic $F_v$-algebras. In integral coordinates the tests
above have support, denominators, and invariance level bounded in terms
of $F$, since $r\leq2$. In the first moment, the double integral preceding
\eqref{8.32}, after extracting its displayed discriminant factor, is
bounded on the shifted contour and vanishes unless the conductor of
$\xi_v$ is bounded in terms of $F$. Enlarge $S_F$ to $T_F$ in
\eqref{8.37}; these changes affect only its fixed local factors and
conductor bound.
In the second moment, the local contractions are bounded in terms of
$F$, and the spectral conductor acquires only a fixed factor depending
on $F$. Applying the same spectral moment estimates at that level changes
only the implied constants. The estimates outside $T_F$ are exactly
those in Theorem~\ref{thm9.unit.group}, proving the assertion.
\end{proof}

\subsection{Choice of the Mollifier}
\begin{lemma}\label{lem9.unit.cutoff}
Put $\ell=\log D_E$ and $z=\ell/20$, and let
\[
S=\big\{\mathfrak p\mid\mathfrak D_{E/F}:N_F(\mathfrak p)\leq z\big\}.
\]
The set $S$ may be empty; in that case $R_S=c_S=g_S:=1$.
For sufficiently large $D_E$,
\begin{equation}\label{eq9.unit.cutoff}
R_S\leq D_E^{1/16},\qquad
\prod_{\substack{\mathfrak p\mid\mathfrak D_{E/F}\\q_{\mathfrak p}>z}}
(1+q_{\mathfrak p}^{-1})
=1+O_F\!\left(\frac1{\log\ell}\right).
\end{equation}
\end{lemma}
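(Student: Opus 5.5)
The lemma contains two independent estimates about the primes dividing $\mathfrak D_{E/F}$, both elementary. I will prove each with a prime-ideal counting estimate for the fixed field $F$.

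\emph{Bound for $R_S$.} Since every $\mathfrak p\in S$ is a prime of $F$ with $N_F(\mathfrak p)\le z$, I will drop the divisibility condition and write
\[
\log R_S\le\sum_{N_F(\mathfrak p)\le z}\log N_F(\mathfrak p)=\theta_F(z).
\]
The prime ideal theorem for $F$, in Chebyshev form, gives $\theta_F(z)=z+o(z)$, and in any case $\theta_F(z)\le 2z$ for $z\ge z_0(F)$. With $z=\ell/20$ this yields
\[
\log R_S\le \ell/10+o(\ell)=\tfrac{1}{10}\log D_E+o(\log D_E),
\]
so $R_S\le D_E^{1/10+o(1)}$. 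This is weaker than the claimed $D_E^{1/16}$, so the divisibility condition must be used, and this is the point I expect to need care.

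Since $\mathfrak p\mid\mathfrak D_{E/F}$, we also have $R_S\le N_F(\mathfrak D_{E/F})=D_{E/F}\le D_E$, which alone is useless. A better route is to lower the cutoff to $z=\ell/20$ measured by $\log$-size, using $\theta_F(z)\le(1+o(1))z$ with the exact constant $1$. That gives $\log R_S\le(1+o(1))\ell/20$, hence $R_S\le D_E^{1/20+o(1)}\le D_E^{1/16}$ for $D_E$ large. So the correct bookkeeping is $\theta_F(z)\sim z$, not $2z$, and the first bound follows.

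\emph{Product over large primes.} I will take logarithms:
\[
\log\prod_{q_{\mathfrak p}>z}(1+q_{\mathfrak p}^{-1})\le\sum_{\substack{\mathfrak p\mid\mathfrak D_{E/F}\\ q_{\mathfrak p}>z}}q_{\mathfrak p}^{-1}.
\]
The number $k$ of prime divisors of $\mathfrak D_{E/F}$ satisfies $z^{k}\le\prod q_{\mathfrak p}\le D_E$ when restricted to large primes, so $k\le\ell/\log z$. A crude bound $k/z\le 20/\log z$ already gives $O(1/\log\ell)$. The log of the product is therefore $O(1/\log\ell)$, and exponentiating gives $1+O_F(1/\log\ell)$.

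To sharpen this, I would replace the crude bound by a Mertens-type argument: the sum is largest when the $k$ primes are the smallest ones exceeding $z$, which gives $\sum_{z<N\mathfrak p\le y}1/N\mathfrak p=\log(\log y/\log z)+O(1/\log z)$ with $\theta_F(y)-\theta_F(z)\le\ell$. In fact the simpler count $k/z$ suffices.
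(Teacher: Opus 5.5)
Your proposal is correct and is essentially the paper's proof. The first bound is the prime ideal theorem $\theta_F(z)=(1+o_F(1))z$ at $z=\ell/20$, giving $\log R_S\le(1+o(1))\ell/20\le\ell/16$; your detour through $\theta_F(z)\le 2z$ and the claim that the divisibility condition must be used are a false alarm and should be deleted, since the cutoff is already $\ell/20$. For the tail, your count $k\log z\le\sum_{\mathfrak p\mid\mathfrak D_{E/F}}\log q_{\mathfrak p}\le\ell$ together with $\sum q_{\mathfrak p}^{-1}\le k/z$ gives the same bound $20/\log z$ as the paper's weighting $1/q\le\log q/(z\log z)$, and the Mertens refinement is unnecessary.
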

\begin{proof}
For the first inequality, by the prime-ideal theorem,
\begin{align*}
\log R_S\leq\sum_{N_F(\mathfrak p)\leq z}\log N_F(\mathfrak p)
=(1+o_F(1))z\leq\frac{\ell}{16}.
\end{align*}
For the tail estimate, since $q\log q$ is increasing for $q>1$,
\begin{align*}
\sum_{\substack{\mathfrak p\mid\mathfrak D_{E/F}\\q_{\mathfrak p}>z}}
\frac1{q_{\mathfrak p}}
\leq\frac1{z\log z}
\sum_{\mathfrak p\mid\mathfrak D_{E/F}}\log q_{\mathfrak p}
\leq\frac{\ell}{z\log z}
=\frac{20}{\log z}.
\end{align*}
The assertion follows from $\log(1+x)\leq x$ and
$\log z=\log\ell-\log 20$.
\end{proof}

Use this choice of $S$ throughout the nonvanishing argument. Fix
$0<\beta<1/28$ and write $Y=D_E^\beta$.
Let $\mathcal Q(Y)$ be the set of squarefree
integral ideals $\mathfrak q$ with $N_F(\mathfrak q)\leq Y$, supported on
primes that split in $E$. Lemma~\ref{lem9.fixed.primes} applies to all
moment twists below, since their prime exponents are at most two.
For $\mathfrak q_1,\mathfrak q_2\in\mathcal Q(Y)$,
the local Hecke
relation is
\begin{equation}\label{eq9.nv.hecke}
\lambda_\chi^*(\mathfrak q_1)\lambda_\chi^*(\mathfrak q_2)
=2^{-\omega_F(\mathfrak d)}\sum_{\mathfrak r\mid\mathfrak d}
\lambda_\chi^*(\mathfrak q\mathfrak r^2),\qquad \mathfrak d=(\mathfrak q_1,\mathfrak q_2),\quad \mathfrak q=\mathfrak q_1\mathfrak q_2\mathfrak d^{-2}.
\end{equation}

In the following local formulas, write $q=N_F(\mathfrak p)$.
Let $b$ be the multiplicative function supported on squarefree ideals with
$b(\mathfrak p)=2q/(q-1)$ if $\mathfrak p$ splits in $E$, and
$b(\mathfrak p)=0$ otherwise. Define
\begin{align*}
S(X)=\sum_{N_F(\mathfrak a)\leq X}
\frac{b(\mathfrak a)}{N_F(\mathfrak a)}.
\end{align*}

\begin{lemma}\label{lem9.shortlong}
For fixed $0<\beta<\sigma$, as $D_E\to\infty$,
\begin{equation}\label{eq9.shortlong}
S(D_E^\beta)\geq
\left(\frac{e^{-2\gamma}}2+o_F(1)\right)
\frac{\beta^2}{\sigma^2}S(D_E^\sigma),
\end{equation}
where $\gamma$ is Euler's constant and the error is uniform in the
split-prime set.
\end{lemma}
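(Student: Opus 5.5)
The plan is to compare both sides of \eqref{eq9.shortlong} with the Euler product
\[
P(X):=\prod_{\substack{\mathfrak p\ \mathrm{split}\\ N_F(\mathfrak p)\leq X}}\Big(1+\frac{b(\mathfrak p)}{N_F(\mathfrak p)}\Big)
=\prod_{\substack{\mathfrak p\ \mathrm{split}\\ q\leq X}}\frac{q+1}{q-1}.
\]
We want an upper bound for $S(D_E^\sigma)$ and a lower bound for $S(D_E^\beta)$ in terms of $P$, both uniform in the set of split primes. We then pass from $P(D_E^\beta)$ to $P(D_E^\sigma)$ using only the Mertens estimate for \emph{all} prime ideals of $F$. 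That last step is where uniformity in the split set enters: discarding the splitting condition can only enlarge the intermediate product.

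\textbf{Step 1 (upper bound).} Every $\mathfrak a$ counted in $S(X)$ is squarefree and supported on split primes of norm $\leq X$. Hence $S(X)\leq P(X)$ trivially, by expanding the product.

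\textbf{Step 2 (ratio of products).} For $y=D_E^\beta<X=D_E^\sigma$ we use $(q+1)/(q-1)\leq(1-q^{-1})^{-2}(1+O(q^{-2}))$. Extending the range from split primes to all prime ideals, the Mertens theorem for $F$ gives
\[
\frac{P(X)}{P(y)}\leq\prod_{y<N_F(\mathfrak p)\leq X}(1-q^{-1})^{-2}\big(1+O(q^{-2})\big)=\Big(\frac{\sigma}{\beta}\Big)^2(1+o_F(1)).
\]
The residue $\rho_F$ cancels in this ratio, so only the logarithmic growth $\prod_{N_F\mathfrak p\leq x}(1-q^{-1})^{-1}\sim e^\gamma\rho_F\log x$ matters.

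\textbf{Step 3 (lower bound).} The goal is
\[
S(y)\geq\Big(\frac{e^{-2\gamma}}{\Gamma(3)}+o(1)\Big)P(y),
\]
uniformly in the split set, where $e^{-2\gamma}/\Gamma(3)=e^{-2\gamma}/2$ is the constant attached to a sieve of dimension $\kappa=2$. The multiplicative function $g(\mathfrak a)=b(\mathfrak a)/N_F(\mathfrak a)$ is supported on squarefree ideals and satisfies the \emph{one-sided} condition
\[
\sum_{N_F\mathfrak p\leq x}g(\mathfrak p)\log q\leq 2\log x+O_F(1)
\]
for every subset of primes. I would prove the lower bound for such $g$ in three moves.
\begin{enumerate}
\item Use the identity $\log N_F\mathfrak a=\sum_{\mathfrak p\mid\mathfrak a}\log q$ to derive the integral inequality
\[
S(x)\log x\geq\int_1^x\frac{S(t)}{t}\,dt\;-\;\sum_{\mathfrak p}g(\mathfrak p)\log q\;S(x/q)\;+\;\cdots,
\]
in the style of Hall and Tenenbaum.
\item Compare the normalized function $S(x)/P(x)$ with the extremal case of full density $\kappa=2$, in which $S(x)/P(x)\to e^{-2\gamma}/2$.
\item Argue by monotonicity that sparser prime sets only increase this ratio. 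The degenerate case illustrates the direction: with no split primes, $S=P=1$.
\end{enumerate}
As a fallback, truncate $S(y)$ to ideals supported on primes of norm $\leq y^{c}$ and bound the tail by Rankin's trick. This gives a lower bound $\gg P(y)$ with some explicit constant. I would then optimize in $c$ to recover $e^{-2\gamma}/2$, or at worst record the constant actually obtained.

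Combining Steps 1--3 gives
\[
S(D_E^\beta)\geq\Big(\frac{e^{-2\gamma}}2+o(1)\Big)P(D_E^\beta)\geq\Big(\frac{e^{-2\gamma}}2+o(1)\Big)\frac{\beta^2}{\sigma^2}P(D_E^\sigma)\geq\Big(\frac{e^{-2\gamma}}2+o(1)\Big)\frac{\beta^2}{\sigma^2}S(D_E^\sigma),
\]
which is \eqref{eq9.shortlong}. The main obstacle is Step 3: obtaining the sharp constant $e^{-2\gamma}/2$ \emph{uniformly} over arbitrary split-prime sets. Only an upper density bound on the prime set is available, whereas lower bounds for sums of multiplicative functions usually require a lower density hypothesis as well. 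The monotonicity argument in the third move is the delicate point.
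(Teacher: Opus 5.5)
Your Steps 1 and 2 are correct and agree with the paper: you use the Euler product $P$, the trivial bound $S\le P$, and the Mertens comparison over \emph{all} prime ideals via $(q+1)/(q-1)=(1-q^{-2})(1-q^{-1})^{-2}$. The gap is Step 3. The lemma rests on the claim that passing from the split primes to all primes can only decrease $S(y)/P(y)$. You state this only as an intention (``argue by monotonicity'') and call it the delicate point without proving it.

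The Hall--Tenenbaum route does not supply this claim. As displayed, your inequality has the prime sum with the wrong sign: the identity is $S(x)\log x=\int_1^x S(t)\,dt/t+\sum_{\mathfrak p}g(\mathfrak p)\log q\,S^{(\mathfrak p)}(x/q)$, where $S^{(\mathfrak p)}$ is restricted to ideals prime to $\mathfrak p$. Also, as you note yourself, such identities do not give lower bounds from an upper density hypothesis alone. The Rankin fallback loses the sharp constant. That loss matters here, because $e^{-2\gamma}/2$ feeds directly into the explicit proportion $\kappa_0$ through \eqref{eq9.explicit.ratio}.

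The missing idea is a one-line monotonicity, which makes any density hypothesis on the split set irrelevant. Extend $g(\mathfrak p)=2/(q-1)$ to every prime ideal. For a set $T$ of primes of norm $\le y$, write $S_T(y)$ and $P_T(y)$ for the corresponding sum and Euler product. If $\mathfrak p_0\notin T$ has norm $q_0\le y$, splitting according to whether $\mathfrak p_0\mid\mathfrak a$ gives
\[
\frac{S_{T\cup\{\mathfrak p_0\}}(y)}{P_{T\cup\{\mathfrak p_0\}}(y)}=\frac{S_T(y)+g(\mathfrak p_0)\,S_T(y/q_0)}{(1+g(\mathfrak p_0))\,P_T(y)}\le\frac{S_T(y)}{P_T(y)},
\]
since $S_T(y/q_0)\le S_T(y)$. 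The paper phrases this probabilistically, which is equivalent. $S_T(y)/P_T(y)$ is the probability that the product of independently selected primes has norm $\le y$, where each $\mathfrak p\in T$ is chosen with probability $g(\mathfrak p)/(1+g(\mathfrak p))=2/(q+1)$. Enlarging $T$ can only enlarge this product.

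Inducting up to all prime ideals of norm $\le y$ reduces Step 3 to the full set, where the dimension is exactly $2$. There the limit $e^{-2\gamma}/2$ follows from the prime ideal theorem. The paper identifies the limit law of $\log N(\text{product})/\log y$ as the generalized Dickman distribution with parameter $2$, with density $e^{-2\gamma}x$ on $(0,1)$. Alternatively, Selberg--Delange for $\sum\mu_F^2 g$ together with Mertens for $P$ gives the same constant. With this inserted, your chain of inequalities proves the lemma, uniformly in the split-prime set.
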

\begin{proof}
Independently select each permitted prime ideal of
norm $q\leq Y$ with probability $2/(q+1)$. The ratio of $S(Y)$ to
its Euler product is the probability that the selected product has
norm at most $Y$. Adding all missing prime ideals can only decrease
this probability. For the product over all prime ideals, the prime-ideal
theorem identifies the limiting Laplace transform of its logarithmic
norm divided by $\log Y$ as
\begin{align*}
\exp\left(2\int_0^1\frac{e^{-tu}-1}{u}\,du\right).
\end{align*}
This is the generalized Dickman distribution with parameter $2$,
whose density on $(0,1)$ is $e^{-2\gamma}x$; see
\cite[\textsection 1, (1.1)]{Pin17}. Thus the limiting probability is
$e^{-2\gamma}/2$. Comparing the Euler products at
$D_E^\beta$ and $D_E^\sigma$ by Mertens' formula, using
$1+2/(q-1)=(1-q^{-2})(1-q^{-1})^{-2}$, proves \eqref{eq9.shortlong}.
\end{proof}

We choose the mollifier by the Selberg diagonalization of
\cite[\textsection 4]{Blo04}. Write $\mu_F$ and $\tau_F$ for the
ideal M\"obius and divisor functions. For a split prime $\mathfrak p$, set
\[
\alpha_0(\mathfrak p):=\frac{q}{q+1},
\qquad
\alpha_1(\mathfrak p):=\frac{q-1}{q+1},
\qquad
h(\mathfrak p):=
\frac{(q-1)^3}{2q(q+1)^2},
\]
and extend these functions multiplicatively to squarefree ideals.
For $\mathfrak a\in\mathcal Q(Y)$, write
$V(\mathfrak a)=\ell/2+\log N_F(\mathfrak a)+B_F\asymp_F\ell$,
where $B_F$ is as in \eqref{e8.3-cm}, and choose
\begin{align*}
\xi_0(\mathfrak a):=
\frac{\mu_F(\mathfrak a)\alpha_1(\mathfrak a)}
{2\ell^2h(\mathfrak a)N_F(\mathfrak a)^{1/2}}.
\end{align*}
Since $\alpha_1(\mathfrak a)^2/h(\mathfrak a)=b(\mathfrak a)$
for $\mathfrak a\in\mathcal Q(Y)$, we have
\begin{equation}\label{eq9.nv.normalization}
\sum_{\mathfrak a\in\mathcal Q(Y)}h(\mathfrak a)\xi_0(\mathfrak a)^2
=\frac{S(Y)}{4\ell^4}.
\end{equation}
Define the mollifier 
\begin{align*}
\mathcal A^\times(\chi)=\mathcal B_S(0,\chi)
\sum_{\mathfrak q\in\mathcal Q(Y)}a(\mathfrak q)\lambda_\chi^*(\mathfrak q),
\end{align*}
where 
\begin{align*}
a(\mathfrak q):=
\sum_{\substack{\mathfrak f:\ \mathfrak f\mathfrak q\in\mathcal Q(Y)}}
\frac{\mu_F(\mathfrak f)\alpha_0(\mathfrak f)\tau_F(\mathfrak f)}
{N_F(\mathfrak f)^{1/2}}\xi_0(\mathfrak f\mathfrak q),
\end{align*}
and extend $a$ and $\xi_0$ by zero outside $\mathcal Q(Y)$.
We suppress the dependence on the fixed set $S$.

The definition of $\xi_0$ bounds each term in $a(\mathfrak q)$, so
\begin{align*}
|a(\mathfrak q)|
&\leq |\xi_0(\mathfrak q)|
\sum_{\substack{\mathfrak f:\ \mathfrak f\mathfrak q\in\mathcal Q(Y)}}
\prod_{\mathfrak p\mid\mathfrak f}
\frac{4N_F(\mathfrak p)}{(N_F(\mathfrak p)-1)^2}\\
&\leq |\xi_0(\mathfrak q)|
\prod_{N_F(\mathfrak p)\leq Y}
\left(1+\frac{4N_F(\mathfrak p)}{(N_F(\mathfrak p)-1)^2}\right)
\ll_F |\xi_0(\mathfrak q)|(\log(2Y))^4.
\end{align*}
Since $|\xi_0(\mathfrak q)|\ll_{F,\varepsilon}
N_F(\mathfrak q)^{-1/2+\varepsilon}$, we obtain
\begin{equation}\label{eq9.nv.coeff}
a(\mathfrak q)\ll_{F,\varepsilon}D_E^\varepsilon
N_F(\mathfrak q)^{-1/2+\varepsilon}.
\end{equation}

For $D_E$ sufficiently large in terms of $F$,
the roots of unity in $E$ are $\{\pm1\}$ and $E/F$ is ramified at
a finite prime not dividing $2$, since the exceptions form a finite set.
By \cite[Theorem~1(i).1]{Lem95}, the Hasse unit index is $1$ and
$\mathrm{Cl}_F\to\mathrm{Cl}_E$ is injective. Thus
$R_E=2^{d-1}R_F$, so the analytic class number formula reads
\begin{equation}\label{eq9.classnumber.exact}
h_{E/F}=\frac{h_E}{h_F}
=\frac{2^{1-d}\sqrt{D_E}\,L(1,\eta_{E/F})}{\pi^d\sqrt{D_F}}.
\end{equation}

Normalize the moments by
\begin{align*}
\mathscr L_1:=\frac1{h_{E/F}c_S}\sum_\chi
\mathcal A^\times(\chi)\Lambda(1/2,\chi),\qquad
\mathscr L_2:=\frac{g_S}{h_{E/F}c_S^2}\sum_\chi
|\mathcal A^\times(\chi)\Lambda(1/2,\chi)|^2.
\end{align*}

\subsection{Mollified Moments}
\begin{lemma}[First moment]\label{lem9.mollified.first}
For fixed $0<\beta<1/28$ and $\varepsilon>0$,
\begin{multline}\label{eq9.mollified.first}
\mathscr L_1=
\frac{2^{d/2-1}\rho_F}{\ell^2}
\sum_{\mathfrak a\in\mathcal Q(Y)}\frac{b(\mathfrak a)}{N_F(\mathfrak a)}
\Big(V(\mathfrak a)+
\sum_{\mathfrak p\mid\mathfrak a}\frac{2\log q_{\mathfrak p}}{q_{\mathfrak p}-1}
+\sum_{\mathfrak p\in S}\frac{\log q_{\mathfrak p}}{q_{\mathfrak p}-1}\Big)
\\+O_{F,\varepsilon}\big(D_E^{-11/2+(31/2)\beta+\varepsilon}\big).
\end{multline}
In particular, as $D_E\to\infty$,
\begin{equation}\label{eq9.explicit.first}
\mathscr L_1\geq\bigl(2^{d/2-2}\rho_F+o_F(1)\bigr)\frac{S(Y)}\ell.
\end{equation}
\end{lemma}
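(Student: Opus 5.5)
The plan is to expand $\mathcal A^\times(\chi)$ linearly and apply the first moment with ramified Euler factors removed, \eqref{eq9.unit.first.center}, to each twist $\lambda_\chi^*(\mathfrak q)$ with $\mathfrak q\in\mathcal Q(Y)$; Lemma~\ref{lem9.fixed.primes} covers the places above $6\mathfrak D_F$. This gives
\[
\mathscr L_1=\frac{L(1,\eta_{E/F})}{h_{E/F}c_S}\sum_{\mathfrak q}a(\mathfrak q)\Big[c_SA_F\sqrt{\tfrac{D_E}{N_F(\mathfrak q)}}\Big(\log\tfrac{D_E^{1/2}}{N_F(\mathfrak q)}+B_F+\sum_{\mathfrak p\in S}\tfrac{\log q_{\mathfrak p}}{q_{\mathfrak p}-1}\Big)+\mathrm{Err}(\mathfrak q)\Big].
\]
The factor $c_S$ cancels. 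By \eqref{eq9.classnumber.exact}, $L(1,\eta)\sqrt{D_E}/h_{E/F}$ is an explicit $F$-constant, which combines with $A_F$ to give $2^{d/2-1}\rho_F$ after the normalization conventions are matched. I take this constant bookkeeping as routine.

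\textbf{Error term.} By \eqref{eq9.nv.coeff}, $a(\mathfrak q)\ll D_E^\varepsilon N_F(\mathfrak q)^{-1/2+\varepsilon}$. The first-moment error is $D_E^{-m/2-3/4+\varepsilon}N_F(\mathfrak q)^{m+4+\varepsilon}R_S^{m+3/2+\varepsilon}$, and we have $R_S\le D_E^{1/16}$ by Lemma~\ref{lem9.unit.cutoff} and $h_{E/F}^{-1}L(1,\eta)\asymp D_E^{-1/2}$. Summing over $N_F(\mathfrak q)\le Y=D_E^\beta$ and taking $m=10$ gives a negative power of $D_E$. I expect this to reproduce the stated exponent $-11/2+(31/2)\beta$ once the $R_S$ power is tracked, or it can simply be bounded crudely using $\beta<1/28$.

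\textbf{Main term.} Substitute the definition of $a(\mathfrak q)$ and write $\mathfrak a=\mathfrak f\mathfrak q$. The main term becomes
\[
\sum_{\mathfrak a}\xi_0(\mathfrak a)\sum_{\mathfrak f\mathfrak q=\mathfrak a}\frac{\mu_F(\mathfrak f)\alpha_0(\mathfrak f)\tau_F(\mathfrak f)}{N_F(\mathfrak f)^{1/2}N_F(\mathfrak q)^{1/2}}\big(\tfrac{\ell}{2}-\log N_F(\mathfrak q)+B_F+\cdots\big).
\]
The inner sum factors over $\mathfrak p\mid\mathfrak a$, since $\mathfrak a$ is squarefree. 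Its constant part is $N_F(\mathfrak a)^{-1/2}\prod_{\mathfrak p\mid\mathfrak a}(1-2\alpha_0(\mathfrak p))=N_F(\mathfrak a)^{-1/2}\mu_F(\mathfrak a)\alpha_1(\mathfrak a)$. The $\log N_F(\mathfrak q)$ part is obtained by differentiating the local factor, which gives for each $\mathfrak p\mid\mathfrak a$ a term $-\log q_{\mathfrak p}\cdot 1/(1-2\alpha_0(\mathfrak p))$ times the product. Rewriting $\ell/2-\log N_F(\mathfrak q)$ as $V(\mathfrak a)-\ell/2\cdot0$ plus these local corrections, and using $1/\alpha_1(\mathfrak p)-1=2/(q_{\mathfrak p}-1)$, produces exactly $V(\mathfrak a)+\sum_{\mathfrak p\mid\mathfrak a}2\log q_{\mathfrak p}/(q_{\mathfrak p}-1)$. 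Multiplying by $\xi_0(\mathfrak a)$ and using $\alpha_1^2/h=b$ turns the weight into $b(\mathfrak a)/(2\ell^2N_F(\mathfrak a))$, which gives \eqref{eq9.mollified.first}. The main obstacle is getting the signs and the constant right in this local combinatorics. I will check it prime by prime on $\mathfrak a=\mathfrak p$ and then use multiplicativity.

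\textbf{Lower bound \eqref{eq9.explicit.first}.} All the added logarithmic terms are nonnegative, and $V(\mathfrak a)\ge \ell/2+B_F\ge(1/2-o(1))\ell$ because $N_F(\mathfrak a)\ge1$. Hence the main term is at least $(2^{d/2-2}\rho_F+o(1))S(Y)/\ell$. The error term is $o(S(Y)/\ell)$, since $S(Y)\ge1$ and the error is a negative power of $D_E$.
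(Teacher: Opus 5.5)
Your proposal is correct and follows the paper's proof essentially step for step. You apply \eqref{eq9.unit.first.center} with $m=10$ to each twist, bound the summed error via \eqref{eq9.nv.coeff} and $R_S\le D_E^{1/16}$, and evaluate the divisor sums multiplicatively. Your direct computation of $\sum_{\mathfrak f}\mu_F\alpha_0\tau_F(\mathfrak f)\log N_F(\mathfrak q)$ via $1/\alpha_1(\mathfrak p)-1=2/(q_{\mathfrak p}-1)$ is equivalent to the paper's identity \eqref{eq9.nv.divisorsum} with $j=1$, and you then drop the nonnegative prime sums as the paper does. The bookkeeping you left implicit also works out. With $A_F=2^{1-d/2}\rho_F/(\pi^d\sqrt{D_F})$, the finite part of $\mathcal M+\mathcal M^\Diamond$ in Theorem~\ref{thm8.1} with $r_{\mathrm{ns}}=d$, together with \eqref{eq9.classnumber.exact} and the factor $1/(2\ell^2)$ from $\xi_0$, gives exactly $2^{d/2-1}\rho_F$. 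The factor $c_S^{-1}\le c_{E/F}^{-1}\ll_{F,\varepsilon}D_E^\varepsilon$ in the error term is harmless.
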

\begin{proof}
Taking $m=10$ in \eqref{eq9.unit.first.center} and using
\eqref{eq9.nv.coeff} and $R_S^{23/2}\leq D_E^{23/32}\leq D_E^{3/4}$
bounds the summed error by
$O_{F,\varepsilon}(D_E^{-5+\varepsilon}R_S^\varepsilon Y^{31/2+\varepsilon})$.
Since $L(1,\eta_{E/F})/h_{E/F}\asymp_F D_E^{-1/2}$,
\eqref{eq9.ramified.relative} and \eqref{eq9.unit.cutoff} absorb the
remaining factors and establish the error in \eqref{eq9.mollified.first}.
For fixed $\beta<1/28$, it is $o_F(S(Y)/\ell)$, since $S(Y)\geq1$.

Substitute the definition of $a(\mathfrak q)$
and write $\mathfrak a=\mathfrak f\mathfrak q$. For $\mathfrak a\in\mathcal Q(Y)$ and $j\in\{0,1\}$, 
\begin{equation}\label{eq9.nv.divisorsum}
\sum_{\mathfrak f\mid\mathfrak a}
\mu_F(\mathfrak f)\alpha_0(\mathfrak f)\tau_F(\mathfrak f)
(\log N_F(\mathfrak f))^j
=\mu_F(\mathfrak a)\alpha_1(\mathfrak a)
\Big(2\log N_F(\mathfrak a)+
\sum_{\mathfrak p\mid\mathfrak a}
\frac{2\log q_{\mathfrak p}}{q_{\mathfrak p}-1}\Big)^j.
\end{equation}
Since $V(\mathfrak q)-2\log N_F(\mathfrak q)
=V(\mathfrak a)-2\log N_F(\mathfrak a)+\log N_F(\mathfrak f)$,
the two divisor sums combine to give
\begin{equation}\label{eq9.nv.firsttransform}
\sum_{\mathfrak q}\frac{a(\mathfrak q)(V(\mathfrak q)-2\log N_F(\mathfrak q))}{N_F(\mathfrak q)^{1/2}}=\sum_{\mathfrak a}\frac{\mu_F(\mathfrak a)\alpha_1(\mathfrak a)\xi_0(\mathfrak a)}{N_F(\mathfrak a)^{1/2}}\Big(V(\mathfrak a)+\sum_{\mathfrak p\mid\mathfrak a}\frac{2\log N_F(\mathfrak p)}{N_F(\mathfrak p)-1}\Big).
\end{equation}

The first-moment constant in Theorem~\ref{thm8.1} is
\begin{align*}
A_F=\frac{2^{1-d/2}\rho_F}{\pi^d\sqrt{D_F}}.
\end{align*}
For $j=0$, summing \eqref{eq9.nv.divisorsum} against
$\xi_0(\mathfrak a)/N_F(\mathfrak a)^{1/2}$ shows that
\begin{align*}
\sum_{\mathfrak q}\frac{a(\mathfrak q)}{N_F(\mathfrak q)^{1/2}}
=\frac{S(Y)}{2\ell^2}.
\end{align*}
Substituting this and \eqref{eq9.nv.firsttransform} into
\eqref{eq9.unit.first.center}, using
\begin{align*}
\frac{\mu_F(\mathfrak a)\alpha_1(\mathfrak a)\xi_0(\mathfrak a)}
{N_F(\mathfrak a)^{1/2}}
=\frac{b(\mathfrak a)}{2\ell^2N_F(\mathfrak a)}
\end{align*}
and \eqref{eq9.classnumber.exact}, proves \eqref{eq9.mollified.first}.
Its main term has nonnegative weights. Discarding the two prime sums
and using $V(\mathfrak a)\geq\ell/2+O_F(1)$ proves
\eqref{eq9.explicit.first}.
\end{proof}

Recall that $H_{\mathfrak{n}}(s_1,s_2)$ is defined by \eqref{e3.6} in \textsection\ref{sec3.6}.  Define
\begin{align*}
\widetilde H_{\mathfrak n}(z_1,z_2)
=N_F(\mathfrak n)^{-1/2-z_1}H_{\mathfrak n}(z_1,z_2).
\end{align*}

Let $b_{\mathfrak p}=\widetilde H_{\mathfrak p}$ and
$h_{\mathfrak p}=(1+\widetilde H_{\mathfrak p^2})/2-b_{\mathfrak p}^2$,
and extend multiplicatively to squarefree ideals.
For a split prime $\mathfrak p$, the local formula in \textsection\ref{3.2.1} specializes to
$H_{\mathfrak p^r}(0,0)=1+r(q-1)/(q+1)$, so
\begin{align*}
b_{\mathfrak p}(0,0)
=\frac{2\sqrt q}{q+1}=\frac{2\alpha_0(\mathfrak p)}{\sqrt q},\qquad
h_{\mathfrak p}(0,0)=\frac{(q-1)^3}{2q(q+1)^2}=h(\mathfrak p).
\end{align*}
For the shifted mollifier, define
\begin{align*}
\xi_{\mathfrak a}(z_1,z_2)
=\sum_{\substack{\mathfrak f:\ \mathfrak a\mathfrak f\in\mathcal Q(Y)}}
a(\mathfrak a\mathfrak f)b_{\mathfrak f}(z_1,z_2).
\end{align*}
and
\begin{equation}\label{eq9.nv.Q}
Q(x,y)=\frac{4\ell^4}{S(Y)}\sum_{\mathfrak a}h(\mathfrak a)
\xi_{\mathfrak a}(x/\ell,y/\ell)^2,\qquad
Q_{ij}=\partial_x^i\partial_y^jQ(0,0).
\end{equation}

\begin{lemma}\label{lem9.mollifier.derivatives}
For fixed $0<\beta<1/28$, the function $Q$ in \eqref{eq9.nv.Q}
satisfies $Q(0,0)=1$, $Q(x,y)=Q(y,x)$, and, as $D_E\to\infty$,
\begin{equation}\label{eq9.explicit.Qbounds}
\begin{cases}
0\leq Q_{10}\leq4\beta+o_F(1),\\
0\leq Q_{11}\leq12\beta^2+o_F(1),\\
-Q_{30}/2+3Q_{21}/2\leq 80\beta^3/3+o_F(1),
\end{cases}
\end{equation}
where the lower bounds allow an $o_F(1)$ error.
\end{lemma}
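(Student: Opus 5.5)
We would begin with the value at the origin, which should come from the Selberg diagonalization behind the choice of $a(\mathfrak q)$. For squarefree $\mathfrak f$ one has $\tau_F(\mathfrak f)=2^{\omega_F(\mathfrak f)}$. The specialization recorded just before \eqref{eq9.nv.Q} gives $b_{\mathfrak f}(0,0)=\tau_F(\mathfrak f)\alpha_0(\mathfrak f)N_F(\mathfrak f)^{-1/2}$, so we will substitute the definition of $a(\mathfrak a\mathfrak f)$ into $\xi_{\mathfrak a}(0,0)$. Collecting the terms with a fixed product $\mathfrak g=\mathfrak f\mathfrak f'$ leaves, for each $\mathfrak g$, a factor $\tau_F\alpha_0(\mathfrak g)N_F(\mathfrak g)^{-1/2}\sum_{\mathfrak f\mid\mathfrak g}\mu_F(\mathfrak f)$. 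This vanishes unless $\mathfrak g=\mathcal O_F$. Hence $\xi_{\mathfrak a}(0,0)=\xi_0(\mathfrak a)$, and \eqref{eq9.nv.normalization} gives $Q(0,0)=1$ directly. For the symmetry $Q(x,y)=Q(y,x)$ it suffices to show $b_{\mathfrak p}(z_1,z_2)=b_{\mathfrak p}(z_2,z_1)$, since $h$ and the outer structure are shift-independent. For $r=1$ the last bracket in $H_v$ of \textsection\ref{3.2.1} is absent, and multiplying by $q^{-z_1}$ should give a visibly symmetric expression. We will check this by clearing the common denominator $1-q^{-2-2z_1-2z_2}$.

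For the derivatives we will Taylor expand each local factor $b_{\mathfrak p}(x/\ell,y/\ell)$ to third order. To leading order we expect
\[
b_{\mathfrak p}(x/\ell,y/\ell)=b_{\mathfrak p}(0,0)\,q^{-(x+y)/(2\ell)}\bigl(1+O(\log q/(q\ell))\bigr)
\]
jointly with derivatives, where the error comes from the $q^{\pm z}$ terms of size $q^{-1}$. Inserting this into $\xi_{\mathfrak a}$ and repeating the Möbius collapse, now with the weight $N_F(\mathfrak f)^{-(x+y)/(2\ell)}$ retained, should express $\xi_{\mathfrak a}(x/\ell,y/\ell)$ as $\xi_0$-type sums over $\mathfrak a\mathfrak g\in\mathcal Q(Y)$. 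The resulting weights are polynomials in $\log N_F(\mathfrak g)/\ell$, and the weights $\mu_F\alpha_1/h$ turn these into sums of $b(\mathfrak c)/N_F(\mathfrak c)$ truncated at $N_F(\mathfrak c)\le Y/N_F(\mathfrak a)$. By Mertens' formula and the prime-ideal theorem, each $\partial$ produces a factor controlled by $\log(Y/N_F(\mathfrak a))/\ell\le\beta$ times a bounded numerical constant.

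Thus $Q_{ij}$ becomes a weighted average, with the nonnegative weights $h(\mathfrak a)\xi_0(\mathfrak a)^2$ normalized by \eqref{eq9.nv.normalization}, of polynomials in $u=\log(Y/N_F(\mathfrak a))/\ell\in[0,\beta]$. Nonnegativity of $Q_{10}$ and $Q_{11}$ up to $o_F(1)$ should follow because the leading polynomials have nonnegative coefficients. The upper bounds would follow by bounding $u\le\beta$ pointwise, giving constants of the form $4\beta$, $12\beta^2$, and, for the combination $-Q_{30}/2+3Q_{21}/2$, $80\beta^3/3$. The lower-order terms coming from $\sum_{\mathfrak p}\log q/(q-1)$ contribute $O_F(1/\ell)=o_F(1)$ uniformly, using Lemma~\ref{lem9.shortlong}-type comparisons of $S$ at nearby scales.

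The main obstacle is the third step: obtaining the exact polynomial in $u$ for each derivative, with the right constants and signs, after the shifted Möbius collapse. In particular, the cross terms in $Q_{21}$ and $Q_{30}$ must be arranged so that their signed combination is dominated by a pointwise bound in $u$. We would first try to write $\xi_{\mathfrak a}(x/\ell,y/\ell)\approx\xi_0(\mathfrak a)\,P_{x+y}(u)$ with an explicit function $P$, and then differentiate under the average.
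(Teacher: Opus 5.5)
\textbf{What works.} Your proof of $Q(0,0)=1$ is the paper's M\"obius collapse. If you keep the shift in $b_{\mathfrak f}$, the same collapse gives the identity the paper actually uses:
\[
\xi_{\mathfrak a}(z_1,z_2)=\sum_{(\mathfrak l,\mathfrak a)=1}\xi_0(\mathfrak a\mathfrak l)\prod_{\mathfrak p\mid\mathfrak l}\bigl(b_{\mathfrak p}(z_1,z_2)-b_{\mathfrak p}(0,0)\bigr).
\]
There is one slip in your symmetry step. For $r=1$ the last bracket of $H_v$ is not absent: it equals $(1-q^{-1-2s_1})(1-q^{-1-2s_2})q^{s_1-s_2}$, and it is needed to get $H_v(0,0)=1+\frac{q-1}{q+1}$. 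The correct simplification is $b_{\mathfrak p}(z_1,z_2)=q^{-1/2}(q^{-z_1}+q^{-z_2})/(1+q^{-1-z_1-z_2})$, which is visibly symmetric.

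\textbf{The gap: your local model is wrong at second order.} The model $b_{\mathfrak p}(x/\ell,y/\ell)\approx b_{\mathfrak p}(0,0)q^{-(x+y)/(2\ell)}$ replaces the arithmetic mean $(q^{-z_1}+q^{-z_2})/2$ by the geometric mean. Their ratio is $\cosh\bigl((x-y)\log q/(2\ell)\bigr)$, whose second derivatives have size $(\log q/\ell)^2$, not $O(\log q/(q\ell))$. The difference $b_{\mathfrak p}(z)-b_{\mathfrak p}(0,0)$ is itself only of size $q^{-1/2}\log q/\ell$. So this error enters every second and third derivative at main-term order.

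Your model makes $f=\xi_{\mathfrak a}(x/\ell,y/\ell)/\xi_0(\mathfrak a)$ a function of $x+y$. That forces $f_{xy}=f_{xx}$ and $Q_{21}=Q_{30}$, which is false. The correct normalized factor is
\[
g_{\mathfrak p}(x,y)=\frac{\xi_0(\mathfrak a\mathfrak p)}{\xi_0(\mathfrak a)}\bigl(b_{\mathfrak p}-b_{\mathfrak p}(0,0)\bigr)=-2q^{-1}\bigl(e^{-x\log q/\ell}+e^{-y\log q/\ell}-2\bigr),
\]
up to $O(q^{-2}(\log q/\ell)^j)$ in $j$-th derivatives. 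Its essential features are $\partial_x\partial_y g_{\mathfrak p}\equiv0$ and $\partial_xg_{\mathfrak p}(0)=\partial_yg_{\mathfrak p}(0)=2\log q/(q\ell)>0$.

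\textbf{Exact polynomials in $u$ are not available.} The primes are split primes coprime to $\mathfrak a$, and their distribution is not controlled uniformly in $E$. So Mertens and the prime-ideal theorem only give upper bounds for sums of nonnegative terms, by enlarging to all primes.

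\textbf{The missing bookkeeping.} Sort each derivative by the number $k\le3$ of primes of $\mathfrak l$ that receive a derivative, and use signs term by term:
\begin{itemize}
\item $f_x$ consists of positive one-prime terms.
\item $f_{xy}$ consists only of positive two-prime terms; there is no one-prime term because $g_{xy}=0$.
\item Each of these is bounded by $(2\beta)^k/k!$.
\item In $-f_{xxx}+3f_{xxy}$, the one-prime terms $-\partial_x^3g_{\mathfrak p}(0)$ are negative.
\item The two-prime terms cancel exactly for each ordered pair, since $-3\,\partial_x^2g_{\mathfrak p}\,\partial_xg_{\mathfrak p'}+3\,\partial_x^2g_{\mathfrak p}\,\partial_yg_{\mathfrak p'}=0$ at the origin.
\item The three-prime terms total at most $2(2\beta)^3/3!=8\beta^3/3$.
\end{itemize}
Now average $f^2$ with the nonnegative weights. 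Use $-\frac12(f^2)_{xxx}+\frac32(f^2)_{xxy}=-f_{xxx}+3f_{xxy}+6f_xf_{xy}$ at the origin. This gives the bounds $4\beta$, $8\beta^2+4\beta^2=12\beta^2$, and $8\beta^3/3+24\beta^3=80\beta^3/3$.

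This is exactly the step you left open, and your model cannot supply it. In your model $f_{xy}$ would contain the negative one-prime term $-\sum(\log q)^2/(q\ell^2)$. That term need not be compensated when the admissible split primes are sparse, so even $Q_{11}\ge -o_F(1)$ would not follow.
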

\begin{proof}
Substituting the definition of $a(\mathfrak a\mathfrak f)$ and collecting
divisors rewrites $\xi_{\mathfrak a}(z_1,z_2)$ in terms of $\xi_0$:
\begin{equation}\label{eq9.nv.shiftedxi}
\xi_{\mathfrak a}(z_1,z_2)
=\sum_{(\mathfrak l,\mathfrak a)=1}\xi_0(\mathfrak a\mathfrak l)
\prod_{\mathfrak p\mid\mathfrak l}
(b_{\mathfrak p}(z_1,z_2)-b_{\mathfrak p}(0,0)).
\end{equation}
In particular, $\xi_{\mathfrak a}(0,0)=\xi_0(\mathfrak a)$, so
$Q(0,0)=1$ by \eqref{eq9.nv.normalization}. The local factors are symmetric
in the two variables, hence so is $Q$.
For $\mathfrak a\in\mathcal Q(Y)$, write
$f(x,y)=\xi_{\mathfrak a}(x/\ell,y/\ell)/\xi_0(\mathfrak a)$.
For $\mathfrak p\nmid\mathfrak a$ with $\mathfrak a\mathfrak p\in\mathcal Q(Y)$,
the factor associated with $\mathfrak p$ in \eqref{eq9.nv.shiftedxi}, after division by $\xi_0(\mathfrak a)$, is
\begin{align*}
\frac{\xi_0(\mathfrak a\mathfrak p)}{\xi_0(\mathfrak a)}
\bigl(b_{\mathfrak p}(x/\ell,y/\ell)-b_{\mathfrak p}(0,0)\bigr)
=-\frac{2\sqrt q(q+1)}{(q-1)^2}
\bigl(b_{\mathfrak p}(x/\ell,y/\ell)-b_{\mathfrak p}(0,0)\bigr).
\end{align*}
By \eqref{eq3.Ridentities} and \eqref{eq3.9},
\begin{align*}
b_{\mathfrak p}(z_1,z_2)
=\frac{q^{-1/2}(q^{-z_1}+q^{-z_2})}{1+q^{-1-z_1-z_2}}.
\end{align*}
Thus, when taking any derivative of total order $j$, $1\leq j\leq3$, at $(0,0)$,
we may replace the preceding factor by
$-2q^{-1}(e^{-x\log q/\ell}+e^{-y\log q/\ell}-2)$,
with an error $O_j(q^{-2}(\log q/\ell)^j)$. These errors satisfy
\begin{align*}
\sum_{\mathfrak p}\frac{(\log q)^j}{q^2\ell^j}
\ll_{F,j}\ell^{-j}=o_F(1),\qquad 1\leq j\leq3.
\end{align*}
Each factor vanishes at $(0,0)$, so a derivative of order at most $3$
contains at most three prime factors. The prime-ideal Mertens estimate
bounds the sums over the other differentiated factors by $O_{F,\beta}(1)$.
Hence the replacement changes these derivatives of $f$ by $o_F(1)$,
uniformly in $\mathfrak a$.

For a product of $k\leq3$ distinct primes, the norm is at most $Y=D_E^\beta$, so
the prime-ideal theorem bounds the contribution from $k$ distinct
primes, each differentiated once, by
\begin{align*}
2^k\int_{\substack{t_i\geq0\\\sum t_i\leq\beta}}dt_1\cdots dt_k
=\frac{(2\beta)^k}{k!},\qquad 1\leq k\leq3,
\end{align*}
up to $o_F(1)$. At $(0,0)$, this bounds $f_x$ and $f_{xy}$ by
$2\beta+o_F(1)$ and $2\beta^2+o_F(1)$, respectively; both are
nonnegative up to $o_F(1)$. In $-f_{xxx}+3f_{xxy}$, the one-prime
terms are nonpositive, the two-prime terms cancel, and the three-prime
terms are at most $8\beta^3/3+o_F(1)$.

The weights $4\ell^4h(\mathfrak a)\xi_0(\mathfrak a)^2/S(Y)$ are nonnegative
and sum to $1$ by \eqref{eq9.nv.normalization}. Since $Q$ is the corresponding
average of $f^2$, the product rule proves the first two bounds. For the third, use
$f(0,0)=1$ and $f_x(0,0)=f_y(0,0)$ to write
\begin{align*}
-\tfrac12(f^2)_{xxx}+\tfrac32(f^2)_{xxy}
=-f_{xxx}+3f_{xxy}+6f_xf_{xy}
\leq\tfrac{80}{3}\beta^3+o_F(1)
\end{align*}
at $(0,0)$, and take the same average.
\end{proof}

For the second moment, write
\begin{align*}
P(\beta)=\frac34+12\beta+36\beta^2,\qquad
J(\beta)=\frac{80}3\beta^3,\qquad
\sigma=\sigma(\beta)=\frac12+\frac{J(\beta)}{P(\beta)}.
\end{align*}
Let $u=\ell^{-1}L(1,\eta_{E/F})^{-1}L'(1,\eta_{E/F}) $. Choose a fixed smooth function $0\leq\mathcal U\leq1$, equal to $1$
on $(0,1/2]$ and to $0$ on $[1,\infty)$.

\begin{lemma}[Second moment]\label{lem9.mollified.second}
For fixed $0<\beta<1/28$ and $\varepsilon>0$,
\begin{multline}\label{eq9.mollified.second}
\mathscr L_2=\frac{g_SL(1,\eta_{E/F})}{h_{E/F}c_S^2}
\sum_{\mathfrak q_1,\mathfrak q_2\in\mathcal Q(Y)}
\frac{a(\mathfrak q_1)a(\mathfrak q_2)}{2^{\omega_F(\mathfrak d)}}
\sum_{\mathfrak r\mid\mathfrak d}
\mathscr M_S^\heartsuit
(\mathfrak q_1\mathfrak q_2\mathfrak d^{-2}\mathfrak r^2)
\\+O_{F,\varepsilon}\big(D_E^{-1/12+(7/3)\beta+\varepsilon}\big),
\end{multline}
where $\mathfrak d=(\mathfrak q_1,\mathfrak q_2)$. Moreover, as $D_E\to\infty$,
\begin{equation}\label{eq9.nv.decorated}
\mathscr L_2\leq\frac{2^{d-2}\rho_F^3}{3\zeta_F(2)}\cdot 
\frac{L(1,\eta_{E/F})S(Y)}{\ell}\cdot 
\Big[P(\beta)(1/2+u)+J(\beta)+o_F(1+|u|)\Big].
\end{equation}
\end{lemma}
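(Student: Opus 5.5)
Expand $|\mathcal A^\times(\chi)|^2$ and reduce \eqref{eq9.mollified.second} to Theorem~\ref{thm9.unit.group} term by term; the inequality \eqref{eq9.nv.decorated} then comes from a Taylor expansion of the main term in the normalized shifts.

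\textbf{The identity.} Since $\chi$ is unramified and trivial on $\mathbb A_F^\times$, and $\epsilon_{\mathfrak p}(\chi)=\pm1$, the factor $\mathcal B_S(0,\chi)$ is real. Hence
\[
|\mathcal A^\times(\chi)|^2=\mathcal B_S(0,\chi)^2\sum_{\mathfrak q_1,\mathfrak q_2}a(\mathfrak q_1)a(\mathfrak q_2)\lambda_\chi^*(\mathfrak q_1)\lambda_\chi^*(\mathfrak q_2),
\]
using that $\lambda^*_\chi$ is real for split primes. Linearize the product of Hecke weights with \eqref{eq9.nv.hecke}. The resulting ideals $\mathfrak q\mathfrak r^2$ are supported on split primes with exponents at most two, so Lemma~\ref{lem9.fixed.primes} gives \eqref{eq9.unit.second} for each of them. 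Multiplying by $g_SL(1,\eta_{E/F})/(h_{E/F}c_S^2)$ produces the main term in \eqref{eq9.mollified.second}.

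For the error, use $N_F(\mathfrak q\mathfrak r^2)\le N_F(\mathfrak q_1\mathfrak q_2)$ and \eqref{eq9.nv.coeff}. The summed error is then
\[
D_E^{5/12+\varepsilon}\sum_{\mathfrak q_1,\mathfrak q_2}N_F(\mathfrak q_1\mathfrak q_2)^{-1/2+2/3+\varepsilon}\ll D_E^{5/12+\varepsilon}Y^{7/3+\varepsilon}.
\]
Next, $L(1,\eta)/h_{E/F}\asymp D_E^{-1/2}$ by \eqref{eq9.classnumber.exact}. The factors $g_S$, $c_S^{-2}$ and $R_S^\varepsilon$ are $D_E^{\varepsilon}$ by Lemma~\ref{lem9.ramified.relative} and \eqref{eq9.unit.cutoff}. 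Together these give $D_E^{-1/12+(7/3)\beta+\varepsilon}$, which is $o(S(Y)/\ell)$ since $\beta<1/28$.

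\textbf{The upper bound.} Write each $\mathscr M_S^\heartsuit$ as the double Cauchy integral \eqref{eq7.finitepart} of $\mathscr M_S(\mathbf s;\mathfrak n)/(s_1s_2)$, and move the sums over $\mathfrak q_i,\mathfrak r$ inside.

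\emph{Ramified local factors.} At each $v\in S$, the branch factor $Q_{v,\epsilon}$ from \eqref{eq9.unit.branch} at small shifts equals $(1-q^{-1})^2/(1+q^{-1})$ up to $O(q^{-1}|\mathbf s|\log q)$. This is exactly what $g_S/c_S^2$ normalizes away. The derivative corrections sum to $\sum_{\mathfrak p\in S}\log q_{\mathfrak p}/q_{\mathfrak p}$, which is $O(\log z)=o(\ell)$.

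\emph{Split-prime weights.} The twist dependence of $\mathscr M_{\epsilon_1,\epsilon_2}$ is through
\[
N_F(\mathfrak n)^{\epsilon_2 s_2}N_F(\mathfrak n)^{-1/2-\epsilon_1s_1-\epsilon_2s_2}H_{\mathfrak n}(\epsilon_1s_1,\epsilon_2s_2)=\widetilde H_{\mathfrak n}(\epsilon_1s_1,\epsilon_2s_2)\,N_F(\mathfrak n)^{(\dots)}.
\]
Summing over $\mathfrak r\mid\mathfrak d$ with weight $2^{-\omega_F(\mathfrak d)}$ makes the common part local: its factor is $(1+\widetilde H_{\mathfrak p^2})/2$, while the non-shared primes carry $b_{\mathfrak p}$. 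Standard Selberg diagonalization, writing $(1+\widetilde H_{\mathfrak p^2})/2=h_{\mathfrak p}+b_{\mathfrak p}^2$, turns the double sum into $\sum_{\mathfrak a}h_{\mathfrak a}(\mathbf s)\,\xi_{\mathfrak a}(\mathbf s)^2$. I expect the main obstacle here: $h_{\mathfrak a}(\mathbf s)$ depends on the shifts, and the first-order deviation $h_{\mathfrak a}(\mathbf s)-h(\mathfrak a)$ is $O(|\mathbf s|\log q)$ per prime, so it must be checked that it contributes only $o(1)$ after normalization by $\ell$. After rescaling $\mathbf s=(x,y)/\ell$ this combination becomes $\frac{S(Y)}{4\ell^4}Q(x,y)$, up to that deviation.

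\emph{The Taylor expansion.} Expand the remaining factor $\sum_\epsilon\mathscr Z(\dots)D_E^{(\dots)}$ from \eqref{eq4.Z}. In the variables $x,y$ it has a pole of total order four in $\mathbf s$, from $\Lambda_F(1+2s_1)\Lambda_F(1+2s_2)\Lambda_E(1+s_1+s_2)$. This converts the prefactor $\ell^{-4}$ to $\ell^{-1}$ times a polynomial in the derivatives $Q_{ij}$. The contribution $L'/L(1,\eta)$ from $\Lambda_E=\Lambda_F\cdot\Lambda(\cdot,\eta)$ is what produces $u$.

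The finite part then takes the form
\[
\frac{2^{d-2}\rho_F^3}{3\zeta_F(2)}\Big[\big(\tfrac34+3Q_{10}+\dots\big)\big(\tfrac12+u\big)-\tfrac12Q_{30}+\tfrac32Q_{21}+\dots\Big],
\]
with the constant coming from the Laurent coefficients of $\Lambda_F$ at $1$ and from \eqref{eq9.classnumber.exact}, and $\zeta_F(2)$ from $\Lambda_F(2+2s_1+2s_2)$. I have not fixed the exact polynomial in the $Q_{ij}$ and will determine it by the explicit four-branch expansion. The aim is that inserting the bounds of Lemma~\ref{lem9.mollifier.derivatives} gives exactly $P(\beta)=\tfrac34+12\beta+36\beta^2$ as the coefficient of $(\tfrac12+u)$, and $J(\beta)=\tfrac{80}{3}\beta^3$ from the third-order combination. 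All other terms are absorbed into $o_F(1+|u|)$.
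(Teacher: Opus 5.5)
\textbf{The identity.} Your treatment of \eqref{eq9.mollified.second} is correct and is the paper's argument. You linearize with \eqref{eq9.nv.hecke}, apply \eqref{eq9.unit.second} (through Lemma~\ref{lem9.fixed.primes}) to each $\mathfrak q\mathfrak r^2$, and bound the summed error by $D_E^{5/12+\varepsilon}Y^{7/3+\varepsilon}$. One small correction: to absorb this error into the $o_F(1+|u|)$ term of \eqref{eq9.nv.decorated}, you must compare it with $L(1,\eta_{E/F})S(Y)/\ell$, not with $S(Y)/\ell$. That comparison needs Siegel's bound $L(1,\eta_{E/F})\gg_{F,\varepsilon}D_E^{-\varepsilon}$, and this is where the ineffectivity enters.

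\textbf{The gap: \eqref{eq9.nv.decorated} is not derived.} Each branch has three simple polar factors, $\Lambda_F(1+2z_1)$, $\Lambda_F(1+2z_2)$ and $\Lambda_E(1+z_1+z_2)$; the pole has order three, not four. Along a line such as $(s_1,s_2)=(t/\ell,2t/\ell)$, chosen so that $\epsilon_1+2\epsilon_2\neq0$, the constant term is therefore a $[t^3]$ coefficient. Computing it requires expanding $L(1+(\epsilon_1+2\epsilon_2)t/\ell,\eta_{E/F})$ through degree three.

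Consequently the normalized higher derivatives $u_2=L''(1,\eta_{E/F})/(L(1,\eta_{E/F})\ell^2)$ and $u_3=L'''(1,\eta_{E/F})/(L(1,\eta_{E/F})\ell^3)$ enter, not only $u$. Carrying out the four-branch expansion gives, inside the braces,
\[
\tfrac18+\tfrac34u+\tfrac32u_2+u_3+3\big(\tfrac14+u+u_2\big)Q_{10}+3\big(\tfrac12+u\big)Q_{11}-\tfrac12Q_{30}+\tfrac32Q_{21}.
\]
There is no reason for $u_2$ and $u_3$ to be $o_F(1+|u|)$, so they cannot be absorbed. Your proposed shape $(\tfrac34+3Q_{10}+\cdots)(\tfrac12+u)$ is therefore neither an identity nor, without further input, an upper bound.

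\textbf{The missing idea is positivity.} The coefficients $r(\mathfrak a)=\sum_{\mathfrak c\mid\mathfrak a}\eta_{E/F}(\mathfrak c)$ of $\zeta_F(s)L(s,\eta_{E/F})=\zeta_E(s)$ are nonnegative. Hence the smoothed sums $\mathscr S_j(x)$, with weights $(\log(x/N_F(\mathfrak a)))^j$, satisfy $\mathscr S_0\geq0$, $0\leq\mathscr S_1\leq(\log x)\mathscr S_0$ and $\mathscr S_2\leq(\log x)\mathscr S_1$. Evaluate their residue polynomials at $x=D_E^{1/2+\delta}$, with the contour errors again handled by Siegel. Up to $o_F(1+|u|)$, this gives
\[
u\geq-\tfrac12,\qquad -\tfrac14-u\leq u_2\leq\tfrac14,\qquad u_3\leq\tfrac1{16}-\tfrac34u_2 .
\]

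These inequalities do two things:
\begin{enumerate}
\item They give $\tfrac18+\tfrac34u+\tfrac32u_2+u_3\leq\tfrac34(\tfrac12+u)$, which is the source of the constant $3/4$ in $P(\beta)$.
\item They make the coefficients $\tfrac14+u+u_2$ and $\tfrac12+u$ of $Q_{10}$ and $Q_{11}$ nonnegative. This is what allows you to insert the one-sided bounds of Lemma~\ref{lem9.mollifier.derivatives} and arrive at $P(\beta)(\tfrac12+u)+J(\beta)$.
\end{enumerate}

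By contrast, the deviation $h_{\mathfrak a}(\mathbf s)-h(\mathfrak a)$ that you flagged as the main obstacle is harmless. Since $\partial^j\log h_{\mathfrak p}(0,0)=O((\log q)^j/q)$, the derivatives of $h_{\mathfrak a}/h(\mathfrak a)$ are bounded by powers of $\log\log N_F(\mathfrak a)$, which is $o(\ell^j)$.
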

\begin{proof}
Apply \eqref{eq9.nv.hecke} and \eqref{eq9.unit.second}.
By \eqref{eq9.nv.coeff}, the summed error before normalization is
$O_{F,\varepsilon}(D_E^{5/12+\varepsilon}R_S^\varepsilon Y^{7/3+\varepsilon})$.
Using \eqref{eq9.classnumber.exact}, \eqref{eq9.ramified.relative}, and
\eqref{eq9.unit.cutoff} completes the proof of \eqref{eq9.mollified.second}.
Choose $\varepsilon>0$ with $2\varepsilon<1/12-7\beta/3$.
By $S(Y)\geq1$ and Siegel's lower bound
$L(1,\eta_{E/F})\gg_{F,\varepsilon}D_E^{-\varepsilon}$, the error divided
by $L(1,\eta_{E/F})S(Y)/\ell$ is
\begin{align*}
O_{F,\varepsilon}\big(\ell D_E^{-1/12+(7/3)\beta+2\varepsilon}\big)=o_F(1).
\end{align*}

We now evaluate the main term in \eqref{eq9.mollified.second}.

For $\epsilon=(\epsilon_1,\epsilon_2)\in\{\pm1\}^2$, write
$z_i=\epsilon_i s_i$ for $i=1,2$.
After substituting \eqref{eq4.Z}, each term in \eqref{eq7.signterm} takes the form
\begin{multline}\label{eq9.mollified.branch}
\mathscr M_{\epsilon_1,\epsilon_2}(\mathbf s;\mathfrak n)
=\frac{2\sqrt{D_E}}{\pi^d\sqrt{D_F}}
D_E^{-(1-\epsilon_1)s_1/2-(1-\epsilon_2)s_2/2}
\\\times
\frac{\Lambda_F(1+2z_1)\Lambda_F(1+2z_2)\Lambda_E(1+z_1+z_2)}
{\Lambda_F(2+2z_1+2z_2)}
\widetilde H_{\mathfrak n}(z_1,z_2).
\end{multline}
Thus all dependence on $\mathfrak n$ is in $\widetilde H_{\mathfrak n}(z_1,z_2)$;
the ramified factors $\prod_{v\in S}Q_{v,\epsilon}(\mathbf s)$ in
\eqref{eq9.unit.main} are independent of $\mathfrak n$.

Expand $\sum_{\mathfrak a}h_{\mathfrak a}\xi_{\mathfrak a}^2$ using the
definition of $\xi_{\mathfrak a}$. For each ordered pair $(\mathfrak q_1,\mathfrak q_2)$, summing
over $\mathfrak a\mid\mathfrak d=(\mathfrak q_1,\mathfrak q_2)$ produces the factor
\begin{align*}
b_{\mathfrak q_1\mathfrak q_2\mathfrak d^{-2}}(z_1,z_2)
\prod_{\mathfrak p\mid\mathfrak d}
\bigl(b_{\mathfrak p}(z_1,z_2)^2+h_{\mathfrak p}(z_1,z_2)\bigr).
\end{align*}
Since $b_{\mathfrak p}^2+h_{\mathfrak p}
=(1+\widetilde H_{\mathfrak p^2})/2$, expanding this product proves
\begin{equation}\label{eq9.nv.shifteddiagonal}
\sum_{\mathfrak a}h_{\mathfrak a}(z_1,z_2)
\xi_{\mathfrak a}(z_1,z_2)^2=\sum_{\mathfrak q_1,\mathfrak q_2\in\mathcal Q(Y)}
\frac{a(\mathfrak q_1)a(\mathfrak q_2)}{2^{\omega_F(\mathfrak d)}}
\sum_{\mathfrak r\mid\mathfrak d}
\widetilde H_{\mathfrak q_1\mathfrak q_2\mathfrak d^{-2}\mathfrak r^2}(z_1,z_2),
\end{equation}
where $\mathfrak d=(\mathfrak q_1,\mathfrak q_2)$.

Use \eqref{eq9.nv.shifteddiagonal} to sum \eqref{eq9.mollified.branch}
against the mollifier coefficients in \eqref{eq9.mollified.second}.
Multiplying by the ramified factors from \eqref{eq9.unit.main} and
normalizing by \eqref{eq9.classnumber.exact}, the main term in
\eqref{eq9.mollified.second} is the joint constant term at
$(s_1,s_2)=(0,0)$ of the following expression, with the four sign
choices summed first, as in \eqref{eq7.finitepart}:
\begin{multline}\label{eq9.mollified.main}
\frac{2^dg_S}{c_S^2}
\sum_{\epsilon_1,\epsilon_2=\pm1}
D_E^{-\frac{(1-\epsilon_1)s_1+(1-\epsilon_2)s_2}{2}}\cdot 
\frac{\Lambda_F(1+2z_1)\Lambda_F(1+2z_2)\Lambda_E(1+z_1+z_2)}
{\Lambda_F(2+2z_1+2z_2)}
\\\times\prod_{v\in S}Q_{v,\epsilon}(\mathbf s)
\sum_{\mathfrak a}h_{\mathfrak a}(z_1,z_2)\xi_{\mathfrak a}(z_1,z_2)^2,
\end{multline}
with $z_i=\epsilon_i s_i$.

Each factor in the product in \eqref{eq9.nv.shiftedxi} vanishes at $(0,0)$. Thus a
derivative of order $j$ at the center receives contributions only from
$\omega_F(\mathfrak l)\leq j$. Write $\partial^j$ for any derivative of total order $j$ in $z_1,z_2$.
For $1\leq j\leq3$, the local formula in \textsection\ref{3.2.1}
implies the derivative bounds
\begin{align*}
|\partial^j b_{\mathfrak p}(0,0)|\ll_j b_{\mathfrak p}(0,0)(\log q)^j,\qquad \partial^j\log h_{\mathfrak p}(0,0)=O_j((\log q)^j/q).
\end{align*}
Together with the definition of $\xi_0$, the prime-ideal Mertens
estimate bounds the derivatives by
\begin{align*}
|\partial^j\xi_{\mathfrak a}(0,0)|\ll_{F,j}(\log Y)^j|\xi_0(\mathfrak a)|.
\end{align*}

The derivatives of $h_{\mathfrak a}/h(\mathfrak a)$ are bounded by fixed
powers of $\log\log(3N_F(\mathfrak a))$. At a ramified prime, the
explicit factor in \eqref{eq9.unit.branch} satisfies, with $q=q_v$ and
$1\leq j\leq3$,
\begin{align*}
Q_{v,\epsilon}(\mathbf0)=\frac{(1-q^{-1})^2}{1+q^{-1}},\qquad
\partial^j Q_{v,\epsilon}(\mathbf0)
\ll_j\frac{(\log q)^j}q.
\end{align*}
Thus, for every sign branch and $1\leq j\leq3$,
\begin{align*}
\prod_{v\in S}Q_{v,\epsilon}(\mathbf0)=\frac{c_S^2}{g_S},\qquad
\Big|\partial^j\prod_{v\in S}Q_{v,\epsilon}(\mathbf0)\Big|
\ll_{F,j}\frac{c_S^2}{g_S}\cdot (\log z)^j.
\end{align*}

When extracting the constant term in \eqref{eq9.mollified.main},
we may replace $h_{\mathfrak a}(z_1,z_2)$ by
$h(\mathfrak a)$ and the ramified product by $c_S^2/g_S$.
The omitted derivatives have size $o_F(\ell^j)$ at order $j$ and,
using the bounds for the derivatives of $L(s,\eta_{E/F})$ below, contribute
$o_F(L(1,\eta_{E/F})S(Y)(1+|u|)/\ell)$.
The ramified scalar cancels $g_S/c_S^2$ in \eqref{eq9.mollified.main}.

We next bound the derivatives $L_j=L^{(j)}(1,\eta_{E/F})$,
$0\leq j\leq3$. For $\Re s>1$, write
\begin{align*}
\zeta_F(s)L(s,\eta_{E/F})
=\sum_{\mathfrak a}\frac{r(\mathfrak a)}{N_F(\mathfrak a)^s},
\qquad
r(\mathfrak a):=\sum_{\mathfrak c\mid\mathfrak a}
\eta_{E/F}(\mathfrak c)\geq0.
\end{align*}
The coefficients are nonnegative since the product is $\zeta_E(s)$.
Define
\begin{align*}
\mathscr S_j(x)=\sum_{\mathfrak a}\frac{r(\mathfrak a)}{N_F(\mathfrak a)}\mathcal U(N_F(\mathfrak a)/x)(\log(x/N_F(\mathfrak a)))^j,\qquad 0\leq j\leq2.
\end{align*}
With $t=\log x$, the nonnegative summands satisfy
$0\leq \mathscr S_{j+1}(x)\leq t\mathscr S_j(x)$
for $j=0,1$.
In Mellin inversion, shifting the contour to
$\Re s=-1/2+\varepsilon$ leaves an error of size
$O_{F,\varepsilon}(D_E^{1/4+\varepsilon}x^{-1/2+\varepsilon})$.
Up to the common positive factor $\operatorname{Res}_{s=1}\zeta_F(s)$,
the leading residue polynomials for $\mathscr S_0,\mathscr S_1,\mathscr S_2$ are respectively
$L_0t+L_1$, $L_0t^2/2+L_1t+L_2/2$, and
$L_0t^3/3+L_1t^2+L_2t+L_3/3$; their lower-degree terms involve only fixed
Laurent coefficients.

Write $u=L_1/(L_0\ell)$, $u_2=L_2/(L_0\ell^2)$, and $u_3=L_3/(L_0\ell^3)$. Apply positivity at
$x=D_E^{1/2+\delta}$, with $\delta>0$ fixed and arbitrarily small.
The inequalities $\mathscr S_0\geq0$, $0\leq \mathscr S_1\leq t\mathscr S_0$, and
$\mathscr S_2\leq t\mathscr S_1$ imply, in the limit $\delta\to0$,
\begin{equation}\label{eq9.nv.derivatives}
\begin{cases}
u\geq-1/2+o_F(1),\\
-1/4-u+o_F(1+|u|)\leq u_2\leq 1/4+o_F(1+|u|),\\
u_3\leq 1/16-3u_2/4+o_F(1+|u|).
\end{cases}
\end{equation}
Siegel's bound absorbs the contour errors. One may keep $\delta$ fixed
throughout: the strict numerical margin at the end permits a sufficiently
small positive choice.

After summing the four terms in \eqref{eq9.mollified.main},
we compute the constant term along
$(s_1,s_2)=(t/\ell,2t/\ell)$.
This choice ensures that $\epsilon_1+2\epsilon_2\neq0$
for every sign pair. For fixed $E/F$,
\begin{align*}
\lim_{t\to0}\frac{t^3\Lambda_F(1+2\epsilon_1t/\ell)\Lambda_F(1+4\epsilon_2t/\ell)\Lambda_E(1+(\epsilon_1+2\epsilon_2)t/\ell)}{\Lambda_F(2+2(\epsilon_1+2\epsilon_2)t/\ell)}=\frac{\rho_F^3L_0\ell^3}{8\zeta_F(2)\epsilon_1\epsilon_2(\epsilon_1+2\epsilon_2)}.
\end{align*}
We have used $\Lambda_{E_\infty}(1)=\pi^{-d}$ and
$\Lambda_F(2)=\pi^{-d}\zeta_F(2)$.
Expand $L(1+(\epsilon_1+2\epsilon_2)t/\ell,\eta_{E/F})/L_0$
through degree $3$. The other regular factors contribute only to the
error, so
\begin{multline}\label{eq9.mollified.coefficient}
\mathscr L_2=\frac{2^{d-5}\rho_F^3L_0S(Y)}{\zeta_F(2)\ell}
[t^3]\sum_{\epsilon_1,\epsilon_2=\pm1}
\frac{e^{-(3-\epsilon_1-2\epsilon_2)t/2}
Q(\epsilon_1t,2\epsilon_2t)}
{\epsilon_1\epsilon_2(\epsilon_1+2\epsilon_2)}
\\\times\Big(1+u(\epsilon_1+2\epsilon_2)t
+\frac{u_2}2(\epsilon_1+2\epsilon_2)^2t^2
+\frac{u_3}6(\epsilon_1+2\epsilon_2)^3t^3\Big)
+o_F\left(\frac{L_0S(Y)}\ell(1+|u|)\right),
\end{multline}
where $[t^3]$ denotes the coefficient of $t^3$.
Expanding the coefficient in \eqref{eq9.mollified.coefficient} and
combining terms by the symmetry $Q(x,y)=Q(y,x)$, we obtain
\begin{multline*}
\mathscr L_2=\frac{2^{d-2}\rho_F^3}{3\zeta_F(2)}\cdot \frac{L_0S(Y)}{\ell}\cdot  
\bigg\{\frac18+\frac34u+\frac32u_2+u_3
+3\left(\frac14+u+u_2\right)Q_{10}\\
+3\left(\frac12+u\right)Q_{11}
-\frac12Q_{30}+\frac32Q_{21}+o_F(1+|u|)\bigg\}.
\end{multline*}

By \eqref{eq9.nv.derivatives}, the coefficients of $Q_{10},Q_{11}$
are nonnegative up to
$o_F(1+|u|)$. Applying Lemma~\ref{lem9.mollifier.derivatives} to
the expression for $\mathscr L_2$ proves \eqref{eq9.nv.decorated}.
\end{proof}

\subsection{The Nonvanishing Proportion}
We obtain the absolute constant
\begin{equation}\label{eq9.kappaF}
\kappa_0=4.9\times10^{-4}.
\end{equation}

\begin{prop}\label{lem9.1}\label{prop.explicit.basefield}
For a suitable fixed $\beta<1/28$, and for sufficiently large $D_E$, 
\begin{equation}\label{eq9.nv.ratio}
\frac{\left|h_{E/F}^{-1}\sum_\chi
\mathcal A^\times(\chi)\Lambda(1/2,\chi)\right|^2}
{h_{E/F}^{-1}\sum_\chi|\mathcal A^\times(\chi)\Lambda(1/2,\chi)|^2}
\geq\kappa_0.
\end{equation}
Here the sums are over $\widehat{\mathrm{Cl}}_{E/F}$.
\end{prop}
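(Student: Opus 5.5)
The plan is to rewrite the ratio in \eqref{eq9.nv.ratio} in terms of the normalized moments $\mathscr L_1$ and $\mathscr L_2$. By definition,
\[
h_{E/F}^{-1}\sum_\chi\mathcal A^\times(\chi)\Lambda(1/2,\chi)=c_S\mathscr L_1,
\qquad
h_{E/F}^{-1}\sum_\chi|\mathcal A^\times(\chi)\Lambda(1/2,\chi)|^2=c_S^2g_S^{-1}\mathscr L_2 .
\]
Hence the left side of \eqref{eq9.nv.ratio} equals $g_S\mathscr L_1^2/\mathscr L_2\geq\mathscr L_1^2/\mathscr L_2$, since $g_S\geq1$. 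The ramified factors therefore cancel exactly. This is why the unit-group tests of Theorem~\ref{thm9.unit.group} and the choice of $S$ in Lemma~\ref{lem9.unit.cutoff} were made: no factor $c_{E/F}$ survives.

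Next insert \eqref{eq9.explicit.first} in the numerator and \eqref{eq9.nv.decorated} in the denominator. The factors $S(Y)^2/\ell^2$ against $S(Y)/\ell$ leave one factor $S(Y)/\ell$, and the powers of $2^d$ and $\rho_F$ should combine to an $F$-independent quantity, up to $\rho_F^{-1}\zeta_F(2)$. I need to recheck this normalization. The point is that the class number formula \eqref{eq9.classnumber.exact} was already used to fix both constants, so I expect the $F$-dependence to cancel. Comparing with the counting measure converts $S(Y)/\ell$, via Mertens for split primes, into a product $\asymp L(1,\eta_{E/F})^{-1}\cdot\beta$-dependent quantity. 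More precisely, Lemma~\ref{lem9.shortlong} with $\sigma$ compares $S(D_E^\beta)$ to $S(D_E^\sigma)$, and the latter has Euler product $\asymp\ell\,\rho_F^{-1}L(1,\eta_{E/F})^{-1}\cdot(\text{const})\,\sigma\cdot\ell$. This cancels the $L(1,\eta_{E/F})$ in \eqref{eq9.nv.decorated} and leaves a bound of the shape
\[
\frac{\mathscr L_1^2}{\mathscr L_2}\geq C\,\frac{e^{-2\gamma}}{2}\frac{\beta^2}{\sigma^2}\cdot\frac{\sigma\cdot(\text{Mertens constant})}{P(\beta)(1/2+u)+J(\beta)}+o(1).
\]

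The main obstacle is the unknown quantity $u=L'(1,\eta)/(\ell L(1,\eta))$, which may be large and positive. The plan is to apply Lemma~\ref{lem9.shortlong} not at a fixed $\sigma$ but at a $u$-dependent scale. The Euler product of $S(D_E^\sigma)$ is comparable to $\sum r(\mathfrak a)/N(\mathfrak a)$ up to $D_E^{\sigma}$, i.e.\ to $\mathscr S_0$ in the proof of Lemma~\ref{lem9.mollified.second}. That sum equals $\rho_F(L_0\sigma\ell+L_1)=\rho_FL_0\ell(\sigma+u)$, at least once $\sigma\geq1/2+\delta$ so that the contour error is negligible, and it is this step that absorbs $u$. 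Then $S(Y)\gg\beta^2\sigma^{-2}(\sigma+u)\ell L_0^{-1}\cdot(\ldots)$. With $\sigma=\sigma(\beta)=1/2+J/P$, the denominator $P(1/2+u)+J=P(\sigma+u)$ matches exactly, so the ratio becomes independent of $u$:
\[
\frac{\mathscr L_1^2}{\mathscr L_2}\geq\bigl(c_0+o(1)\bigr)\frac{e^{-2\gamma}\beta^2}{\sigma(\beta)^2P(\beta)},
\]
with an absolute constant $c_0$ coming from the normalizations. This presumes the $u\geq-1/2$ bound in \eqref{eq9.nv.derivatives} keeps everything positive, and that $\delta\to0$ is harmless by the strict margin.

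Finally, maximize $e^{-2\gamma}\beta^2/(\sigma^2P)$ over $\beta<1/28$. Since the expression is increasing in $\beta$ there, take $\beta$ just below $1/28$ and evaluate numerically to reach $\kappa_0=4.9\times10^{-4}$. The step I expect to be delicate is the scale comparison in Lemma~\ref{lem9.shortlong}, i.e.\ justifying the Mertens identification of $S(D_E^\sigma)$ with $\mathscr S_0$ including the weight $b(\mathfrak p)=2q/(q-1)$. The other is checking that the numerical constants combine to the stated value.
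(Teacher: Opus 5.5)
\textbf{The gap: you discard $g_S$.} Your architecture is the paper's. You write the ratio as $g_S\mathscr L_1^2/\mathscr L_2$ and insert \eqref{eq9.explicit.first} and \eqref{eq9.nv.decorated}. You take $\sigma=\sigma(\beta)$ so that $P(\beta)(1/2+u)+J(\beta)=P(\beta)(\sigma+u)$. You then bound $S(Y)$ from below via Lemma~\ref{lem9.shortlong} and a Mellin lower bound for $S(D_E^\sigma)$ whose residue $L_0\sigma\ell+L_1$ supplies the matching factor $\sigma+u$.

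The failure is at the first step. You drop $g_S$ using $g_S\geq1$ and assert that no factor $c_{E/F}$ survives. In fact $c_{E/F}$ enters through $S(Y)$, and $g_S$ is exactly what cancels it. Since $b$ vanishes at ramified primes, its Dirichlet series is \eqref{eq9.nv.euler}, which contains $\prod_{\mathfrak p\mid\mathfrak D_{E/F}}(1-N_F(\mathfrak p)^{-s})$. Mellin inversion with $\mathcal U$ therefore gives
\[
S(D_E^\sigma)\geq(1+o_F(1))\rho_Fc_{E/F}\mathcal H_{E/F}(1)L(1,\eta_{E/F})\ell(\sigma+u).
\]
Here $L(1,\eta_{E/F})$ appears to the first power, not inverted as in your sketch. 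This factor cancels the one in \eqref{eq9.nv.decorated}. Tracking the remaining constants gives
\[
g_S\frac{\mathscr L_1^2}{\mathscr L_2}\geq\bigl(1+o_F(1)\bigr)\,g_S\,c_{E/F}\,\mathcal H_{E/F}(1)\,\zeta_F(2)\cdot\frac{3e^{-2\gamma}\beta^2}{8\sigma(\beta)^2P(\beta)}.
\]
Without $g_S$ the prefactor is $\zeta_F(2)c_{E/F}\mathcal H_{E/F}(1)=\prod_{\mathfrak p\mid\mathfrak D_{E/F}}(1+q_{\mathfrak p}^{-1})^{-1}$. This tends to $0$, like $1/\log\log D_E$, when $\mathfrak D_{E/F}$ is divisible by all primes of small norm. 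That is precisely the ramified Euler-factor loss in Blomer's bound that \S\ref{sec9} is built to remove. So your argument does not give a uniform $\kappa_0$.

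\textbf{The repair.} Keep $g_S$. By \eqref{eq9.explicit.local} and the tail estimate in Lemma~\ref{lem9.unit.cutoff},
\[
\zeta_F(2)c_{E/F}g_S\mathcal H_{E/F}(1)=\prod_{\mathfrak p\mid\mathfrak D_{E/F},\,q_{\mathfrak p}>z}(1+q_{\mathfrak p}^{-1})^{-1}=1+O_F(1/\log\ell).
\]
So the role of the unit-group tests and the choice of $S$ is not that ramified factors vanish. Rather, the $g_S$ left over from the normalization of $\mathscr L_2$ compensates the small ramified primes in $c_{E/F}$, and the large ones cost only $1+o(1)$. With this, your unspecified constant $c_0$ is $3/8$. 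The choice $\beta=35714/10^{6}$ then gives $4.906\ldots\times10^{-4}>\kappa_0$.

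To reach an explicit constant, the comparison of $S(D_E^\sigma)$ with $L(1,\eta_{E/F})\ell(\sigma+u)$ must hold up to a factor $1+o_F(1)$, not merely $\asymp$. That comes from Mellin inversion applied directly to \eqref{eq9.nv.euler}. The ramified logarithmic derivative is nonnegative, and $\mathcal H'_{E/F}(1)/\mathcal H_{E/F}(1)=O_F(1)$. The bound $\sigma+u\gg_\beta1$ absorbs these terms and the $o_F(1+|u|)$ errors.
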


\begin{proof}
By the definitions of $\mathscr L_1$ and $\mathscr L_2$, the ratio in
\eqref{eq9.nv.ratio} is $g_S|\mathscr L_1|^2/\mathscr L_2$.
For fixed $\beta>0$, we have $\sigma+u\gg_\beta1$ by
\eqref{eq9.nv.derivatives}. Lemma~\ref{lem9.mollified.second} therefore implies
\begin{align*}
\mathscr L_2\leq\left(\frac{2^{d-2}\rho_F^3}{3\zeta_F(2)}P(\beta)
+o_F(1)\right)\frac{L(1,\eta_{E/F})S(Y)}\ell(\sigma+u).
\end{align*}
It remains to bound $S(Y)$ from below. The Dirichlet series of $b$ is
\begin{equation}\label{eq9.nv.euler}
\sum_{\mathfrak a}\frac{b(\mathfrak a)}{N_F(\mathfrak a)^s}
=\zeta_F(s)L(s,\eta_{E/F})
\prod_{\mathfrak p\mid\mathfrak D_{E/F}}
(1-N_F(\mathfrak p)^{-s})\,\mathcal H_{E/F}(s).
\end{equation}
For $\mathfrak p\nmid\mathfrak D_{E/F}$, the local factors of
$\mathcal H_{E/F}(s)$ are
\begin{align*}
\mathcal H_{E/F,\mathfrak p}(s)=
\begin{cases}
\displaystyle
\left(1+\frac{2N_F(\mathfrak p)^{1-s}}
{N_F(\mathfrak p)-1}\right)
\left(1-N_F(\mathfrak p)^{-s}\right)^2,
& \eta_{E/F}(\mathfrak p)=1,\\[6pt]
1-N_F(\mathfrak p)^{-2s},
& \eta_{E/F}(\mathfrak p)=-1.
\end{cases}
\end{align*}
Hence $\mathcal H_{E/F}$ is holomorphic and uniformly bounded on
$\Re(s)\geq1/2+\varepsilon$, with
$\mathcal H_{E/F}(1)\asymp_F1$ and
$\mathcal H'_{E/F}(1)/\mathcal H_{E/F}(1)=O_F(1)$.
Both displayed factors equal $1-N_F(\mathfrak p)^{-2}$ at $s=1$;
ramified primes contribute $1$.

Apply Mellin inversion to \eqref{eq9.nv.euler} with the same contour
shift as above. Since $0\leq\mathcal U\leq1$,
\begin{align*}
S(D_E^\sigma)
&\geq\sum_{\mathfrak a}\frac{b(\mathfrak a)}{N_F(\mathfrak a)}
\mathcal U\left(\frac{N_F(\mathfrak a)}{D_E^\sigma}\right)\\
&\geq(1+o_F(1))\rho_Fc_{E/F}\mathcal H_{E/F}(1)L(1,\eta_{E/F})\ell(\sigma+u).
\end{align*}
The ramified logarithmic derivative is nonnegative; the remaining
fixed terms and contour error are negligible. By Lemma~\ref{lem9.unit.cutoff},
\begin{equation}\label{eq9.explicit.local}
c_{E/F}g_S\mathcal H_{E/F}(1)
=\frac1{\zeta_F(2)}
\prod_{\substack{\mathfrak p\mid\mathfrak D_{E/F}\\q_{\mathfrak p}>z}}
(1+q_{\mathfrak p}^{-1})^{-1}
=\frac{1+o_F(1)}{\zeta_F(2)}.
\end{equation}
Combining this bound and the preceding estimate for $S(D_E^\sigma)$
with \eqref{eq9.shortlong}, \eqref{eq9.explicit.first}, and
\eqref{eq9.nv.decorated}, we obtain
\begin{equation}\label{eq9.explicit.ratio}
g_S\frac{|\mathscr L_1|^2}{\mathscr L_2}
\geq\frac{3e^{-2\gamma}\beta^2}{8\sigma(\beta)^2P(\beta)}+o_F(1).
\end{equation}
For $\beta=35714/10^6<1/28$, the numerical factor is
\begin{align*}
\frac{3e^{-2\gamma}\beta^2}{8\sigma(\beta)^2P(\beta)}
=4.906024918\ldots\times10^{-4}>\kappa_0.
\end{align*}
The strict margin absorbs the arbitrarily small fixed loss in
Lemma~\ref{lem9.mollified.second} and all $o_F(1)$ terms, proving \eqref{eq9.nv.ratio} with \eqref{eq9.kappaF}.
\end{proof}

\begin{proof}[Proof of Theorem~\ref{thmC}]
Cauchy--Schwarz bounds the nonvanishing proportion from below by the
left-hand side of \eqref{eq9.nv.ratio}. Proposition~\ref{lem9.1}
therefore proves the theorem with the explicit choice \eqref{eq9.kappaF}.
\end{proof}

\begin{remark}
The threshold $D_0(F)$ is ineffective in this proof: absorbing the errors
in \eqref{eq9.mollified.second} and the Mellin contour shifts uses
Siegel's lower bound for $L(1,\eta_{E/F})$. Thus the explicit proportion
$\kappa_0$ does not provide a computable discriminant threshold.
For $F=\mathbb Q$, Blomer estimated a proportion of order $10^{-6}$
without computing it explicitly \cite[p.~832]{Blo04}.
\end{remark}

\begin{remark}
The summed second-moment error permits $Y=D_E^\beta$ for every
$\beta<1/28$, compared with $\beta<1/364$ in \cite[(4.9)]{Blo04}.
The factor $R_S^\varepsilon$ does not change this range.
\end{remark}

\section{Conditional Subconvexity}\label{sec10}
The twisted second moment also supports a Duke--Friedlander--Iwaniec-type amplified subconvexity argument. As in \cite[\textsection 15--\textsection 16]{DFI95}, the arithmetic input is a supply of small split primes from which to build the amplifier.

For $\beta>0$ we define
\begin{align*}
\mathcal{S}_{\beta}:=\big\{\text{prime $\mathfrak{p}\subset\mathcal{O}_F$ :\ split in $E$},\ \mathfrak{p}\nmid 2\mathfrak{D}_F,\  N_F(\mathfrak{p})\leq D_E^{\beta}\big\}.
\end{align*}

The expected uniform Chebotarev asymptotic in the discriminant aspect predicts $\#\mathcal{S}_{\beta}=D_E^{\beta-o(1)}$. The polynomial lower bound required below remains open in this generality; this is the number-field analogue of the small-split-prime obstruction in \cite[\textsection 15]{DFI95}.

\begin{thm}\label{thm10.subconvexity}
Let $0<\delta<1$, put $\beta=\frac{1}{4(11-3\delta)}$, and suppose that, uniformly in a family of quadratic extensions $E/F$ as $D_E\to\infty$,
\begin{align*}
\#\mathcal{S}_{\beta}\gg_{F,\delta}D_E^{(1-\delta)\beta}.
\end{align*}
Then, for every sufficiently large $E/F$ in that family, every $\chi\in \widehat{\mathcal{C}}_{E/F}^{\text{un}}$, and every $\varepsilon>0$,
\begin{align*}
|L(1/2,\chi)|\ll D_E^{\frac{1}{4}-\frac{1-\delta}{8(11-3\delta)}+\varepsilon},
\end{align*}
where the implied constant depends on $F$, $\delta$, $\varepsilon$, the Archimedean component $\chi_\infty$, and the implied constant in the assumed lower bound for $\#\mathcal{S}_{\beta}$.
\end{thm}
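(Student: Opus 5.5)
We run the Duke--Friedlander--Iwaniec amplification argument on the twisted second moment of Theorem~\ref{thmB}, in its central form Corollary~\ref{cor7.2}. Fix $\chi_0\in\widehat{\mathcal C}_{E/F}^{\mathrm{un}}$ and put $L=D_E^{\beta}$. The amplifier is
\[
\mathcal A(\chi)=\sum_{\mathfrak p\in\mathcal S_\beta}\overline{\lambda_{\chi_0}(\mathfrak p)}\lambda_\chi(\mathfrak p)-\sum_{\mathfrak p\in\mathcal S_\beta}\overline{\lambda_{\chi_0}(\mathfrak p^2)}\lambda_\chi(\mathfrak p^2).
\]
The Hecke relation $\lambda(\mathfrak p)^2=\lambda(\mathfrak p^2)+2$ at split primes gives $|\mathcal A(\chi_0)|\gg\#\mathcal S_\beta$. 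Indeed, $|\lambda(\mathfrak p)|^2$ and $|\lambda(\mathfrak p^2)|^2$ cannot both be small, so either the first or the second sum is $\gg\#\mathcal S_\beta$; we keep whichever piece is large, as in \cite{DFI95}. By hypothesis $\#\mathcal S_\beta\gg D_E^{(1-\delta)\beta}$.

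By positivity,
\[
|\mathcal A(\chi_0)|^2|\Lambda(1/2,\chi_0)|^2\le\sum_\chi|\mathcal A(\chi)|^2|\Lambda(1/2,\chi)|^2 .
\]
We expand $|\mathcal A|^2$ with the Hecke relation \eqref{eq9.nv.hecke}, which writes it as a combination of $\lambda_\chi^*(\mathfrak n)$. Here the $\mathfrak n$ have $N_F(\mathfrak n)\le L^4$, are supported on split primes coprime to $2\mathfrak D_F$, have exponents at most $4$, and have at most two prime factors. Each coefficient has size $O(2^{\omega})$. We insert Corollary~\ref{cor7.2}, multiplying by $L(1,\eta_{E/F})\ll D_E^\varepsilon$, and treat the two pieces of the resulting bound separately.

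\emph{Main terms.} Lemma~\ref{lem7.main.bound} applies, since $\omega_F(\mathfrak n)\le2$ and $e_v(\mathfrak n)\le4$, and gives $\mathscr M^\heartsuit(\mathfrak n)\ll D_E^{1/2+\varepsilon}N_F(\mathfrak n)^{-1/2}$. The diagonal $\mathfrak n=\mathcal O_F$ contributes $D_E^{1/2+\varepsilon}\#\mathcal S_\beta$. The off-diagonal terms have $\mathfrak n=\mathfrak p_1\mathfrak p_2$, $\mathfrak p^2$, $\mathfrak p_1^2\mathfrak p_2^2$, or $\mathfrak p^4$, and sum to at most $D_E^{1/2+\varepsilon}(\sum_{\mathfrak p}N(\mathfrak p)^{-1/2})^2\ll D_E^{1/2+\varepsilon}L$.

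\emph{Error terms.} These total $D_E^{5/12+\varepsilon}\sum N_F(\mathfrak n)^{2/3}$. The sum is dominated by $\mathfrak n=\mathfrak p_1^2\mathfrak p_2^2$, giving about $L^2\cdot L^{8/3}=L^{14/3}$ up to $\varepsilon$.

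Dividing by $|\mathcal A(\chi_0)|^2\gg D_E^{2(1-\delta)\beta}$ gives
\[
|\Lambda(1/2,\chi_0)|^2\ll D_E^{\varepsilon}\Bigl(D_E^{1/2}\bigl(D_E^{-(1-\delta)\beta}+D_E^{\beta-2(1-\delta)\beta}\bigr)+D_E^{5/12+(14/3)\beta-2(1-\delta)\beta}\Bigr).
\]
The middle term $D_E^{1/2+(2\delta-1)\beta}$ dominates $D_E^{1/2-(1-\delta)\beta}$, so the working balance is between $D_E^{1/2}L^{-1+2\delta}$ and the spectral error. We then pass from $\Lambda$ to $L$ by Stirling, the gamma factors depending only on $\chi_\infty$ up to the fixed normalisation $D_E^{1/2}$ in the completed functional equation.

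The main obstacle is the exponent bookkeeping: making the amplifier expansion produce exactly the weights that give the stated $\beta=1/(4(11-3\delta))$ and saving $(1-\delta)/(8(11-3\delta))$. The count $L^{14/3}$ above is a guess. The stated exponent should emerge once the spectral-error contribution is computed sharply, for example by using the finer form of Proposition~\ref{prop5.1}, where the level factor enters as $N_F(\mathfrak n)^{-1/2}$ times a sum over $\mathfrak m\mid\mathfrak n$. I would first redo this count with the actual support and weights of $|\mathcal A|^2$, then solve the linear balance for $\beta$. A secondary point is uniformity of the implied constants in $\mathfrak n$ within Corollary~\ref{cor7.2}; the paper asserts this for the test levels used in amplifiers.
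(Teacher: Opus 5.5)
Your strategy is the paper's: amplify Corollary~\ref{cor7.2} and control the main terms with Lemma~\ref{lem7.main.bound}. But the bound you arrive at does not prove the theorem. Its middle term $D_E^{1/2+(2\delta-1)\beta}$ is strictly weaker than the target $D_E^{1/2-(1-\delta)\beta}$ for every $\delta>0$, and gives no saving at all once $\delta\ge 1/2$. The remedy you suggest, a finer form of Proposition~\ref{prop5.1}, is not what is missing. The spectral input $N_F(\mathfrak n)^{2/3}$ from Corollary~\ref{cor7.2} is exactly what the paper uses. The loss is in the counting: twice you replace $S=\#\mathcal S_\beta$ by $L=D_E^\beta$ in the numerator, while dividing by the lower bound $S^2\gg L^{2(1-\delta)}$.

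\emph{Off-diagonal main terms.} Do not bound $\sum_{\mathfrak p\in\mathcal S_\beta}N_F(\mathfrak p)^{-1/2}$ trivially by $L^{1/2}$. Use Cauchy--Schwarz over the possibly sparse set:
\[
\Big(\sum_{\mathfrak p\in\mathcal S_\beta}N_F(\mathfrak p)^{-1/2+\varepsilon}\Big)^2\le S\sum_{N_F(\mathfrak p)\le L}N_F(\mathfrak p)^{-1+2\varepsilon}\ll_{F,\varepsilon}SL^{2\varepsilon}.
\]
Thus the off-diagonal main term is $\ll SD_E^{1/2+\varepsilon}L^{\varepsilon}$, the same size as the diagonal.

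\emph{Spectral errors.} Count the pairs as $S^2$, not $L^2$. Each twist has norm at most $L^4$, so the total is $\ll S^2D_E^{5/12+\varepsilon}L^{8/3+\varepsilon}$. This factor $S^2$ cancels exactly against $|\mathcal A(\chi_0)|^2\gg S^2$.

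Together these give
\[
|\Lambda(1/2,\chi_0)|^2\ll D_E^{\varepsilon}\bigl(S^{-1}D_E^{1/2}+D_E^{5/12}L^{8/3}\bigr).
\]
With $S\gg D_E^{(1-\delta)\beta}$, the exponents $1/2-(1-\delta)\beta$ and $5/12+8\beta/3$ coincide precisely at $\beta=1/(4(11-3\delta))$. Taking square roots gives the stated saving. In this paper $\Lambda=L_\infty L$ carries no discriminant factor, so no $D_E$-normalisation enters when you pass to $L(1/2,\chi)$.

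A smaller point: the amplifier as displayed, with the minus sign and the coefficient $\overline{\lambda_{\chi_0}(\mathfrak p^2)}$, can vanish at $\chi_0$. For $\chi_0=\mathbf 1$ both sums equal $4S$, and the relation $\lambda(\mathfrak p)^2=\lambda(\mathfrak p^2)+2$ does not rescue it. Your fallback of keeping whichever single sum is large is valid, since $\max(|\cos\theta|,|\cos2\theta|)\ge1/2$. The paper instead chooses $\ell_{\mathfrak p}\in\{1,2\}$ prime by prime. Either works once the counting above is corrected.
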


\begin{proof}
Put $X=D_E^\beta$ and fix $\xi\in\widehat{\mathcal C}_{E/F}^{\mathrm{un}}$. For each $\mathfrak{p}\in \mathcal{S}_{\beta}$, there exists $\ell_{\mathfrak{p}}\in \{1,2\}$, depending on $\xi$, such that
\begin{equation}\label{9.3}
\big|\lambda_{\xi}^*(\mathfrak{p}^{\ell_{\mathfrak{p}}})\big|\gg 1. 
\end{equation}
Write $\lambda_\xi^*(\mathfrak p)=\cos\theta_{\mathfrak p}$
and use $\max(|\cos\theta_{\mathfrak p}|,|\cos2\theta_{\mathfrak p}|)\geq1/2$.

Set $c_{\xi}(\mathfrak{p}):=\overline{\lambda_{\xi}^*(\mathfrak{p}^{\ell_{\mathfrak{p}}})}=\lambda_{\xi}^*(\mathfrak{p}^{\ell_{\mathfrak{p}}})$. Consider 
\begin{align*}
\mathcal{J}_{\xi}(X):=\sum_{\chi\in \widehat{\mathcal{C}}_{E/F}^{\text{un}}}\frac{|\Lambda(1/2,\chi)|^2}{L(1,\eta_{E/F})}\cdot\bigg|\sum_{\mathfrak{p}\in \mathcal{S}_{\beta}}c_{\xi}(\mathfrak{p})\lambda_{\chi}^*(\mathfrak{p}^{\ell_{\mathfrak{p}}})\bigg|^2.
\end{align*}

The term $\chi=\xi$ alone implies
\begin{equation}\label{9.5}
\mathcal{J}_{\xi}(X)\gg |\mathcal{S}_{\beta}|^2\cdot \frac{|\Lambda(1/2,\xi)|^2}{L(1,\eta_{E/F})}.	
\end{equation}

Expanding the square and using the Hecke relations
\begin{align*}
\begin{cases}
\lambda_{\chi}^*(\mathfrak{p}^{\ell_{\mathfrak{p}}})^2=2^{-1}\lambda_{\chi}^*(\mathfrak{p}^{2\ell_{\mathfrak{p}}})+2^{-1},\\
\lambda_{\chi}^*(\mathfrak{p}^{\ell_{\mathfrak{p}}})\lambda_{\chi}^*(\mathfrak{q}^{\ell_{\mathfrak{q}}})=\lambda_{\chi}^*(\mathfrak{p}^{\ell_{\mathfrak{p}}}\mathfrak{q}^{\ell_{\mathfrak{q}}}),\quad \mathfrak{p}\neq \mathfrak{q}. 
\end{cases}	
\end{align*}
we obtain 
\begin{align*}
\mathcal{J}_{\xi}(X)=\mathcal{J}_{\xi}^{\text{diag}}(X)+\mathcal{J}_{\xi}^{\text{off-diag}}(X),
\end{align*}
where
\begin{align*}
&\mathcal{J}_{\xi}^{\text{diag}}(X):=2^{-1}\sum_{\mathfrak{p}\in \mathcal{S}_{\beta}}|c_{\xi}(\mathfrak{p})|^2\sum_{\chi\in \widehat{\mathcal{C}}_{E/F}^{\text{un}}}\frac{|\Lambda(1/2,\chi)|^2}{L(1,\eta_{E/F})}\big(\lambda_{\chi}^*(\mathfrak{p}^{2\ell_{\mathfrak{p}}})+1\big),\\
&\mathcal{J}_{\xi}^{\text{off-diag}}(X):=\sum_{\substack{\mathfrak{p},\mathfrak{q}\in \mathcal{S}_{\beta}\\
\mathfrak{p}\neq \mathfrak{q}}}c_{\xi}(\mathfrak{p})c_{\xi}(\mathfrak{q})\sum_{\chi\in \widehat{\mathcal{C}}_{E/F}^{\text{un}}}\frac{|\Lambda(1/2,\chi)|^2}{L(1,\eta_{E/F})}\lambda_{\chi}^*(\mathfrak{p}^{\ell_{\mathfrak{p}}}\mathfrak{q}^{\ell_{\mathfrak{q}}}).
\end{align*}

For sufficiently large $D_E$, write $S=|\mathcal S_\beta|\geq1$.
The twisting ideals above satisfy $\omega_F(\mathfrak n)\leq2$ and
$e_v(\mathfrak n)\leq4$, so Lemma~\ref{lem7.main.bound} applies.
Since $|c_\xi(\mathfrak p)|\leq1$ and $\ell_{\mathfrak p}\leq2$,
Corollary~\ref{cor7.2} bounds the diagonal contribution by
\begin{equation}\label{eq9.6}
\mathcal{J}_{\xi}^{\text{diag}}(X)\ll S D_E^{1/2+\varepsilon}+S D_E^{5/12+\varepsilon}X^{8/3+\varepsilon}.
\end{equation}
For the off-diagonal main term, apply Cauchy--Schwarz:
\begin{align*}
\Big(\sum_{\mathfrak p\in\mathcal S_\beta}N_F(\mathfrak p)^{-\ell_{\mathfrak p}/2+\varepsilon}\Big)^2\leq S\sum_{N_F(\mathfrak p)\leq X}N_F(\mathfrak p)^{-1+2\varepsilon}\ll_{F,\varepsilon}SX^{2\varepsilon}.
\end{align*}
The ideals $\mathfrak p^{\ell_{\mathfrak p}}\mathfrak q^{\ell_{\mathfrak q}}$
have norm at most $X^4$, and there are at most $S^2$ pairs.
By \eqref{eq7.main.bound} and Corollary~\ref{cor7.2}, after renaming
$\varepsilon$,
\begin{equation}\label{eq9.7}
|\mathcal{J}_{\xi}^{\text{off-diag}}(X)|\ll S X^{\varepsilon}D_E^{1/2+\varepsilon}+S^2D_E^{5/12+\varepsilon}X^{8/3+\varepsilon}.
\end{equation}

Combining \eqref{9.5}, \eqref{eq9.6}, and \eqref{eq9.7}, and using
$L(1,\eta_{E/F})\ll_{F,\varepsilon}D_E^\varepsilon$ and $S\leq S^2$, we obtain
\begin{equation}\label{9.8}
|\Lambda(1/2,\xi)|^2 \ll_{F,\varepsilon} |\mathcal{S}_{\beta}|^{-1}\cdot X^{\varepsilon}D_E^{1/2+\varepsilon}+D_E^{5/12+\varepsilon}X^{8/3+\varepsilon}.	
\end{equation}
Since $\Lambda(1/2,\xi)=L_\infty(1/2,\xi)L(1/2,\xi)$ and the Archimedean
factor is nonzero, the same estimate holds with $L(1/2,\xi)$ in place of
$\Lambda(1/2,\xi)$, with an implied constant depending additionally on
$\xi_\infty$.

The split-prime hypothesis makes the two exponents in \eqref{9.8}
$1/2-(1-\delta)\beta$ and $5/12+\frac83\beta$, up to $\varepsilon$.
They agree for our choice of $\beta$. Taking square roots, with the
internal $\varepsilon$ sufficiently small, proves the theorem.
\end{proof}

\begin{remark}
With the expected supply $\#\mathcal S_\beta=D_E^{\beta-o(1)}$, the two
terms in \eqref{9.8} are balanced at $\beta=1/44$, giving a saving
approaching $1/88$. This is the limiting saving from the moment estimate
and amplifier used here.
\end{remark}

\begin{remark}
Duke--Friedlander--Iwaniec \cite[Theorems~4--6]{DFI95} amplify a second
moment over the class group characters of an imaginary quadratic field,
a dihedral family of size $D_E^{1/2+o(1)}$. Their Theorem~6 gives the
explicit saving $1/1156$ under conditions ensuring enough small split
primes. For a cuspidal automorphic induction $\pi$ in this family, both
the level and the conductor of its quadratic central character
$\eta_{E/\mathbb Q}$ equal $D_E$, and
$\pi\otimes\eta_{E/\mathbb Q}\simeq\pi$.
Thus this is already a subconvexity problem in which the central character
has conductor as large as the level.

Their later work \cite{DFI02} enlarges the family to Maass forms of level
$D$ with a fixed primitive central character and uses an amplified fourth
moment; see \cite[Theorem~2.2 and (2.5)]{DFI02}. It gives the unconditional
saving $1/23041$ for these forms and for class group $L$-functions of both
real and imaginary quadratic fields \cite[Theorems~2.4 and~2.6]{DFI02}.
The symmetric spectral reciprocity of \cite{Yan26a} develops this
fourth-moment approach over arbitrary number fields and at general levels.
For quadratic central characters, \cite[Theorem~1.8]{Yan26a} gives
\begin{align*}
|L(1/2,\chi)|\ll_{F,\chi_\infty,\varepsilon}
D_E^{\frac14-\frac1{80}+\varepsilon}
\end{align*}
when $\chi\in\widehat{\mathcal C}_{E/F}^{\mathrm{un}}$ has cuspidal
automorphic induction. This bound requires no split-prime hypothesis.
The present Fourier spectral reciprocity instead generalizes the second-moment
approach of \cite{DFI95} and improves its conditional saving.
\end{remark}

\bibliographystyle{alpha}
\bibliography{LY}

\end{document}